\theoremstyle{plain}
\newtheorem{thm}{\protect\theoremname}[section]
\theoremstyle{plain}
\newtheorem{lem}[thm]{\protect\lemmaname}
\theoremstyle{plain}
\newtheorem{prop}[thm]{\protect\propositionname}
\newtheorem{corollary}[thm]{\protect\corname}
\newcommand{\Tr}{\text{Tr}}
\providecommand{\corname}{Corollary}
\providecommand{\lemmaname}{Lemma}
\providecommand{\propositionname}{Proposition}
\providecommand{\theoremname}{Theorem}
\providecommand{\lemmaname}{Lemma}
\providecommand{\propositionname}{Proposition}
\providecommand{\theoremname}{Theorem}
\title{Large deviation principle for the largest eigenvalue of random matrices
with a variance profile}
\author{Rapha\"el Ducatez, Alice~Guionnet and Jonathan~Husson}
\begin{document}
\maketitle
\begin{abstract}

We establish large deviation principles for the largest eigenvalue of large random matrices with variance profiles. For $N \in \mathbb N$, we consider random  $N \times N$ symmetric matrices $H^N$ which are such that $H_{ij}^{N}=\frac{1}{\sqrt{N}}X_{i,j}^{N}$ for $1 \leq i,j \leq N$, where the $X_{i,j}^{N}$ for $1 \leq i \leq j \leq N$ are independent and centered. We then denote $\Sigma_{i,j} ^N = \text{Var} (X_{i,j}^{N}) ( 1 + \mathds{1}_{ i =j})^{-1}$ the variance profile of $H^N$. Our large deviation principle is then stated under the assumption that the $\Sigma^N$ converge in a certain sense toward a real continuous function $\sigma$ of $[0,1]^2$ and that the entries of $H^N$ are sharp sub-Gaussian. Our rate function is expressed in terms of the solution of a Dyson equation involving $\sigma$. This result is a generalization of \cite{HussonVar} and is new even in the case of Gaussian entries. 

\end{abstract}

\tableofcontents{}

\bigskip

\thanks{This project   has received funding from the European Research Council (ERC) under the European Union
Horizon 2020 research and innovation program (grant agreement No. 884584).
}
\section{Introduction}

Large random matrices appear in a wide variety of problems. They were first introduced into statistics in  Wishart's work  \cite{wishart} to analyze large arrays of noisy data, and are still central in statistics for
 principal component analysis and statistical learning. Wigner \cite{wigner} and Dyson \cite{dyson}  conjectured that the statistics of their eigenvalues model those of high energy levels in heavy nuclei.
They have also been applied to model the stability of large dynamical systems such as food webs \cite{May72} and neural networks \cite{RaAb:neural}. 
Random matrices  have recently played a central role in the study of the complexity of random energy landscapes which are governed by the large deviations of the smallest eigenvalue of their Hessian \cite{ABA,BAMMN,BBMcK}. Random matrices also play a key role in operator algebra and more precisely in free probability since  Voiculescu \cite{voicstflour,voi91}  proved that independent matrices  are asymptotically free.

 In all these problems, random matrices with independent equidistributed entries (modulo the symmetry constraint) were first considered. Indeed, such matrices, called Wigner matrices,  are somehow the easiest to study, in particular because when the entries are Gaussian, the joint law of the eigenvalues is explicit and amenable to analysis \cite{ME,AGZ}. The first and central result of random matrix theory concerns the convergence of the spectrum of such random matrices. Wigner  \cite{wigner} proved that the empirical measure of the eigenvalues of Wigner matrices with entries with finite moments converges towards the so-called semi-circle law. F\"uredi  and Koml\'os  then showed that the largest eigenvalue stick to the bulk in the sense that it converges towards the boundary of the support of the semi-circle law. The Gaussian fluctuations of the empirical measure of the eigenvalues were also studied in great details in a series of papers \cite{PS,LP} whereas local fluctuations were established and shown to be universal \cite{TW99, TW5,TaVu, erdy, TV10, ESY11}. Much less is known about the large deviations for the empirical measure, which were mostly established in the Gaussian case \cite{BAG97}, and in the case where the tails of the entries is heavier than the Gaussians \cite{BordCap}. On can also mention the recent work \cite{Au24} that deals with the large deviations of the empirical measure of adjacency matrices of Erdös-Renyi graphs. However, we have now a fairly complete picture of the large deviations for the largest eigenvalue first in the Gaussian case\cite{BDG}, then in the case of tails heavier than Gaussian's \cite{fanny} and finally when the entries are subGaussian \cite{ HuGu1,HuGu2,AGH, CoDuGu}. In particular, it was shown that large deviations of the largest eigenvalue of Wigner matrices are universal provided the entries are sharp sub-Gaussian in the sense that their Laplace transform is bounded by that of the Gaussian with the same variance. This includes Rademacher or uniformly distributed entries. There has also been multiple works regarding the large deviations for the largest eigenvalue of adjacency matrices of sparse Erdös-Renyi graphs $\mathcal{G}(n,p)$ for different regimes of sparsity when $N$ goes to $+\infty$ including some recent advances
 (see \cite{ChaVar11,ChaVar12,LubZha15} in the dense regime $p \sim 1$, \cite{ChaDem16,BhaGan20} in the regime $ \frac{1}{\sqrt{n}} \ll p \ll 1$ and \cite{BhaBhaGan21,GanNam22,Gan22} in the regime $p \sim \frac{1}{n}$).

Most of the previous  questions are very relevant for more complex models, for instance for the so-called structured matrices or Wigner matrices with a variance profile where the entries are still independent but with a variance which depends on the position.  In this case, the joint law of the eigenvalues is not  explicit, even when the entries are Gaussian. The results about these matrices are much more recent.  The convergence of the spectrum was established by Girko \cite{girko}, see also \cite{Sh98}. It can also be seen that the eigenvalue stick to the bulk (see the discussion in Section \ref{convsec}). Fluctuations of the empirical measure could be studied \cite{AZ04,guionnet-clt}, as well as local fluctuations \cite{AEK2,Alt18,AlErKr, AEK1,AEK3, CEKS1,CEKS2}.
In this article, our goal is to derive large deviations for the largest eigenvalue of such matrices. 
We prove a large deviation principle for the largest eigenvalue of
these matrices under the condition that the variables $(X_{i,j}^{N})_{i\le j}$
have sharp sub-Gaussian taills. The same  question was studied  by the second author in \cite{HussonVar} but the results required a quite restrictive
assumption on the profile, see \cite[Assumptions 2.4, 2.5 and 2.8]{HussonVar},
which we show is not necessary provided the rate function is conveniently
generalized. To simplify the notations, we will assume that the entries
are real but it is straightforward to generalize our results to the complex case.

\subsection{Model }
In this article, we consider Wigner matrices  with a variance profile.  We will denote these matrices by 
$H^{N}$, $N\in\mathbb{N}$ and will assume that the entries which are of the form $H_{ij}^{N}=\frac{1}{\sqrt{N}}X_{i,j}^{N}$
where $(X_{i,j}^{N})_{i\le j}$ are centered independent variables
with non trivial variance profile $\Sigma_{i,j}^{N} $.

More precisely, the model will satisfy the following hypothesis: \label{thehyp} 
\begin{itemize}
\item $H^{N}=\frac{1}{\sqrt{N}}X^{N}$ belongs to the set $\mathcal{S}_{N}(\mathbb{R})$
of $N\times N$ real symmetric matrices. $(X_{ij}^{N})_{i\le j}$
are independent centered random variables whose variance profile is
uniformly bounded that is there exists a finite constant $C$ independent
of $N$ such that $\max_{i,j}\Sigma_{i,j}^{N}\leq C$. 
\item The entries are \emph{sharp subgaussian} variables \cite{HuGu1},
namely for every real number $t$, {
\begin{equation}\label{SSH}
\frac{1}{t^{2}}\log\mathbb{E}e^{tX_{ij}^{N}}\leq\frac{1}{2^{1_{i\neq j}}}\Sigma_{i,j}^{N}
\end{equation}
for all $i,j\in \{1,\ldots, N\}$. 
Observe that this implies that $\mathbb{E}[(X_{i,j}^{N})^{2}]=\Sigma_{i,j}^{N}$ when $i\neq j$ but  when $j=i$, we have $\mathbb{E}[(X_{i,i}^{N})^{2}]=2\Sigma_{i,i}^{N}$.   }
\item $\Sigma_{i,j}^{N}$ is \emph{asymptotically piecewise continuous}
: A nonnegative symmetric real-valued function $\sigma$ on $[0,1]^{2}$
is piecewise continuous if there exists an integer number $p$ and
a collection of intervals $I_{1},\dots I_{p}\subset[0,1]$ that forms
a partition of $[0,1]$ such that $\sigma$ is continuous on each
$I_{k}\times I_{l}$ and can be extended as a continuous function
on $\overline{I_{k}}\times\overline{I_{l}}$ for every $k,l\in\{1,\dots,p\}$.
We say that $\Sigma_{i,j}^{N}$ is \emph{asymptotically piecewise
continuous} iff there exists a sequence $0\le t_{1}^{N}<t_{2}^{N}\le\cdots<t_{N}^{N}\le1$
such that $\frac{1}{N}\sum_{i=1}^{N}\delta_{t_{i}^{N}}$ converges weakly towards
the uniform law on $[0,1]$ and a piecewise continuous function $\sigma$
such that
\begin{equation}\label{convcov}
\lim_{N\rightarrow\infty}\sup_{1\leq i,j\leq N}|\Sigma_{i,j}^{N}-\sigma(t_{i}^{N},t_{j}^{N})|=0\,.
\end{equation}
\end{itemize}
In the next section, we recall known results about the convergence of the spectrum and the largest eigenvalue of $H^{N}$. 
\subsection{Limit of the empirical measure of the eigenvalues of $H^{N}$}\label{convsec}

For a symmetric real matrix $M$, we will denote in the rest of this
paper $\lambda_{1}(M)\geq\dots\geq\lambda_{N}(M)$ its ordered eigenvalues, {
$v_{1}(M),\dots, v_{N}(M)$ the associated eigenvectors and $\mu_{M}$
the empirical measure of its eigenvalues defined by } 
\[
\mu_{M}=\frac{1}{N}\sum_{i=1}^{N}\delta_{\lambda_{i}(M)}.
\]
In the case where $M=H^{N}$, we will often just use  the notation $\lambda_{i},v_{i}$
instead of $\lambda_{i}(H^{N}),v_{i}(H^{N})$. For any measure $\nu$
on $\mathbb{R}$, we denote by $G_{\nu}$ its Stieljes transform given by
as
\[
G_{\nu}(z):=\int_{\mathbb{R}}\frac{1}{z-y}d\nu(y)
\]
for any $z\in\mathbb{C}\setminus\text{Supp}(\nu)$.

The spectral measure $\mu_{H^{N}}$ does not converge to the semi-circle
law in general but to some  measure $\mu_{\sigma}$ that depends
on $\sigma$, see \cite{AlErKr,Sh98}, see \cite[Theorem 1.4]{HussonVar}. We denote by $r_{\sigma}$ the right hand point of the support of $\mu_{\sigma}$. 
We denote by $d_{W}$ the Wassertein distance on $\mathcal{P}(\mathbb{R})$
the space of probability measures on $\mathbb{R}$: 
\[
d_{W}(\mu,\nu)=\sup_{\|f\|_{L}\le1}|\int f(d\mu-d\nu)|
\]
 with $\|f\|_{L}$ the Lipschitz constant. We will also use this distance
for non negative measures with different masses. We have the following. 
\begin{lem}
\label{lem:Measure-Concentration} There exists a family of probability
measures $(\mu_{\sigma,t})_{t\in[0,1]}$ supported on a compact interval
$[l_{\sigma},r_{\sigma}]\subset\mathbb{R}$ so that if we set 
\[
\mu_{\sigma}:=\int_{0}^{1}\mu_{\sigma,t}dt,
\]
we have 
\begin{enumerate}
\item \label{enu:(Dyson-equation)}(Dyson equation)  For all $z\in\mathbb{C},$
$\Im z>0$
\begin{equation}
\frac{1}{G_{\mu_{\sigma,t}}(z)}=z-\int_{0}^{1}\sigma(t,s)G_{\mu_{\sigma,s}}(z)ds\label{eq:Dyson}
\end{equation}
for Lebesgue-almost all $t\in[0,1]$.
\item \label{enu:(Weak-convergence)}(Weak convergence of the specral measure)
For every $\delta>0$,
\[
\limsup_{N\rightarrow\infty}\frac{1}{N}\log\mathbb{P}(d_{W}(\mu_{\sigma},\mu_{H^{N}})\geq\delta)=-\infty.
\]
\item (Convergence of the largest eigenvalue) \label{enu:edge}For every $\delta>0$,
\[
\lim_{N\rightarrow\infty}\mathbb{P}(|\lambda_{1}(H^{N})-r_{\sigma}|\geq\delta)=0.
\]
\end{enumerate}
\end{lem}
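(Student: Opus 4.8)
The plan is to establish the three assertions in order. For the first — existence of $(\mu_{\sigma,t})_t$ and the Dyson equation \eqref{eq:Dyson} — fix $z$ with $\Im z>0$ and consider the map $\Phi_z$ sending a measurable family of probability measures $(\nu_s)_{s\in[0,1]}$ to the family whose Stieltjes transforms are $t\mapsto\bigl(z-\int_0^1\sigma(t,s)G_{\nu_s}(z)\,ds\bigr)^{-1}$; since $\sigma\ge 0$ this again lands in the Stieltjes transforms of probability measures, and it is a strict contraction, uniformly on compacts of $\{\Im z>0\}$, for the pointwise hyperbolic metric of the lower half-plane. This is the standard analysis of the quadratic vector (Dyson) equation, carried out in \cite{AlErKr,Sh98} and recalled in \cite[Theorem 1.4]{HussonVar}, and the fact that $\sigma$ is only piecewise continuous does not affect it. Its unique fixed point $z\mapsto m(t,z)$ is, for a.e.\ $t$, a holomorphic map $\{\Im z>0\}\to\{\Im z<0\}$ with $\lim_{y\to\infty}iy\,m(t,iy)=1$, hence equals $G_{\mu_{\sigma,t}}$ for a unique probability measure $\mu_{\sigma,t}$, which can be chosen measurably in $t$ by Stieltjes inversion; this is \eqref{eq:Dyson}. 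Feeding the a priori bound $\sup_s|m(s,z)|\le(\Im z)^{-1}$ back into the equation shows $m(t,\cdot)$ extends holomorphically across $(-\infty,-2\sqrt{\|\sigma\|_\infty})\cup(2\sqrt{\|\sigma\|_\infty},+\infty)$, so every $\mu_{\sigma,t}$, hence $\mu_\sigma$, is supported in a common compact interval $[l_\sigma,r_\sigma]\subset[-2\sqrt{\|\sigma\|_\infty},2\sqrt{\|\sigma\|_\infty}]$.

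For the second assertion I split $d_W(\mu_\sigma,\mu_{H^N})\le d_W(\mathbb E\mu_{H^N},\mu_\sigma)+d_W(\mu_{H^N},\mathbb E\mu_{H^N})$. The first (deterministic) term tends to $0$: a Schur-complement expansion shows the vector $(\mathbb E[(z-H^N)^{-1}_{ii}])_{1\le i\le N}$ satisfies the discretised version of \eqref{eq:Dyson} up to an error that is $o(1)$ by \eqref{convcov} and the concentration of the quadratic forms $\langle v,H^Nv\rangle$ (the first place the sub-Gaussian hypothesis enters), and stability of the Dyson equation — part of the analysis of the first assertion, see \cite{AlErKr} — then gives $\mathbb E[\frac1N\Tr(z-H^N)^{-1}]\to G_{\mu_\sigma}(z)$ for all $\Im z>0$, equivalently $\mathbb E\mu_{H^N}\to\mu_\sigma$ weakly; alternatively one quotes \cite{Sh98,AlErKr} directly. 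For the probabilistic term I prove super-exponential concentration of linear statistics. First, $X\mapsto\|H^N\|$ (operator norm) is convex and Lipschitz with constant $O(1/\sqrt N)$ in the entries, and $\mathbb E\|H^N\|\le 2\sqrt{\|\sigma\|_\infty}+o(1)$ by the moment method, so Talagrand's concentration inequality for convex Lipschitz functions of sub-Gaussian variables gives $\mathbb P(\|H^N\|\ge L)\le e^{-cN(L-2\sqrt{\|\sigma\|_\infty})^2}$ for large $N$; taking $L=L_N\to\infty$ slowly makes this super-exponentially small, and on $\{\|H^N\|\le L_N\}$ every $1$-Lipschitz test function may be replaced by its clipping to $[-L_N,L_N]$. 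Writing such a clipped function as a difference of two convex Lipschitz functions and using the convexity of $A\mapsto\Tr h(A)$ for convex $h$ (Klein's inequality), each $\frac1N\Tr f(H^N)$ becomes a difference of convex functions of the entries that are Lipschitz with constant $O(1/N)$, to which the same convex concentration applies, yielding $\mathbb P(|\frac1N\Tr f(H^N)-\mathbb E\frac1N\Tr f(H^N)|\ge\delta)\le e^{-cN^2\delta^2}$; this type of estimate is carried out in \cite{HuGu1,HussonVar}. A compactness argument on the $1$-Lipschitz functions restricted to a fixed compact neighbourhood of $[l_\sigma,r_\sigma]$, together with the fact that $\int(x-r_\sigma)^2\mathds{1}_{|x-r_\sigma|>1}\,d\mu_{H^N}(x)$ is super-exponentially small (a linear/quadratic statistic handled as above and by the $\|H^N\|$-bound), reduces $\{d_W(\mu_{H^N},\mathbb E\mu_{H^N})\ge\delta\}$ to a finite union of such events, and the second assertion follows.

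For the third assertion, the lower bound is immediate from the second: if $\lambda_1(H^N)<r_\sigma-\delta$ then $\mu_{H^N}$ gives no mass to $(r_\sigma-\delta,+\infty)$, whereas $d_W(\mu_{H^N},\mu_\sigma)$ small forces $\mu_{H^N}((r_\sigma-\delta,+\infty))>0$ because $r_\sigma$ is the right edge of $\mathrm{supp}\,\mu_\sigma$; hence $\mathbb P(\lambda_1<r_\sigma-\delta)\to0$. For the upper bound I use a moment estimate with a growing power: $\mathbb P(\lambda_1>r_\sigma+\delta)\le(r_\sigma+\delta)^{-2k}\,\mathbb E[\Tr(H^N)^{2k}]$ with $k=k_N\asymp\log N$. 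Expanding over closed walks of length $2k$, the sharp sub-Gaussian bound (via $\mathbb E|X_{ij}^N|^p\le(Cp)^{p/2}$) controls the contribution of walks using some edge more than twice, so the leading term is the sum over non-crossing pair partitions with the variance weights, which equals $N(1+o(1))\int x^{2k}\,d\mu_\sigma(x)$ uniformly for $k\le\alpha\log N$ with $\alpha$ small — the F\"uredi--Koml\'os count with a variance profile, the combinatorial weights being precisely the even moments of $\mu_\sigma$ produced by iterating \eqref{eq:Dyson}. Since $(\int x^{2k}\,d\mu_\sigma)^{1/2k}\to r_\sigma$, this gives $\mathbb E[\Tr(H^N)^{2k_N}]\le N(r_\sigma+\delta/2)^{2k_N}$ for $N$ large, whence $\mathbb P(\lambda_1>r_\sigma+\delta)\le N\,((r_\sigma+\delta/2)/(r_\sigma+\delta))^{2k_N}\to0$. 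Equivalently, one may invoke the sharp matrix concentration inequalities of Brailovskaya and van Handel to get $\|H^N\|\to r_\sigma$ directly.

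I expect the main obstacle to be the super-exponential (speed $N^2$) concentration of the linear statistics for genuinely unbounded sub-Gaussian entries: the route above relies on the difference-of-convex decomposition being compatible with Klein's trace convexity and on the sub-Gaussian form of Talagrand's convex concentration inequality, and the reduction from $d_W$ to finitely many linear statistics must be made uniform in the growing truncation level $L_N$. A secondary difficulty is pushing the variance-profile F\"uredi--Koml\'os moment count to powers $k_N\asymp\log N$ while keeping both the non-pair contributions and the asymptotics of $\int x^{2k}\,d\mu_\sigma$ under uniform control; by comparison, the convergence $\mathbb E\mu_{H^N}\to\mu_\sigma$ (stability of the Dyson equation for a merely piecewise-continuous profile) is routine but still requires the error analysis of the Schur expansion to be done with care.
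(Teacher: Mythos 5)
Your overall plan is correct, and for parts~(1) and~(2) it essentially coincides with what the paper does: the Dyson equation is solved by a fixed point/contraction argument in the hyperbolic metric (the paper follows \cite{AEK2} and, for completeness, Proposition~\ref{prop:muconv} shows that the finite-$N$ solution converges to the continuous one), and the speed-$N^2$ concentration is the Guionnet--Zeitouni/Talagrand argument for sums of convex Lipschitz linear statistics, exactly as the paper sets up in Theorem~\ref{prop:G-measure-concentration}. One small technical correction there: a clipped $1$-Lipschitz function is not in general a difference of two convex Lipschitz functions with uniformly controlled constants; what the paper (following \cite{GZ}) does is approximate it by a sum of $O(K/\delta)$ convex wedges $\max(0,x-b)$, which yields the same conclusion with a harmless polynomial prefactor.

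For part~(3) you take a genuinely different route from the paper. You propose either a random moment method (traces of $(H^N)^{2k_N}$ with $k_N\asymp\log N$ and a variance-profile F\"uredi--Koml\'os count) or, alternatively, a matrix concentration bound \`a la Brailovskaya--van Handel. The paper instead quotes the isotropic local law of \cite[Theorem~4.7]{Alt18}, which immediately confines $\mathrm{Spec}(H^N)$ to $[-r_{\sigma_N}-\epsilon,r_{\sigma_N}+\epsilon]$ with polynomially small exceptional probability, and then reduces the problem to the \emph{deterministic} statement $r_{\sigma_N}\to r_\sigma$; this it proves by expanding the finite-$N$ and limiting Dyson equations in power series around $\infty$ and comparing moments of $\mu_{\sigma_N}$ and $\mu_\sigma$ term by term (with a short inductive $(1\pm\epsilon)^k$ sandwich), then passes to the piecewise-continuous case by the exponential approximation of Lemma~\ref{approx}. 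The paper's route is shorter and avoids the random combinatorics because the moment comparison it needs is purely deterministic; the price is the reliance on a nontrivial external result (a local law with effective tail bounds, which holds under the present hypotheses but is not elementary). Your moment-method route is more self-contained, but, as you yourself flag, the claim that the pair-partition count with variance weights matches $N(1+o(1))\int x^{2k}d\mu_\sigma$ \emph{uniformly} for $k\lesssim\log N$, together with control of the higher-multiplicity walks and of the discretisation error $|\Sigma^N_{ij}-\sigma(t^N_i,t^N_j)|$ compounded over $k$ edges, is substantial work that the paper does not need to carry out. Both routes lead to the same conclusion.
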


The first part of this theorem is basically a consequence of Theorem
1.1 in \cite{girko2012} which states that the empirical measure is close to the solution of the corresponding finite $N$ Dyson equation, and proofs are given in \cite{AEK2,Alt18,AlErKr, AEK1,AEK3, CEKS1,CEKS2,HussonVar}
For completness,{ we give  in Proposition \ref{prop:muconv}  a proof that the solution of this finite $N$ Dyson equation converges to the continuous version of this equation.}
Remark that the empirical mesure $\mu_{H^{N}}$ concentrates
with speed larger than $N$ according to Lemma \ref{lem:Measure-Concentration} (2) by \cite{GZ00}. The convergence of the largest eigenvalue stated in   Lemma  \ref{lem:Measure-Concentration} (3)  is known but because we could not find a self-contained
proof in the literature, we  give a full proof in Appendix
\ref{convlambda} for completeness.

\subsection{Main results}

\label{subsec:Main-results}
The main result of our article  is a large deviation principle for the largest eigenvalue of $H^{N}$.  To state this result we first define the rate function of the large deviation principle.
We denote  by $S$ the following bilinear form on $\mathcal{P}([0,1])$ :
\[
\langle\phi,S\phi\rangle=\int_{0}^{1}\int_{0}^{1}\sigma(s,t)d\phi(s)d\phi(t).
\]
For a probability measure $\psi$ on $[0,1]$ which is absolutely continuous
with respect to  Lebesgue measure,   a non-negative real number $\hat\theta$ and a real number $x$ larger than $r_{\sigma}$, we set
\begin{equation*}
	\widehat{\mathcal{F}}( \widehat{\theta},x,\psi,\sigma) := \widehat{\theta}(x-\langle\varphi_{\star}(x) dt ,S\psi\rangle)-\widehat{\theta}^{2}\langle\psi ,S\psi\rangle-\frac{1}{2}\int_{0}^{1}\log\left(\frac{2\widehat{\theta}}{\varphi_{\star}(x)}\frac{d\psi}{dt}+1\right)dt
\end{equation*} 
where $\varphi_{\star}(x)$ is the function $t \mapsto G_{\mu_{\sigma,t}}(x)  $. Using the Dyson equation satisfied by  $\varphi_*$, we also have 

\begin{equation}\label{defF2}
	\widehat{\mathcal{F}}( \widehat{\theta},x,\psi,\sigma) := \widehat{\theta}\langle\frac{dt}{\varphi_{\star}(x)},\psi\rangle-\widehat{\theta}^{2}\langle\psi ,S\psi\rangle-\frac{1}{2}\int_{0}^{1}\log\left(\frac{2\widehat{\theta}}{\varphi_{\star}(x)}\frac{d\psi}{dt}+1\right)dt.
\end{equation} 


We can now state the main result of this article, where we recall that $\mu_{\sigma}$ is the almost sure limit of $\mu_{H^{N}}$ and $r_{\sigma}$,  the right boundary of its support, the limit of the largest eigenvalue of $H^{N}$.
\begin{thm}
\label{maintheo} The law of $\lambda_{1}(H^{N})$ satisfies a large
deviation principle with speed $N$ and rate function $I_{\sigma}$
which is infinite if $x<r_{\sigma},$ $I_{\sigma}(r_{\sigma})=0$
and is  given for $x>r_{\sigma}$ by

\[ I_{\sigma}(x)=\inf_{\substack{\psi\in{\cal P}([0,1])\\
		}
	}\sup_{\widehat{\theta}\geq0} \widehat{\mathcal{F}}( \widehat{\theta},x,\psi,\sigma) \]


 Moreover $I_{\sigma}$ is a good rate function,
namely its level sets are compact, and it is increasing. 
\end{thm}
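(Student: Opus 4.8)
I would prove a weak large deviation principle together with exponential tightness; the full LDP and the goodness of the rate function then follow by the standard upgrading argument. Exponential tightness is immediate from \eqref{SSH}, which yields $\mathbb{P}(\lambda_1(H^N)\ge L)\le e^{-cNL^2}$ for $L$ large (cover the sphere by an $\varepsilon$-net and apply the annealed bound below). For $x<r_\sigma$, a macroscopic gap below $r_\sigma$ forces $d_W(\mu_{H^N},\mu_\sigma)$ to be macroscopic, so $\mathbb{P}(\lambda_1(H^N)\le x)$ decays faster than $e^{-NM}$ for every $M$ by Lemma~\ref{lem:Measure-Concentration}, giving $I_\sigma=+\infty$ there, while $I_\sigma(r_\sigma)=0$ is the convergence $\lambda_1(H^N)\to r_\sigma$ of Lemma~\ref{lem:Measure-Concentration}. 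The content is the local estimate, for $x>r_\sigma$, that $\tfrac1N\log\mathbb{P}(|\lambda_1(H^N)-x|\le\varepsilon)\to-I_\sigma(x)$ as $N\to\infty$ then $\varepsilon\downarrow0$. The device that produces the whole variational structure is a \emph{profile-tilted spherical integral}: for a probability density $\psi$ on $[0,1]$ and $\widehat\theta\ge0$ let $\omega_N^{\psi}$ be the law of $g/\|g\|$ with $(g_i)$ independent, $g_i\sim\mathcal{N}(0,\psi(t_i^N)/N)$, so that $v\sim\omega_N^{\psi}$ has mass profile $\tfrac1N\sum_i\delta_{(t_i^N,\,Nv_i^2)}\to\psi(t)\,dt$ (for $\psi\equiv1$ this is the uniform measure on the sphere), and set $\mathcal{I}_N(\widehat\theta,\psi):=\int e^{N\widehat\theta\langle v,H^Nv\rangle}\,\omega_N^{\psi}(dv)$. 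Exchanging $\mathbb{E}_{H^N}$ and $\mathbb{E}_v$ and using \eqref{SSH} entrywise with $t=2\widehat\theta\sqrt N v_iv_j$ for $i\ne j$ and $t=\widehat\theta\sqrt N v_i^2$ for $i=j$, i.e.\ $\log\mathbb{E}e^{tX_{i,j}^N}\le 2^{-1_{i\ne j}}t^2\Sigma_{i,j}^N$, gives on the support of $\omega_N^{\psi}$
\[
\mathbb{E}_{H^N}\!\big[e^{N\widehat\theta\langle v,H^Nv\rangle}\big]\le\exp\!\Big(N\widehat\theta^{\,2}\!\sum_{i,j}\Sigma_{i,j}^N v_i^2v_j^2\Big)=\exp\!\big(N\widehat\theta^{\,2}\langle\psi,S\psi\rangle+o(N)\big),
\]
using \eqref{convcov} and the convergence of $\tfrac1N\sum_i\delta_{t_i^N}$ to the uniform law; so $\tfrac1N\log\mathbb{E}[\mathcal{I}_N(\widehat\theta,\psi)]\le\widehat\theta^{\,2}\langle\psi,S\psi\rangle+o(1)$. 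This is the only place sub-Gaussianity enters, and it is responsible for the term $-\widehat\theta^{\,2}\langle\psi,S\psi\rangle$ of $\widehat{\mathcal{F}}$; everything else is Gaussian-universal.

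\textbf{Upper bound.} Fix $x>r_\sigma$, $\widehat\theta\ge0$ and a density $\psi$. The key analytic input is the asymptotics of $\tfrac1N\log\mathcal{I}_N(\widehat\theta,\psi)$ on the event that the top eigenvector of $H^N$ is isolated at height $x$ and has profile $\psi$: a Laplace analysis — the variance-profile analogue of the Guionnet--Maida spherical-integral asymptotics, now with the non-uniform reference $\omega_N^{\psi}$ whose profile matches that of $v_1(H^N)$ — shows that the integral localises near $\pm v_1(H^N)$ and, after inserting the Dyson equation \eqref{eq:Dyson} through $\varphi_\star(x)(t)=G_{\mu_{\sigma,t}}(x)$, that $\tfrac1N\log\mathcal{I}_N(\widehat\theta,\psi)\to\widehat{\mathcal{F}}(\widehat\theta,x,\psi,\sigma)+\widehat\theta^{\,2}\langle\psi,S\psi\rangle$ uniformly on that event up to $o_\varepsilon(1)$. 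Comparing with the annealed bound $\tfrac1N\log\mathbb{E}[\mathcal{I}_N(\widehat\theta,\psi)]\le\widehat\theta^{\,2}\langle\psi,S\psi\rangle+o(1)$ and discarding $\{d_W(\mu_{H^N},\mu_\sigma)>\varepsilon\}$ at no exponential cost via Lemma~\ref{lem:Measure-Concentration} gives
\[
\limsup_N\tfrac1N\log\mathbb{P}\big(|\lambda_1(H^N)-x|\le\varepsilon,\ \rho_{v_1(H^N)}\approx\psi\big)\ \le\ -\widehat{\mathcal{F}}(\widehat\theta,x,\psi,\sigma)+o_\varepsilon(1),
\]
hence $\le-\sup_{\widehat\theta\ge0}\widehat{\mathcal{F}}(\widehat\theta,x,\psi,\sigma)+o_\varepsilon(1)$. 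Since the profile of $v_1(H^N)$ ranges over the compact set $\mathcal{P}([0,1])$, covering it by finitely many $\varepsilon$-balls and summing decomposes $\{|\lambda_1(H^N)-x|\le\varepsilon\}$ into such events, so $\mathbb{P}(|\lambda_1(H^N)-x|\le\varepsilon)\le C_\varepsilon\,e^{-N(\inf_\psi\sup_{\widehat\theta}\widehat{\mathcal{F}}-o_\varepsilon(1))}$; letting $N\to\infty$ then $\varepsilon\downarrow0$ gives the upper bound $-I_\sigma(x)$. (It is essential that the free parameter here is the \emph{eigenvector} profile and that one infimises over it: supremising would give $+\infty$, since $\widehat{\mathcal{F}}$ is unbounded above in $\psi$ for spiky densities.)

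\textbf{Lower bound.} It suffices to realise the minimising profile. Let $(\widehat\theta^\star,\psi^\star)$ be near-optimal in $\inf_\psi\sup_{\widehat\theta}\widehat{\mathcal{F}}(\widehat\theta,x,\psi,\sigma)$ and tilt the law of $H^N$ by $\mathcal{I}_N(\widehat\theta^\star,\psi^\star)$, i.e.\ set $d\widetilde{\mathbb{P}}^N\propto\big(\int e^{N\widehat\theta^\star\langle v,H^Nv\rangle}\,\omega_N^{\psi^\star}(dv)\big)\,d\mathbb{P}^N$; disintegrating, this amounts to sampling $v^\star$ (with profile $\approx\psi^\star$) and then $H^N$ from the entrywise product tilt of the $X_{i,j}^N$ by $e^{2\widehat\theta^\star\sqrt N v_i^\star v_j^\star X_{i,j}^N}$ ($i\ne j$, halved on the diagonal), well defined by \eqref{SSH}, under which $H^N$ equals its original centred noise plus the drift $\widehat\theta^\star\,\Sigma^N\!\odot\!\big(v^\star(v^\star)^{\mathsf T}\big)$ (doubled on the diagonal). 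For the optimal pair, the Euler--Lagrange equations of $\widehat{\mathcal{F}}$ — which reduce, via $\varphi_\star(x)(t)=G_{\mu_{\sigma,t}}(x)$ and a fixed-point (Dyson) analysis of the drifted matrix, to the statement that $H^N$ develops an outlier at height $x$ with eigenvector of profile $\psi^\star$ — together with the fact that $\mu_{H^N}$ still concentrates at $\mu_\sigma$, show that $\widetilde{\mathbb{P}}^N$ puts asymptotically all its mass on $\{|\lambda_1(H^N)-x|\le\varepsilon\}$. A change of measure (Jensen) then yields
\[
\liminf_N\tfrac1N\log\mathbb{P}(|\lambda_1(H^N)-x|\le\varepsilon)\ \ge\ -\limsup_N\tfrac1N H(\widetilde{\mathbb{P}}^N\,|\,\mathbb{P}^N),
\]
and $\tfrac1N H(\widetilde{\mathbb{P}}^N|\mathbb{P}^N)=\widetilde{\mathbb{E}}[\tfrac1N\log\mathcal{I}_N(\widehat\theta^\star,\psi^\star)]-\tfrac1N\log\mathbb{E}[\mathcal{I}_N(\widehat\theta^\star,\psi^\star)]$ converges to $\sup_{\widehat\theta}\widehat{\mathcal{F}}(\widehat\theta,x,\psi^\star,\sigma)$, using the spherical-integral asymptotics above for the first term and \eqref{SSH} together with a Legendre-transform/convexity estimate of the entrywise contributions (as in \cite{HuGu1,HuGu2,AGH}) for the tilting normalisation. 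Letting $(\widehat\theta^\star,\psi^\star)$ approach the optimum and $\varepsilon\downarrow0$ matches the upper bound.

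\textbf{Properties of $I_\sigma$; main obstacle.} Goodness follows from the weak LDP and exponential tightness; lower semicontinuity is also visible in the formula, since $\widehat{\mathcal{F}}$ is continuous in $(\widehat\theta,x)$ and, as $-\tfrac12\int_0^1\log(\tfrac{2\widehat\theta}{\varphi_\star(x)(t)}\tfrac{d\psi}{dt}(t)+1)\,dt$ is a convex integral functional of $d\psi/dt$ bounded below by $-\widehat\theta\|1/\varphi_\star(x)\|_\infty$ (because $\int d\psi=1$), lower semicontinuous in $\psi$ for the weak topology, so $\sup_{\widehat\theta\ge0}\widehat{\mathcal{F}}$ is l.s.c.\ in $(x,\psi)$ and infimising over the compact set $\mathcal{P}([0,1])$ preserves l.s.c.\ in $x$ (with the infimum attained). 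For monotonicity, fix $(\widehat\theta,\psi)$ and differentiate the first expression of $\widehat{\mathcal{F}}$ in $x$; eliminating $\partial_x\varphi_\star$ via the $x$-derivative of \eqref{eq:Dyson}, namely $\int_0^1\sigma(t,s)\partial_x\varphi_\star(x)(s)\,ds=1+\varphi_\star(x)(t)^{-2}\partial_x\varphi_\star(x)(t)$, collapses the derivative to
\[
\partial_x\widehat{\mathcal{F}}(\widehat\theta,x,\psi,\sigma)=-\widehat\theta\int_0^1\frac{\partial_x\varphi_\star(x)(t)}{\varphi_\star(x)(t)^2}\,\frac{\tfrac{2\widehat\theta}{\varphi_\star(x)(t)}\tfrac{d\psi}{dt}(t)}{\tfrac{2\widehat\theta}{\varphi_\star(x)(t)}\tfrac{d\psi}{dt}(t)+1}\,\frac{d\psi}{dt}(t)\,dt\ \ge\ 0,
\]
since $\partial_x\varphi_\star(x)(t)=-\int(x-y)^{-2}\,d\mu_{\sigma,t}(y)<0$ and $\varphi_\star(x)(t)>0$ for $x>r_\sigma$; hence $x\mapsto\widehat{\mathcal{F}}(\widehat\theta,x,\psi,\sigma)$ is non-decreasing for every $(\widehat\theta,\psi)$, and therefore so are $\sup_{\widehat\theta}\widehat{\mathcal{F}}$ and $I_\sigma$, the increase being strict on $(r_\sigma,\infty)$ because there $I_\sigma(x)>0$ forces the infimising $\psi$ to carry an optimal $\widehat\theta^\star>0$. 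The difficulty I anticipate as central is the content of the second paragraph: establishing the profile-matched spherical integral asymptotics — existence of the limit, continuity in $(\widehat\theta,x,\mu_\sigma,\psi)$, and identification with $\widehat{\mathcal{F}}+\widehat\theta^{\,2}\langle\psi,S\psi\rangle$ through the Dyson equation — assuming only that $\sigma$ is piecewise continuous, which forces a continuity/approximation argument and is exactly where \cite{HussonVar} needed its extra hypotheses; the change-of-measure book-keeping in the lower bound and the fixed-point analysis of the drifted matrix are heavy but follow the template of \cite{HuGu1,HuGu2,AGH,HussonVar}.
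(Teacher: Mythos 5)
Your proposal follows the same basic architecture as the paper's proof: weak LDP plus exponential tightness, a joint study of $(\lambda_1,\rho(v_1))$ covered by finitely many profile balls, a tilted spherical integral whose annealed asymptotics use sharp sub-Gaussianity and whose quenched asymptotics use the Dyson equation, and a change-of-measure/entropy lower bound that engineers an outlier at height $x$. The main structural difference is notational: you absorb the profile constraint into the \emph{reference measure} $\omega_N^{\psi}$ on the sphere, whereas the paper keeps the uniform measure $du$ and \emph{restricts} to $\{\rho(u)\approx\phi(\theta,x,\psi)\}$, which lets it invoke the known Guionnet--Maida limit (Lemma \ref{lem:J_spherical_integral}) and then localise via Proposition \ref{prop:Gaussian-Concentration-1}. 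These two formulations are asymptotically equivalent, and yours is not unreasonable, but you are left with the full burden of proving what you call the ``profile-matched spherical integral asymptotics'' from scratch: the paper instead decomposes that task into a black-box uniform asymptotic plus a Gaussian-concentration estimate for the tilted measure $\mathbb{Q}_{H,\theta}$ on the sphere, and handles the absence of a spectral gap by cutting a sub-space and applying a sub-additivity lemma. You explicitly flag this as the main obstacle, so the gap is self-acknowledged, but it is precisely the content of Sections \ref{sec:Annealed_spherical}--\ref{sec:LDupperbound} of the paper and is not a small appendix.

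Two further places where your outline misses real work the paper does. First, you propose to attack the piecewise continuous $\sigma$ head-on, whereas the paper proves everything for piecewise constant $\sigma$ (where all objects are finite-dimensional vectors in ${\cal P}_p$) and then promotes to piecewise continuous by exponential approximation and uniform convergence of the rate functions (Section \ref{sec:Piecewise-continuous}); for a direct argument you would need uniform Dyson-equation/Stieltjes estimates in $t$ for general piecewise continuous $\sigma$, which is nontrivial. Second, in the lower bound you assert that the tilt by $\mathcal{I}_N(\widehat\theta^\star,\psi^\star)$ ``shows that $\widetilde{\mathbb{P}}^N$ puts asymptotically all its mass on $\{|\lambda_1-x|\le\varepsilon\}$''; but the existence of a $\theta_*$ for which the spiked outlier sits exactly at $x$ is not automatic — it is a fixed-point/intermediate-value statement, and the paper's Lemma \ref{lem:continuous-find_theta} (continuity and divergence of $\nu(\theta)=2\theta\lambda_1(\sqrt{D}S\sqrt{D})$) is precisely what guarantees it, together with the exclusion of $\psi$ with $\langle\psi,S\psi\rangle$ too small (Proposition \ref{prop:epsilon}).

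Where your proposal is genuinely different and interesting is the monotonicity of $I_\sigma$. Your direct computation of $\partial_x\widehat{\mathcal{F}}\ge0$ (via the chain rule and $\partial_x\varphi_\star<0$) is correct and shows $\widehat{\mathcal{F}}$ is nondecreasing in $x$ pointwise in $(\widehat\theta,\psi)$, hence so is $I_\sigma$; this is more elementary than the paper's argument and does not use the LDP. The paper instead deletes a fraction of rows/columns and rescales (Lemma \ref{lem:inc}) to get $\check I_\sigma(x)\le\frac{x^2}{y^2}\check I_\sigma(y)$, which immediately yields \emph{strict} increase on $(r_\sigma,\infty)$ once one knows $\check I_\sigma>0$ there (by comparison with the Gaussian case). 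Your argument for strictness — ``$I_\sigma(x)>0$ forces the optimal $\widehat\theta^\star>0$'' — is not sufficient as stated: you would still need an envelope-type argument to turn $\partial_x\widehat{\mathcal{F}}(\widehat\theta^\star,x,\psi^\star,\sigma)>0$ at a minimax pair into a strict inequality for $I_\sigma$, and that requires uniqueness/stability of the optimizers. Filling that in would give a genuine simplification of Proposition \ref{prop:rate}(3), which the paper obtains only after the full LDP machinery.
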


Notice that the rate function only depends on the variance profile
$\text{\ensuremath{\sigma}.}$ In particular we have that the large
deviation principle is the same as for a random Gaussian matrix with
the same variance profile. This result is also new in the Gaussian
case as soon as $\sigma$ does not satisfy the conditions of \cite{HussonVar}
for instance in the case where $\sigma$ is block diagonal.  

In the rest of the paper however, we will use another expression of our rate function that is more suited to our needs but slightly more involved. 
In order to do so, we first introduce a few notations. 
Let $\psi$ be an element of the set $\mathcal{P}([0,1])$ of probability measures on $[0,1]$, $x$ be a real number larger than $ r_{\sigma}$ and $\theta$ be a non-negative real number.
We define two non negative measures $\varphi,\phi$ on $[0,1]$ by
\[
d\varphi(\theta,x)(t):=\frac{1}{2\theta}G_{\mu_{\sigma,t}}(G_{\mu_{\sigma}}^{-1}(2\theta)\vee x)dt,
\]
\begin{equation}
	d\phi(\theta,x,\psi)(t):=d\varphi(\theta,x)(t)+\Big(1-\frac{G_{\mu_{\sigma}}(x)}{2\theta}\Big)_{+}d\psi(t).\label{eqn:phicontinuous-1}
\end{equation}
Notice that $\phi(\theta,x,\psi)\in\mathcal{P}([0,1])$. We also introduce
the function $J_{\mu_{\sigma}}(x,\theta)$ given by 
\begin{equation}
	\hspace{-0.185cm}
	\begin{cases}
		\theta x-\frac{1}{2}-\frac{1}{2}\log2\theta-\frac{1}{2}\int_{\mathbb{R}}\log|x-y|d\mu_{\sigma}(y) & \text{if }2\theta\geq G_{\mu_{\sigma}}(x),\\
		\theta G_{\mu_{\sigma}}^{-1}(2\theta)-\frac{1}{2}-\frac{1}{2}\log2\theta-\frac{1}{2}\int_{\mathbb{R}}\log(G_{\mu_{\sigma}}^{-1}(2\theta)-y)d\mu_{\sigma}(y) & \text{if }2\theta<G_{\mu_{\sigma}}(x).
	\end{cases}\label{defJ-1}
\end{equation}

We then define
\begin{equation}\label{defKH}
K(\theta,\phi):=\theta^{2}\int_{0}^{1}\int_{0}^{1}\sigma(s,t)d\phi(s)d\phi(t)- H(\phi),\,
H(\phi):=- \frac{1}{2}\int_{0}^{1}\log\frac{d\phi}{dt}dt\,.
\end{equation}

Then we have the following equality, which is proven in Appendix \ref{equalityRF}:

\begin{lem}\label{lem:equalityRF}
	
	For $x > r_{\sigma}$:
\[
I_{\sigma}(x)=\inf_{\substack{\psi\in{\cal P}([0,1])\\
		\langle\psi,S\psi\rangle\neq0
		}
	}\sup_{\theta\geq0}\{J_{\mu_{\sigma}}(x,\theta)-K(\theta,\phi(\theta,x,\psi))\}\,.
\]
	\end{lem}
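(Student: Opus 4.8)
The plan is to recognise that the formula for $I_{\sigma}(x)$ in Theorem~\ref{maintheo} and the one in Lemma~\ref{lem:equalityRF} are related by the affine change of dual variable $\widehat\theta=\theta-\tfrac12 G_{\mu_\sigma}(x)$, together with the identification of the probability measure $\phi(\theta,x,\psi)$ of \eqref{eqn:phicontinuous-1} with a tilted profile. Everything reduces to two exact identities for $\varphi_\star(x):t\mapsto G_{\mu_{\sigma,t}}(x)$, valid for $x>r_\sigma$ and obtained from the Dyson equation \eqref{eq:Dyson}: writing $G:=G_{\mu_\sigma}(x)=\int_0^1\varphi_\star(x)(t)\,dt$ and letting the dot denote $\partial_x$,
\[
xG=1+\langle\varphi_\star(x)\,dt,\,S\varphi_\star(x)\,dt\rangle,\qquad \tfrac12\langle\varphi_\star(x)\,dt,\,S\varphi_\star(x)\,dt\rangle=\int_{\mathbb R}\log(x-y)\,d\mu_\sigma(y)+\int_0^1\log\varphi_\star(x)(t)\,dt .
\]
The first follows by multiplying the real-$x$ form of \eqref{eq:Dyson}, namely $\varphi_\star(x)(t)^{-1}=x-\int_0^1\sigma(t,s)\varphi_\star(x)(s)\,ds$, by $\varphi_\star(x)(t)$ and integrating in $t$. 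For the second, I would check that both sides vanish as $x\to+\infty$ (since $\varphi_\star(x)(t)=x^{-1}+O(x^{-2})$ uniformly in $t$ and $\int\log(x-y)\,d\mu_\sigma(y)=\log x+O(x^{-1})$, all $\mu_{\sigma,t}$ being supported in the fixed compact $[l_\sigma,r_\sigma]$ of Lemma~\ref{lem:Measure-Concentration}), and that their $x$-derivatives coincide: the derivative of the right side is $G+\int_0^1\dot\varphi_\star(t)/\varphi_\star(t)\,dt-\langle\dot\varphi_\star\,dt,S\varphi_\star\,dt\rangle$, which by $\varphi_\star^{-1}=x-\int\sigma(\cdot,s)\varphi_\star(s)\,ds$ equals $G+x\dot G-2\langle\dot\varphi_\star\,dt,S\varphi_\star\,dt\rangle$, and this is the derivative $\langle\dot\varphi_\star\,dt,S\varphi_\star\,dt\rangle$ of the left side precisely because of the first identity differentiated in $x$.

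The core step is a direct computation valid when $2\theta\ge G_{\mu_\sigma}(x)$. There $\phi(\theta,x,\psi)=\tfrac1{2\theta}\varphi_\star(x)\,dt+\bigl(1-\tfrac{G}{2\theta}\bigr)\psi$, and setting $\widehat\theta:=\theta-\tfrac12 G$ (so that $2\theta-G=2\widehat\theta\ge0$) one expands
\[
\theta^2\langle\phi(\theta,x,\psi),S\phi(\theta,x,\psi)\rangle=\tfrac14\langle\varphi_\star(x)\,dt,S\varphi_\star(x)\,dt\rangle+\widehat\theta\langle\varphi_\star(x)\,dt,S\psi\rangle+\widehat\theta^2\langle\psi,S\psi\rangle
\]
and, factoring $\tfrac{\varphi_\star(x)}{2\theta}$ out of the density of $\phi(\theta,x,\psi)$,
\[
H(\phi(\theta,x,\psi))=\tfrac12\log 2\theta-\tfrac12\int_0^1\log\varphi_\star(x)(t)\,dt-\tfrac12\int_0^1\log\Bigl(\tfrac{2\widehat\theta}{\varphi_\star(x)}\tfrac{d\psi}{dt}+1\Bigr)dt .
\]
Substituting these, together with the first branch of \eqref{defJ-1}, into $J_{\mu_\sigma}(x,\theta)-K(\theta,\phi(\theta,x,\psi))=J_{\mu_\sigma}(x,\theta)-\theta^2\langle\phi,S\phi\rangle+H(\phi)$ (see \eqref{defKH}), the two identities of the previous paragraph cancel the $\tfrac12\log2\theta$ terms and combine all the remaining non-$\widehat\theta$ contributions into $x\widehat\theta$, leaving exactly $\widehat{\mathcal F}(\theta-\tfrac12 G_{\mu_\sigma}(x),x,\psi,\sigma)$ in the form defining $\widehat{\mathcal F}$ just above \eqref{defF2}. (I would carry this out for absolutely continuous $\psi$; a possible singular part of $\psi$ enters $J-K$ and $\widehat{\mathcal F}$ in the same way, only through $\langle\psi,S\psi\rangle$ and $\langle\varphi_\star\,dt,S\psi\rangle$, so the identity persists.) Since $\theta\mapsto\theta-\tfrac12 G$ is a bijection of $[\tfrac12 G,\infty)$ onto $[0,\infty)$, this yields, for every $\psi$, $\sup_{\theta:\,2\theta\ge G}\{J_{\mu_\sigma}(x,\theta)-K(\theta,\phi(\theta,x,\psi))\}=\sup_{\widehat\theta\ge0}\widehat{\mathcal F}(\widehat\theta,x,\psi,\sigma)$.

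It remains to treat $0\le 2\theta<G_{\mu_\sigma}(x)$, where $\phi(\theta,x,\psi)=\varphi(\theta,x)$ does not depend on $\psi$: it has density $\tfrac1{2\theta}G_{\mu_{\sigma,t}}(z)$ and, by \eqref{defJ-1}, $J_{\mu_\sigma}(x,\theta)=\theta z-\tfrac12-\tfrac12\log2\theta-\tfrac12\int\log(z-y)\,d\mu_\sigma(y)$ with $z:=G_{\mu_\sigma}^{-1}(2\theta)>x>r_\sigma$; these are precisely $\phi(\theta,z,\psi)$ and $J_{\mu_\sigma}(z,\theta)$ evaluated at the point $z$, for which $2\theta=G_{\mu_\sigma}(z)$, i.e. $\widehat\theta=0$. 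Applying the computation of the preceding paragraph at $z$ and using $\widehat{\mathcal F}(0,\cdot,\cdot,\cdot)=0$, one obtains $J_{\mu_\sigma}(x,\theta)-K(\theta,\varphi(\theta,x))\equiv 0$ on $0\le 2\theta<G_{\mu_\sigma}(x)$ (the expression extends continuously to $\theta=0$, where $\varphi(0^+,x)$ is Lebesgue measure, with value $0$). Combining the two regimes, for each $\psi$ one has $\sup_{\theta\ge0}\{J_{\mu_\sigma}(x,\theta)-K(\theta,\phi(\theta,x,\psi))\}=\max\bigl(0,\sup_{\widehat\theta\ge0}\widehat{\mathcal F}(\widehat\theta,x,\psi,\sigma)\bigr)=\sup_{\widehat\theta\ge0}\widehat{\mathcal F}(\widehat\theta,x,\psi,\sigma)$, since $\widehat{\mathcal F}(0,x,\psi,\sigma)=0$. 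Taking the infimum over $\psi$ with $\langle\psi,S\psi\rangle\neq0$ and noting that any $\psi$ with $\langle\psi,S\psi\rangle=0$ has $\sup_{\widehat\theta\ge0}\widehat{\mathcal F}(\widehat\theta,x,\psi,\sigma)=+\infty$ — by \eqref{defF2} the term $\widehat\theta\langle\tfrac{dt}{\varphi_\star(x)},\psi\rangle=\widehat\theta\int_0^1\varphi_\star(x)(t)^{-1}\,d\psi(t)$ is linear with strictly positive slope and there is no quadratic penalty — so that such $\psi$ can be re-included without changing the infimum, the right-hand side of Lemma~\ref{lem:equalityRF} equals $\inf_{\psi\in\mathcal P([0,1])}\sup_{\widehat\theta\ge0}\widehat{\mathcal F}(\widehat\theta,x,\psi,\sigma)=I_\sigma(x)$ by Theorem~\ref{maintheo}. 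The step I expect to be the main obstacle is the second Dyson identity: making the differentiation under the integral sign and the $x\to+\infty$ expansion rigorous requires uniform-in-$t$ control of $\varphi_\star(x)(t)$ and $\partial_x\varphi_\star(x)(t)$ on compact subsets of $(r_\sigma,\infty)$, which I would derive from the uniform support bound $[l_\sigma,r_\sigma]$ of Lemma~\ref{lem:Measure-Concentration} and from $\varphi_\star(x)(t)^{-1}=x-\int_0^1\sigma(t,s)\varphi_\star(x)(s)\,ds$; the remaining points (continuity in $\theta$ across $2\theta=G_{\mu_\sigma}(x)$, the value at $\theta=0$, and the harmless role of the singular part of $\psi$) are routine once the two identities are in place.
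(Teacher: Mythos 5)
Your argument is correct and reaches the same conclusion by essentially the same route as the paper: the affine change of variable $\widehat\theta=\theta-\tfrac12 G_{\mu_\sigma}(x)$, the reduction of the regime $2\theta<G_{\mu_\sigma}(x)$ to the ``$\mathcal F\equiv0$'' case via Lemma~\ref{lem:theta}, and the observation that $\psi$ with $\langle\psi,S\psi\rangle=0$ contributes $+\infty$ to the supremum (hence can be excluded from the infimum). The one tactical difference is in how you verify $J_{\mu_\sigma}(x,\theta)-K(\theta,\phi(\theta,x,\psi))=\widehat{\mathcal F}(\widehat\theta,x,\psi,\sigma)$ on $\{2\theta\ge G_{\mu_\sigma}(x)\}$: the paper matches $\partial_{\widehat\theta}$ of the two sides and uses $\widehat{\mathcal F}(0,\cdot)=0$ together with Lemma~\ref{lem:theta} for the initial value, whereas you perform a direct expansion and cancellation after isolating two Dyson-equation identities for $\varphi_\star(x)$. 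Your second identity, $\tfrac12\langle\varphi_\star\,dt,S\varphi_\star\,dt\rangle=\int\log(x-y)\,d\mu_\sigma(y)+\int_0^1\log\varphi_\star(x)(t)\,dt$, is in fact equivalent, given the first identity $xG=1+\langle\varphi_\star\,dt,S\varphi_\star\,dt\rangle$, to the content of Lemma~\ref{lem:theta} at $v=x$, so the two proofs ultimately use the same input; your version is more explicit, which makes the cancellations visible term by term.

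One small slip in your sketch of the second identity: the $x$-derivative of the right-hand side $\int\log(x-y)\,d\mu_\sigma(y)+\int_0^1\log\varphi_\star(x)(t)\,dt$ is $G+\int_0^1\dot\varphi_\star/\varphi_\star\,dt$, without the extra $-\langle\dot\varphi_\star\,dt,S\varphi_\star\,dt\rangle$ you wrote; after substituting $1/\varphi_\star=x-\int\sigma(\cdot,s)\varphi_\star(s)\,ds$ one gets $G+x\dot G-\langle\dot\varphi_\star\,dt,S\varphi_\star\,dt\rangle$, and setting this equal to the derivative $\langle\dot\varphi_\star\,dt,S\varphi_\star\,dt\rangle$ of the left-hand side gives exactly $G+x\dot G=2\langle\dot\varphi_\star\,dt,S\varphi_\star\,dt\rangle$, i.e.\ the first identity differentiated, as you intended. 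This is a typo rather than a gap; the rest of the proposal is sound.
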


We easily retrieve the well known large deviation principle  when $\sigma=\mathds{1}_{[0,1]^{2}}$ in which case $K(\theta,\phi)=\theta^{2}$, as studied in \cite{HuGu1}. In the case where $\Sigma_{N}$ is block diagonal, we find that our fomula "decouples"  as would be expected, see Proposition \ref{prop:block}.
Furthermore, in the Wishart case, we also recover the already known large deviation principle in this case (see Theorem 1.6 and 1.7 in \cite{HuGu1}). We prove that our formula for $I_{\sigma}$ does match the one in those two results in Appendix \ref{sec:Wishart}. In the more  general case where $\psi\rightarrow \langle \psi,S\psi\rangle$ is concave, we show in Lemma \ref{lem:concave} that the infimum over $\psi$ and the maximum over $\theta$ can be exchanged by Sion's minmax theorem. One can work out the resulting supremum over $K$ to recover the result of \cite{HussonVar}, modulo the use of some arguments from that paper. 
  
\subsection{Piecewise constant case}
\label{subsec:Piecewise-constant-notation} 

We first work with a stronger assumption on the model and we will
assume that $\Sigma_{i,j}^{N}$ is\emph{ piecewise constant}. This
will imply the general theorem by approximating  the  piecewise continuous function $\sigma$
 by piecewise constant functions and constructing exponential approximations of our problem, see Section \ref{sec:approximation}.
More precisely, we say that   $\Sigma_{i,j}^{N}$ is \emph{piecewise constant}
if there exists a collection of intervals $I_{1},\dots I_{p}\subset[0,1]$ that forms
a partition of $[0,1]$,  a $p\times p$ matrix   $\sigma\in\mathbb{R}^{p\times p}$ and 
 a sequence $0\le t_{1}^{N}<t_{2}^{N}\le\cdots<t_{N}^{N}\le1$ such that $\frac{1}{N}\sum_{i=1}^{N}\delta_{t_{i}^{N}}$ converges weakly towards
the uniform law on $[0,1]$ and 
 so that if we set
$I_{k}^{N}:=\{i\in\{1,\cdots,N\}:t_i^N \in I_{k}\}$,
$$ \Sigma_{i,j}^{N}=\sigma_{k,l}\qquad \forall i\in I_{k}^{N},
j\in I_{l}^{N}\,.$$
Observe that $\Sigma_{i,j}^{N}$ converges in the sense of \eqref{convcov} towards 
the asymptotic variance profile $\tilde{\sigma}$ given by
\[
\tilde{\sigma}(s,t)=\sum_{k,l=1}^{p}\mathds{1}_{I_{k}}(s)\mathds{1}_{I_{l}}(t)\sigma_{k,l}.
\]
In this setting all the mathematical objects look quite simpler. We
have a canonical bijection that associates a vector $\psi\in\mathbb{R}^{p}$,
resp. a family of measure $(\nu_{k})_{k\leq p}$ and a family of complex
function $(g_{k}(z))_{k\leq p}$, to a function $\tilde{\psi}\in\mathbb{R}^{[0,1]}$,
resp. a family of measure $(\tilde{\nu}_{t})_{t\in[0,1]}$ and a family
of complex function $(\tilde{g}_{t}(z))_{t\in[0,1]}$, that is constant
on each interval $I_{k}$ :

\[
d\tilde{\psi}(t)=\sum_{k=1}^{p}\mathds{1}_{I_{k}}(t)\frac{\psi_{k}}{|I_{k}|}dt,\quad\tilde{\nu}_{t}=\sum_{k=1}^{p}\mathds{1}_{I_{k}}(t)\frac{\nu_{k}}{|I_{k}|},\quad\tilde{g}_{t}(z)=\sum_{k=1}^{p}\mathds{1}_{I_{k}}(t)\frac{g_{k}(z)}{|I_{k}|}
\]
The factors $|I_{k}|^{-1}$ have been added here by convention so
that the measure  $\tilde{\nu}$ belongs to ${\cal P}([0,1])$ if and only if  $\nu$ is a vector in $\{\mathbb{R}_{+}^{p}$ such that $\sum\nu_{k}=1\}$, namely $\nu$ is an element of the set ${\cal P}_{p}$
 of probability measures on $\{1,\cdots,p\}$. Lemma \ref{lem:Measure-Concentration}
specifies in this setting as follows : we have a finite family of
non negative measures $(\mu_{\sigma,k})_{k\leq p}$ with mass $(|I_{k}|)_{k\leq p}$,
which satisfies the following Dyson system of equations
\[
\frac{|I_{k}|}{G_{\mu_{\sigma,k}}(z)}=z-\sum_{l=1}^{p}\sigma_{k,l}G_{\mu_{\sigma,l}}(z).
\]
for all $k\in\{1,\cdots,p\}$. We set $\mu_{\sigma}=\sum_{k=1}^{p}\mu_{\sigma,k}$.
Similarly we define for $x,\theta\in\mathbb{R}_{+}$, $\psi\in{\cal P}_{p}$
\begin{equation}
\varphi(\theta,x)_{k}=\frac{G_{\mu_{\sigma,k}}(G_{\mu_{\sigma}}^{-1}(2\theta)\vee x)}{2\theta},\,\phi(\theta,x,\psi)_{k}=\varphi(\theta,x)_{k}+\Big(1-\frac{G_{\mu_{\sigma}}(x)}{2\theta}\Big)_{+}\psi_{k},\label{eq:Def_Phi_theta_bis}
\end{equation}
$\langle\phi,S\phi\rangle=\sum_{k,l=1}^{p}\sigma_{k,l}\phi_{k}\phi_{l}$
and the function

\begin{equation}
K(\theta,\phi)=\theta^{2}\sum_{k,l=1}^{p}\sigma_{k,l}\phi_{k}\phi_{l} + \frac{1}{2}\sum_{k=1}^{p}|I_{k}|\log\frac{\phi_{k}}{|I_{k}|}.\label{eq:Def_K_bis}
\end{equation}
In the rest of the paper with the exception of Section \ref{sec:Piecewise-continuous}
we work in the piecewise constant case and use the above definitions
instead of the ones from Section \ref{subsec:Main-results}. For $\psi\in{\cal P}_{p}$ 
\begin{equation}\label{defF0}
\mathcal{F}(\theta,x,\psi,\sigma):=J_{\mu_{\sigma}}(x,\theta)-K(\theta,\phi(\theta,x,\psi)).
\end{equation} 
and we set  
$$I_{\sigma}(x)=\inf_{\substack{\psi\in{\cal P}_{p}\\
\langle\psi,S\psi\rangle\neq0
}
}\sup_{\theta\geq0}\mathcal{F}(\theta,x,\psi,\sigma)\,.$$
It is easily seen that this definition corresponds to the previous one when $\sigma$ is piecewise constant, see Section \ref{ratef}. 
We set
$$P_{\delta}^{N}(x):=\frac{1}{N}\log\mathbb{P}(\{|\lambda_{1}-x|\leq\delta\})\,.$$
We will prove the following result: 
\begin{thm}
\label{maintheo-constant} In the piecewise constant case, we have
the weak large deviation principle : for $x>r_{\sigma}$ 

\[
\lim_{\delta\rightarrow0}\liminf_{N\rightarrow\infty}P_{\delta}^{N}(x)=\lim_{\delta\rightarrow0}\limsup_{N\rightarrow\infty}P_{\delta}^{N}(x)=-I_{\sigma}(x)\,.
\]
\end{thm}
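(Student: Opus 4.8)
The plan is to establish the matching upper and lower bounds for $P_\delta^N(x)$ via a tilting (change of measure) argument, in the spirit of the spherical integral / rank-one perturbation method used in \cite{HuGu1,HussonVar}. The key idea is that a large deviation of $\lambda_1$ toward a value $x > r_\sigma$ is produced by a localization of the top eigenvector: the eigenvector concentrates its mass on the blocks $I_k$ according to some profile $\psi \in \mathcal P_p$, and the measure $\phi(\theta,x,\psi)$ records the ``effective'' mass distribution after the tilt of strength $\theta$. First I would introduce the tilted measure: for a parameter $\theta \geq 0$ and a deterministic unit vector (or family of test vectors) with block-mass profile $\psi$, reweight the law of $H^N$ by $e^{N \theta \langle u, H^N u\rangle}$ (appropriately normalized). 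Because the entries are sharp sub-Gaussian, the log-Laplace transform of each entry is controlled by the Gaussian one, so the normalizing constant (the ``annealed'' free energy) is bounded above by its Gaussian counterpart, which is explicitly $\theta^2 \langle \phi, S\phi\rangle$ up to the entropy term $H(\phi)$ — this is precisely where $K(\theta,\phi)$ emerges. Under the tilted measure the matrix behaves like a deformed model whose top eigenvalue is pushed to $x$, and the cost of the spectral part of the deformation is captured by $J_{\mu_\sigma}(x,\theta)$, using that $\mu_{H^N}$ concentrates at $\mu_\sigma$ with speed faster than $N$ (Lemma \ref{lem:Measure-Concentration}(2)) and that the edge converges (Lemma \ref{lem:Measure-Concentration}(3)).

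For the \textbf{upper bound}, I would fix $\theta \geq 0$, write $\mathbb P(|\lambda_1 - x|\le\delta) \leq e^{-N\theta(x-\delta)}\, \mathbb E[e^{N\theta\lambda_1}\mathds 1_{|\lambda_1-x|\le\delta}]$, and then bound $\mathbb E[e^{N\theta\lambda_1}]$ (restricted to the event where $\mu_{H^N}\approx\mu_\sigma$, the complement being negligible at speed $N$). Since $\lambda_1 = \sup_{\|u\|=1}\langle u, H^N u\rangle$, one approximates this sup over a net of vectors and for each net vector with block profile $\psi$ one uses the sharp sub-Gaussian bound to get $\mathbb E[e^{N\theta\langle u,H^N u\rangle}] \leq e^{N(\theta^2\langle\phi,S\phi\rangle - H(\phi) + o(1))}$ where $\phi = \phi(\theta,x,\psi)$ incorporates the optimal ``response'' of the bulk. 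Combining with the spectral contribution $J_{\mu_\sigma}(x,\theta)$ and optimizing: taking the infimum over $\psi$ (net directions) and then, since $\theta$ was arbitrary, the supremum over $\theta\geq0$, yields $\limsup_N P_\delta^N(x) \leq -I_\sigma(x) + \mathrm{err}(\delta)$ with $\mathrm{err}(\delta)\to 0$.

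For the \textbf{lower bound}, I would fix a near-optimal pair $(\psi,\theta)$ in the variational formula for $I_\sigma(x)$, perform the corresponding change of measure, and show that under the tilted law the event $\{|\lambda_1 - x|\le\delta\}$ has probability $e^{o(N)}$: this requires a concentration/second-moment argument showing that the tilted top eigenvalue indeed sits near $x$ (the deformation created by the tilt is essentially rank-one in the relevant sense, and the deformed edge is computed via the subordination equation that defines $\varphi_\star$ and hence $G_{\mu_\sigma}^{-1}(2\theta)\vee x$). Then $P_\delta^N(x) \geq -[\,\text{free energy of the tilt}\,] + o(1) = -\mathcal F(\theta,x,\psi,\sigma) + o(1)$, and optimizing over the chosen $(\psi,\theta)$ gives $\liminf_N P_\delta^N(x) \geq -I_\sigma(x)$. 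One must also handle the constraint $\langle\psi,S\psi\rangle\neq 0$ in the infimum, which is harmless since profiles with $\langle\psi,S\psi\rangle=0$ produce no tilt and cannot lower the cost below $r_\sigma$.

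The main obstacle I anticipate is the lower bound's sharpness: proving that under the tilted measure $\lambda_1$ actually concentrates near $x$ (and not elsewhere), which demands a careful analysis of the perturbed Dyson equation and a matching of the ``self-consistent'' profile $\phi(\theta,x,\psi)$ with the actual eigenvector delocalization pattern under the tilt. In the non-block (piecewise continuous) Wigner setting this is where the variance-profile structure genuinely complicates matters relative to \cite{HuGu1}, since the relevant spherical-integral asymptotics now involve the operator $S$ rather than a scalar, and one needs the solvability of equation \eqref{eq:Dyson} together with monotonicity of $\theta\mapsto G_{\mu_\sigma}^{-1}(2\theta)$ to identify the optimizer; the sharp sub-Gaussian hypothesis \eqref{SSH} is exactly what makes the annealed bound tight and the Gaussian answer universal. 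A secondary technical point is passing the net approximation in the upper bound through the entropy term $H(\phi)$ uniformly, which should follow from continuity of $\sigma$ on each block and the explicit form of $K(\theta,\phi)$ in \eqref{eq:Def_K_bis}.
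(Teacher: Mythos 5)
Your high-level picture — tilt by a spherical-integral weight $e^{N\theta\langle u,H^Nu\rangle}$, use sharp sub-Gaussianity to make the annealed cost Gaussian, and for the lower bound push the top eigenvalue to $x$ via a near-rank-one deformation — is indeed the right overall strategy, and your lower-bound sketch tracks the paper reasonably well (including the need to solve a fixed-point equation so that the tilted model has its outlier at $x$, and the need to exclude $\langle\psi,S\psi\rangle=0$). However, your upper-bound argument has a genuine gap. You propose the Chernoff-type bound $\mathbb{P}(|\lambda_1-x|\le\delta)\le e^{-N\theta(x-\delta)}\,\mathbb{E}[e^{N\theta\lambda_1}\mathds{1}]$ together with a net approximation of $\lambda_1=\sup_{\|u\|=1}\langle u,H^Nu\rangle$. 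This cannot produce the entropy term $-H(\phi)=\tfrac12\int\log\frac{d\phi}{dt}\,dt$ in $K(\theta,\phi)$: a sup/max over a net incurs a metric-entropy overhead $e^{N\log(C/\epsilon)}$ which is not $o(N)$ and which moreover has the \emph{wrong sign} relative to $-H(\phi)$. In the paper, the entropy is not an artefact of a net; it is the exact Dirichlet large-deviation cost for the random block-mass vector $\rho(u)$ of a \emph{uniform} point $u\in\mathbb{S}^{N-1}$ (Lemma~\ref{lem:rho_LD}), and it enters because the paper multiplies and divides by the full spherical integral $I_N(H^N,\theta)$ and then uses Fubini to turn $\mathbb{E}_H\mathbb{E}_u$ into $\mathbb{E}_u\mathbb{E}_H$ (equation~\eqref{eq:Fubini_trick}), never replacing the integral over the sphere by a supremum.

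There is a second, related gap: you assert $\mathbb{E}[e^{N\theta\langle u,H^Nu\rangle}]\le e^{N(\theta^2\langle\phi,S\phi\rangle-H(\phi))}$ with $\phi=\phi(\theta,x,\psi)$, but the sharp sub-Gaussian bound only gives $\mathbb{E}[e^{N\theta\langle u,H^Nu\rangle}]\le e^{N\theta^2\langle\rho(u),S\rho(u)\rangle}$ with $\rho(u)$ the profile of $u$ itself. The identification $\rho(u)\approx\phi(\theta,x,\psi)$ is not automatic: it is the conclusion of the paper's central localization result (Proposition~\ref{prop:Gaussian-Concentration-1}), which shows that under the \emph{tilted} law $\mathbb{Q}_{H,\theta}$ on the sphere, given $H$ with $|\lambda_1-x|\le\delta$ and $\rho(v_1)\approx\psi$, the vector $u$ concentrates so that its component along $v_1$ carries mass $1-G_{\mu_\sigma}(x)/2\theta$ and the orthogonal part $u'$ has block profile $\varphi(\theta,x)$ driven by the bulk spectral measures $\mu_{H^N}(\Pi_k)$. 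This is why the paper introduces the refined quantity $P^N_\delta(x,\psi)$ jointly localizing $\lambda_1$ \emph{and} $\rho(v_1)$ (and covers $\mathcal{P}_p$ by finitely many balls, Lemma~\ref{lem:zero_entropy}), and why it needs the concentration of the projected spectral measures $\mu_{H^N}(\Pi_k)$ (Theorem~\ref{prop:G-measure-concentration}), none of which appear in your proposal. Without this joint localization the profile $\phi(\theta,x,\psi)$ that enters $K$ is not accounted for, and the upper bound does not close.
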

We prove this Theorem in Sections \ref{sec:LDupperbound} and \ref{sec:LDlowerbound}.

\subsection{Outline of the paper}

The main tool we use in this paper is the spherical integral defined,
for every real number $\theta$ and every symmetric $N\times N$ matrix
$M$, by 
\[
I_{N}(M,\theta)=\int_{\mathbb{S}^{N-1}}e^{\theta N\langle u,Mu\rangle}du
\]
where the integral is taken over the uniform law on the $N$ dimension
sphere $\mathbb{S}^{N-1}$. We see $u$ as a random vector in $\mathbb{S}^{N-1}$
and also use the notation $\mathbb{E}_{u}[.]:=\int_{\mathbb{S}^{N-1}}.du$.
The strategy of the proof is to tilt the measures by the spherical integral as in \cite{HuGu1,HuGu2,HussonVar}
but to localize them and to compute their expectation by using the Fubini trick.
\begin{equation}
\mathbb{E}_{H^{N}}\mathbb{E}_{u}(1_{\Theta}e^{\theta N\langle u,H^{N}u\rangle})=\mathbb{E}_{u}\mathbb{E}_{H^{N}}(1_{\Theta}e^{\theta N\langle u,H^{N}u\rangle})\label{eq:Fubini_trick}
\end{equation}
for some particular event $\Theta$. The following Lemma can be used
to estimate the left hand side of \eqref{eq:Fubini_trick}. It was
shown in \cite{Ma07,GuMa05} that 
\begin{lem}
\label{lem:J_spherical_integral}For a sequence of $N\times N$ self-adjoint deterministic  matrices $M^{N}$
so that $\mu_{M^{N}}$ converges towards $\nu$, $\sup_{N}\|M^{N}\|<\infty$
and whose largest eigenvalue $\lambda_{1}(M^{N})$  converges towards $y$ we have 
\[
\lim_{N\rightarrow\infty}\frac{1}{N}\log I_{N}(M^{N},\theta)=J_{\nu}(y,\theta).
\]
Moreover this convergence is uniform on small neighborhoods of $\mu_{M^{N}}$
for the weak topology and $\lambda_{1}(M^{N})$.
\end{lem}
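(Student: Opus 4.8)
The statement to prove is Lemma~\ref{lem:J_spherical_integral}, the asymptotics of the spherical integral $I_N(M^N,\theta)=\int_{\mathbb{S}^{N-1}}e^{\theta N\langle u,M^Nu\rangle}du$ towards $J_\nu(y,\theta)$, together with uniformity in a weak neighborhood of $(\mu_{M^N},\lambda_1(M^N))$.

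\textbf{Approach.} The plan is to follow the Laplace/Coulomb-gas computation that goes back to \cite{GuMa05,Ma07}. Diagonalize $M^N=O\,\mathrm{diag}(\mu_1,\dots,\mu_N)\,O^\top$; since the sphere is rotation invariant, $I_N(M^N,\theta)=\mathbb{E}_u e^{\theta N\sum_i \mu_i u_i^2}$ with $u$ uniform on $\mathbb{S}^{N-1}$. Writing $u_i=g_i/\sqrt{\sum_j g_j^2}$ with $g_i$ i.i.d.\ standard Gaussians, and using that $\sum_j g_j^2\approx N$ concentrates, one expects $I_N(M^N,\theta)\approx \mathbb{E}\big[e^{\theta\sum_i\mu_i g_i^2}\,;\,\sum g_j^2\approx N\big]$. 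The clean way to make this rigorous is to introduce the Gaussian computation $\mathbb{E}\,e^{\theta\sum_i \mu_i g_i^2 - \beta\sum_i g_i^2}=\prod_i(1-2\theta\mu_i+2\beta)^{-1/2}$, valid for $\beta$ large enough that all factors are positive, i.e.\ $2\theta\mu_1 - 2\beta<1$; then a change-of-variables / Laplace inversion in the radial variable recovers $I_N$ up to subexponential factors. Taking $\tfrac1N\log$, the product becomes $-\tfrac12\int\log(1-2\theta y+2\beta)\,d\mu_{M^N}(y)$, which converges to $-\tfrac12\int\log(1-2\theta y+2\beta)\,d\nu(y)$ by weak convergence of $\mu_{M^N}$ together with the uniform bound $\sup_N\|M^N\|<\infty$ (so the integrand is bounded and continuous on a fixed compact containing all supports once $\beta$ is chosen appropriately). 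Optimizing over $\beta$ (a one-dimensional Legendre-type problem) produces exactly the two regimes in \eqref{defJ-1}: when $2\theta$ is below the critical value $G_\nu(y)$ one gets an interior critical point with $2\beta$ determined by $G_\nu(\cdot)$, and when $2\theta\geq G_\nu(y)$ the optimizer sits at the boundary $2\beta = 2\theta y-1$ (the constraint becomes active because of the top eigenvalue), giving the $\theta y - \tfrac12-\tfrac12\log2\theta - \tfrac12\int\log|y-t|\,d\nu(t)$ branch. One then checks that the resulting function of $(y,\theta)$ is precisely $J_\nu(y,\theta)$.

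\textbf{Key steps, in order.} First, reduce to the eigenvalue representation and to a Gaussian integral via $u_i = g_i/\|g\|$. Second, prove the matching upper and lower bounds: for the upper bound use the exponential Chebyshev/Gaussian bound $I_N(M^N,\theta)\le e^{\beta N}\mathbb{E}e^{\theta N\langle u,M^Nu\rangle - \beta N\|u\|^2}$ ... but since $\|u\|^2=1$ on the sphere this is trivial, so instead the genuine route is: write $I_N$ as an integral over $\mathbb{R}^N$ restricted to the sphere, introduce the ``radial'' auxiliary integration $\int_0^\infty r^{N-1}e^{-\beta N r^2}dr$ to go from sphere to full Gaussian space, and identify $I_N(M^N,\theta)$ with $\mathbb{E}e^{\theta N\langle g,M^N g\rangle/\|g\|^2}$ where the $\|g\|^2$ in the denominator is handled by a concentration + continuity argument (this is the standard ``hard-to-soft constraint'' step). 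Third, compute the Gaussian Laplace transform explicitly, take $\tfrac1N\log$, pass to the limit using weak convergence of $\mu_{M^N}$ and boundedness of spectra, and perform the Legendre optimization over $\beta$, carefully treating the threshold where the top eigenvalue pins the optimizer — this is where $\lambda_1(M^N)\to y$ enters, because the relevant constraint region depends on the largest eigenvalue, not just on $\nu$. Fourth, for uniformity: note all the estimates above depend on $\mu_{M^N}$ only through integrals of bounded continuous functions against it and on $\lambda_1(M^N)$ continuously, on a compact set of parameters (by $\sup_N\|M^N\|<\infty$), so the convergence is uniform over weak neighborhoods of $\mu_{M^N}$ and neighborhoods of $\lambda_1(M^N)$; equivalently, cite that $(\nu,y)\mapsto J_\nu(y,\theta)$ is continuous for the weak topology on measures supported in a fixed compact.

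\textbf{Main obstacle.} The delicate point is the interplay between the largest eigenvalue $\lambda_1(M^N)$ and the bulk measure $\nu$ near the threshold $2\theta = G_\nu(y)$: the Gaussian integral $\mathbb{E}e^{\theta\sum\mu_i g_i^2-\beta\sum g_i^2}$ diverges as soon as $2\beta \le 2\theta\lambda_1(M^N)-1$, so the admissible range of $\beta$ is governed by $\lambda_1(M^N)$, which may sit strictly to the right of the support of $\nu$; one must show that the contribution of this top eigenvalue to $\tfrac1N\log I_N$ nonetheless vanishes in the regime $2\theta < G_\nu(y)$ (a single eigenvalue, negligible on the exponential scale) while it becomes the dominant, constraint-saturating term in the regime $2\theta\ge G_\nu(y)$, and that the transition matches continuously. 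Making this dichotomy rigorous — essentially a saddle-point analysis with a moving boundary — and checking that the resulting expression coincides with the piecewise formula \eqref{defJ-1} for $J_\nu$, is the technical heart; fortunately this is exactly the content of \cite{GuMa05,Ma07}, so I would organize the proof as a careful recollection of that argument, emphasizing the uniformity statement which is what we need downstream for the Fubini tilting argument.
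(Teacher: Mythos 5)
Your proposal correctly recalls the Coulomb-gas/Legendre-transform argument of Guionnet and Ma\"ida, which is exactly what the paper relies on: the paper does not reprove this lemma but cites \cite{Ma07,GuMa05} directly. Your sketch of the key steps — Gaussian representation via $u=g/\|g\|$, radial auxiliary integration, one-dimensional Legendre optimization in $\beta$ with the admissible range pinned by $\lambda_1(M^N)$ (producing the two branches of \eqref{defJ-1}), and uniformity via continuity of $(\nu,y)\mapsto J_\nu(y,\theta)$ on measures with uniformly bounded support — matches the structure of those references, so this is the same approach.
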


For $1\le k\le p$ , we denote by $\Pi_{k}$ the orthogonal projection
on the vector space generated by the elements of the canonical basis
indexed by $I_{k}^{N}=\{i\in\{1,\cdots,N\}:t_i^N \in I_{k}\}$.
For any $v\in\mathbb{R}^{N}$, 
\begin{equation}
(\Pi_{k}v)_{i}:=v_{i}\mathds{1}_{i\in I_{k}^{N}}.\label{defpik}
\end{equation}
and we define for $v\in\mathbb{R}^{N}$ the associated profile $\rho(v)\in(\mathbb{R}_{+})^{p}$
as 
\begin{equation}
\rho(v)_{k}:=\|\Pi_{k}v\|^{2}=\sum_{j\in I_{k}^{N}}v_{j}^{2}.\label{eq:def_rho(u)}
\end{equation}
Remark that for $u\in\mathbb{S}^{N-1}$ we have $\rho(u)\in{\cal P}_{p}$
. For the right hand side of \eqref{eq:Fubini_trick} we will use
\begin{prop}
\label{ldpv-1-1} In the piecewise constant case, for any vector $\phi\in\mathcal{P}_{p}$
we have {
\begin{eqnarray*}
K(\theta,\phi)&=&\lim_{\delta\rightarrow0}\limsup_{N\rightarrow\infty}\frac{1}{N}\log\mathbb{E}_{u}\mathbb{E}_{H^{N}}(e^{N\theta\langle u,H^{N}u\rangle}\mathds{1}_{\{||\rho(u)-\phi||\leq\delta\}})\\
&=&\lim_{\delta\rightarrow0}\liminf_{N\rightarrow\infty}\frac{1}{N}\log\mathbb{E}_{u}\mathbb{E}_{H^{N}}(e^{N\theta\langle u,H^{N}u\rangle}\mathds{1}_{\{||\rho(u)-\phi||\leq\delta\}})\end{eqnarray*}}.
\end{prop}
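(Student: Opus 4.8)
## Proof proposal for Proposition~\ref{ldpv-1-1}

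The plan is to compute the inner expectation over $H^N$ exactly (using the sharp sub-Gaussian bound and its matching lower bound for the quadratic form $\langle u, H^N u\rangle$), and then perform a Laplace-type analysis of the remaining integral over $u\in\mathbb{S}^{N-1}$ restricted to the slice $\{\|\rho(u)-\phi\|\le\delta\}$. First I would write $\langle u,H^Nu\rangle = \frac{1}{\sqrt N}\sum_{i,j} u_iu_j X_{ij}^N = \frac{1}{\sqrt N}\big(\sum_i u_i^2 X_{ii}^N + 2\sum_{i<j}u_iu_jX_{ij}^N\big)$, so that by independence of the entries, $\mathbb{E}_{H^N}\,e^{N\theta\langle u,H^Nu\rangle} = \prod_i \mathbb{E}\, e^{\theta\sqrt N\,u_i^2 X_{ii}^N}\prod_{i<j}\mathbb{E}\,e^{2\theta\sqrt N\,u_iu_j X_{ij}^N}$. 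Applying the sharp sub-Gaussian hypothesis \eqref{SSH} to each factor gives the upper bound $\mathbb{E}_{H^N}\,e^{N\theta\langle u,H^Nu\rangle}\le \exp\big(\theta^2 N\sum_{i,j}\Sigma^N_{ij}u_i^2u_j^2\big)$; here the factor $2^{-\mathds{1}_{i\ne j}}$ in \eqref{SSH} exactly accounts for the factor $2$ in the off-diagonal terms and the convention $\Sigma_{ii}^N = \mathrm{Var}(X_{ii}^N)/2$. For the matching lower bound one cannot simply use that the entries are sub-Gaussian; instead I would invoke the argument already used in \cite{HuGu1,HussonVar} (a truncation/tilting argument showing that for the relevant range of $u$ — where no single $u_i$ is too large, which holds on the sphere with overwhelming probability — the Laplace transform of each entry is, to leading exponential order, that of the Gaussian with the same variance). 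Since $\Sigma^N_{ij}$ is piecewise constant equal to $\sigma_{k,l}$ on $I_k^N\times I_l^N$, the quadratic expression is $\sum_{i,j}\Sigma^N_{ij}u_i^2u_j^2 = \sum_{k,l}\sigma_{k,l}\rho(u)_k\rho(u)_l = \langle\rho(u),S\rho(u)\rangle$, which on the slice $\{\|\rho(u)-\phi\|\le\delta\}$ is $\langle\phi,S\phi\rangle + O(\delta)$ by continuity (the finitely many $\sigma_{k,l}$ are bounded). Thus, up to $e^{o(N)}$ and $e^{O(N\delta)}$ errors,
\[
\mathbb{E}_u\,\mathbb{E}_{H^N}\big(e^{N\theta\langle u,H^Nu\rangle}\mathds{1}_{\{\|\rho(u)-\phi\|\le\delta\}}\big) \approx e^{N\theta^2\langle\phi,S\phi\rangle}\;\mathbb{E}_u\big(\mathds{1}_{\{\|\rho(u)-\phi\|\le\delta\}}\big).
\]

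Next I would identify $\lim_{\delta\to0}\lim_N \frac1N\log \mathbb{E}_u(\mathds{1}_{\{\|\rho(u)-\phi\|\le\delta\}}) = -H(\phi)$ with $H(\phi) = \frac12\sum_k|I_k|\log\frac{\phi_k}{|I_k|}$ (equivalently $-\frac12\int\log\frac{d\phi}{dt}dt$ in the notation \eqref{defKH}); together with the quadratic term this yields exactly $-K(\theta,\phi)$, matching \eqref{eq:Def_K_bis}. To prove this rate, recall that a uniform vector on $\mathbb{S}^{N-1}$ can be realized as $u = g/\|g\|$ with $g$ a standard Gaussian vector in $\mathbb{R}^N$; then $\rho(u)_k = \|\Pi_k g\|^2/\|g\|^2$, and $\|\Pi_k g\|^2$ is a $\chi^2$ variable with $|I_k^N|\approx N|I_k|$ degrees of freedom. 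The vector $(\rho(u)_1,\dots,\rho(u)_p)$ is thus a Dirichlet-type random variable, and its large deviation rate at $\phi$ is a standard computation: by Cramér/Sanov for the $\chi^2$ blocks (or directly, writing the density of the Dirichlet distribution with parameters $(N|I_k|/2)_k$ via Stirling), $\frac1N\log\mathbb{P}(\rho(u)\approx\phi) \to \frac12\sum_k |I_k|\log\frac{\phi_k}{|I_k|}$, i.e.\ $-H(\phi)$. I would state this as a lemma and prove it by the Gaussian representation plus Stirling's formula; the sign and normalization should be checked against the known case $\sigma=\mathds{1}$, where $K(\theta,\phi)=\theta^2$ is attained at $\phi_k=|I_k|$ (the uniform profile), consistent with $H$ vanishing there.

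The main obstacle is the lower bound on $\mathbb{E}_{H^N}e^{N\theta\langle u,H^Nu\rangle}$: the sharp sub-Gaussian condition gives an inequality in only one direction, so to get the matching $\liminf$ in the Proposition one must show that the Laplace transform of each entry is, to leading exponential order over the relevant set of directions $u$, no smaller than the Gaussian one. This is where the bulk of the work lies, and I expect to reuse the truncation argument of \cite{HuGu1} (restricting to the event that $\max_i \sqrt N\,|u_i|$ is bounded, which has probability $1-o(1)$ under $\mathbb{E}_u$ and changes the quadratic form negligibly), on which a second-order Taylor expansion of $\log\mathbb{E}\,e^{tX_{ij}^N}$ around $t=0$ is valid with error controlled by the third absolute moment — finite and uniformly bounded because the variables are sub-Gaussian. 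One must also handle the interchange of $\mathbb{E}_u$ and the restriction to this good event, but since both the exponential weight and the indicator $\mathds{1}_{\{\|\rho(u)-\phi\|\le\delta\}}$ are bounded and the bad event is exponentially negligible at speed $N$ (in fact faster), this is routine. Finally, combining the upper bound (immediate from \eqref{SSH}) and this lower bound, taking $\limsup$ and $\liminf$ in $N$ and then $\delta\to0$, gives the two displayed equalities with common value $K(\theta,\phi)$.
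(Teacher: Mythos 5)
Your proposal follows essentially the same route as the paper: apply the sharp sub-Gaussian bound \eqref{SSH} entrywise for the upper bound, use Varadhan together with the LDP for the profile $\rho(u)$ (stated in the paper as Lemma~\ref{lem:rho_LD}, which is the Dirichlet/$\chi^2$ computation you outline) to produce $K(\theta,\phi)$, and for the lower bound restrict to an event where all $u_i$ are small so that the second-order Taylor expansion of $L_{ij}^N$ around $0$ recovers the Gaussian rate up to a factor $(1-\epsilon)$. The independence structure (Lemma~\ref{lem:Uniform_sphere_family}) letting you treat $\rho(u)$ and the unit vectors $\frak{u}^k$ separately is also the same mechanism.

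One point to correct: in your sketch of the lower bound you say to restrict to the event that $\max_i\sqrt N\,|u_i|$ is bounded, claiming this has probability $1-o(1)$. That event in fact has probability tending to $0$: for $u$ uniform on $\mathbb{S}^{N-1}$, $\max_i|u_i|$ is of order $\sqrt{\log N/N}$, so $\max_i\sqrt N|u_i|$ grows like $\sqrt{\log N}$. The correct truncation is at a threshold tending to $0$ slower than $N^{-1/2}$ — the paper uses $|\frak{u}_i^k|\le N^{-3/8}$, i.e.\ $|u_i|\le N^{-3/8}$ — which is simultaneously large enough for $\mathbb{P}_u[E]$ to have subexponential cost and small enough that $2\sqrt N\theta|u_i||u_j|\le 2\theta N^{-1/4}\to 0$ falls in the range where $L_{ij}^N(t)\ge(1-\epsilon)\Sigma_{ij}^N t^2/2$. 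With that threshold fixed, your argument goes through exactly as in the paper.
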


To use both Lemma \ref{lem:J_spherical_integral} and Proposition
\ref{ldpv-1-1}, the new key idea is to consider jointly the largest
eigenvalue $\lambda_{1}(H^{N})$ and the profile of the eigenvector
$v_{1}(H^{N})$. We define for any $\psi\in\mathcal{P}_{p}$ and $\delta>0$
\[
P_{\delta}^{N}(x,\psi):=\frac{1}{N}\log\mathbb{P}(\{|\lambda_{1}-x|\leq\delta\}\cap\{\|\rho(v_{1})-\psi\|_{1}\leq\delta\})
\]
with $\|.\|_{1}$ the $L^{1}$ norm on $\mathbb{R}^{p}$. The motivation
to introduce $P_{\delta}^{N}(x,\psi)$ is the following remark : Because
$\mathcal{P}_{p}$ is compact, it can be covered by a finite union
of open balls of radius $\delta$. As a consequence,  we have :
\begin{lem}
\label{lem:zero_entropy} For any $\delta>0$, there exists a finite
set ${\cal J}\subset{\cal P}_{p}$ such that 
\[
P_{\delta}^{N}(x)=\sup_{\psi\in{\cal J}}P_{\delta}^{N}(x,\psi)+\eta_{\delta}(N)
\]
where $\eta_{\delta}(N)$ goes to $0$ as $N$ goes to infinity.
\end{lem}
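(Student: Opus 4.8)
The plan is to prove Lemma~\ref{lem:zero_entropy} by a standard covering and union-bound argument, exploiting that the probability $P_\delta^N(x,\psi)$ only enters through the event that the eigenvector profile $\rho(v_1)$ lies in a $\delta$-ball of $\psi$, and that $\mathcal P_p$ is compact. First I would note that $\mathcal P_p$, being a closed bounded subset of $\mathbb R^p$, is compact, hence for fixed $\delta>0$ it can be covered by finitely many open balls $B(\psi_1,\delta/2),\dots,B(\psi_m,\delta/2)$ with centers $\psi_1,\dots,\psi_m\in\mathcal P_p$; set $\mathcal J:=\{\psi_1,\dots,\psi_m\}$, a finite set whose cardinality $m=m(\delta)$ depends only on $\delta$ and not on $N$.

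Next I would decompose the event $\{|\lambda_1-x|\le\delta\}$. Since $\rho(v_1)\in\mathcal P_p$ always (as $v_1$ is a unit vector and the $\Pi_k$ form a partition of the identity), the event $\{|\lambda_1-x|\le\delta\}$ is contained in $\bigcup_{j=1}^m\big(\{|\lambda_1-x|\le\delta\}\cap\{\|\rho(v_1)-\psi_j\|_1\le\delta\}\big)$, because any $\rho(v_1)$ lies in some $B(\psi_j,\delta/2)$ and a fortiori within $\ell^1$-distance $\delta$ of $\psi_j$ (possibly after adjusting the relation between the covering radius and $\delta$ using equivalence of norms on $\mathbb R^p$; one may simply take the covering radius small enough that membership in a covering ball implies $\ell^1$-distance at most $\delta$). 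Conversely each such event is trivially contained in $\{|\lambda_1-x|\le\delta\}$. Hence
\[
\mathbb P(|\lambda_1-x|\le\delta)\ \le\ \sum_{j=1}^m \mathbb P\big(\{|\lambda_1-x|\le\delta\}\cap\{\|\rho(v_1)-\psi_j\|_1\le\delta\}\big)\ \le\ m\,\max_{1\le j\le m}\mathbb P\big(\{|\lambda_1-x|\le\delta\}\cap\{\|\rho(v_1)-\psi_j\|_1\le\delta\}\big),
\]
while on the other hand $\mathbb P(|\lambda_1-x|\le\delta)\ge \mathbb P\big(\{|\lambda_1-x|\le\delta\}\cap\{\|\rho(v_1)-\psi_j\|_1\le\delta\}\big)$ for every $j$, hence $\mathbb P(|\lambda_1-x|\le\delta)\ge \max_j \mathbb P(\cdots)$.

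Taking $\frac1N\log$ of the two-sided inequality gives
\[
\sup_{j}P_\delta^N(x,\psi_j)\ \le\ P_\delta^N(x)\ \le\ \sup_{j}P_\delta^N(x,\psi_j)+\frac{\log m}{N},
\]
so the claimed identity holds with $\eta_\delta(N):=\frac{\log m(\delta)}{N}$, which tends to $0$ as $N\to\infty$ since $m(\delta)$ is a fixed finite number once $\delta$ is fixed. There is essentially no hard step here: the only mild point of care is to set up the covering so that lying in a covering ball implies the $\ell^1$-ball condition appearing in the definition of $P_\delta^N(x,\psi)$ — this is handled by choosing the covering radius sufficiently small relative to $\delta$ via the equivalence of the Euclidean and $\ell^1$ norms on the finite-dimensional space $\mathbb R^p$ — and to observe that this rescaling does not affect the limit $\delta\to0$ that is taken later. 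I do not anticipate any genuine obstacle; the content of the lemma is purely the compactness of $\mathcal P_p$ turning a supremum over a continuum into a supremum over a finite set at sub-exponential cost.
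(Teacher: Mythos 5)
Your proposal is correct and matches the paper's approach exactly: the paper gives no formal proof of this lemma, merely stating the observation that $\mathcal P_p$ is compact and hence admits a finite cover by $\delta$-balls, from which the claim follows by the union bound at sub-exponential cost. Your argument formalizes precisely this, and the minor point you flag — choosing the covering radius small enough (or covering directly with $\ell^1$-balls of radius $\delta$) so that membership in a covering ball implies the $\ell^1$-ball condition appearing in the definition of $P_\delta^N(x,\psi)$ — is handled correctly.
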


The proof of Theorem \ref{maintheo-constant} is then divided into
the proof of the upper bound and the lower bound.
\begin{prop}
\label{prop:LDPUB2} (Theorem \ref{maintheo-constant} upper bound)
In the piecewise constant case, for every $x>r_{\sigma}$ and $\psi\in\mathcal{P}_{p}$,

\[
\lim_{\delta\to0}\limsup_{N\rightarrow\infty}P_{\delta}^{N}(x,\psi)\leq\inf_{\theta\geq0}-\mathcal{F}(\theta,x,\psi,\sigma).
\]
\end{prop}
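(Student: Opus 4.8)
The plan is to use the spherical integral as an exponential tilt, exactly as in the Fubini identity \eqref{eq:Fubini_trick}, but restricted to the event $\{|\lambda_1 - x|\le\delta\}\cap\{\|\rho(v_1)-\psi\|_1\le\delta\}$. Fix $\theta\ge0$. On the event $\{|\lambda_1-x|\le\delta\}$, if moreover $\{\|\rho(v_1)-\psi\|_1\le\delta\}$ holds and $u$ is close to $v_1$ in the sense that $\rho(u)$ is near $\rho(v_1)$, the quadratic form $\langle u, H^N u\rangle$ is close to $\lambda_1\langle u,v_1\rangle^2 + \langle u,(H^N-\lambda_1 v_1v_1^*)u\rangle$, and in particular $\langle u,H^N u\rangle$ is not too far below $\lambda_1|\langle u,v_1\rangle|^2$. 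The standard lower bound on the spherical integral localized near a fixed direction — that $\mathbb{E}_u(e^{N\theta\langle u,H^Nu\rangle}\mathds{1}_{\rho(u)\approx\psi})$ picks up a factor at least $e^{N\theta x |\langle u,v_1\rangle|^2}$ times the volume of the relevant cap — lets us write
\[
\mathbb{E}_u\big(e^{N\theta\langle u,H^Nu\rangle}\mathds{1}_{\{\|\rho(u)-\psi\|\le C\delta\}}\big)\;\gtrsim\; e^{N(\theta x - \varepsilon(\delta))}\,\mathbb{P}\big(\{|\lambda_1-x|\le\delta\}\cap\{\|\rho(v_1)-\psi\|_1\le\delta\}\big)\cdot(\text{entropy factor}),
\]
where the entropy factor comes from the volume of directions $u$ with the prescribed profile. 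This is the mechanism by which $P_\delta^N(x,\psi)$ gets controlled from above by the left-hand side of \eqref{eq:Fubini_trick}.

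More precisely, I would proceed as follows. First, decompose $u = \alpha v_1 + \sqrt{1-\alpha^2}\, w$ with $w\perp v_1$, $\|w\|=1$, so that $\langle u,H^Nu\rangle = \alpha^2\lambda_1 + (1-\alpha^2)\langle w, H^N w\rangle + \text{cross term}$, the cross term vanishing since $v_1$ is an eigenvector. Restricting the $u$-integral to a set where $\alpha^2$ is bounded below by a fixed constant (say $\alpha^2\ge c_0>0$), where the profile $\rho(u)$ is within $\delta'$ of $\psi$, and where $\langle w,H^N w\rangle \ge -\|H^N\| \ge -\Lambda$ (using that $\|H^N\|$ is bounded with overwhelming probability by Lemma \ref{lem:Measure-Concentration}, or restricting to that event), we get $\langle u,H^Nu\rangle \ge \alpha^2(x-\delta) - (1-\alpha^2)\Lambda$. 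Taking $\alpha^2\to1$ requires care because the volume of near-$v_1$ caps is exponentially small in $N$ only polynomially — actually the cap $\{\alpha^2\ge 1-\epsilon\}$ has volume $\approx \epsilon^{(N-1)/2}$, which is exponentially small; so one cannot push $\alpha^2$ all the way to $1$. The right move, following \cite{HuGu1,HussonVar}, is to integrate over ALL of $u$ with profile near $\psi$, not just near $v_1$: the point is that $\mathbb{E}_u(e^{N\theta\langle u,H^Nu\rangle}\mathds{1}_{\rho(u)\approx\psi})$ with $u$ ranging over the full profile-constrained sphere is, on the event in question, at least the contribution from $u$'s that have a definite overlap with $v_1$, and by a Laplace/saddle-point argument in $\alpha$ this recovers the full $e^{N\theta x}$ at the cost of a $J$-type correction. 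Concretely: $\mathbb{E}_u(e^{N\theta\langle u,H^Nu\rangle}\mathds{1}_{\rho(u)\approx\psi}) \ge e^{-N\eta_\delta}\,\mathbb{P}(\text{event})\cdot I_{N}(H^N_{\text{on }v_1^\perp}, \theta\,|\,\text{profile }\psi)$-like quantity, and one identifies the exponential rate of the latter via Lemma \ref{lem:J_spherical_integral}. Taking logs, dividing by $N$, letting $N\to\infty$ then $\delta\to0$, rearranging, and using Proposition \ref{ldpv-1-1} to identify $\lim\frac1N\log\mathbb{E}_u\mathbb{E}_{H^N}(e^{N\theta\langle u,H^Nu\rangle}\mathds{1}_{\rho(u)\approx\psi})$ with $K(\theta,\phi(\theta,x,\psi))$ — here one chooses the localization profile for $u$ to be exactly the optimal $\phi(\theta,x,\psi)$ of \eqref{eq:Def_Phi_theta_bis} rather than $\psi$ itself — yields
\[
-I_\sigma\text{-type bound:}\qquad \limsup_{N}P_\delta^N(x,\psi) \;\le\; K(\theta,\phi(\theta,x,\psi)) - J_{\mu_\sigma}(x,\theta) + \eta_\delta \;=\; -\mathcal{F}(\theta,x,\psi,\sigma)+\eta_\delta,
\]
and since $\theta\ge0$ is arbitrary we take the infimum over $\theta$.

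There is a subtlety I am glossing over in the sketch: the relation between the profile $\psi$ of the eigenvector $v_1$ and the profile $\phi$ of the integration variable $u$. When $2\theta < G_{\mu_\sigma}(x)$ the spherical integral "does not feel" the outlier and the optimal $u$-profile is $\varphi(\theta,x)$ alone, independent of $\psi$; when $2\theta\ge G_{\mu_\sigma}(x)$ the optimal profile genuinely mixes in a $\psi$-component with weight $(1-G_{\mu_\sigma}(x)/2\theta)_+$, which is precisely the definition \eqref{eqn:phicontinuous-1}. So the argument must track, on the event $\{\|\rho(v_1)-\psi\|_1\le\delta\}$, that a $u$ with a macroscopic overlap $\alpha$ with $v_1$ and an orthogonal part spread according to the Gibbs-tilted measure has profile $\approx \alpha^2\psi + (1-\alpha^2)\varphi(\theta,x)$, and the optimal $\alpha^2$ from the Laplace computation is exactly $(1-G_{\mu_\sigma}(x)/2\theta)_+$.

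The main obstacle, as I see it, is making the lower bound on the localized spherical integral genuinely quantitative and uniform: one needs that restricting the $u$-integral to $\{\|\rho(u)-\phi(\theta,x,\psi)\|\le\delta\}$ loses only $e^{o(N)}$, AND that on the event $\{|\lambda_1-x|\le\delta, \|\rho(v_1)-\psi\|_1\le\delta\}$ the integrand is bounded below by $e^{N(\theta x - \eta_\delta)}$ on a subset of $u$-directions of not-too-small measure with the correct profile. This is exactly the kind of estimate proved in \cite{HuGu1,HussonVar} for the constant-variance and restricted-profile cases; the work here is to check it survives the piecewise-constant structure, which essentially amounts to replacing scalar quantities by their block-indexed analogues and invoking the block Dyson system, together with the concentration of $\|H^N\|$ and of $\mu_{H^N}$ from Lemma \ref{lem:Measure-Concentration} to control the contribution of the bulk of $H^N$ on $v_1^\perp$. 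Uniformity in small neighborhoods, needed to send $\delta\to0$ cleanly, is provided by the last sentence of Lemma \ref{lem:J_spherical_integral}.
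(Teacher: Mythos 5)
Your outline correctly identifies the high-level strategy the paper follows: tilt by $I_N(H^N,\theta)$, insert an indicator localizing the sphere variable $u$ at profile $\phi(\theta,x,\psi)$ at cost only $e^{o(N)}$, and feed the resulting annealed restricted spherical integral into Proposition \ref{ldpv-1-1}. But the difficulty you acknowledge at the end---that the profile-restricted spherical integral is at most $e^{o(N)}$ smaller than the full one, uniformly on the event---is precisely the crux (Proposition \ref{prop:Gaussian-Concentration-1} in the paper), and your sketch leaves it unproved. The Laplace-in-$\alpha$ mechanism you propose would in fact break in a configuration that cannot be excluded: on $\{|\lambda_1-x|\le\delta\}$ nothing forces a spectral gap below $\lambda_1$, and the tilted Gaussian covariance on $v_1^\perp$ has eigenvalues of order $(2N\theta(\lambda_1-\lambda_i))^{-1}$, which blow up if $\lambda_2$ is within $o(1)$ of $\lambda_1$; the saddle in $\alpha$ then degenerates. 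The paper invests substantial effort (Lemma \ref{SpectralGap}, Proposition \ref{prop:Gaussian-Concentrationbis}, the truncation $\Omega'_N$ removing the $k_N=o(N)$ eigenvalues potentially clustering near $\lambda_1$) to resolve exactly this, and it is not a cosmetic refinement---without it the Gaussian approximation of the tilted law of $u'$ that your argument implicitly uses is simply false.

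There is a second genuine gap. You appeal only to the concentration of $\mu_{H^N}$ from Lemma \ref{lem:Measure-Concentration}, but identifying the profile of $u'$ under the tilted measure as $\varphi(\lambda_1,\theta)_k=\tfrac{1}{2\theta}G_{\mu_{\sigma,k}}(\lambda_1)$ (block by block, not just in aggregate) requires the block-projected spectral measures $\mu_{H^N}(\Pi_k)$ to concentrate at speed faster than $N$. That is Theorem \ref{prop:G-measure-concentration}, a new ingredient specific to the variance-profile setting, proved via Talagrand concentration plus a truncation argument for unbounded entries; it enters through the definition of $\Omega_N$ in \eqref{defomegan}. Without it the expected block-wise squared norm of $u'$ under the tilt cannot be pinned down to the Dyson-equation solution. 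Finally, a smaller omission: the paper first reduces to $\theta>G_{\mu_\sigma}(x)/2$ via Lemma \ref{lem:theta} (where $\mathcal F(\theta,x,\psi,\sigma)=0$ for $\theta\le G_{\mu_\sigma}(x)/2$); your sketch never justifies why only this regime need be considered, yet that restriction is what makes $\varphi(\lambda_1,\theta)$ well-defined and the weight $(1-G_{\mu_\sigma}(x)/2\theta)_+$ active.
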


\begin{prop}
\label{prop:LDPLB} (Theorem \ref{maintheo-constant} lower bound)
In the piecewise constant case, for every $x>r_{\sigma}$,

\[
\lim_{\delta\to0}\liminf_{N\rightarrow\infty}P_{\delta}^{N}(x)\geq\sup_{\psi\in{\cal P}_{p}\atop
\langle \psi,S\psi\rangle \neq 0}\inf_{\theta\geq0}-\mathcal{F}(\theta,x,\psi,\sigma)
\]
\end{prop}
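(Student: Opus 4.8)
\emph{Step 1: reduction.} Since $P_\delta^N(x)\geq P_\delta^N(x,\psi)$ for every $\psi$, it suffices to fix $\psi\in\mathcal P_p$ with $\langle\psi,S\psi\rangle\neq0$ and prove
\[
\lim_{\delta\to0}\liminf_{N\to\infty}P_\delta^N(x,\psi)\ \geq\ \inf_{\theta\geq0}\big(-\mathcal F(\theta,x,\psi,\sigma)\big),
\]
and then take the supremum over such $\psi$. The condition $\langle\psi,S\psi\rangle\neq0$, together with $\phi(\theta,x,\psi)\to\psi$ as $\theta\to\infty$, forces via \eqref{defF0} and \eqref{eq:Def_K_bis} that $\mathcal F(\theta,x,\psi,\sigma)\to-\infty$; hence $\sup_{\theta\geq0}\mathcal F(\theta,x,\psi,\sigma)$ is attained at some $\theta^\star=\theta^\star(x,\psi)$, and since $\mathcal F(0,x,\psi,\sigma)=0$ while $\sup_\theta\mathcal F(\theta,x,\psi,\sigma)\geq I_\sigma(x)>0$ (as $x>r_\sigma$) we may take $\theta^\star\in(0,\infty)$; one checks moreover that $2\theta^\star\geq G_{\mu_\sigma}(x)$, so that $\varphi(\theta^\star,x)_k=G_{\mu_{\sigma,k}}(x)/(2\theta^\star)$. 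Put $\phi^\star:=\phi(\theta^\star,x,\psi)\in\mathcal P_p$; then $\inf_{\theta\geq0}(-\mathcal F(\theta,x,\psi,\sigma))=-\mathcal F(\theta^\star,x,\psi,\sigma)=K(\theta^\star,\phi^\star)-J_{\mu_\sigma}(x,\theta^\star)$, so the goal becomes $\lim_{\delta\to0}\liminf_N P_\delta^N(x,\psi)\geq K(\theta^\star,\phi^\star)-J_{\mu_\sigma}(x,\theta^\star)$.

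\emph{Step 2: a spherical-integral sandwich.} Let $A_\delta:=\{|\lambda_1-x|\leq\delta\}\cap\{\|\rho(v_1)-\psi\|_1\leq\delta\}\cap\{d_W(\mu_{H^N},\mu_\sigma)\leq\delta\}\cap\{\|H^N\|\leq C\}$ for a fixed $C>\max(r_\sigma,x,|l_\sigma|)$; by Lemma \ref{lem:Measure-Concentration}(2) and edge concentration the last two constraints are super-exponentially typical. On $A_\delta$ the uniform convergence in Lemma \ref{lem:J_spherical_integral} gives $I_N(H^N,\theta^\star)\leq e^{N(J_{\mu_\sigma}(x,\theta^\star)+o_\delta(1))}$; since $I_N(H^N,\theta^\star)=\mathbb E_u[e^{N\theta^\star\langle u,H^Nu\rangle}]\geq\mathbb E_u[\mathds{1}_{\{\|\rho(u)-\phi^\star\|\leq\delta\}}e^{N\theta^\star\langle u,H^Nu\rangle}]$, the Fubini identity \eqref{eq:Fubini_trick} yields
\[
P_\delta^N(x,\psi)\ \geq\ -J_{\mu_\sigma}(x,\theta^\star)-o_\delta(1)+\tfrac1N\log\mathbb E_u\mathbb E_{H^N}\big[\mathds{1}_{A_\delta}\,\mathds{1}_{\{\|\rho(u)-\phi^\star\|\leq\delta\}}\,e^{N\theta^\star\langle u,H^Nu\rangle}\big].
\]
By Proposition \ref{ldpv-1-1}, the same double expectation without $\mathds{1}_{A_\delta}$ has exponential rate exactly $K(\theta^\star,\phi^\star)$. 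Writing the inner $H^N$-integral as $\widetilde Z_u\,\widetilde{\mathbb P}_u(\cdot)$ with $\widetilde Z_u:=\mathbb E_{H^N}[e^{N\theta^\star\langle u,H^Nu\rangle}]$ and $d\widetilde{\mathbb P}_u\propto e^{N\theta^\star\langle u,H^Nu\rangle}d\mathbb P(H^N)$, one gets $\mathbb E_u\mathbb E_{H^N}[\mathds{1}_{A_\delta^c}\mathds{1}_{\{\|\rho(u)-\phi^\star\|\leq\delta\}}e^{N\theta^\star\langle u,H^Nu\rangle}]\leq\big(\sup_{u:\|\rho(u)-\phi^\star\|\leq\delta}\widetilde{\mathbb P}_u(A_\delta^c)\big)\cdot\mathbb E_u\mathbb E_{H^N}[\mathds{1}_{\{\|\rho(u)-\phi^\star\|\leq\delta\}}e^{N\theta^\star\langle u,H^Nu\rangle}]$. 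Hence, combining with Proposition \ref{ldpv-1-1} once more, the proposition follows once we show
\[
\lim_{\delta\to0}\ \limsup_{N\to\infty}\ \sup_{u:\,\|\rho(u)-\phi^\star\|\leq\delta}\ \widetilde{\mathbb P}_u(A_\delta^c)\ =\ 0,
\]
i.e. that under the tilted law, conditioned on the eigenvector profile $\rho(u)$ being close to $\phi^\star$, the triple $(\lambda_1(H^N),\rho(v_1(H^N)),\mu_{H^N})$ concentrates at $(x,\psi,\mu_\sigma)$.

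\emph{Step 3: the core estimate.} It is enough to prove this uniformly over a full-weight subset of $\{\|\rho(u)-\phi^\star\|\leq\delta\}$, for instance the $u$ whose restriction to each block $I^N_k$ is a uniform vector of the prescribed norm. Split $A_\delta^c$ into three parts. For $\{d_W(\mu_{H^N},\mu_\sigma)>\delta\}$: Cauchy--Schwarz gives $\widetilde{\mathbb P}_u(d_W>\delta)\leq\widetilde Z_u^{-1}\,\mathbb P(d_W>\delta)^{1/2}\,\mathbb E_{H^N}[e^{2N\theta^\star\langle u,H^Nu\rangle}]^{1/2}$, and since $\widetilde Z_u\geq1$ (Jensen), $\mathbb P(d_W>\delta)\leq e^{-\omega_\delta(N)}$ with $\omega_\delta(N)/N\to\infty$ (Lemma \ref{lem:Measure-Concentration}(2)), and $\mathbb E_{H^N}[e^{2N\theta^\star\langle u,H^Nu\rangle}]\leq e^{4N\theta^{\star2}\langle\rho(u),S\rho(u)\rangle}$ directly from the sharp sub-Gaussian hypothesis \eqref{SSH} (with $\langle\rho(u),S\rho(u)\rangle$ bounded on $\mathcal P_p$), this part tends to $0$. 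For $\{|\lambda_1-x|>\delta\}$ and $\{\|\rho(v_1)-\psi\|_1>\delta\}$: here one uses that the $\theta^\star$-tilt replaces $H^N$ by $W^N+M_u$, with $W^N$ an independent variance-profile Wigner matrix and $M_u=2\theta^\star\sum_{k,l}\sigma_{k,l}(\Pi_k u)(\Pi_l u)^{\top}$ a deterministic self-adjoint matrix of rank $\leq p$ --- this holds exactly when the entries are Gaussian, and in general up to corrections negligible for the spectral quantities at play, which is precisely where the universality built into \eqref{SSH} (saturated by the Gaussian) enters. A BBP/subordination analysis of $W^N+M_u$, in which the outlier eigenvalue and its eigenvector are governed by the Dyson equation \eqref{eq:Dyson} and by $G_{\mu_\sigma}^{-1}$, then shows --- at speed faster than $N$ --- that $\lambda_1(H^N)\to x$ and $\rho(v_1(H^N))\to\psi$ under $\widetilde{\mathbb P}_u$: the eigenvalue detaches at exactly $x$ because the ``bulk'' component $\varphi(\theta^\star,x)$ of $\phi^\star$ is the resolvent profile of $\mu_\sigma$ at $x$, and the eigenvector profile equals $\psi$ because of the decomposition $\phi^\star=\varphi(\theta^\star,x)+\big(1-\tfrac{G_{\mu_\sigma}(x)}{2\theta^\star}\big)_+\psi$ in \eqref{eq:Def_Phi_theta_bis} together with the stationarity relation $\partial_\theta\mathcal F(\theta^\star,x,\psi,\sigma)=0$, which makes the resolvent-reshuffled block weights of the condensate direction come out to $\psi$. (Deviations of $\lambda_1$ to arbitrarily large values are excluded by the truncation $\{\|H^N\|\leq C\}$; deviations of $\lambda_1$ or $\rho(v_1)$ inside $[r_\sigma-\delta,C]$ can alternatively be handled by bounding the corresponding piece of the double expectation through $I_N(H^N,\theta^\star)\leq e^{N(J_{\mu_\sigma}(\cdot,\theta^\star)+o_\delta(1))}$, the monotonicity of $y\mapsto J_{\mu_\sigma}(y,\theta^\star)$, and the already-established upper bound of Proposition \ref{prop:LDPUB2}.) Assembling the three parts yields the display of Step 2 and hence the proposition.

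\emph{Main obstacle.} The hard point is Step 3 for the eigenvalue and eigenvector pieces: carrying out the variance-profile BBP/subordination computation for the explicit perturbation $M_u$, verifying that the calibration encoded in $\varphi(\theta,x)$ and $\phi(\theta,x,\psi)$ --- together with the optimality of $\theta^\star$ --- makes the outlier location and its eigenvector's profile equal precisely $x$ and $\psi$, and upgrading this to a concentration statement at speed $>N$ valid for sharp sub-Gaussian, not only Gaussian, entries. The remaining ingredients (Lemma \ref{lem:J_spherical_integral}, Proposition \ref{ldpv-1-1}, Lemma \ref{lem:Measure-Concentration}, Proposition \ref{prop:LDPUB2}) are used essentially off the shelf.
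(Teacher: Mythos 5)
Your overall strategy is the right one: tilt by the spherical integral, restrict to the sphere‐profile event $\{\|\rho(u)-\phi^\star\|\leq\delta\}$, use Proposition~\ref{ldpv-1-1} to evaluate the resulting annealed integral, and then argue that the $\theta^\star$-tilted law concentrates the largest eigenvalue at $x$. But there are two substantive problems, one of them a genuine gap.

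\emph{Choice of the tilting parameter.} You define $\theta^\star$ as the maximizer of $\theta\mapsto\mathcal F(\theta,x,\psi,\sigma)$ and then assert, but do not prove, that the stationarity condition $\partial_\theta\mathcal F(\theta^\star,x,\psi,\sigma)=0$ implies the tilted outlier equation (the BBP/subordination relation), so that $\lambda_1$ concentrates at $x$ and $\rho(v_1)$ at $\psi$ under $\widetilde{\mathbb P}_u$. This is exactly the part the paper avoids. The paper instead defines $\theta_*$ directly as a solution of the tilted outlier equation, obtained by an intermediate value argument (Lemma~\ref{lem:continuous-find_theta}: $\nu(0)=0$, $\nu(\theta)\to\infty$, so there is $\theta_*$ with $\nu(\theta_*)=1$, i.e.\ $z(\theta_*)=x$), and then uses only the trivial inequality $-\mathcal F(\theta_*,x,\psi,\sigma)\geq\inf_{\theta\geq0}\big(-\mathcal F(\theta,x,\psi,\sigma)\big)$ to finish. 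Nothing in the lower-bound argument requires $\theta_*$ to be the maximizer. Taking the argmax as you do is logically harder, since it obliges you to verify a nontrivial compatibility between the variational characterization of $\theta^\star$ and the perturbed Dyson equation $\det(I_p-2\theta SD(\theta,x))=0$; you cannot appeal to the fact that the upper and lower bounds match, since that would be circular.

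\emph{The concentration step itself.} Step 3 is the heart of the proof and you explicitly do not carry it out, calling it the ``main obstacle.'' In the paper this step is exactly the content of Proposition~\ref{prop:Add_lambda_1}, whose proof passes through (i) Lemma~\ref{lem:Phii-uniform}, which replaces the condition $\|\rho(u)-\phi\|\leq\kappa$ by sampling $v$ with $\rho(v)=\phi$ exactly (a reduction you do not make, but which is needed because the BBP lemmas borrowed from \cite{HussonVar} are formulated for a fixed delocalized $v$); (ii) Lemma~\ref{lem2-1}, which identifies the $\theta_*$-tilt, for delocalized $v$, with a rank-$\leq p$ deterministic shift plus a centered matrix with the same variance profile, up to a spectrally negligible $\Delta_N$; and (iii) Lemma~\ref{lem:z(theta)-prediction}, which gives only convergence \emph{in probability} of $\lambda_1$ to $z(\theta)$ under $\mathbb P^{\theta,v}$. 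You claim concentration ``at speed faster than $N$,'' which is not needed for the lower bound and is not what the BBP outlier fluctuations deliver; the convergence in probability $1-o(1)$, combined with \eqref{eq:annealed_delocalized} and \eqref{eq:Omega_theta}, suffices, and this is all the paper uses. Your Cauchy--Schwarz handling of the $d_W$ part is correct but is not the hard part.

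In short: the skeleton of the proposal matches the paper, but the choice of $\theta^\star$ creates an additional unproved claim, and the decisive tilted-law concentration step (BBP/subordination at the right $\theta$, reduction to a fixed delocalized profile vector) is left undone.
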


Proposition \ref{prop:LDPUB2} and Lemma \ref{lem:zero_entropy} will
be enough to conclude the upper bound in Theorem \ref{maintheo-constant} { (together with Section \ref{seceps} where it is shown that the $\psi$ which optimize the upper bound are such that $\langle \psi,S\psi\rangle \neq 0$)}
and then Proposition \ref{prop:LDPLB} will give the complementary
large deviation lower bound. This finishes the proof of Theorem \ref{maintheo-constant}.
We then obtain Theorem \ref{maintheo} in the piecewise continuous
case approximating the variance profile by piecewise constant functions
and we will prove that the rate functions of the approximations converge
toward the actual definition of Theorem \ref{maintheo} thus concluding
the proof.

In Section \ref{sec:Annealed_spherical} we prove Proposition \ref{ldpv-1-1}
that gives an asymptotic formula for the annealed spherical integral.
In Section \ref{sec:LDupperbound} we prove the lower bound of Theorem
\ref{maintheo-constant} stated as in Proposition \ref{prop:LDPUB2}.
Section \ref{sec:LDlowerbound} is devoted to the proof of the lower
bound of Theorem \ref{maintheo-constant} stated as in Proposition
\ref{prop:LDPLB}. We check that the proposed function $I_{\sigma}$
is a good rate function in Section \ref{sec:rate}. The generalization
from piecewise constant to piecewise continuous is done in Section
\ref{sec:Piecewise-continuous}.

\subsection{Notations}
We recall the following notations used throughout this paper:
\begin{itemize}
\item $N$ the size of the matrices, $p$ the number of intervals for the piecewise
continuous/constant functions.
\item $H_{ij}^{N}=\frac{1}{\sqrt{N}}X_{ij}^{N}$ the random matrix of size
$N\times N$. 
\item $\Sigma_{ij}^{N}=2^{-1_{i=j}}\mathbb{E}[(X_{ij}^{N})^{2}]$ the variance profile
of $H^{N}$ and  $A=\sup_{N}\max_{i,j}\Sigma_{ij}^{N}$.  $\sigma$ denotes  the limiting
variance profile. it is a matrix $\sigma_{k,l}$ in the  piecewise constant case, and a function $\sigma(s,t)$ on $[0,1]^{2}$
in the continuous case. 
\item $I_{1},\cdots,I_{p}$ are  intervals that form a partition of $[0,1]$,
$|I_{k}|$ for the length of the interval $0<t_{1}^{N} <t^{N}_{2}<\cdots <t_{N}^{N}<1$ are sequences whose empirical distribution converges towards the uniform law on $[0,1]$ and we set $I_{k}^{N}:=\{i: t_{i}^{N}\in I_{k}\}$. 
\item $M$ is
a symmetric matrix in $\mathbb{R}^{N\times N}$,
$\lambda_{1}(M)\geq\cdots\geq\lambda_{N}(M)$ the eigenvalues of $M$
and $v_{1}(M),\cdots,v_{N}(M)$ the associated eigenvectors. \item $\mu_{M}$ the empirical measure of  the eigenvalues of $M$. 
\item $G_{\nu}(z)$ the Stieljes transform of a measure $\nu$, $G_{\mu_{\sigma}}^{-1}$
its inverse. 
\item ${\cal P}([0,1])$ the set of probability measures on $[0,1]$
and ${\cal P}_{p}={\cal P}(\{1,\cdots,p\})$. 
\item $\mu_{\sigma}$, $\mu_{\sigma,t}$ and $\mu_{\sigma,k}$ the asymptotic
empirical measures. $[l_{\sigma},r_{\sigma}]$ the support of $\mu_{\sigma}$
.
\item $u$ and element of the $N$-dimensional sphere $\mathbb{S}^{N-1}$, $\int_{\mathbb{S}^{N-1}}...du$ and  $\mathbb{E}_{u}$
the integral on $\mathbb{S}^{N-1}$. $\mathbb{E}_{H}\mathbb{E}_{u}$ or $\mathbb{E}_{H,u}$ when we integrate both on the matrix $H$ and $u$. 
\item $I(M,\theta)$ the spherical integral and  $J_{\mu}(x,\theta)$ the limit of the spherical integral. 
\item $I_{\sigma}(x)$ the large deviation rate function,  $K(\theta,\psi)$ the limit of the annealed integral. 
\item ${\cal F}(\theta,x,\psi,\sigma)=J_{\mu_{\sigma}}(x,\theta)-K(\theta,\phi(\theta,x,\psi))$ 
\item $\rho(u)\in\mathbb{R}_{+}^{p}$ for the profile of $u$.
\item $P_{\delta}^{N}(x)$ the probability for the largest eigenvalue  to be localized near $x$ and
$P_{\delta}^{N}(x,\psi)$  the probability  for both the eigenvalue  to be localized near $x$  and the profile of the
eigenvector to be localized near $\psi$. 
\item $\Pi$ general orthogonal projection, $\Pi_{k}$ the orthogonal   projection onto the vector space
$\{ e_{i}, i\in I_{k}^{N}\}$  where $e_{i}$ is the canonical basis.
\end{itemize}

\section{Convergence of the annealed spherical integral, Proof of Proposition
\ref{ldpv-1-1}}

\label{sec:Annealed_spherical}We next prove the convergence of the
annealed restricted spherical integral of Lemma \ref{ldpv-1-1}. For
$u\in\mathbb{S}^{N-1}$, we recall the definition of  the profile $\rho(u)\in{\cal P}_{p}$
from \eqref{eq:def_rho(u)} and introduce the family of vectors $\frak{u}^{k}\in\mathbb{R}^{I_{k}^{N}}$
for $1\leq k\leq p$ 
\[
\frak{u}^{k}:=(u_{i}/\sqrt{\rho(u)_{k}})_{i\in I_{k}^{N}}.
\]

It is straightforward to check the following Lemma :
\begin{lem}
\label{lem:Uniform_sphere_family}For $u$ following the uniform law
on $\mathbb{S}^{N-1}$, the random vector $\rho(u)$ follows a Dirichlet
law of parameters $(\#I_{1}^{N},\dots,\#I_{p}^{N})/2$ and $(\frak{u}^{k})_{1\leq k\leq p}$
form a family of independent random uniform unit vectors that are
independent of $\rho(u)$.
\end{lem}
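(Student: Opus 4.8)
The plan is to use the standard Gaussian representation of the uniform measure on the sphere. Let $g=(g_{1},\dots,g_{N})$ be a standard Gaussian vector in $\mathbb{R}^{N}$; then $u:=g/\|g\|$ is uniformly distributed on $\mathbb{S}^{N-1}$, so it suffices to establish the claim for this particular realization of $u$. Writing $R_{k}:=\|\Pi_{k}g\|^{2}=\sum_{j\in I_{k}^{N}}g_{j}^{2}$, one has $\rho(u)_{k}=\|\Pi_{k}g\|^{2}/\|g\|^{2}=R_{k}/\sum_{l=1}^{p}R_{l}$ and
\[
\frak{u}^{k}=(u_{i}/\sqrt{\rho(u)_{k}})_{i\in I_{k}^{N}}=(g_{i}/\sqrt{R_{k}})_{i\in I_{k}^{N}}=\Pi_{k}g/\|\Pi_{k}g\|,
\]
where $\frak{u}^{k}$ is viewed as an element of the unit sphere of $\mathbb{R}^{I_{k}^{N}}$. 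Since the coordinates of $g$ are independent, the blocks $(\Pi_{k}g)_{1\le k\le p}$ are independent, and each $R_{k}$ is a $\chi^{2}$ variable with $\#I_{k}^{N}$ degrees of freedom, i.e.\ a Gamma variable with shape $\#I_{k}^{N}/2$ and a common scale.

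The first step is to record the elementary within-block fact: for a standard Gaussian vector $h$ in $\mathbb{R}^{d}$, the direction $h/\|h\|$ is uniform on the unit sphere of $\mathbb{R}^{d}$ and is independent of $\|h\|$; this is immediate from the rotational invariance of the Gaussian law, or from a polar-coordinate computation. Applying this to $h=\Pi_{k}g$ for each $k$ and combining with the independence of the blocks shows that the family $(\frak{u}^{1},\dots,\frak{u}^{p},R_{1},\dots,R_{p})$ is mutually independent, with each $\frak{u}^{k}$ uniform on the unit sphere of $\mathbb{R}^{I_{k}^{N}}$. Because $\rho(u)$ is the deterministic function $(R_{k}/\sum_{l}R_{l})_{k}$ of $(R_{1},\dots,R_{p})$, this already yields that $(\frak{u}^{k})_{1\le k\le p}$ is a family of independent uniform unit vectors, independent of $\rho(u)$.

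It remains to identify the law of $\rho(u)$, which is the classical Gamma--Dirichlet representation: if $R_{1},\dots,R_{p}$ are independent Gamma variables with shapes $\alpha_{1},\dots,\alpha_{p}$ and a common scale, then $(R_{1},\dots,R_{p})/\sum_{l}R_{l}$ has the Dirichlet law with parameters $(\alpha_{1},\dots,\alpha_{p})$ and is independent of $\sum_{l}R_{l}$. I would prove this by the change of variables $(R_{1},\dots,R_{p})\mapsto\big(\sum_{l}R_{l},\,R_{1}/\sum_{l}R_{l},\dots,R_{p-1}/\sum_{l}R_{l}\big)$ in the product of Gamma densities, which factorizes into a Gamma density in the first variable times a Dirichlet density in the remaining ones. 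With $\alpha_{k}=\#I_{k}^{N}/2$ this gives $\rho(u)\sim\mathrm{Dir}(\#I_{1}^{N}/2,\dots,\#I_{p}^{N}/2)$ and completes the proof. There is no genuine difficulty here; the only point requiring a little care is to keep the three independence statements separate — between blocks, and within each block between direction and norm — before invoking the Gamma--Dirichlet identity, and to observe that $\#I_{k}^{N}\ge1$ so that all objects are well defined, which holds for $N$ large since $\frac1N\sum_{i}\delta_{t_{i}^{N}}$ converges to the uniform law and each $I_{k}$ has positive length.
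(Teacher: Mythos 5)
Your proof is correct. The paper does not actually supply an argument for this lemma — it simply labels it "straightforward to check" — so there is nothing to compare against; your Gaussian-representation argument is the canonical proof and fills the gap cleanly. The three independence points (between blocks via independence of the Gaussian coordinates; within a block between direction and norm via rotational invariance; and the final Gamma--Dirichlet factorization) are correctly identified and in the right order, and the reduction of $\chi^{2}_{\#I_{k}^{N}}$ to $\Gamma(\#I_{k}^{N}/2,\cdot)$ gives exactly the stated Dirichlet parameters.
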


Moreover we have the following classical large deviation principle.

\begin{lem}
\label{lem:rho_LD}For $u$ with the uniform law on $\mathbb{S}^{N-1}$,
the law of $\rho(u)$ satisfies a large deviation principle with speed
$N$ and rate function given by the Kullback--Leibler divergence
:
\[
d_{KL}(|I|\|\phi):=-\frac{1}{2}\sum_{k=1}^{p}|I_{k}|(\log(\phi_{k})-\log(|I_{k}|)).
\]
\end{lem}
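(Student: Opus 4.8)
The plan is to identify the law of $\rho(u)$ with that of a normalized vector of independent chi-square variables and then to derive the large deviation principle from Cram\'er's theorem together with the contraction principle. Realize $u=g/\|g\|$ with $g=(g_1,\dots,g_N)$ a standard Gaussian vector in $\mathbb{R}^N$, so that, recalling $\rho(u)_k=\|\Pi_k u\|^2$, one has $\rho(u)_k=\|\Pi_k g\|^2/\|g\|^2=S_k/\sum_{l=1}^{p}S_l$ with $S_k:=\sum_{j\in I_k^N}g_j^2$; since $I_1^N,\dots,I_p^N$ partition $\{1,\dots,N\}$, the variables $S_1,\dots,S_p$ are independent, $S_k$ having a $\chi^2$ law with $\#I_k^N$ degrees of freedom (this is just the Gaussian reformulation of Lemma \ref{lem:Uniform_sphere_family}). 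One may assume $|I_k|>0$ for all $k$: if $\#I_k^N$ stays bounded, the coordinate $\rho(u)_k$ tends to $0$ exponentially fast while the corresponding term $-\tfrac12|I_k|\log(\phi_k/|I_k|)$ in $d_{KL}$ vanishes when $|I_k|=0$, so such indices can simply be removed.

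Next I would prove that $(S_1/N,\dots,S_p/N)$ satisfies a large deviation principle at speed $N$. For fixed $k$, $\tfrac1N\log\mathbb{E}[e^{\lambda S_k}]=-\tfrac{\#I_k^N}{2N}\log(1-2\lambda)$ for $\lambda<\tfrac12$ (and $+\infty$ otherwise), which converges as $N\to\infty$ to $\Lambda_k(\lambda):=-\tfrac{|I_k|}{2}\log(1-2\lambda)$; this limit is lower semicontinuous, essentially smooth, and finite on a neighbourhood of $0$, so the G\"artner--Ellis theorem gives a large deviation principle for $S_k/N$ with good rate function
\[
\Lambda_k^*(b)=\sup_{\lambda}\bigl(\lambda b-\Lambda_k(\lambda)\bigr)=\tfrac12\Bigl(b-|I_k|-|I_k|\log\tfrac{b}{|I_k|}\Bigr)\quad\text{for }b>0,
\]
and $\Lambda_k^*(b)=+\infty$ for $b\le0$. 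Since the joint log-Laplace transform of $(S_1/N,\dots,S_p/N)$ factorizes into $\sum_k\Lambda_k(\lambda_k)$, the same theorem applied to the vector yields a large deviation principle at speed $N$ with good rate function $\widetilde I(b)=\sum_{k=1}^{p}\Lambda_k^*(b_k)$, whose effective domain is $(0,\infty)^p$.

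Finally I would contract along $\Psi:b\mapsto b/\sum_l b_l$, which is continuous on the open set $\{\sum_l b_l>0\}$, hence on every level set $\{\widetilde I\le L\}$, a compact subset of $(0,\infty)^p$. Together with the exponential bound $\mathbb{P}(\sum_l S_l\le\varepsilon N)\le e^{-Nc(\varepsilon)}$ with $c(\varepsilon)\to\infty$ as $\varepsilon\to0$ — itself Cram\'er's theorem for $\tfrac1N\|g\|^2$ — this allows one to discard the neighbourhood of the origin and apply the contraction principle, so that $\rho(u)=\Psi(S/N)$ satisfies a large deviation principle at speed $N$ with good rate function $\phi\mapsto\inf_{r>0}\widetilde I(r\phi)$. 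For $\phi\in\mathcal{P}_p$, using $\sum_k\phi_k=\sum_k|I_k|=1$,
\[
\widetilde I(r\phi)=\tfrac12\sum_k\Bigl(r\phi_k-|I_k|-|I_k|\log\tfrac{r\phi_k}{|I_k|}\Bigr)=\tfrac12(r-1-\log r)-\tfrac12\sum_k|I_k|\log\tfrac{\phi_k}{|I_k|},
\]
whose infimum over $r>0$ is attained at $r=1$ and equals $-\tfrac12\sum_k|I_k|\log(\phi_k/|I_k|)=d_{KL}(|I|\|\phi)$ (both sides being $+\infty$ when some $\phi_k=0$). This is exactly the claimed rate function.

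The only delicate point is the use of the contraction principle: $\Psi$ is not continuous at the origin, so one first has to localize away from $\{\sum_l b_l=0\}$ via the exponential tightness recalled above before the standard transfer of the large deviation bounds applies; the rest is routine. An alternative route, bypassing the contraction principle entirely, starts from the explicit Dirichlet density of $\rho(u)$ supplied by Lemma \ref{lem:Uniform_sphere_family} and Stirling's formula, which give $-\tfrac1N\log(\text{density at }\phi)\to d_{KL}(|I|\|\phi)$ locally uniformly on the interior of the simplex and uniformly to $+\infty$ near its boundary, whence the large deviation principle by the usual argument for sequences of laws with densities.
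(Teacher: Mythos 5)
Your proof is correct, but it takes a genuinely different and far more detailed route than the paper, which simply observes that $\#I_k^N/N\to|I_k|$ and cites Theorem 11 of \cite{HuGu2}. Your argument — realize $u$ as a normalized standard Gaussian, reduce $\rho(u)$ to a normalized vector of independent chi-square sums $S_k$, get the joint LDP for $(S_1/N,\dots,S_p/N)$ via G\"artner--Ellis (which, not Cram\'er, is what is really needed since $S_k$ has $\#I_k^N\neq N$ summands), and then contract along $b\mapsto b/\sum_l b_l$ — is sound, and you correctly identify and handle the only subtle point, namely that the normalization map is discontinuous at the origin: the level sets of $\widetilde I$ are compact subsets of $(0,\infty)^p$, which lets you localize away from $\{\sum_l b_l=0\}$ before invoking the contraction principle. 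The final Legendre-transform and infimum-over-$r$ computations are also correct and recover $d_{KL}(|I|\|\phi)$. The alternative you sketch — Laplace asymptotics on the explicit Dirichlet density supplied by Lemma \ref{lem:Uniform_sphere_family} via Stirling — is arguably the most economical self-contained argument since that density is already in hand, and is likely closer to what the cited reference does. What the paper's approach buys is brevity by offloading the work to \cite{HuGu2}; what yours buys is a self-contained derivation.
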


\begin{proof}
[Proof of Lemma \ref{lem:rho_LD}]Because $\lim_{N\to\infty}\#I_{k}^{N}/N=|I_{k}|$ since the empirical measure of the $t_{i}^{N}$ converges toward the uniform law, 
it follows from \cite[Theorem 11]{HuGu2}.
\end{proof}

We denote, for $1\leq i\leq j\leq N$, $L_{ij}^{N}(t)=\log\mathbb{E}[e^{tX_{ij}^{N}}]$
and we have 
\begin{equation}
\mathbb{E}_{H}[\exp(\theta N\langle u,H^{N}u\rangle)]=\exp\Big(\sum_{1\leq i<j\leq N}L_{ij}^{N}(2\sqrt{N}\theta u_{i}u_{j})+\sum_{i=1}^{N}L_{ii}^{N}(\sqrt{N}\theta u_{i}^{2})\Big)\label{eq:Annealed_H-1}.
\end{equation}

\begin{proof}
[Proof of Proposition \ref{ldpv-1-1}] For the sake of this theorem,
we can replace the norm $\|.\|_{L^{1}}$ by the uniform norm $\|.\|_{\infty}$.
By norm equivalence, the result remains unchanged. Then, using our
sharp sub-Gaussian assumption \eqref{SSH} in  \eqref{eq:Annealed_H-1}, we find that : 
\begin{eqnarray*}
\mathbb{E}_{H}[\exp(\theta N\langle u,H^{N}u\rangle)] & \leq & \exp\Big(N\theta^{2}\Big(2\sum_{1\leq i<j\leq N}u_{i}^{2}u_{j}^{2}\Sigma_{i,j}^{N}+\sum_{i=1}^{N}u_{i}^{4}\Sigma_{i,i}^{N}\Big)\Big)\\
 & = & \exp\Big(N\theta^{2}\sum_{k,l=1}^{p}\rho(u)_{k}\rho(u)_{l}\sigma_{k,l}\Big).
\end{eqnarray*}
Therefore, with Lemma \ref{lem:rho_LD}, since $\{\|\rho(u)-\psi\|_{\infty}\leq\delta\}=\bigcap_{i=1}^{p}\{|\rho(u)_{i}-\psi_{i}|\leq\delta\}$,
we have by Varadhan's lemma that 
\[
\limsup_{N\to\infty}\frac{1}{N}\log\mathbb{E}_{u}[\prod_{1\le i\le p}\mathds{1}_{\{|\rho_{i}(u)-\psi_{i}|\leq\delta\}}\exp\Big(N\theta^{2}\sum_{k,l=1}^{p}\rho(u)_{k}\rho(u)_{l}\sigma_{k,l}\Big)]\qquad\qquad
\]
\[
\qquad\qquad\leq\theta^{2}\sum_{k,l=1}^{p}\psi_{k}\psi_{l}\sigma_{k,l} { + } \frac{1}{2}\sum_{i=1}^{p}|I_{k}|(\log(\psi_{i})-\log(|I_{i}|))+O(\delta)=K(\theta,\psi)+O(\delta)
\]
where $O(\delta)$ goes to zero when $\delta$ goes to zero. 
Therefore, letting $\delta$ going to zero, we conclude that

\[
\lim_{\delta\rightarrow0}\limsup_{N\rightarrow\infty}\frac{1}{N}\log\mathbb{E}_{u}\mathbb{E}_{H}(e^{N\theta\langle u,H^{N}u\rangle}\mathds{1}_{\{\|\psi-\rho(u)\|\leq\delta\}})\leq K(\theta,\psi).
\]

For the lower bound, we use Lemma \ref{lem:Uniform_sphere_family}.
We denote,  for $1\leq k\leq p$, $E_{k}:=\{\forall i\in I_{k}^{N},|\frak{u}_{i}^{k}|\leq N^{-3/8}\}$
and $E=\cap_{k=1}^{p}E_{k}$. We get the following lower bound:
\begin{eqnarray*}
\mathbb{E}_{u}\mathbb{E}_{H}(e^{N\theta\langle u,H^{N}u\rangle}\mathds{1}_{\{\|\psi-\rho(u)\|\leq\delta\}}) & \geq & \mathbb{E}_{u}\mathbb{E}_{H}(e^{N\theta\langle u,H^{N}u\rangle}\mathds{1}_{\{\|\psi-\rho(u)||\leq\delta\}\cap E}).
\end{eqnarray*}
Let $\epsilon>0$. Using the sub-Gaussian character of the $H_{ij}^{N}$,
and the fact that the $\Sigma_{i,j}^{N}$ are bounded, one can find
$\epsilon'>0$ such that for $|t|\leq\epsilon'$, $L_{i,j}^{N}(t)\geq(1-\epsilon)\Sigma_{i,j}^{N}t^{2}/2$.
Therefore, on the event $E$, $2\sqrt{N}\theta|u_{i}||u_{j}|\leq2\theta\sqrt{N}(N^{-3/8})^{2}\leq\epsilon'$
for $N$ large enough and we have by \eqref{eq:Annealed_H-1} : 
\begin{eqnarray*}
\mathbb{E}_{H}[\exp(\theta N\langle u,H^{N}u\rangle)] & \geq & \exp\Big(N(1-\epsilon)^{2}\theta^{2}\Big(\sum_{k,l=1}^{p}\rho(u)_{k}\rho(u)_{l}\sigma_{k,l}\Big)\Big)\,.
\end{eqnarray*}
As a consequence, one can write: 
\begin{eqnarray*}
 &  & \mathbb{E}_{u}\mathbb{E}_{H}(e^{N\theta\langle u,H^{N}u\rangle}\mathds{1}_{\{\|\psi-\rho(u)\|\leq\delta\}})\\
 & \geq & \mathbb{E}_{u}\left[\exp\Big(N(1-\epsilon)\theta^{2}\Big(\sum_{k,l=1}^{p}\sigma_{k,l}\rho(u)_{k}\rho(u)_{l}\Big)\Big)\mathds{1}_{\{\|\psi-\rho(u)\|\leq\delta\}\cap E}\right]\\
 & \geq & \mathbb{P}_{u}[E\cap\{\|\psi-\rho(u)\|\leq\delta\}]\exp\Big(N(1-\epsilon)\theta^{2}\Big(\sum_{k,l=1}^{n}\sigma_{k,l}\psi_{k}\psi_{l}\Big)\Big)\exp(-N\eta(\delta))
\end{eqnarray*}
with $\eta(\delta)\rightarrow0$ as $\delta\rightarrow0$. Furthermore,
one can see that since $\mathbb{P}_{u}[E]=\prod_{k=1}^{p}\mathbb{P}_{u}[E_{k}]$
because the $\frak{u}^{k}$ are independent and $\lim_{N\to\infty}N^{-1}\log\mathbb{P}_{u}[E_{k}]=0$
for all $k$ (see e.g. \cite[Lemma 3.3]{HuGu1}) then 
\[
\lim_{N\to\infty}\frac{1}{N}\log\mathbb{P}_{u}[E]=0.
\]
Therefore, since $\rho$ is independent from $E$, we have by Lemma
\ref{lem:rho_LD} that for any $\epsilon>0$,

\[
\lim_{\delta\to0}\liminf_{N\to\infty}\frac{1}{N}\log\mathbb{E}_{u}\mathbb{E}_{H}(e^{N\theta\langle u,H^{N}u\rangle}\mathds{1}_{\{||\psi-\rho(u)||\leq\delta\}})\qquad
\]
\[
\geq(1-\epsilon)\theta^{2}\sum_{k,l=1}^{p}\psi_{k}\psi_{l}\sigma_{k,l}+ \frac{1}{2}\sum_{i=1}^{p}|I_{i}|(\log(\psi_{i})-\log(|I_{i}|)).
\]
Since this is true for any $\epsilon>0$, we conclude by taking $\epsilon$ to zero that
\[
\lim_{\delta\to0}\liminf_{N\to\infty}\frac{1}{N}\log\mathbb{E}_{u}\mathbb{E}_{H}(e^{N\theta\langle u,H^{N}u\rangle}\mathds{1}_{\{\|\psi-\rho(u)\|\leq\delta\}})\geq K(\theta,\psi)\,.
\]
\end{proof}

\section{Proof of Theorem \ref{maintheo-constant}, Large deviation upper
bound}

\label{sec:LDupperbound}
As
mentioned before we restrict ourselves in this section to the piecewise
constant case and use the notations from Section \ref{subsec:Piecewise-constant-notation}. 

\subsection{Proof of Proposition \ref{prop:LDPUB2}}
For Proposition \ref{prop:LDPUB2} we actually only need to prove
that for every $x\ge r_{\sigma}$ and $\psi\in\mathcal{P}_{p}$, every
$\theta>G_{\mu_{\sigma}}(x)/2$, 
\begin{equation}
\lim_{\delta\to0}\limsup_{N\to\infty}P_{\delta}^{N}(x,\psi)\leq-\mathcal{F}(\theta,x,\psi,\sigma)\,.\label{ldub}
\end{equation}
Indeed, recalling  the definitions in \eqref{eq:Def_K_bis} and \eqref{eq:Def_Phi_theta_bis}, we will prove in  Appendix \ref{secsimp} that
\begin{lem}
\label{lem:theta} For any piecewise continuous variance profile $\sigma$,
any $\psi\in\mathcal{P}([0,1])$, any $x>r_{\sigma}$, and $\theta$
such that $G_{\mu_{\sigma}}^{-1}(2\theta)>x$, we have 
\[
K(\theta,\phi(\theta,x,\psi))=J_{\mu_{\sigma}}(x,\theta)\,.
\]
\end{lem}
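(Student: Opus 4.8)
The plan is to verify directly that, when $G_{\mu_\sigma}^{-1}(2\theta) > x$, the auxiliary measure $\phi(\theta,x,\psi)$ collapses to $\varphi(\theta,x)$ (since the coefficient $(1 - G_{\mu_\sigma}(x)/(2\theta))_+$ vanishes in this regime, as $G_{\mu_\sigma}(x) < G_{\mu_\sigma}(G_{\mu_\sigma}^{-1}(2\theta)) = 2\theta$ because $G_{\mu_\sigma}$ is decreasing on $(r_\sigma,\infty)$ and $x < G_{\mu_\sigma}^{-1}(2\theta)$). So it suffices to show $K(\theta,\varphi(\theta,x)) = J_{\mu_\sigma}(x,\theta)$ for such $\theta$, where $\varphi(\theta,x)_k = G_{\mu_{\sigma,k}}(y)/(2\theta)$ with $y := G_{\mu_\sigma}^{-1}(2\theta) \vee x = G_{\mu_\sigma}^{-1}(2\theta)$. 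Since $2\theta = G_{\mu_\sigma}(y) > G_{\mu_\sigma}(x)$, the relevant branch of $J_{\mu_\sigma}$ is the second one:
\[
J_{\mu_\sigma}(x,\theta) = \theta y - \tfrac12 - \tfrac12\log 2\theta - \tfrac12 \int_{\mathbb R}\log(y - t)\, d\mu_\sigma(t).
\]

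First I would compute $K(\theta,\varphi)$ from \eqref{eq:Def_K_bis}. The entropy term is $\tfrac12 \sum_k |I_k| \log(\varphi_k/|I_k|) = \tfrac12 \sum_k |I_k| \log\big(G_{\mu_{\sigma,k}}(y)/(2\theta |I_k|)\big)$. The interaction term is $\theta^2 \sum_{k,l}\sigma_{k,l}\varphi_k\varphi_l = \tfrac14 \sum_{k,l}\sigma_{k,l} G_{\mu_{\sigma,k}}(y) G_{\mu_{\sigma,l}}(y)$. The key tool is the Dyson system $|I_k|/G_{\mu_{\sigma,k}}(y) = y - \sum_l \sigma_{k,l} G_{\mu_{\sigma,l}}(y)$, i.e. $\sum_l \sigma_{k,l} G_{\mu_{\sigma,l}}(y) = y - |I_k|/G_{\mu_{\sigma,k}}(y)$. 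Multiplying by $G_{\mu_{\sigma,k}}(y)$ and summing over $k$ gives $\sum_{k,l}\sigma_{k,l}G_{\mu_{\sigma,k}}(y)G_{\mu_{\sigma,l}}(y) = y\sum_k G_{\mu_{\sigma,k}}(y) - \sum_k |I_k| = 2\theta y - 1$ (using $\sum_k G_{\mu_{\sigma,k}}(y) = G_{\mu_\sigma}(y) = 2\theta$ and $\sum_k |I_k| = 1$). Hence the interaction term equals $\tfrac14(2\theta y - 1) = \tfrac{\theta y}{2} - \tfrac14$.

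Next I would handle the entropy/log-potential identification. The target contains $-\tfrac12\int\log(y-t)\,d\mu_\sigma(t) = -\tfrac12\sum_k \int\log(y-t)\,d\mu_{\sigma,k}(t)$. I would relate this to $\log G_{\mu_{\sigma,k}}(y)$ via the standard fact that $\frac{d}{dy}\int\log(y-t)\,d\mu_{\sigma,k}(t) = G_{\mu_{\sigma,k}}(y)$ and $\frac{d}{dy}\log G_{\mu_{\sigma,k}}(y) = G'_{\mu_{\sigma,k}}(y)/G_{\mu_{\sigma,k}}(y)$; so the cleanest route is to differentiate the claimed identity $K(\theta,\varphi(\theta,x)) = J_{\mu_\sigma}(x,\theta)$ in $\theta$ (equivalently in $y$, using $2\theta = G_{\mu_\sigma}(y)$, $2\,d\theta = G'_{\mu_\sigma}(y)\,dy$), check that both sides have the same $y$-derivative using the Dyson equations and the differentiated Dyson equations, and then match one boundary value, e.g. as $\theta \to G_{\mu_\sigma}(x)/2^+$ (so $y \to x^+$), where the second branch of $J$ meets the first and one can either invoke continuity of both sides or compute the common limit explicitly. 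Alternatively — and this is the approach I'd actually try first as it avoids the boundary computation — I would plug the Dyson relation $\log\varphi_k = \log G_{\mu_{\sigma,k}}(y) - \log 2\theta$ and the identity $\sum_k |I_k|\log|I_k|$ against the known structure, and reduce everything to the single scalar identity $\sum_k |I_k|\log\frac{G_{\mu_{\sigma,k}}(y)}{|I_k|} = -\int\log(y-t)\,d\mu_\sigma(t) + (\text{explicit terms})$, proving that scalar identity again by the differentiate-in-$y$ argument.

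The main obstacle I expect is precisely this last identification of the entropy term with the log-potential $\int\log(y-t)\,d\mu_\sigma(t)$: unlike the interaction term, which falls out algebraically from the Dyson system, the log-potential is not a rational function of the $G_{\mu_{\sigma,k}}$, so one genuinely needs a calculus argument (differentiate in $y$, use $\partial_y \log G_{\mu_{\sigma,k}}(y)$ together with the $y$-derivative of the Dyson system to see the derivatives match, then fix the constant at a boundary point). Care is needed that $y = G_{\mu_\sigma}^{-1}(2\theta) > r_\sigma$ throughout, so all Stieltjes transforms and logarithms are real-analytic and the differentiation is justified; and one must track the $\tfrac12\log 2\theta$ and constant $-\tfrac12$ terms carefully so that they balance exactly against the $-\tfrac12\log 2\theta$ coming from $\log\varphi_k = \log G_{\mu_{\sigma,k}}(y) - \log 2\theta$ summed with weights $|I_k|$ summing to $1$.
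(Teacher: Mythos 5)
Your plan follows the same route as the paper's proof: note that $\phi$ collapses to $\varphi$ in this regime, rewrite everything in terms of $v:=G_{\mu_\sigma}^{-1}(2\theta)$, show that the $v$-derivative of $K-J$ vanishes via the Dyson equation (and its $v$-derivative), then pin down the constant at a boundary value. Two points deserve correction, one minor and one more substantial.

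First, your inequality directions are reversed in several places. Since $G_{\mu_\sigma}$ is decreasing on $(r_\sigma,\infty)$ and $x < G_{\mu_\sigma}^{-1}(2\theta)=v$, one has $G_{\mu_\sigma}(x) > G_{\mu_\sigma}(v)=2\theta$, not ``$G_{\mu_\sigma}(x) < 2\theta$'' as you write, and likewise later ``$2\theta = G_{\mu_\sigma}(y) > G_{\mu_\sigma}(x)$'' should be ``$<$''. The conclusions you draw are nevertheless the correct ones: the coefficient $(1-G_{\mu_\sigma}(x)/(2\theta))_+$ vanishes precisely because $G_{\mu_\sigma}(x)/(2\theta)>1$, and the second branch of $J_{\mu_\sigma}$ (the one for $2\theta < G_{\mu_\sigma}(x)$) is the relevant one — but the stated reasons are backwards and, read literally, would give the opposite branch and a nonvanishing coefficient. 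Worth fixing before writing this up.

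Second, the boundary point you propose to match, $\theta\to G_{\mu_\sigma}(x)/2^+$ (i.e.\ $v\to x^+$), is not a good choice. At that endpoint the value of $K-J$ is not manifestly zero; ``computing the common limit explicitly'' there amounts to proving the very identity one is after at a particular $v$, which is circular. The clean boundary is $v\to+\infty$, where $\theta=G_{\mu_\sigma}(v)/2\to 0$: one then has $\varphi_k\to |I_k|$, so the entropy term in $K$ vanishes, the quadratic term is $O(v^{-2})$, and in $J$ the terms $\theta v-\tfrac12$, $-\tfrac12\log 2\theta$ and $-\tfrac12\int\log(v-\lambda)\,d\mu_\sigma(\lambda)$ cancel up to $O(v^{-1})$ since $vG_{\mu_\sigma}(v)\to 1$. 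This is exactly the boundary the paper uses. Your algebraic simplification of the quadratic term to $\theta v/2-\tfrac14$ via the Dyson system is correct and does shorten the bookkeeping a little compared to differentiating $K-J$ wholesale, but the core mechanism — differentiated Dyson system plus the $v\to\infty$ boundary — is the same.
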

As a consequence,  we see that $\mathcal{F}(\theta,x,\psi,\sigma)=0$
for $\theta\leq G_{\mu_{\sigma}}(x)/2$ so we only need to optimize
over $\theta>G_{\mu_{\sigma}}(x)/2$ rather than all positive $\theta$ in \eqref{ldub}.
We thus deduce Proposition \ref{prop:LDPUB2} readily from \eqref{ldub} by optimizing over $\theta>G_{\mu_{\sigma}}(x)/2$. To prove \eqref{ldub},
we may assume without loss of generality that 
\begin{equation}
\lim_{\delta\to0}\limsup_{N}P_{\delta}^{N}(x,\psi)>-\infty\,.\label{infty}
\end{equation}

The main  step to prove  \eqref{ldub}, and the central result of Section \ref{sec:LDupperbound}, is the following
Proposition.
\begin{prop}
\label{localem-1}There is a function $o(\delta)$ with $\lim_{\delta\rightarrow0}o(\delta)=0$
such that for every  $x\ge r_{\sigma}$, $\delta>0$, $\theta>G_{\mu_{\sigma}}(x)/2$
and $\psi\in{\cal P}_{p}$, 
\[
P_{\delta}^{N}(x,\psi)\le\frac{1}{N}\log\mathbb{E}_{H^{N}}\mathbb{E}_{u}[\mathds{1}_{\{\|\rho(u)-\phi(\theta,x,\psi)\|<o(\delta)\}}e^{N\theta\langle u,H^{N}u\rangle}]-J_{\mu_{\sigma}}(x,\theta)+\eta_\delta(N)
\]
where $\eta_\delta(N)$ goes to zero when $N$ goes to infinity and
then $\delta$ goes to zero. 
\end{prop}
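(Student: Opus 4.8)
\textbf{Proof proposal for Proposition \ref{localem-1}.}

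The plan is to express $P_\delta^N(x,\psi)$ in terms of a spherical integral tilted at parameter $\theta$, using the key observation that if $\lambda_1(H^N)\approx x$ and $v_1(H^N)$ has profile $\approx\psi$, then the tilted measure $\mathbb E_u[\mathds 1_{A} e^{N\theta\langle u,H^Nu\rangle}]$ can be lower-bounded by restricting $u$ to be aligned with $v_1$ and decomposing the sphere into a neighborhood of $v_1$ and a bulk part. First I would write, for a localizing event $\Theta=\{|\lambda_1-x|\le\delta\}\cap\{\|\rho(v_1)-\psi\|_1\le\delta\}$,
\[
\mathbb E_{H^N}\mathbb E_u[\mathds 1_{\Theta}e^{N\theta\langle u,H^Nu\rangle}]\ge \mathbb E_{H^N}\big[\mathds 1_{\Theta}\, I_N(H^N,\theta)\big],
\]
and then invoke the behaviour of the spherical integral on the event $\Theta$. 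On $\Theta$ we have $\lambda_1(H^N)\to x$ and (by Lemma \ref{lem:Measure-Concentration}(2), up to a set of exponentially negligible probability) $\mu_{H^N}\approx\mu_\sigma$, so by the uniform version of Lemma \ref{lem:J_spherical_integral}, $\frac1N\log I_N(H^N,\theta)\ge J_{\mu_\sigma}(x,\theta)-\eta_\delta(N)$. This gives
\[
\frac1N\log\mathbb E_{H^N}\mathbb E_u[\mathds 1_{\Theta}e^{N\theta\langle u,H^Nu\rangle}]\ge P_\delta^N(x,\psi)+J_{\mu_\sigma}(x,\theta)-\eta_\delta(N),
\]
which is already the desired inequality \emph{except} that the indicator on the left is $\mathds 1_\Theta$ rather than $\mathds 1_{\{\|\rho(u)-\phi(\theta,x,\psi)\|<o(\delta)\}}$. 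So the real content is to show that, on $\Theta$, the profile of a $\theta$-tilted uniform vector $u$ concentrates near $\phi(\theta,x,\psi)$.

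The heart of the argument, and the step I expect to be the main obstacle, is this profile computation. The tilted law on the sphere, conditioned on $H^N$ with spectral decomposition $H^N=\sum_i\lambda_i v_iv_i^\top$, is (up to normalization) $\exp(N\theta\sum_i\lambda_i\langle u,v_i\rangle^2)$ against uniform measure; writing $a_i=\langle u,v_i\rangle^2$, the mass splits depending on whether $2\theta$ exceeds $G_{\mu_\sigma}(x)=G_{\mu_{H^N}}(\lambda_1)+o(1)$. When $2\theta>G_{\mu_\sigma}(x)$ the tilt is supercritical: a macroscopic fraction $1-G_{\mu_\sigma}(x)/(2\theta)$ of the mass of $u$ localizes on the top eigenvector $v_1$, whose profile is $\approx\psi$ by definition of $\Theta$, while the remaining mass is spread over the bulk eigenvectors with weights proportional to $\frac1{2\theta-\lambda_i^{-1}\cdots}$ — more precisely, the bulk contribution to coordinate block $k$ is governed by $\frac1{2\theta}G_{\mu_{\sigma,k}}(G_{\mu_\sigma}^{-1}(2\theta))$ by a Laplace/steepest-descent analysis of $\mathbb E_u$ over the sphere, exactly as in \cite{HuGu1,HussonVar}. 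Matching these two pieces against \eqref{eqn:phicontinuous-1}–\eqref{eq:Def_Phi_theta_bis} yields that the profile of $u$ is concentrated at $\varphi(\theta,x)+(1-G_{\mu_\sigma}(x)/(2\theta))_+\psi=\phi(\theta,x,\psi)$. The technical work is to make ``concentrates'' quantitative: one shows that for any $\varepsilon>0$, the tilted probability that $\|\rho(u)-\phi(\theta,x,\psi)\|\ge\varepsilon$ is exponentially smaller (with a strictly negative rate, uniformly for $\theta$ in compacts bounded away from $G_{\mu_\sigma}(x)/2$) than the total tilted mass, so that inserting the indicator $\mathds 1_{\{\|\rho(u)-\phi(\theta,x,\psi)\|<o(\delta)\}}$ only costs a factor $e^{o(N)}$; the needed eigenvector-profile input on $\Theta$ (that $v_1$ has profile within $o(\delta)$ of $\psi$, and that bulk eigenvectors are delocalized across the blocks in the averaged sense needed) comes from the definition of $\Theta$ together with the block structure.

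Finally I would assemble the pieces: combine the lower bound on the tilted integral with the profile-concentration estimate to replace $\mathds 1_\Theta$ by $\mathds 1_{\{\|\rho(u)-\phi\|<o(\delta)\}}$ at the price of $\eta_\delta(N)$, rearrange to isolate $P_\delta^N(x,\psi)$ on the left, and absorb all error terms — the spectral-measure deviation, the edge convergence $\lambda_1\to r_\sigma$, the Laplace-approximation errors, and the discretization $\#I_k^N/N\to|I_k|$ — into a single $o(\delta)$ in the radius and $\eta_\delta(N)$ in the additive constant, noting that every estimate used is uniform over $\theta>G_{\mu_\sigma}(x)/2$ on compacts and over $x\ge r_\sigma$ on compacts. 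This is exactly the ``Fubini trick'' \eqref{eq:Fubini_trick} run in the direction $\mathbb E_{H^N}\mathbb E_u\ge \mathbb E_u\mathbb E_{H^N}$ is not needed here — here we only need the elementary direction $\mathbb E_{H^N}[\mathds 1_\Theta I_N(H^N,\theta)]\le \mathbb E_{H^N}\mathbb E_u[\mathds 1_{\{\|\rho(u)-\phi\|<o(\delta)\}}e^{N\theta\langle u,H^Nu\rangle}]e^{N\eta_\delta(N)}$ — so the proposition reduces to the spherical-integral asymptotics plus the profile-localization lemma.
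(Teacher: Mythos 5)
Your overall skeleton is exactly the paper's: tilt by the spherical integral, use the uniform convergence of $\frac1N\log I_N(H^N,\theta)$ on a good event where $\mu_{H^N}\approx\mu_\sigma$ and $\lambda_1\approx x$, and reduce the proposition to showing that under the tilted measure on $\mathbb{S}^{N-1}$ the profile $\rho(u)$ concentrates near $\phi(\theta,x,\psi)$. You also correctly identify the decomposition of $\rho(u)$ into the $v_1$-component (giving the $(1-G_{\mu_\sigma}(x)/2\theta)\psi$ term) and the bulk component (giving $\varphi$). This matches Proposition \ref{prop:Gaussian-Concentration-1} and its use in the paper.

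However, there are two genuine gaps in the profile-localization step, which is precisely the part you flagged as ``the main obstacle.''

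First, you assert that the averaged delocalization of the bulk eigenvectors across blocks ``comes from the definition of $\Theta$ together with the block structure.'' It does not. The event $\Theta=\{|\lambda_1-x|\le\delta\}\cap\{\|\rho(v_1)-\psi\|_1\le\delta\}$ controls only the top eigenvalue and the profile of the top eigenvector. What the Gaussian/Laplace analysis actually requires is that
\[
\frac1N\sum_{i\ge2}\frac{\langle v_i,\Pi_k v_i\rangle}{2\theta(\lambda_1-\lambda_i)}\;\approx\;\frac1{2\theta}G_{\mu_{\sigma,k}}(\lambda_1)=\varphi(\lambda_1,\theta)_k,
\]
i.e., that the block-restricted spectral measures $\mu_{H^N}(\Pi_k)$ all concentrate around $\mu_{\sigma,k}$. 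This is strictly stronger than Lemma \ref{lem:Measure-Concentration}(2) (which you cite): it is Theorem \ref{prop:G-measure-concentration}, proved separately via the convergence of $\mathbb{E}[\mu_{H^N}(\Pi_k)]$ and a Talagrand/Guionnet--Zeitouni concentration argument (Section \ref{sec:LDspectralmeasure}). Without this input the displayed approximation fails, and the ``good event'' $\Omega_N$ must be defined in terms of $d_W(\mu_H(\Pi_k),\mu_{\sigma,k})$, not merely $d_W(\mu_H,\mu_\sigma)$.

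Second, the steepest-descent picture in which $\langle u,v_i\rangle^2$ has effective variance $\sim(2N\theta(\lambda_1-\lambda_i))^{-1}$ requires $\lambda_1-\lambda_i$ to be bounded below for $i\ge2$, and nothing in $\Theta$ or $\Omega_N$ guarantees this: one can have a cluster of outliers just below $\lambda_1$. The paper handles this by introducing $\Pi_{\text{out}}$, the projector onto the (sublinear number $k_N\sim N\epsilon_N$ of) eigenvectors with eigenvalues in $(x-2\delta,\lambda_1)$, conditioning on $\|\Pi_{\text{out}}u\|$ being small, and applying the spectral-gap argument to the cut matrix $H^{\text{cut}}$ (Lemma \ref{SpectralGap} and the proof of Proposition \ref{prop:Gaussian-Concentration-1}). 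Your sketch implicitly assumes the spectral-gap case throughout. Finally, a small slip: in the supercritical regime $\theta>G_{\mu_\sigma}(x)/2$ the bulk contribution to block $k$ is $\frac1{2\theta}G_{\mu_{\sigma,k}}(x)$ (evaluated at $x$, or at $\lambda_1\approx x$), not at $G_{\mu_\sigma}^{-1}(2\theta)$ as written; the latter belongs to the subcritical branch of the definition \eqref{eq:Def_Phi_theta_bis}.
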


We first check that the large deviation upper bound in Proposition
\ref{prop:LDPUB2} indeed follows from Proposition \ref{localem-1}.
\begin{proof}
[Proof of Proposition \ref{prop:LDPUB2}]With Proposition \ref{localem-1}
and Lemma \ref{ldpv-1-1} we deduce that for $\theta>G_{\mu_{\sigma}}(x)/2$
\begin{eqnarray*}
 &  & \limsup_{N\rightarrow\infty}P_{\delta}^{N}(x,\psi)\\
 &  & \le\limsup_{N\rightarrow\infty}\frac{1}{N}\log\mathbb{E}_{H}\mathbb{E}_{u}[\mathds{1}_{\{\|\rho(u)-\phi(\theta,x,\psi)\|<o(\delta)\}}e^{N\theta\langle u,Hu\rangle}]-J_{\mu_{\sigma}}(x,\theta)+\eta(\delta)\\
 &  & =K(\theta,\phi(\theta,x,\psi))-J_{\mu_{\sigma}}(x,\theta)+\eta(\delta)
\end{eqnarray*}
with $\eta(\delta)$ going to zero with $\delta$. This proves \eqref{ldub} and hence Proposition \ref{prop:LDPUB2}.
\end{proof}

\subsection{Localization of the spherical integral, Proof of Proposition \ref{localem-1}}

For any symmetric matrix $M$ and orthogonal projection $\Pi$ in
$\mathbb{R}^{N}$ we denote $\mu_{M}(\Pi)$ the non negative measure
on $\mathbb{R}$ given by 
\[
\mu_{M}(\Pi):=\frac{1}{N}\sum_{i=1}^{N}\langle v_{i}(M),\Pi v_{i}(M)\rangle\delta_{\lambda_{i}(M)}
\]
where we recall that $(v_{i}(M),\lambda_{i}(M))_{i\leq N}$ are the
eigenvectors and eigenvalues of $M$. In particular, for every bounded
continuous function $f$ we have

\begin{equation}
\mu_{M}(\Pi)(f):=\int_{\mathbb{R}}fd\mu_{M}(\Pi)=\frac{1}{N}Tr(\Pi f(M)\Pi)\label{eq:Trace_mu_M}
\end{equation}
Remark that $\mu_{M}(\Pi)$ generalizes the empirical measure of the
eigenvalues since $\mu_{M}(I_{N})=\mu_{M}$. In the case where $\Pi=uu^{*}$ is a rank
one projection, $\mu_{M}(uu^{*})=\frac{1}{N}\sum_{i=1}^{N}\langle u,v_{i}(M)\rangle^{2}\delta_{\lambda_{i}(M)}$ is the usual spectral measure associated
to the vector $u$.
The following result improves Lemma \ref{lem:Measure-Concentration}
so that we have concentration of measure not only for $\mu_{M}$ but
for every $\mu_{M}(\Pi_{k})$ as well. We recall that $d_{W}$ denote
the Wassertein distance and the projections $\Pi_{k}$ are defined in \eqref{defpik}.
\begin{thm}
\label{prop:G-measure-concentration} For  every $\delta>0$, 
\[
\limsup_{N\rightarrow\infty}\frac{1}{N}\log\mathbb{P}(\exists k\leq p,\,d_{W}(\mu_{\sigma,k},\mu_{H^{N}}(\Pi_{k}))\geq\delta)=-\infty.
\]
\end{thm}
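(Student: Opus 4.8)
The plan is to upgrade the concentration statement of Lemma~\ref{lem:Measure-Concentration}(2) from the global empirical measure $\mu_{H^N}$ to each of the projected measures $\mu_{H^N}(\Pi_k)$. The natural strategy has two ingredients. First, one identifies the deterministic limit: the measure $\mu_{\sigma,k}$ (mass $|I_k|$) should be the almost sure limit of $\mu_{H^N}(\Pi_k)$, and one should check this via a Stieltjes-transform/Dyson-equation argument. Writing $g_k^N(z) := \frac{1}{N}\Tr(\Pi_k (z-H^N)^{-1}\Pi_k)$, one uses the Schur complement / resolvent identities for matrices with a variance profile to derive a finite-$N$ approximate Dyson system for $(g_k^N)_{k\le p}$, and then invokes the stability of the Dyson system (as in \cite{girko2012,AlErKr,AEK1}) together with the convergence of the finite-$N$ profile $\Sigma^N$ toward $\sigma$ (via \eqref{convcov}) to conclude $g_k^N(z)\to G_{\mu_{\sigma,k}}(z)$ pointwise for $\Im z>0$. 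Since the test measures live on the fixed compact $[l_\sigma,r_\sigma]$ (using also that the spectral norm of $H^N$ is bounded with overwhelming probability, which is already part of the input to Lemma~\ref{lem:Measure-Concentration}(3)), pointwise convergence of Stieltjes transforms upgrades to convergence in $d_W$.

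Second, and this is the heart of the matter, one needs the \emph{exponential} concentration at speed strictly larger than $N$, i.e. that for every $\delta>0$ the probability of $\{d_W(\mu_{\sigma,k},\mu_{H^N}(\Pi_k))\ge\delta\}$ decays faster than $e^{-cN}$ for every $c$. The plan is to use the concentration-of-measure machinery for independent sharp sub-Gaussian (hence, in particular, sub-Gaussian with uniformly bounded constants) entries, exactly as in \cite{GZ00}: for a fixed $1$-Lipschitz function $f$ on $\mathbb{R}$ and a fixed projection $\Pi_k$, the map $X^N \mapsto \frac{1}{N}\Tr(\Pi_k f(H^N)\Pi_k)$ is a function of the independent entries $(X_{ij}^N)_{i\le j}$, and one checks it has Lipschitz constant $O(1/N)$ with respect to the Euclidean norm on the entries — since $\Pi_k f(H^N)\Pi_k$ differs from $f(H^N)$ only by a contraction, the same Lipschitz bound as for $\frac{1}{N}\Tr f(H^N)$ applies, with the $1/\sqrt N$ normalization giving the extra factor. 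Herbst's argument (or the Gaussian/sub-Gaussian concentration inequality of \cite{GZ00} for products of measures with bounded sub-Gaussian constants) then yields $\mathbb{P}(|\frac{1}{N}\Tr(\Pi_k f(H^N)\Pi_k) - \mathbb{E}[\cdots]| \ge t) \le 2 e^{-cN^2 t^2}$, i.e. decay at speed $N^2 \gg N$. Combining this with the first ingredient (which controls the expectation) gives exponential control at speed $\gg N$ for a single Lipschitz $f$; a standard net argument over a finite family of Lipschitz functions realizing $d_W$ up to $\delta/2$, together with a union bound over the finitely many $k\le p$, yields the claimed statement.

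The main obstacle I expect is the first ingredient: establishing the finite-$N$ approximate Dyson system for the \emph{projected} resolvent entries $g_k^N$ with error terms that vanish, and in particular handling the fact that the blocks $I_k^N$ are defined through the points $t_i^N$ whose empirical measure converges to uniform but which need not be exactly evenly spaced, so that $\#I_k^N/N\to|I_k|$ only in the limit. One must be careful that the stability estimates for the Dyson equation (which give uniqueness and continuous dependence on the profile) apply with the piecewise continuous $\sigma$, using the local-law-type or flow arguments of \cite{AEK2,AlErKr}; alternatively, one can sidestep part of this by noting that in the piecewise constant case — which is the case actually used in the body of the paper via Section~\ref{sec:approximation} — the Dyson system is a finite system of algebraic equations and its stability is elementary, so the projected-resolvent computation reduces to a self-consistent block computation. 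The concentration half is routine once the Lipschitz bound is recorded, so the write-up weight should be on the deterministic equivalent. For completeness one also notes that $d_W$ here is used for non-negative measures of equal mass $|I_k|$, so no mass-defect issue arises, and the compactness of the common support makes $d_W$-convergence equivalent to weak convergence plus the a priori norm bound.
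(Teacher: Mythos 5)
Your plan gets the skeleton right (deterministic equivalent plus concentration), but the concentration half is where the real work lies, and there your plan has a genuine gap. You assert that since the entries are sharp sub-Gaussian, Herbst's argument (or the corresponding inequality in \cite{GZ00}) gives $\mathbb{P}\bigl(|\frac{1}{N}\Tr(\Pi_k f(H^N)\Pi_k)-\mathbb{E}[\cdots]|\ge t\bigr)\le 2e^{-cN^2 t^2}$ and that the concentration half is therefore ``routine.'' This is not correct: the Herbst argument requires a log-Sobolev inequality for the law of the entries, and the sub-Gaussian condition \eqref{SSH} does \emph{not} imply log-Sobolev (nor even Poincar\'e) with a uniform constant. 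Sharp sub-Gaussianity only controls Laplace transforms/tails and says nothing about local regularity of the entry distributions, which is what log-Sobolev needs. There is no off-the-shelf ``sub-Gaussian concentration inequality for $\frac1N\Tr f(H)$ at speed $N^2$'' in \cite{GZ00} under the mere hypothesis of a bounded sub-Gaussian constant.

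What the paper actually does is markedly more work and is the reason the section is nontrivial: (i) for \emph{bounded} entries it applies Talagrand's inequality \cite[Theorem 6.6]{talagrand} to \emph{convex} Lipschitz test functions (not Herbst), and then extends to general Lipschitz $f$ by decomposing $f$ into $O(K/\delta)$ convex Lipschitz pieces as in \cite[Corollary 1.4]{GZ}; (ii) to pass from bounded to sub-Gaussian entries, it truncates at level $K=N^{1/2-\epsilon}$, applies (i) to the truncated part, and then shows the remainder $H^{>}$ is a matrix of polynomially small rank plus a Hilbert--Schmidt-small correction, both with probability $1-e^{-cN^{1+\epsilon}}$, via a Bernoulli counting estimate for $\{\lvert a_{ij}\rvert>K\}$. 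Your write-up would need to include some version of this truncation/low-rank argument (or an alternative that genuinely uses only \eqref{SSH}); as written, the concentration step is not justified. The remark that $\Pi_k f(H^N)\Pi_k$ ``differs from $f(H^N)$ only by a contraction'' so the same Lipschitz bound applies is fine, and the deterministic half of your plan (projected Dyson system, stability, use of the piecewise-constant reduction, plus a regularization to handle vanishing variance entries, which the paper implements by adding $\epsilon W^N$ GOE) is in the right spirit, but the weight of the proof lies in the concentration estimate you declared routine.
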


The proof of this Theorem is given in Section \ref{sec:LDspectralmeasure}.
{ For $(\epsilon_N)_{N \geq 0}$ a  sequence  of positive real numbers going to $0$}, let us denote by 
\begin{equation}\label{defomegan}
\Omega_{N}:=\{H\in\mathcal{S}_{N}:\forall k\in[1,p],d_{W}(\mu_{H}(\Pi_{k}),\mu_{\sigma,k})\leq\epsilon_{N},||H||_{op}\leq{\cal C}\}\,.
\end{equation}
We see that $\Omega_{N}$ has a large probability:
\begin{lem}
\label{prop:exptightness} For any $K>0$, there exists ${\cal C}>0$
and a sequence $\epsilon_{N}$ going to $0$ as $N$ goes to $\infty$
such that:

\[
\limsup_{N\to\infty}\frac{1}{N}\log\mathbb{P}[H^{N}\notin\Omega_{N}]\leq-K.
\]
\end{lem}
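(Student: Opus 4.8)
The plan is to establish the exponential tightness statement of Lemma~\ref{prop:exptightness} by combining the two ingredients that have just been recalled: the operator-norm bound on $H^N$ and the refined concentration of the projected spectral measures $\mu_{H^N}(\Pi_k)$ from Theorem~\ref{prop:G-measure-concentration}. Write
\[
\mathbb{P}[H^N \notin \Omega_N] \le \mathbb{P}(\|H^N\|_{\mathrm{op}} > {\cal C}) + \mathbb{P}\big(\exists k \le p,\ d_W(\mu_{H^N}(\Pi_k),\mu_{\sigma,k}) > \epsilon_N\big),
\]
so that it suffices to control each term separately at exponential speed $N$, for an appropriate choice of ${\cal C}$ and $\epsilon_N \to 0$.

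For the operator-norm term, I would use the sharp sub-Gaussian hypothesis~\eqref{SSH} together with the uniform bound $\Sigma^N_{ij} \le A$. The standard net argument on the sphere (discretize $\mathbb{S}^{N-1}$ at scale $1/2$, which costs a factor $e^{CN}$, control $\langle u, H^N u\rangle$ for fixed $u$ by a one-dimensional Laplace-transform/Chernoff bound using~\eqref{eq:Annealed_H-1} and~\eqref{SSH}) shows there is a universal constant $b$ such that $\mathbb{P}(\|H^N\|_{\mathrm{op}} > t) \le e^{CN} e^{-N b t^2/A}$ for $t$ large enough; alternatively one may simply quote the classical fact that sub-Gaussian Wigner-type matrices with bounded variance profile have $\limsup_N \frac1N \log \mathbb{P}(\|H^N\|_{\mathrm{op}} > t) \to -\infty$ as $t \to \infty$. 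Either way, given $K$ one fixes ${\cal C} = {\cal C}(K,A)$ large enough that this probability is $\le e^{-KN}$ for all large $N$; this also fixes the constant ${\cal C}$ appearing in the definition~\eqref{defomegan} of $\Omega_N$.

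For the second term, Theorem~\ref{prop:G-measure-concentration} gives, for every fixed $\delta > 0$,
\[
\limsup_{N\to\infty}\frac1N\log\mathbb{P}\big(\exists k\le p,\ d_W(\mu_{H^N}(\Pi_k),\mu_{\sigma,k}) \ge \delta\big) = -\infty .
\]
A routine diagonal-extraction argument then produces a single sequence $\epsilon_N \downarrow 0$, decaying slowly enough, such that the probability of the event $\{\exists k,\ d_W(\mu_{H^N}(\Pi_k),\mu_{\sigma,k}) > \epsilon_N\}$ still decays faster than $e^{-KN}$: concretely, for each $m$ pick $N_m$ so that the probability with $\delta = 1/m$ is below $e^{-KN}$ for $N \ge N_m$, arrange $N_m$ strictly increasing, and set $\epsilon_N = 1/m$ for $N_m \le N < N_{m+1}$. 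Combining the two bounds gives $\mathbb{P}[H^N \notin \Omega_N] \le 2 e^{-KN}$ for $N$ large, hence $\limsup_N \frac1N\log\mathbb{P}[H^N\notin\Omega_N] \le -K$, as desired.

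The only genuine content is the operator-norm large deviation bound, and even that is classical for sharp sub-Gaussian entries with a bounded variance profile; the passage from "for each fixed $\delta$" in Theorem~\ref{prop:G-measure-concentration} to "for a sequence $\epsilon_N \to 0$" is a soft diagonal argument with no real obstacle. So I expect no serious difficulty here — the main point to be careful about is simply to make the choices in the right order, namely first fix $K$, then fix ${\cal C}$ from the norm bound, and only then extract $\epsilon_N$, so that ${\cal C}$ does not depend on $\epsilon_N$ and the definition of $\Omega_N$ is consistent.
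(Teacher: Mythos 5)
Your proof is correct and follows essentially the same route as the paper's: decompose $\Omega_N^c$ into the operator-norm event and the spectral-measure-concentration event, bound the former via the sub-Gaussian operator-norm tail estimate (the paper quotes \cite[Lemma 3.2]{HussonVar} giving \eqref{expt} directly, where you sketch the underlying $\varepsilon$-net argument), and extract the sequence $\epsilon_N$ from Theorem~\ref{prop:G-measure-concentration} by a diagonal argument. The paper's proof is terse and leaves the diagonal extraction implicit, but the content and order of choices (first ${\cal C}$, then $\epsilon_N$) are identical to yours.
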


\begin{proof}
[Proof of Lemma \ref{prop:exptightness}]The fact that we can choose
an adequate sequence $\epsilon_{N}$ comes from Theorem \ref{prop:G-measure-concentration}.
The choice of ${\cal C}>0$ comes from the fact that $\|H^{N}\|_{op}$
is exponentially tight and more precisely, see for instance \cite[Lemma 3.2]{HussonVar},
that there exists a constant $C>0$ such that for $A:=\max_{N}\max_{i,j}\Sigma_{i,j}^{N}$,
for every ${\cal C}>0$ 
\begin{equation}
\frac{1}{N}\log\mathbb{P}[||H^{N}||_{op}\geq{\cal C}]\leq C-\frac{{\cal C}^{2}}{A}\,.\label{expt}
\end{equation}
\end{proof}
We then notice that
\begin{align*}
\mathbb{P}[\{H^{N} & \in\Omega_{N}\}\cap\{|\lambda_{1}-x|\leq\delta\}\cap\{||\rho(v_{1})-\psi||<\delta\}]\\
 & \geq\mathbb{P}[\{|\lambda_{1}-x|\leq\delta\}\cap\{||\rho(v_{1})-\psi||<\delta\}]-\mathbb{P}[\Omega_{N}^{c}]\\
 & =\exp(NP_{\delta}^{N}(x,\psi))(1-\mathbb{P}[\Omega_{N}^{c}]\exp(-NP_{\delta}^{N}(x,\psi)))
\end{align*}
and then
\[
P_{\delta}^{N}(x,\psi)\leq\frac{1+o_{N}(1)}{N}\log\mathbb{P}[\{H^{N}\in\Omega_{N}\}\cap\{|\lambda_{1}-x|\leq\delta\}\cap\{\|\rho(v_{1})-\psi\|<\delta\}]\,,
\]
where $1+o_{N}(1)=(1-\mathbb{P}(\Omega_{N}^{c})\exp(-NP_{\delta}^{N}(x,\psi)))^{-1}$
goes to one as $N$ goes to infinity by \eqref{infty} provided  ${\cal C}$ and $\epsilon_N$
are chosen so that  \eqref{expt} holds with $K$ is large enough. 
We denote in the following 
\[
B_{\delta}(x,\psi)=\{H^{N}\in\Omega_{N}\}\cap\{|\lambda_{1}-x|\leq\delta\}\cap\{\|\rho(v_{1})-\psi\|<\delta\}\,.
\]

We next follow \cite{HuGu1,HussonVar} and tilt the measure by spherical
integrals. As a consequence of Lemma \ref{lem:J_spherical_integral},
for all $\theta\in[\frac{1}{2} G_{\mu_{\sigma}}(x-\delta),\infty)$ , for $N$
large enough,

\begin{eqnarray}
P_{\delta}^{N}(x,\psi) & \leq & \frac{1}{N}\log\mathbb{E}[\mathds{1}_{H^{N}\in B_{\delta}(x,\psi)}\frac{I_{N}(H^{N},\theta)}{I_{N}(H^{N},\theta)}]\nonumber \\
 & \leq & \frac{1}{N}\log\mathbb{E}[\mathds{1}_{H^{N}\in B_{\delta}(x,\psi)}\mathbb{E}_{u}[e^{N\theta\langle u,H^{N}u\rangle}]]-J_{\mu_{\sigma}}(x,\theta)+\eta_\delta(N)\label{tobound}
\end{eqnarray}
where $\eta_\delta(N)$ goes to zero as $N$ goes to infinity and
then $\delta$ to zero by uniform convergence of the spherical integral. 

Let us call $u'$ the projection of $u$ on the orthocomplement of
the eigenvector $v_{1}$ for the largest eigenvalue $\lambda_{1}$
of $H$ and recall the definition \eqref{eq:Def_Phi_theta_bis} of $\varphi$
for $G_{\mu_{\sigma}}(\lambda)\leq2\theta$ : $\varphi(\lambda,\theta)_{k}=\frac{1}{2\theta}G_{\mu_{\sigma,k}}(\lambda)$.
We also denote $\varsigma(u,v)_{k}:=\langle u,\Pi_{k}v\rangle$
for $1\leq k\leq p$ and for $\epsilon>0$
\begin{equation}
S_{\epsilon}(v_{1},\lambda_{1}):=\{u\in\mathbb{S}^{N-1}\colon\|\rho(u')-\varphi(\lambda_{1},\theta)\|_{\infty}<\epsilon\text{ and }\|\varsigma(u',v_{1})\|_{\infty}<\epsilon\}.\label{eq:S_epsilon_def}
\end{equation}

\begin{prop}
\label{prop:Gaussian-Concentration-1}Let $\delta,\epsilon>0$. For
any $H\in {B_{\delta}(x, \psi)}$ such that $\lambda_{1}(H)-3\delta>r_{\sigma}$ and
$G_{\mu_{\sigma}}(\lambda_{1}(H)-3\delta)<2\theta$ : 
\[
\frac{\mathbb{E}_{u}[\mathds{1}_{S_{\epsilon}(v_{1}(H),\lambda_{1}(H))}e^{N\theta\langle u,Hu\rangle}]}{\mathbb{E}_{u}[e^{N\theta\langle u,Hu\rangle}]}\geq e^{-o(N)}
\]
where $o(N)/N$ goes to zero as $N$ goes to infinity.
\end{prop}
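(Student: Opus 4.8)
The plan is to show that the subset $S_\epsilon(v_1,\lambda_1)$ of the sphere captures a non-exponentially-small share of the tilted mass $\mathbb{E}_u[e^{N\theta\langle u,Hu\rangle}]$, by exhibiting the structure of the optimal $u$ under the tilting $e^{N\theta\langle u,Hu\rangle}$ and checking that typical $u$ for the tilted measure concentrate near that structure. First I would decompose $u = s\, v_1 + \sqrt{1-s^2}\, u'$, where $s = \langle u,v_1\rangle$ and $u' \in \mathbb{S}^{N-2}$ lies in $v_1^\perp$; then $\langle u,Hu\rangle = s^2\lambda_1 + (1-s^2)\langle u',Hu'\rangle$, and the integral factors (up to Jacobian factors in $s$ that contribute only $e^{o(N)}$) into an integral over $s$ and an integral over $u'$ on the sphere of $v_1^\perp$ against $e^{N\theta(1-s^2)\langle u',H'u'\rangle}$, where $H' = \Pi_{v_1^\perp} H \Pi_{v_1^\perp}$ is the restriction of $H$ to $v_1^\perp$. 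Since $H \in \Omega_N$ with $\lambda_1 - 3\delta > r_\sigma$ and $G_{\mu_\sigma}(\lambda_1-3\delta) < 2\theta$, a Laplace/saddle-point analysis in $s$ shows the $s$-integral localizes at the unique maximizer of $s^2\lambda_1 \theta + (1-s^2)\,\tfrac12 G^{-1}_{\mu_\sigma}(2\theta(1-s^2))\theta + \tfrac12\log(1-s^2)$ type expression (using Lemma \ref{lem:J_spherical_integral} for the inner integral), but the key point is only that the optimal $1-s^2$ keeps $2\theta(1-s^2)$ below the edge threshold so that $u'$ feels the spectrum of $H'$ in the ``delocalized'' regime governed by $\varphi$; one can even work at fixed $s$ close to the optimum, losing only $e^{o(N)}$.

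Next, conditionally on $u'$ being uniform on $\mathbb{S}^{N-2}\subset v_1^\perp$ tilted by $e^{N\theta(1-s^2)\langle u',H'u'\rangle}$, I would invoke the Gaussian/Laplace computation underlying Lemma \ref{lem:J_spherical_integral}: the tilted $u'$ is, to leading exponential order, distributed like $w/\|w\|$ where $w$ is a Gaussian vector in the eigenbasis of $H'$ with independent coordinates of variance proportional to $(G^{-1}_{\mu_\sigma}(2\theta(1-s^2)) - \lambda_j(H'))^{-1}$. The profile $\rho(u')_k = \langle u', \Pi_k u'\rangle$ is then a smooth (quadratic-ratio) functional of $w$; its mean is, by the concentration of $\mu_H(\Pi_k)$ near $\mu_{\sigma,k}$ on the event $\Omega_N$ (Theorem \ref{prop:G-measure-concentration}), asymptotically $\frac{1}{2\theta(1-s^2)}\int \frac{1}{G^{-1}_{\mu_\sigma}(2\theta(1-s^2)) - y}\,d\mu_{\sigma,k}(y) = \varphi(\lambda_1,\theta)_k$ (using the Dyson-equation identity relating $G_{\mu_{\sigma,k}}$ at the relevant point to $\varphi$; here one matches $G^{-1}_{\mu_\sigma}(2\theta(1-s^2))$ with $x$ up to $O(\delta)$ since $H \in B_\delta(x,\psi)$ forces $|\lambda_1-x|\le\delta$). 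By a standard second-moment / Gaussian concentration estimate, $\|\rho(u') - \varphi(\lambda_1,\theta)\|_\infty < \epsilon$ with probability $1 - e^{-\Omega(N)}$ under the tilted law, hence in particular with probability at least $e^{-o(N)}$. Similarly $\varsigma(u',v_1)_k = \langle u', \Pi_k v_1\rangle$ has tilted mean zero by symmetry and variance $O(1/N)$, so $\|\varsigma(u',v_1)\|_\infty < \epsilon$ with overwhelming tilted probability as well; and $s = \langle u,v_1\rangle$ lies near its saddle value, which one checks is consistent with $\|\rho(u') - \varphi\|$ small. Combining the two (intersecting two overwhelming-probability tilted events), the numerator $\mathbb{E}_u[\mathds{1}_{S_\epsilon}e^{N\theta\langle u,Hu\rangle}]$ is at least $e^{-o(N)}$ times the full $\mathbb{E}_u[e^{N\theta\langle u,Hu\rangle}]$.

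The main obstacle I expect is making the two-step Laplace analysis — first in the scalar variable $s$, then in the vector $u'$ on $v_1^\perp$ — uniform and honest, in particular controlling the Jacobian and the coupling between $s$ and the inner integral's exponent $2\theta(1-s^2)$ (which shifts the effective argument of $G^{-1}_{\mu_\sigma}$), and verifying that the resulting optimal profile is exactly $\varphi(\lambda_1,\theta)$ rather than $\varphi$ evaluated at the shifted point; this is where the hypotheses $\lambda_1 - 3\delta > r_\sigma$ and $G_{\mu_\sigma}(\lambda_1-3\delta)<2\theta$ are used to guarantee we stay strictly on the correct (smooth, below-edge) branch so that $G^{-1}_{\mu_\sigma}(2\theta(1-s^2))$ is well-defined and stays a bounded distance from the support $[l_\sigma,r_\sigma]$, making all the Gaussian variances and quadratic functionals uniformly regular. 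A secondary technical point is transferring the concentration of $\mu_H(\Pi_k)$ near $\mu_{\sigma,k}$ (a statement about $H$, valid on $\Omega_N$) into control of the tilted mean of $\rho(u')_k$, which amounts to writing $\rho(u')_k$ as $\frac1N\mathrm{Tr}(\Pi_k f(H'))$-type quantities against the resolvent $f(y) = (G^{-1}_{\mu_\sigma}(2\theta(1-s^2)) - y)^{-1}$ and using that $H'$ and $H$ differ by a rank-one perturbation, hence have the same limiting $\mu(\Pi_k)$.
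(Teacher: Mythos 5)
Your decomposition $u = s\,v_1 + \sqrt{1-s^2}\,u'$ and the proposed Gaussian/Laplace analysis of the tilted $u'$ on $v_1^\perp$ is close in spirit to what the paper does, but it silently assumes that the restriction $H' = \Pi_{v_1^\perp}H\Pi_{v_1^\perp}$ has its top eigenvalue $\lambda_2(H)$ safely below the tilting threshold $G^{-1}_{\mu_\sigma}(2\theta(1-s^2))$. Nothing in the hypotheses guarantees this: the event $B_\delta(x,\psi)$ controls $\lambda_1$ and the spectral measures $\mu_H(\Pi_k)$ but places no lower bound on $x-\lambda_2$, so $\lambda_2$ (and indeed an $o(N)$ bunch of eigenvalues) may sit right next to $\lambda_1$. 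In that case the ``Gaussian vector $w$ with variances $\propto (G^{-1}_{\mu_\sigma}(2\theta(1-s^2))-\lambda_j(H'))^{-1}$'' representation you invoke is ill-posed (the variance along $v_2$ is negative or infinite), the inner spherical integral over $u'$ localizes on $v_2$ rather than delocalizing over the bulk, and the claimed concentration $\|\rho(u')-\varphi(\lambda_1,\theta)\|_\infty<\epsilon$ with overwhelming tilted probability fails. The obstacles you list (Jacobian control, $s$-dependence of the inner exponent, matching $\varphi$ at the right argument) are real but secondary; the missing idea is how to handle these possible extra near-outliers.

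The paper's proof resolves this in two stages. It first proves exactly your kind of statement under the \emph{additional} hypothesis $x-\lambda_2>3\delta$ (Proposition \ref{prop:Gaussian-Concentrationbis}), using the explicit Gaussian change of variables $u'=\sum_{i\ge2}Y_i v_i/\sqrt{2N\theta(\lambda_1-\lambda_i)}$ — at that point the argument is essentially the second half of your proposal, made rigorous via Chebyshev on $\|\Pi_k u'\|^2$ and $\langle\Pi_k u',v_1\rangle$. To remove the spectral gap assumption, it then observes that on $B_\delta(x,\psi)\subset\Omega'_N$ there are at most $k_N=o(N)$ eigenvalues above $x-2\delta$, introduces the projector $\Pi_{\mathrm{out}}$ onto $\mathrm{Span}(v_2,\dots,v_{k_N})$ and its complement $\Pi_{\mathrm{cut}}$, and proves (Lemma \ref{SpectralGap}) that conditioning on $\|\Pi_{\mathrm{out}}u\|^2\le\epsilon$ and passing to the cut matrix $H^{\mathrm{cut}}$ costs only $e^{o(N)}$, while $H^{\mathrm{cut}}$ \emph{does} have the required gap. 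Without this cut-out step your saddle and Gaussian representation are not uniformly valid over $B_\delta(x,\psi)$, so the proposal as written has a genuine gap.
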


We will prove this proposition in Section \ref{subsec:Tilted-probability-}
and now finish the proof of Proposition \ref{localem-1}.
\begin{proof}
[Proof of Proposition \ref{localem-1}] Because of Proposition \ref{prop:Gaussian-Concentration-1}
we can upper bound \eqref{tobound} by 
\begin{equation}
P_{\delta}^{N}(x,\psi)\leq\frac{1}{N}\log\mathbb{E}[\mathds{1}_{H^{N}\in B_{\delta}(x,\psi)}\mathbb{E}_{u}[\mathds{1}_{S_{\epsilon}(v_{1},\lambda_{1})}e^{N\theta\langle u,H^{N}u\rangle}]]-J_{\mu_{\sigma}}(x,\theta)+\eta_{\delta}(N)\label{tobound2}
\end{equation}
where $\eta_{\delta}(N)$ goes to zero as $N$ goes to
infinity and then $\delta$ goes to zero. Write $u=\langle v_{1},u\rangle v_{1}+u'$
so that we have 
\[
\rho(u)=\langle v_{1},u\rangle^{2}\rho(v_{1})+2\langle v_{1},u\rangle\varsigma(u',v_{1})+\rho(u').
\]
On the event $S_{\epsilon}(v_{1},\lambda_{1})\cap\{\|\rho(v_{1})-\psi\|\leq\delta\}$,
we have 
\[
\langle v_{1},u\rangle^{2}=1-\|u'\|^{2}=1-\sum_{k=1}^{p}\rho(u')_{k}=1-\sum_{k=1}^{p}\varphi(\lambda_{1},\theta)_{k}+O(\epsilon)=1-\frac{G_{\mu_{\sigma}}(\lambda_{1})}{2\theta}+O(\epsilon).
\]
We deduce that on $S_{\epsilon}(v_{1},\lambda_{1})\cap\{\|\rho(v_{1})-\psi\|\leq\delta\}$,
\[
\rho(u)=\left(1-\frac{G_{\mu_{\sigma}}(\lambda_{1})}{2\theta}\right)\psi+\varphi(\lambda_{1},\theta)+O(\epsilon+\delta).
\]
Finally we recall the definition of $\phi(\theta,x,\psi)$ from \eqref{eq:Def_Phi_theta_bis}
and because $\lambda\rightarrow G_{\mu_{\sigma}}(\lambda)$ is smooth
on $(r_{\sigma},\infty)$ we have 
\begin{align*}
S_{\epsilon}(v_{1},\lambda_{1})\cap B_{\delta}(x,\psi) & \subset S_{\epsilon}(v_{1},\lambda_{1})\cap\{\|\rho(v_{1})-\psi\|\leq\delta\}\cap\{|\lambda_{1}-x|<\delta\}\\
 & \subset\{\|\rho(u)-\phi(\theta,x,\psi)\|\leq o(\delta,\epsilon)\}.
\end{align*}
where $o(\delta,\epsilon)$ is a quantity going to $0$ as $\delta$
and $\epsilon$ go to $0$ and comes from the modulus of continuity
of the functions $G_{\mu_{\sigma,k}}$. Therefore, choosing $\epsilon=\epsilon_{N}^{(2)}$
going to zero sufficiently slowly such that Proposition \ref{prop:Gaussian-Concentration-1}
remains true and we deduce from \eqref{tobound2} that
\[
P_{\delta}^{N}(x,\psi)\leq\frac{1}{N}\log\mathbb{E}_{H}\mathbb{E}_{u}[\mathds{1}_{\{\|\rho(u)-\phi(\theta,x,\psi)\|<o(\delta)\}}e^{N\theta\langle u,Hu\rangle}]-J_{\mu_{\sigma}}(x,\theta)+\eta_\delta(N).
\]
\end{proof}

\subsection{Tilted probability on the sphere, Proof of Proposition \ref{prop:Gaussian-Concentration-1}}

\label{subsec:Tilted-probability-}

We fix the sequence $(\epsilon_{N})_{N\in\mathbb{N}}$ of positive
real numbers that goes to $0$ as $N$ goes to $+\infty$ and ${\cal C}>0$
so that Lemma \ref{prop:exptightness} is satisfied. The proof of
Proposition \ref{prop:Gaussian-Concentration-1} will be based on
localization of the following probability measure $\mathbb{Q}_{H,\theta}$
on the sphere 
\[
d\mathbb{Q}_{H,\theta}(u)=\frac{e^{N\theta\langle u,Hu\rangle}du}{I_{N}(\theta,H)}\,,
\]
where $H$ is a deterministic self-adjoint matrix and $\theta$ a real number.
Recalling the definition  \eqref{eq:S_epsilon_def} of $S_{\epsilon}$,  Proposition \ref{prop:Gaussian-Concentration-1} is then equivalent
to showing that for
any $H\in {B_{\delta}(x, \psi)}$ such that $\lambda_{1}(H)-3\delta>r_{\sigma}$ and
$G_{\mu_{\sigma}}(\lambda_{1}(H)-3\delta)<2\theta$, 
\begin{equation}
\mathbb{Q}_{H,\theta}(S_{\epsilon}(v_{1},\lambda_{1}))\geq e^{-o(N)}\,.\label{eq:Gaussian_concentration}
\end{equation}
We will first prove a weaker version of this result : 
\begin{prop}
\label{prop:Gaussian-Concentrationbis} Inequality \eqref{eq:Gaussian_concentration}
is true under the additional assumption that $x-\lambda_{2}>3\delta$
where $\lambda_{2}$ is the second largest eigenvalue of $H$. 
\end{prop}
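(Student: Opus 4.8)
The plan is to analyze the Gibbs measure $\mathbb{Q}_{H,\theta}$ directly in the eigenbasis of $H$ and to show that a typical $u$ has the geometry predicted by the supercritical phase of the spherical integral. Since $\mathbb{Q}_{H,\theta}(S_\epsilon(v_1,\lambda_1))=I_N(\theta,H)^{-1}\mathbb{E}_u[\mathds{1}_{S_\epsilon(v_1,\lambda_1)}e^{N\theta\langle u,Hu\rangle}]$, this is exactly what \eqref{eq:Gaussian_concentration} asks for. The hypotheses are tailored to this: $H\in B_\delta(x,\psi)$ gives $|\lambda_1-x|\le\delta$, $\|H\|_{op}\le\mathcal C$ and $d_W(\mu_H(\Pi_k),\mu_{\sigma,k})\le\epsilon_N\to 0$; the standing assumptions $\lambda_1-3\delta>r_\sigma$ and $G_{\mu_\sigma}(\lambda_1-3\delta)<2\theta$ give $2\theta>G_{\mu_\sigma}(\lambda_1)$ (supercriticality); and the extra assumption $x-\lambda_2>3\delta$ produces a spectral gap $\lambda_1-\lambda_2>2\delta$. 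I write $u=\sum_i c_i v_i$, so that $u'=\sum_{i\ge 2}c_i v_i$ is the projection of $u$ onto $v_1^\perp$.

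First I would establish a Gaussian description of $\mathbb{Q}_{H,\theta}$ under the gap assumption. Disintegrating the uniform measure along $u=c_1v_1+\|u'\|w$ with $w=u'/\|u'\|$, the marginal density of $c_1$ is proportional to $(1-s^2)^{(N-3)/2}e^{N\theta\lambda_1 s^2}I_{N-1}\big(H',\tfrac{N}{N-1}\theta(1-s^2)\big)$ with $H'=H|_{v_1^\perp}$, and conditionally on $c_1$ the vector $w$ has law $\mathbb{Q}_{H',\theta\|u'\|^2}$. By the uniform version of Lemma \ref{lem:J_spherical_integral} (using $\|H'\|_{op}\le\mathcal C$, $\mu_{H'}\to\mu_\sigma$) the $c_1$-density is $\exp(N\Phi(s)+o(N))$ with $\Phi(s)=\tfrac12\log(1-s^2)+\theta\lambda_1 s^2+J_{\mu_\sigma}(\lambda_2\vee r_\sigma,\theta(1-s^2))$; using $\partial_\vartheta J_{\mu_\sigma}(y,\vartheta)=\max(y,G_{\mu_\sigma}^{-1}(2\vartheta))-\tfrac1{2\vartheta}$ one checks $\Phi$ is maximized (up to sign) at $s^\star$ with $s^{\star 2}=1-G_{\mu_\sigma}(\lambda_1)/2\theta\in(0,1)$. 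Hence $c_1^2=1-G_{\mu_\sigma}(\lambda_1)/2\theta+o(1)$ with $\mathbb{Q}_{H,\theta}$-probability $\ge e^{-o(N)}$, and for such $c_1$ one has $2\theta\|u'\|^2=G_{\mu_\sigma}(\lambda_1)+o(1)<G_{\mu_\sigma}(\lambda_2\vee r_\sigma)$ by the gap and the monotonicity of $G_{\mu_\sigma}$, so $\mathbb{Q}_{H',\theta\|u'\|^2}$ is \emph{subcritical} for $H'$. In that regime — established, up to keeping track of the block projections, by the Gaussian concentration estimates of \cite{HuGu1,HussonVar} — the law of $w$ is comparable, with $\mathbb{Q}$-probability $\ge 1-e^{-cN}$, to that of $\tilde g/\|\tilde g\|$ for $\tilde g$ a centered Gaussian on $v_1^\perp$ with covariance $\tfrac1N(\alpha I-2\theta\|u'\|^2 H')^{-1}$, $\alpha$ fixed by $\tfrac1N\Tr(\alpha I-2\theta\|u'\|^2 H')^{-1}=1$, i.e. $\alpha/(2\theta\|u'\|^2)=G_{\mu_{H'}}^{-1}(2\theta\|u'\|^2)=\lambda_1+o(1)$; equivalently the $(c_i)_{i\ge 2}$ behave like independent centered Gaussians with $\mathrm{Var}(c_i)=\tfrac1{2\theta N(\lambda_1-\lambda_i)}(1+o(1))$, the gap ensuring $\lambda_1-\lambda_i\ge\lambda_1-\lambda_2>2\delta$ so that this resolvent is bounded.

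Then I would compute the two observables defining $S_\epsilon(v_1,\lambda_1)$ on the event of the previous step. The block profile $\rho(u')_k=\|\Pi_k u'\|^2=\sum_{i,j\ge 2}c_ic_j\langle v_i,\Pi_k v_j\rangle$ is a quadratic form in the near-Gaussian coordinates, so by Gaussian concentration it lies, with $\mathbb{Q}$-probability $\ge 1-e^{-cN}$, within $o(1)$ of $\sum_{i\ge 2}\mathrm{Var}(c_i)\|\Pi_k v_i\|^2=\tfrac1{2\theta}\cdot\tfrac1N\Tr\big(\Pi_k(\lambda_1 I-H')^{-1}\big)$, which converges to $\tfrac1{2\theta}G_{\mu_{\sigma,k}}(\lambda_1)=\varphi(\lambda_1,\theta)_k$ by Theorem \ref{prop:G-measure-concentration} (removing the rank-one eigenprojection at $\lambda_1$ does not change the limit, and $\lambda_1$ stays at distance $\ge 2\delta$ from $r_\sigma$). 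Similarly $\varsigma(u',v_1)_k=\langle u',\Pi_k v_1\rangle=\sum_{i\ge 2}c_i\langle v_i,\Pi_k v_1\rangle$ has conditional mean $0$ and variance $\sum_{i\ge 2}\mathrm{Var}(c_i)\langle v_i,\Pi_k v_1\rangle^2\le\tfrac{1}{2\theta\cdot 2\delta\cdot N}\|\Pi_k v_1\|^2=O(1/N)$, so $|\varsigma(u',v_1)_k|=O(N^{-1/2})$ with $\mathbb{Q}$-probability $\ge 1-e^{-cN}$. Intersecting the finitely many events above (one per $k\le p$) with the event of the second step — a finite intersection still of $\mathbb{Q}_{H,\theta}$-probability $\ge e^{-o(N)}$ — forces $u\in S_\epsilon(v_1,\lambda_1)$ for $N$ large, which is \eqref{eq:Gaussian_concentration}.

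The hard part will be the Gaussian description in the second paragraph: proving, with the stated $e^{-o(N)}$ lower bound on the probability and uniformly over $H\in B_\delta(x,\psi)$, both the concentration of the condensate fraction $c_1^2$ at $1-G_{\mu_\sigma}(\lambda_1)/2\theta$ (which needs the uniform saddle-point analysis of the $c_1$-marginal together with the uniform version of Lemma \ref{lem:J_spherical_integral} for $I_{N-1}(H',\cdot)$) and the comparison of the subcritical part of $\mathbb{Q}_{H,\theta}$ with a Gaussian vector of the prescribed covariance. The spectral gap $\lambda_1-\lambda_2>2\delta$ is exactly what makes this work: it forces $H'$ to be genuinely subcritical at the relevant temperature, and it bounds $(\alpha I-2\theta\|u'\|^2 H')^{-1}$ from above, controlling the error terms in the third paragraph. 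These estimates are essentially available from \cite{HuGu1,HussonVar}; the only genuinely new ingredient is Theorem \ref{prop:G-measure-concentration}, used to identify the limit of the block-compressed resolvent $\tfrac1N\Tr(\Pi_k(\lambda_1 I-H')^{-1})$.
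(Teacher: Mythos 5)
Your decomposition is correct in broad strokes but takes a genuinely longer route than the paper, and the step you yourself flag as ``the hard part'' is exactly where the paper's argument is much shorter.  Under the gap hypothesis, because $\langle u,Hu\rangle=\lambda_1-\sum_{i\ge2}(\lambda_1-\lambda_i)\langle u,v_i\rangle^2$ with $\lambda_1-\lambda_i>\delta$ for $i\ge2$, the law of $u'$ under $\mathbb{Q}_{H,\theta}$ is \emph{exactly} that of the independent Gaussian vector $\tilde{\mathbb{Q}}$ with $\mathrm{Var}(\langle u',v_i\rangle)=\tfrac{1}{2N\theta(\lambda_1-\lambda_i)}$, conditioned to lie in $\mathbb B_{N-1}$ and reweighted by the Jacobian $(1-\|u'\|^2)^{-1/2}$.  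This is an algebraic identity, not an asymptotic statement; all the paper then needs is the bound $\tfrac1N\log\int(1-\|u\|^2)^{-1/2}d\tilde{\mathbb{Q}}\le o(1)$ from \cite[Prop.~4.3]{CoDuGu}, after which both $\rho(u')_k$ and $\varsigma(u',v_1)_k$ are handled by computing means and variances under the product Gaussian and applying Chebyshev.  You instead disintegrate along $c_1$, run a uniform Laplace analysis of the $c_1$-marginal, and then appeal to a subcritical-phase Gaussian approximation of $\mathbb{Q}_{H',\theta\|u'\|^2}^{v_1^\perp}$, for which you cite \cite{HuGu1,HussonVar}.  Two remarks: (i) you arrive at the same covariance $\mathrm{Var}(c_i)=\tfrac{1}{2N\theta(\lambda_1-\lambda_i)}(1+o(1))$ as the exact representation, so the endgame is identical; and (ii) the subcritical Gaussian comparison with block-projected observables, uniformly over $H\in B_\delta(x,\psi)$ and with only an $e^{-o(N)}$ loss, is not a statement you can quote directly from those references --- it would need to be re-derived, and the cleanest way to derive it is precisely the exact identity you bypass.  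So I would not call this a gap in the sense of a wrong idea, but the saddle-point-plus-subcritical-phase detour asks you to reprove (in a less convenient form) what the algebraic representation gives for free, and the gap condition is used twice in your argument (once for concentration of $c_1$, once for subcriticality of $H'$) whereas the paper uses it once, to make the exact Gaussian representation nondegenerate.  One small correction to your third paragraph: the identification of $\tfrac1N\Tr(\Pi_k(\lambda_1 I-H')^{-1})$ with $G_{\mu_{\sigma,k}}(\lambda_1)$ is a consequence of the deterministic inclusion $H\in\Omega_N$ (which gives $d_W(\mu_H(\Pi_k),\mu_{\sigma,k})\le\epsilon_N$) together with the Lipschitz continuity of $z\mapsto(\lambda_1-z)^{-1}$ away from $\lambda_1$; you do not need to invoke Theorem~\ref{prop:G-measure-concentration} again at that point.
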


%
\begin{proof}
Under this assumption, if $\lambda_{1}\ge\lambda_{2},\ldots\ge\lambda_{N}$
are the eigenvalues of $H$, $\lambda_{1}-\lambda_{i}$ is bounded
below by $\delta$ on $\{|\lambda_{1}-x|\le\delta\}$ and therefore
we can define 
\[
u\text{\textquoteright}=\sum_{i=2}^{N}\frac{Y_{i}}{\sqrt{2N\theta(\lambda_{1}-\lambda_{i})}}v_{i}
\]
where $Y_{i}$ are iid Gaussian ${\cal N}(0,1)$. We denote by $\tilde{\mathbb{Q}}$
the law of $(\frac{Y_{i}}{\sqrt{2N\theta(\lambda_{1}-\lambda_{i})}})_{i\ge2}$.
Then it is not hard to see that

\[
\mathbb{Q}_{H,\theta}(u'\in A)=\frac{\int_{A\cap\mathbb{B}_{N-1}}(1-||u||^{2})^{-1/2}d\tilde{\mathbb{Q}}(u)}{\int_{\mathbb{B}_{N-1}}(1-||u||^{2})^{-1/2}d\tilde{\mathbb{Q}}(u)}
\]
where $\mathbb{B}_{N-1}$ is the unit ball in $\mathbb{R}^{N-1}$.
With the spectral gap assumption, using the same arguments as in the
proof of \cite[Proposition 4.3]{CoDuGu}, especially the proof of
inequality (4.17) we find that

\begin{equation}
\limsup_{N\to\infty}\frac{1}{N}\log\int_{\mathbb{B}_{N-1}}(1-||u||^{2})^{-1/2}d\tilde{\mathbb{Q}}(u)\leq0\,.\label{eq:tg}
\end{equation}
Since $\int_{A\cap\mathbb{B}_{N-1}}(1-||u||^{2})^{-1/2}d\tilde{\mathbb{Q}}(u)\geq\tilde{\mathbb{Q}}(A)$
it only remains to show that $\mathbb{\tilde{Q}}(S_{\epsilon}(v_{1},\lambda_{1}))\geq e^{-o(N)}$.
Denoting by $\tilde{\mathbb{E}}$ the expectation under $\tilde{\mathbb{Q}}$,
we have that

\[
\tilde{\mathbb{E}}[\|\Pi_{k}u\text{\textquoteright}\|^{2}]=\sum_{i=2}^{N}\frac{1}{2N\theta(\lambda_{1}-\lambda_{i})}\langle v_{i},\Pi_{k}v_{i}\rangle
\]
and we recognize the Stieljes transform of $\mu_{H}(\Pi_{k})$ on
which the mass on $\delta_{\lambda_{1}}$ has been removed. Given
the definition of $\Omega_{N}$ and $\varphi(\theta,\lambda_{1})$
and because $z\rightarrow(\lambda_{1}-z)^{-1}$ is $\delta^{-2}$
Lipschitz on $(-\infty,\lambda_{1}-\delta)$ we have

\[
\lim_{N\to+\infty}\left|\tilde{\mathbb{E}}[\|\Pi_{k}u\text{\textquoteright}\|^{2}]-\varphi(\theta,\lambda_{1})_{k}\right|=0\,.
\]
Since $u'$ is a Gaussian vector whose covariance matrix is bounded
in spectral radius by $1/(2N\theta\delta)$, the same is also true
for $\Pi_{k}u'$, therefore one can write with $Z:=||\Pi_{k}u'||^{2}$
and $\sigma_{1},\dots,\sigma_{N}$ the eigenvalues of the covariance
matrix of $\Pi_{k}u'$.

\[
\text{Var}(Z)=2\sum_{i=1}^{N}{\sigma_{i}^{2}}\leq\frac{1}{2N\theta^{2}\delta^{2}}\,.
\]
We conclude then using Chebyshev's inequality that 
\begin{eqnarray}
\tilde{\mathbb{Q}}(\exists k\leq p:\Big|\|\Pi_{k}u\text{\textquoteright}\|^{2}-\tilde{\mathbb{E}}\|\Pi_{k}u\text{\textquoteright}\|^{2}\Big|\ge\frac{\epsilon}{2})&\le&\frac{4p}{\epsilon^{2}}\max_{1\leq k\leq p}\mbox{Var}(\|\Pi_{k}u'\|^{2})\nonumber\\
&\le&\frac{2p}{N\theta^{2}\delta^{2}\epsilon^{2}}\,.\label{cont1}
\end{eqnarray}
Furthermore, for $N\geq(2\delta\theta)^{-1}$, since $v_{1}$ is a
unit vector, then $\langle\Pi_{k}u',v_{1}\rangle=\langle u',\Pi_{k}v_{1}\rangle$
is a centered Gaussian vector whose variance is bounded by the spectral
radius of the covariance matrix of $u'$. As a consequence, we deduce
that 
\[
\tilde{\mathbb{E}}[\langle\Pi_{k}u',v_{1}\rangle^{2}]\leq\frac{1}{N\theta\delta}
\]
Therefore, Chebyshev's inequality directly gives the result: 
\begin{equation}
\tilde{\mathbb{Q}}(\exists k\le p:|\langle\Pi_{k}u',v\rangle|\ge\epsilon)\le\frac{p}{N\theta\delta\epsilon^{2}}.\label{cont2}
\end{equation}
Putting together \eqref{cont1} and \eqref{cont2} and for $N$ large
enough so that 

\noindent
$\left|\tilde{\mathbb{E}}[\|\Pi_{k}u\text{\textquoteright}\|^{2}]-\varphi(\theta,\lambda_{1})_{k}\right|$ is smaller than $\frac{\epsilon}{2}$,
we prove
\begin{equation}
\tilde{\mathbb{Q}}(S_{\epsilon}(v_{1},\lambda_{1}))\geq1-\frac{1}{N}\left(\frac{p}{\theta\delta\epsilon^{2}}+\frac{2p}{\theta^{2}\delta^{2}\epsilon^{2}}\right).\label{cont00}
\end{equation}
With \eqref{eq:tg}, the proof of Proposition \ref{prop:Gaussian-Concentrationbis}
is complete since it implies that 
\[
\mathbb{Q}_{H,\theta}(S_{\epsilon}(v_{1},\lambda_{1}))\ge e^{-o(N)}\tilde{\mathbb{Q}}(S_{\epsilon}(v_{1},\lambda_{1}))=e^{-o(N)}\,.
\]
\end{proof}
To deal with the case where $\lambda_{1}$ is not an outlier, let
us first define a generalization of $\mathbb{Q}_{H,\theta}$. For
$V$ a Euclidean vector space and $H$ a self-adjoint endomorphism
of $V$, we define : 
\[
\mathbb{Q}_{H,\theta}^{V}(u):=\frac{e^{\text{dim}(V)\theta\langle u,Hu\rangle}}{\mathbb{E}[e^{\text{dim}(V)\theta\langle u,Hu\rangle}]}du
\]

Let $k_{N}$ be a sequence of integer numbers  such that $\frac{k_{N}}{N}\rightarrow0$
as $N\rightarrow\infty$ { and $(\epsilon_N)_{N \geq 0}$ be  a sequence of positive real numbers  that goes to $0$. We define $\Omega'_{N}$ by}
\[
\Omega'_{N}:=\{H\in\mathcal{S}_{N},\forall k\in[1,p],d_{W}(\mu(\Pi_{k}),\mu_{\sigma,k})\leq\epsilon_{N},\lambda_{k_{N}}(H)<x-2\delta,\|H\|_{op}\leq{\cal C}\}.
\]
{ We can find such a sequence $(k_N)_{N \geq 0}$ that will be such that $$B_{\delta}(x, \psi) \subset  \Omega'_N \cap \{ |\lambda_1 -x| , ||\rho(u) - \psi || \leq \delta  \}\,.$$
Indeed let us consider some Lipschitz function $\phi$ with support in $[r_{\sigma}, \mathcal{C} +1 ]$  such that $0 \leq \phi \leq 1$ and $\phi$ is equal to $1$ on $[ r_{\sigma} + \delta , \mathcal{C}]$. Then, since $d_{W}( \mu_H, \mu_{\sigma}) \leq p \epsilon_N$, we find that

\[ \Big| \int \phi(x) d \mu_H(x) - \int \phi(x) d \mu_{\sigma}(x) \Big| \leq ||\phi||_{Lip}  p \epsilon_N\]
By construction the second term of the difference is $0$ and the first term is bounded below by $\# \{ i \in[1,N], \lambda_i \geq r_{\sigma} + \delta \}/N$. Since $x -3 \delta \geq r_{\sigma}$ by assumption, this implies that 

\[ \# \{ i \in[1,N], \lambda_i \geq x - 2 \delta \} \le \# \{ i \in[1,N], \lambda_i \geq r_{\sigma} + \delta \}
\leq N ||\phi||_{Lip}  p \epsilon_N .\]

In particular, this implies that for any  $k_N $ larger than $  N ||\phi||_{Lip}  p \epsilon_N / 2 $, $\lambda_{k_N} < x - 2 \delta$. Therefore we only need to prove Proposition \ref{prop:Gaussian-Concentration-1} assuming $H \in \Omega'_N$.

 }
We denote $\Pi_{\text{out}}$ the orthogonal projector on $\text{Span}(v_{2}(H),\cdots,v_{k_{N}}(H))$,
$\Pi_{\text{cut}}:=I-\Pi_{\text{out}}$ that is the orthogonal projector
on $V=\text{Span}(v_{1}(H),v_{k_{N}+1}(H),\cdots,v_{N}(H))$, 
\[
u_{\text{cut}}:=\frac{\Pi_{\text{cut}}u}{\|\Pi_{cut}u\|},\quad H^{\text{cut}}:=\frac{N}{N-k_{N}+1}H|_{V}
\]
with $H|_{V}$ the matrix $H$ restricted on $V$ and $u$ a unit  vector.
\begin{lem}
\label{SpectralGap} Let $\delta>0$ and $x>r_{\sigma}+3\delta$ and
$2\theta>G_{\mu_{\sigma}}(x-3\delta)$. For any Borelian $A\subset\mathbb{R}^{N}$,
$\epsilon>0$ and $H\in\Omega'_{N}$ such that $|\lambda_{1}(H)-x|\leq\delta$
we have that
\begin{align*}
 & \frac{1}{N}\log\mathbb{Q}_{H,\theta}(\|\Pi_{\text{out}}(u)\|^{2}\leq\epsilon,u_{\text{cut}}\in A)\\
 & \quad\geq\frac{1}{N}\log\mathbb{P}_{u}[\|\Pi_{\text{out}}(u)\|^{2}\leq\epsilon]+\frac{1}{N}\log\mathbb{Q}_{H^{cut},\theta}^{V}(u\in A)+O({\cal C}\theta\epsilon)+\eta(N)
\end{align*}
where $\eta(N)$ goes to zero when $N$ goes to infinity. { Moreover, $\mathbb{Q}_{H^{cut},\theta}^{V}$ is defined as $\mathbb{Q}_{H,\theta}$ except that $u$ follows the uniform law on the sphere in $V$ (and $H$ is restricted to $V$).}
\end{lem}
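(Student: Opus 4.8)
The plan is to condition on the orthogonal splitting $\mathbb{R}^{N}=\mathrm{Ran}(\Pi_{\text{out}})\oplus V$ and to use that both blocks are $H$-invariant. Writing $u=\Pi_{\text{out}}u+\Pi_{\text{cut}}u$ and setting $t:=\|\Pi_{\text{out}}u\|^{2}$ and $w:=\Pi_{\text{out}}u/\sqrt{t}$, so that $u_{\text{cut}}=\Pi_{\text{cut}}u/\sqrt{1-t}$, the two-block instance of Lemma \ref{lem:Uniform_sphere_family} shows that under the uniform law on $\mathbb{S}^{N-1}$ the triple $(t,w,u_{\text{cut}})$ is independent, with $t\sim\mathrm{Beta}\!\big(\tfrac{k_{N}-1}{2},\tfrac{N-k_{N}+1}{2}\big)$, $w$ uniform on the unit sphere of $\mathrm{Ran}(\Pi_{\text{out}})$ and $u_{\text{cut}}$ uniform on the unit sphere of $V$. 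In particular the law of $t$, hence $\mathbb{P}_{u}[\|\Pi_{\text{out}}(u)\|^{2}\le\epsilon]$, does not depend on $H$, which makes the statement well posed.

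Next, since $\Pi_{\text{out}}$ and $\Pi_{\text{cut}}$ project onto spans of eigenvectors of $H$ they commute with $H$, so the cross term vanishes and $\langle u,Hu\rangle=t\langle w,Hw\rangle+(1-t)\langle u_{\text{cut}},Hu_{\text{cut}}\rangle$. Using $H^{\text{cut}}=\tfrac{N}{N-k_{N}+1}H|_{V}$, $\|H\|_{op}\le{\cal C}$ and $\|H^{\text{cut}}\|_{op}\le 2{\cal C}$ for $N$ large, on the event $\{t\le\epsilon\}$ one has $|t\langle w,Hw\rangle|\le{\cal C}\epsilon$ and $1-t=1+O(\epsilon)$, hence
\[
N\theta\langle u,Hu\rangle=(N-k_{N}+1)\,\theta\,\langle u_{\text{cut}},H^{\text{cut}}u_{\text{cut}}\rangle+O(N{\cal C}\theta\epsilon).
\]
Integrating $e^{N\theta\langle u,Hu\rangle}\mathds{1}_{\{t\le\epsilon\}}\mathds{1}_{\{u_{\text{cut}}\in A\}}$ and using the independence of $(t,w,u_{\text{cut}})$ (the integral over $w$ being $1$), then dividing by $I_{N}(H,\theta)$ and recalling that $\int e^{(N-k_{N}+1)\theta\langle u,H^{\text{cut}}u\rangle}\mathds{1}_{\{u\in A\}}\,du=I_{N-k_{N}+1}(H^{\text{cut}},\theta)\,\mathbb{Q}^{V}_{H^{\text{cut}},\theta}(A)$ for the uniform law on the sphere of $V$, I obtain
\[
\mathbb{Q}_{H,\theta}\!\big(\|\Pi_{\text{out}}(u)\|^{2}\le\epsilon,\,u_{\text{cut}}\in A\big)\ \ge\ e^{-O(N{\cal C}\theta\epsilon)}\,\mathbb{P}_{u}[\|\Pi_{\text{out}}(u)\|^{2}\le\epsilon]\,\frac{I_{N-k_{N}+1}(H^{\text{cut}},\theta)}{I_{N}(H,\theta)}\,\mathbb{Q}^{V}_{H^{\text{cut}},\theta}(A).
\]
Taking $\tfrac1N\log$, the lemma follows once $\eta(N):=\tfrac1N\log\big(I_{N-k_{N}+1}(H^{\text{cut}},\theta)/I_{N}(H,\theta)\big)$ is shown to go to $0$, uniformly over the $H$ under consideration.

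This uniform comparison of spherical integrals is the main obstacle, and it is where the definition of $\Omega'_{N}$ and the assumption $2\theta>G_{\mu_{\sigma}}(x-3\delta)$ come in. For $H\in\Omega'_{N}$ with $|\lambda_{1}(H)-x|\le\delta$ I would check that: (i) removing the $k_{N}-1=o(N)$ eigenvalues $\lambda_{2},\dots,\lambda_{k_{N}}$, which are bounded by ${\cal C}$, and rescaling by $\tfrac{N}{N-k_{N}+1}=1+o(1)$ moves the empirical measure by $o(1)$ in $d_{W}$, so that $d_{W}(\mu_{H^{\text{cut}}},\mu_{\sigma})\to0$ uniformly; (ii) since $\lambda_{k_{N}+1}(H)\le\lambda_{k_{N}}(H)<x-2\delta<\lambda_{1}(H)$, the top eigenvalue of $H^{\text{cut}}$ is $\lambda_{1}(H^{\text{cut}})=\tfrac{N}{N-k_{N}+1}\lambda_{1}(H)$, which stays in a fixed compact subset of $(r_{\sigma},\infty)$ with $\lambda_{1}(H^{\text{cut}})/\lambda_{1}(H)\to1$ uniformly; (iii) because $G_{\mu_{\sigma}}$ is decreasing on $(r_{\sigma},\infty)$, the assumption $2\theta>G_{\mu_{\sigma}}(x-3\delta)$ forces $2\theta>G_{\mu_{\sigma}}(\lambda_{1}(H))$ and $2\theta>G_{\mu_{\sigma}}(\lambda_{1}(H^{\text{cut}}))$, so both spherical integrals fall into the first branch of \eqref{defJ-1}, on which $y\mapsto J_{\mu_{\sigma}}(y,\theta)$ is continuous. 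Then Lemma \ref{lem:J_spherical_integral}, applied with its uniformity over neighbourhoods of the spectral data, gives $\tfrac1N\log I_{N}(H,\theta)=J_{\mu_{\sigma}}(\lambda_{1}(H),\theta)+o(1)$ and $\tfrac1N\log I_{N-k_{N}+1}(H^{\text{cut}},\theta)=J_{\mu_{\sigma}}(\lambda_{1}(H^{\text{cut}}),\theta)+o(1)$ uniformly, and uniform continuity of $J_{\mu_{\sigma}}(\cdot,\theta)$ together with $\lambda_{1}(H^{\text{cut}})-\lambda_{1}(H)\to0$ makes the difference $o(1)$; hence $\eta(N)\to0$. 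Thus the hard part is not the algebra of the block decomposition but verifying that chopping off $o(N)$ eigendirections perturbs neither the limiting spectral measure nor the outlier by more than $o(1)$, so that the convergence in Lemma \ref{lem:J_spherical_integral} survives uniformly over $\Omega'_{N}$.
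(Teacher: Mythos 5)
Your proof follows the same route as the paper: decompose $u$ using the $H$-invariant splitting $\mathrm{Ran}(\Pi_{\text{out}})\oplus V$, bound the contribution of $\Pi_{\text{out}}u$ by $O({\cal C}\theta\epsilon)$ using $\|H\|_{op}\le{\cal C}$, exploit the independence of $u_{\text{cut}}$ and $\|\Pi_{\text{out}}u\|^{2}$ to factor the integral, and control $\eta(N)=\tfrac1N\log(I(H^{\text{cut}},\theta)/I(H,\theta))$ via Lemma~\ref{lem:J_spherical_integral} and continuity of $J_{\mu_{\sigma}}(\cdot,\theta)$. Your points (i)--(iii) simply unpack what the paper asserts in one line (``this follows from Lemma~\ref{lem:J_spherical_integral}, the definition of $\Omega'_{N}$ and the continuity of $J_{\mu_{\sigma}}(\cdot,\theta)$''), so the argument is correct and essentially identical.
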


\begin{proof}[Proof of Lemma \ref{SpectralGap}]

We have, since $H$ maps $V$ into $V$, that
\[
\langle u,Hu\rangle=\langle u,\Pi_{\text{out}}H\Pi_{\text{out}}u\rangle+\langle u,\Pi_{\text{cut}}H\Pi_{\text{cut}}u\rangle
\]
and $\Pi_{\text{cut}}u=(1-\|\Pi_{\text{out}}u\|^{2})^{1/2}u_{\text{cut}}$.
We recall that on $\Omega_{N}$ we have $\|H\|_{op}\le{\cal C}$ and
then 
\begin{eqnarray*}
\mathbb{Q}_{H,\theta}(\|\Pi_{\text{out}}u\|^{2}\leq\epsilon,u_{\text{cut}}\in A) & \geq & \frac{e^{-2N{\cal C}\theta\epsilon}}{I(H,\theta)}\int_{\mathbb{S}^{N-1}}e^{N\theta\langle u_{\text{cut}},Hu_{\text{cut}}\rangle}\mathds{1}_{\|\Pi_{\text{out}}u\|^{2}\leq\epsilon,\,u_{\text{cut}}\in A}du.
\end{eqnarray*}
One may use now that  for $u$ following the uniform law on $\mathbb{S}^{N-1}$,
$u_{\text{cut}}$ is a uniform vector on the unit sphere of $V$ and
is independent from $\|\Pi_{\text{out}}u\|^{2}$. Therefore 

\begin{eqnarray*}
 &  & \int_{\mathbb{S}^{N-1}}e^{N\theta\langle u_{\text{cut}},Hu_{\text{cut}}\rangle}\mathds{1}_{\|\Pi_{\text{out}}u\|^{2}\leq\epsilon,\,u_{\text{cut}}\in A}du\\
 &  & \quad=\mathbb{P}_{u}[\|\Pi_{\text{out}}u\|^{2}\le\epsilon]\int_{\mathbb{S}^{N-1}}e^{N\theta\langle u_{\text{cut}},Hu_{\text{cut}}\rangle}\mathds{1}_{u_{\text{cut}}\in A}du\\
 &  & \quad=\mathbb{P}_{u}[\|\Pi_{\text{out}}u\|^{2}\le\epsilon]\mathbb{Q}_{H^{\text{cut}},\theta}^{V}(u\in A)I(H^{\text{cut}},\theta).
\end{eqnarray*}
It now only remains to show that
\[
\lim_{N\to\infty}\sup_{H\in\Omega_{N}}\frac{1}{N}\log\frac{I(H^{\text{cut}},\theta)}{I(H,\theta)}=0
\]
and this follows from Lemma \ref{lem:J_spherical_integral}, the definition
of $\Omega'_{N}$ and the continuity of $J_{\mu_{\sigma}}(.,\theta)$.
\end{proof}
\begin{proof}
[Proof of Proposition \ref{prop:Gaussian-Concentration-1}]If we denote
$${\nu}(\Pi):=\frac{1}{N-k_{N}+1}\sum_{i=k_{N}+1}^{N}\langle v_{i},\Pi v_{i}\rangle\delta_{\lambda_{i}}+\langle v_{1},\Pi v_{1}\rangle\delta_{\lambda_{1}},$$
we also have that for $H\in\Omega'_{N}$, 
\[
\lim_{N\to\infty}{\nu}(\Pi_{k})=\mu_{\sigma,k}
\]
and furthermore we have that for some { sequence of positive real numbers  $(\epsilon''_{N})_{N \geq 0}$ that goes to $0$}
when $N\to\infty$, $d(\nu(\Pi_{k}),\mu_{\sigma,k})\leq\epsilon''_{N}$.
Last, we have that for $N$ large enough $\|H^{\text{cut}}\|<2{\cal C}$.
For $N\in\mathbb{N}$, $V\subset\mathbb{R}^{N}$ a linear subspace,
let us denote $\mathcal{S}_{N}(V)$ the set of self-adjoint endomorphisms
of $V$ for a fixed $V\subset\mathbb{R}^{N}$ such that $\text{dim}(V)=N-k_{N}+1,\Omega''_{N}$
is given as follows :
\[
\Omega''_{N}:={ \bigcup_{ V \subset {\mathbb R}^N \atop V  \text{ is a subspace}} }\{H\in\mathcal{S}_{N}(V):\|H\|\leq2{\cal C},\forall k\in[1,p],d_{W}(\mu_{H}(\Pi_{k}),\mu_{\sigma,k})\leq\epsilon''_{N}\}.
\]

So we have that $H\in\Omega'_{N}$ implies that $H^{\text{cut}}\in\Omega''_{N}$ for $N$ large enough.
Furthermore,for $N$ large enough, we have that for all {$H\in\Omega'_{N}$,}
$|\lambda_{1}(H)-x|\leq\delta$ implies $|\lambda_{1}(H^{\text{cut}})-x|\leq\delta$
and by construction, we have that for $N$ large enough, for all {$H\in\Omega'_{N}$,}
$H^{\text{cut}}$ has the spectral gap, in the sense that {$\lambda_{2}(H^{\text{cut}})\leq x-2\delta$.}
Hence we can apply Proposition \ref{prop:Gaussian-Concentrationbis}
to conclude that if 
\[
A:=\{u\in\mathbb{S}^{N-1}\cap V\colon\|\rho(u')-\varphi(\lambda_{1},\theta)\|_{\infty}<\epsilon\text{ and }\|\varsigma(u',v_{1})\|_{\infty}<\epsilon\}
\]
 then

\[
\frac{1}{N}\log\mathbb{Q}_{H^{\text{cut}},\theta}^{V}(u\in A)\ge o(1)\,.
\]
Moreover, it is easy to see that for any $\epsilon>0$ 
\[
\lim_{N\rightarrow\infty}\frac{1}{N}\log\mathbb{P}_{u}[\|\Pi_{\text{out}}u\|^{2}\le\epsilon]=0
\]
so with Lemma \ref{SpectralGap} we can find a sequence $\epsilon_{N}^{(1)}$
going to $0$ such that for $\epsilon=\epsilon_{N}^{(1)}$

\[
\mathbb{Q}_{H,\theta}(\|\Pi_{\text{out}}u\|^{2}\leq\epsilon_{N}^{(1)},u_{\text{cut}}\in A)\geq e^{-o(N)}\,.
\]
Furthermore we can see that for $\epsilon>0$ and $N$ large enough:

\[
\{\|\Pi_{\text{out}}u\|^{2}\leq\epsilon_{N}^{(1)},u_{\text{cut}}\in A\}\subset S_{2\epsilon}(v_{1},\lambda_{1})
\]
so $\mathbb{Q}_{H,\theta}(S_{2\epsilon}(v_{1},\lambda_{1}))\geq e^{-o(N)}$
which finishes the proof of Proposition \ref{prop:Gaussian-Concentration-1}. 
\end{proof}

\subsection{Concentration for the spectral measures, proof of Theorem \ref{prop:G-measure-concentration}}

\label{sec:LDspectralmeasure}

We first prove the convergence of the expectation and then will use concentration of measure to conclude. 
\begin{prop}\label{prop:expectation-convergence}
\label{prop:weak-muk}For $1\leq k\leq p$, $\mathbb{E}[\mu_{H^{N}}(\Pi_{k})]$
converges weakly toward $\mu_{\sigma,k}$. 
\end{prop}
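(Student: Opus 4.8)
The plan is to identify the weak limit of $\mathbb{E}[\mu_{H^N}(\Pi_k)]$ through its Stieltjes transform, showing it coincides with $G_{\mu_{\sigma,k}}$, which by Lemma \ref{lem:Measure-Concentration} characterizes $\mu_{\sigma,k}$ uniquely. Concretely, for $z$ with $\Im z>0$ set
\[
g_k^N(z):=\mathbb{E}\big[G_{\mu_{H^N}(\Pi_k)}(z)\big]=\frac{1}{N}\,\mathbb{E}\big[\Tr\big(\Pi_k (z-H^N)^{-1}\big)\big]
=\frac{1}{N}\sum_{i\in I_k^N}\mathbb{E}\big[(z-H^N)^{-1}_{ii}\big].
\]
First I would establish an approximate (finite-$N$) Dyson/Schwinger system for these quantities. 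Using the resolvent identity and either a Gaussian-type integration by parts (justified in the sharp sub-Gaussian setting by a cumulant expansion, cf. the local law literature \cite{AEK1,AEK2,Alt18,AlErKr}, or by the martingale/Stein argument already implicit in \cite{girko2012}), one gets that the vector $(g_k^N)_k$ satisfies
\[
\frac{\# I_k^N}{N}\,\frac{1}{g_k^N(z)} = z - \sum_{l=1}^p \sigma_{k,l}\, g_l^N(z) + \mathcal{E}_k^N(z),
\]
where the error $\mathcal{E}_k^N(z)$ is controlled by $\|H^N\|_{op}$ moments and the fluctuation of the resolvent entries, hence $\mathcal{E}_k^N(z)\to 0$ (uniformly on compacts of the upper half-plane bounded away from $\mathbb{R}$) thanks to the exponential tightness of $\|H^N\|_{op}$ in \eqref{expt} and the concentration of linear statistics of the resolvent.

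Second, since the family $(g_k^N)_k$ is uniformly bounded and equicontinuous on any compact subset of $\{\Im z>0\}$ (Montel/Arzelà–Ascoli), any subsequential limit $(g_k)_k$ is analytic there, satisfies the limiting Dyson system
\[
\frac{|I_k|}{g_k(z)} = z - \sum_{l=1}^p \sigma_{k,l}\, g_l(z),
\]
and has the correct normalization/Herglotz behaviour ($g_k$ maps the upper half-plane to the lower half-plane, $g_k(z)\sim |I_k|/z$ at infinity). By the uniqueness of the solution of this Dyson system in the Herglotz class — which is exactly the content of the construction of $(\mu_{\sigma,k})_k$ recalled in Lemma \ref{lem:Measure-Concentration}, see also \cite{AlErKr,HussonVar} and the proof in Proposition \ref{prop:muconv} — we get $g_k = G_{\mu_{\sigma,k}}$. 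Since every subsequence has the same limit, $g_k^N\to G_{\mu_{\sigma,k}}$ pointwise on $\{\Im z>0\}$; as $\mathbb{E}[\mu_{H^N}(\Pi_k)]$ is a sequence of nonnegative measures of mass $\#I_k^N/N\to|I_k|$ with uniformly compact support (again by \eqref{expt}), convergence of Stieltjes transforms upgrades to weak convergence toward $\mu_{\sigma,k}$.

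The main obstacle I anticipate is the rigorous control of the error term $\mathcal{E}_k^N$: one must show that $\mathbb{E}[(z-H^N)^{-1}_{ii}]$ and its off-diagonal analogues decouple from the entries of $H^N$ up to negligible corrections, uniformly over $i$, despite only having sharp sub-Gaussian tails rather than Gaussianity. The clean way is a bounded cumulant expansion: the $k$-th cumulants of $X_{ij}^N$ are $O(1)$ (sharp sub-Gaussianity gives control of exponential moments, hence of all moments uniformly in $N$), the $k$-th derivative of the resolvent entry in the direction $H_{ij}$ is $O(N^{-k/2})$ on the good event $\{\|H^N\|_{op}\le \mathcal{C}\}$, and the bad event contributes $e^{-cN}$ by \eqref{expt}; summing over $i,j$ leaves only the second-cumulant term, which reproduces the variance-profile quadratic form, plus an $o(1)$ remainder. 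An alternative, if one wants to stay close to tools already in the paper, is to invoke directly Theorem 1.1 of \cite{girko2012} (the finite-$N$ matrix Dyson equation approximates the resolvent in expectation) combined with Proposition \ref{prop:muconv}, which converts the finite-$N$ Dyson solution into the continuous one; in the piecewise constant case this immediately yields the claim and requires essentially no new computation.
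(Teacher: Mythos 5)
Your proof takes a genuinely different route from the paper's. The paper simply reduces the claim to the entry-wise convergence $\frac{1}{N}\sum_{i\in I_k^N}\mathbb{E}[(z-H^N)^{-1}_{ii}]\to G_{\mu_{\sigma,k}}(z)$ and then, for variance profiles bounded below, cites the existing local law \cite[Theorem 1.7]{AEK} together with \cite[Theorem A.6]{HussonVar}; for profiles that may vanish it introduces a crucial regularization step, adding $\epsilon\, W^N$ (a small GOE), applying the local law to the regularized matrix, and then passing $\epsilon\to0$ via resolvent Lipschitz bounds and the continuity of $\sigma\mapsto\mu_{\sigma,k}$. Your primary approach instead reproves a finite-$N$ self-consistent Dyson system by cumulant expansion and closes the argument by uniqueness of the Herglotz solution and Montel compactness; this is a more self-contained but much longer path, and as written it is a sketch — the claim that the error term $\mathcal{E}_k^N(z)$ vanishes hides exactly the concentration of the individual resolvent entries that the local law is used to deliver. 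You also do not address the possibility that the variance profile degenerates, which is the entire reason for the paper's regularization step; your cumulant expansion probably survives this, since the contraction argument behind Proposition \ref{prop:muconv} (following \cite[Prop.\ 2.1]{AEK2}) works without a lower bound on $\sigma$, but you should make that explicit. Finally, your suggested shortcut of invoking Girko's Theorem 1.1 of \cite{girko2012} together with Proposition \ref{prop:muconv} does not immediately yield the claim: as recalled in the appendix, Girko's result gives closeness of the full empirical measure $\mu_{H^N}$ to $\mu_{\Sigma^N}$, not the entry-wise (or block-projected) statement $\mathbb{E}[(z-H^N)^{-1}_{ii}]\approx m_i^N(z)$ needed to control $\mu_{H^N}(\Pi_k)$; you would need an entry-wise version of Girko's theorem, which is precisely what the local law provides.
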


\begin{proof}
[Proof of Proposition \ref{prop:weak-muk}] To prove this convergence,
one only has to prove that for every $z\in\mathbb{C}\setminus\mathbb{R}$,
the convergence in probability of the Stieljes transform
\begin{equation}\label{convSt}
G_{\mu_{H^{N}}(\Pi_{k})}(z)\rightarrow G_{\mu_{\sigma,k}}(z)
\end{equation}
 since by boundedness of $x\mapsto(z-x)^{-1}$ we will then have the
convergence in expectation. {This will imply the vague convergence of  $\mathbb{E}[\mu_{H^{N}}(\Pi_{k})]$
 toward $\mu_{\sigma,k}$. It can be easily strenghtened into a weak convergence by using the tightness established in \eqref{expt}. }
 One then notices, using \eqref{eq:Trace_mu_M},

\[
G_{\mu_{H^{N}}(\Pi_{k})}(z)=\frac{1}{N}\sum_{i\in I_{k}^{N}}(z-H^{N})_{i,i}^{-1}\,.
\]
For the case of a variance profile bounded below by some positive
number (i.e $\sigma_{i,j}>0$ for all $i,j\in[1,p]$) one can apply
the local law \cite[Theorem 1.7]{AEK} and \cite[Theorem A.6]{HussonVar}
to prove the statement \eqref{convSt}. For variance profiles that can vanish, one
can then proceed by approximation, for instance approximating $H^{N}$
by $H^{N}+\epsilon W^{N}$ where $W^{N}$ is GOE and $\epsilon$ a
positive real number. The previous proof shows that $\mathbb{E}[\mu_{H^{N}+\epsilon W^{N}}(\Pi_{k})]$
converges weakly toward $\mu_{(\sigma^{2}+\epsilon^{2})^{1/2},k}$.
One can then use that if $\|.\|_{op}$ denotes the operator norm,
because $\|W^{N}\|_{op}$ is bounded by $3$ with probability greater
than $1-e^{-cN}$ for some positive $c$, 
\[
\|(z-H^{N}-\epsilon W^{N})^{{-1}}-(z-H^{N})^{{-1}}\|_{op}\le\frac{3}{\Im z^{2}}\epsilon
\]
to see that $\mu_{H^{N}+\epsilon W^{N}}(\Pi_{k})$ is close to $\mu_{H^{N}}(\Pi_{k})$
when $\epsilon$ goes to zero with overwhelming probability. Finally,
one can use Theorem A.6 in \cite{HussonVar} again to prove that $\mu_{\sqrt{\sigma^{2}+\epsilon^{2}},k}$
goes to $\mu_{\sigma,k}$ when $\epsilon$ goes to $0$. The conclusion
follows in the general case. 
\end{proof}

\begin{proof}
[Proof of Theorem \ref{prop:G-measure-concentration}]We now obtain an estimate on the  concentration of measure of the law of the spectral measures to conclude the proof of 
Theorem \ref{prop:G-measure-concentration}, namely we show that 
for every  orthogonal projection $\Pi$,
every $\delta>0$,
\begin{equation}\label{top}
\limsup_{N\rightarrow\infty}\frac{1}{N}\log \mathbb P(d_{W}(\mu_{H^{N}}(\Pi),\mathbb E[\mu_{H^{N}}(\Pi)])>\delta)=-\infty\,.\end{equation}
This readily concludes the proof of Theorem \ref{prop:G-measure-concentration}  by taking $\Pi=\Pi_{k}$, $1\le k\le p$, and combining this estimate with the weak  convergence of $\mathbb{E}[\mu_{H^{N}}(\Pi_{k})]$  towards $\mu_{\sigma,k}$ derived in Proposition \ref{prop:expectation-convergence}. 
So, let $\Pi$ a
orthogonal projection and $f$ be a Lipschitz function. By definition,
\[
\mu_{H}(\Pi)[f]=\frac{1}{N}\Tr(f(H)\Pi)
\]
is Lipschitz in the entries of $X=\sqrt{N} H$ for the Euclidean norm. Indeed,
since $\Pi$ is a projection, its operator norm is bounded above by
one so that 
\begin{eqnarray*}
|\mu_{H}(\Pi)[f]-\mu_{H'}(\Pi)[f]| & \le & \frac{1}{N}\Tr(\Pi|f(H)-f(H')|)\\
 & \le & \frac{1}{N}\|f\|_{L} \left(\frac{1}{N}\Tr\Pi\right)^{1/2}\left(\Tr(X-X')^{2}\right)^{1/2}\,.
\end{eqnarray*}
Let us first assume that the $X_{i,j}^{N}$ are all bounded by some
$K$ finite. Then we can use apply Talagrand's bound \cite[Theorem 6.6]{talagrand}
to insure that for $f$ convex and Lipshitz, for all $\delta>0$ 
\[
\mathbb{P}\left(|\mu_{H^{N}}(\Pi)[f]-\text{Median}(\mu_{H^{N}}(\Pi)[f])|\ge\delta\|f\|_{L} \left(\frac{1}{N}\Tr\Pi\right)^{1/2}\right)\le4e^{-(4K)^{-2}N^{2}\delta^{2}}\,.
\]
As a consequence, we find that
\[
|\text{Median}(\mu_{H^{N}}(\Pi)[f])-\mathbb{E}[\mu_{H^{N}}(\Pi)[f]]|\le16\sqrt{\pi}\frac{K}{N}\|\sigma\|_{\infty}\|f\|_{L}\left(\frac{1}{N}\Tr\Pi\right)^{1/2}=:\delta_{N}[f]\,.
\]
Therefore 
\[
\mathbb{P}\left(|\mu_{H^{N}}(\Pi)[f]-\mathbb{E}[\mu_{H^{N}}(\Pi)[f]]|\ge\delta\|f\|_{L}  \left(\frac{1}{N}\Tr\Pi\right)^{1/2}+\delta_{N}[f]\right)\le4e^{-(4K)^{-2}N^{2}\delta^{2}}
\]
By optimization and approximation, this can be generalized to the
$d_{W}$ distance in the spirit of \cite[Corollary 1.4]{GZ} by approximating
any Lipschitz function $f$ by a sum of at most $K/\delta$ convex
Lipschitz function to deduce that 
\[
\mathbb{P}\left(d_{W}(\mu_{H^{N}}(\Pi),\mathbb{E}[\mu_{H^{N}}(\Pi)])>(\delta+16\sqrt{\pi}\frac{K}{N})\|\sigma\|_{\infty}\left(\frac{1}{N}\Tr\Pi\right)^{1/2}\right)\le\frac{32K}{\delta}e^{-(4K)^{-2}N^{2}\delta^{2}}
\]
This completes the proof of \eqref{top} when $X_{ij}^{N}$ is bounded.
To deal with the case where $X_{ij}^{N}$ is not bounded
(but still has subgaussian tail), we denote $a_{ij}=X_{ij}^{N}/(\Sigma_{ij}^{N})^{1/2}$
write 
\[
H^{N}=H^{\le}+H^{>},\mbox{ with }H_{ij}^{\le}=N^{-1/2}\sqrt{\Sigma_{ij}^{N}}\frac{[\mathds{1}_{|a_{ij}|\le K}a_{ij}-\mathbb{E}[\mathds{1}_{|a_{ij}|\le K}a_{ij}]]}{\mathbb{E}[(\mathds{1}_{|a_{ij}|\le K}a_{ij}-\mathbb{E}[\mathds{1}_{|a_{ij}|\le K}a_{ij})^{2}]^{1/2}}
\]
Because $a_{ij}$ are sharp subgaussian, $P(|a_{ij}|\ge K)\le2e^{-K^{2}/2}$
and therefore $\mathbb{E}[\mathds{1}_{|a_{ij}|\le K}a_{ij}]], \mathbb{E}[\mathds{1}_{|a_{ij}|>K}a_{ij}]]$
and $\mathbb{E}[(\mathds{1}_{|a_{ij}|\le K}a_{ij}-\mathbb{E}[\mathds{1}_{|a_{ij}|\le K}a_{ij})^{2}]^{-1/2}-1$
are at most of order $e^{-K^{2}}$. We can apply the previous concentration
result to $H^{\le}$ with any $K\ll\sqrt{N}$ the probability that
$\mu_{H^{\le}}(\Pi)$ deviates from its mean is going to zero faster
than exponentially in $N$. We next take $K=N^{1/2-\epsilon}$ for
$\epsilon>0$. We see that 
\[
H_{ij}^{>}=N^{-1/2}\sqrt{\Sigma_{i,j}}\mathds{1}_{|a_{ij}|>K}a_{ij}+m_{ij}^{K}
\]
where $m_{ij}^{K}$ is a variable coming from the recentering and
rescaling of $a_{ij}\mathds{1}_{|a_{ij}|\le K}$ which is bounded
by $e^{-K^{2}/2}(1+|a_{ij}|)$. Therefore, $\Tr(m^{K})^{2}\le Ne^{-K^{2}/2}\Tr(H^{2})$
is at most of order $Ne^{-K^{2}/2}$ and therefore is very small.
For the first term, we claim that $H^{>}-m^{K}$ has small rank with
overwhelming probability. To see this, it is enough to show that with
high probability, $H^{>}-m^{K}$ has a small rank, say smaller than
$\kappa N$ with $\kappa$ small, and in fact it has less than $\kappa N$
columns with a non zero entry. To prove this we can consider the families
of Bernoulli random variable $\mathds{1}_{|a_{i,j}|>K}$ for $1\leq i\leq j\leq N$.
Using the sub-Gaussian property of the $a_{i,j}$, these are i.i.d.
Bernoulli random variables of parameter 
\[
P(|a_{i,j}|>K)\le p_{N}=O(e^{-cK^{2}})
\]
As a consequence of Tchebychev's inequality, for any $\beta>0$,

\[
\mathbb{P}(\sum_{i\leq j}\mathds{1}_{|a_{i,j}|>K}\ge t)\le e^{-\beta t}(e^{\beta}p_{N}+1)^{\frac{N(N-1)}{2}}\le e^{-\beta t+N^{2}e^{\beta}p_{N}}
\]
Hence, if $t>ep_{N}N^{2}$, taking $\beta=\log(\frac{t}{p_{N}N^{2}})>1$,
we deduce 
\[
\mathbb{P}(\sum_{i\leq j}\mathds{1}_{|a_{i,j}|>K}\ge t)\le e^{-(\log(\frac{t}{p_{N}N^{2}})-1)t}
\]
We conclude by taking $t=\delta N$ and $K=N^{1/2-\epsilon}$ that
for $N$ large enough and $\delta>e^{-cN^{1-4\epsilon}}$ {and $\epsilon < 1/10$}

\[
\mathbb{P}\Big[\sum_{i\leq j}\mathds{1}_{|a_{i,j}|>K}\geq { \delta N }\Big]\leq e^{-cN^{1+2\epsilon}\delta}\,.
\]
Therefore, taking $\delta=N^{-\epsilon}$, we conclude that up to
an error negligible on the exponential scale, the matrix $H^{>>}:=(\mathds{1}_{|a_{ij}|>K}a_{ij})_{i,j}$
has at most $N^{1-\epsilon}$ non-zero entries, and therefore rank
bounded by $N^{1-\epsilon}$, with probability greater than $1-e^{-cN^{1+\epsilon}}$.
We conclude that 
\begin{eqnarray*}
d_{W}(\mu_{H^{N}}(\Pi),\mu_{H^{\le}}(\Pi)) & \le & \left(\frac{1}{N}\Tr(m^{K})^{2}\right)^{1/2}+\frac{\mbox{rank}(H^{>>})}{N}\\
 & \le & N^{-\epsilon}+(e^{-N^{1-\epsilon}/2}\Tr(H^{2}))^{1/2}
\end{eqnarray*}
is bounded by $2N^{-\epsilon}$ with probability greater than $1-e^{-N^{1+\epsilon}}$. this is enough to conclude.
\end{proof}

\subsection{Adding the condition $\langle\psi,S\psi\rangle>\epsilon$}\label{seceps}

The last section completed the proof of  the large deviation upper bound stated in Proposition \ref{prop:LDPUB2}
In view of proving the corresponding lower bound, we improve this
upper bound by showing that we can restrict the infimum over $\psi$.
Namely, we remind that for $\psi\in\mathcal{P}_{p}$, we denote $\langle\psi,S\psi\rangle=\sum_{k,l=1}^{p}\sigma_{k,l}\psi_{k}\psi_{l}$.
We shall prove the following improvement upon Proposition \ref{prop:LDPUB2}.
We denote $A:=\max_{k,l}\sigma_{k,l}$ and $a:=\langle|I|,S|I|\rangle=\sum_{k,l=1}^{p}\sigma_{k,l}|I_{k}||I_{l}|$.
\begin{prop}
\label{prop:epsilon} For  every $x>r_{\sigma}$, there exists $\varepsilon_{x}>0$
such that
\[
\limsup_{\delta\downarrow0}\limsup_{N\rightarrow\infty}\frac{1}{N}\log\mathbb{P}(|\lambda_{1}-x|\le\delta)\le-\inf_{\psi:\langle\psi,S\psi\rangle\ge\varepsilon_{x}}\sup_{\theta>0}\mathcal{F}(\theta,x,\psi,\sigma)\,.
\]
Moreover, we can assume that $x\mapsto\epsilon_{x}$ is decreasing
and only depending on $a,A$ and $r_{\sigma}$. 
\end{prop}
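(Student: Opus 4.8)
The plan is to obtain this refinement from the already-established upper bound of Proposition \ref{prop:LDPUB2} by a purely deterministic analysis of the variational formula, showing that profiles $\psi$ with $\langle\psi,S\psi\rangle$ close to $0$ are never competitive. Combining Proposition \ref{prop:LDPUB2} with Lemma \ref{lem:zero_entropy} and letting $\delta\downarrow0$ first yields the unrestricted bound
\[
\limsup_{\delta\downarrow0}\limsup_{N\to\infty}\frac1N\log\mathbb P(|\lambda_1-x|\le\delta)\ \le\ -\inf_{\psi\in\mathcal P_p}\sup_{\theta>0}\mathcal F(\theta,x,\psi,\sigma)\,.
\]
So it suffices to exhibit $\varepsilon_x\in(0,a]$ for which this infimum is unchanged under the extra constraint $\langle\psi,S\psi\rangle\ge\varepsilon_x$. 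Since $\langle|I|,S|I|\rangle=a\ge\varepsilon_x$, the constrained infimum is at most $M_x:=\sup_{\theta>0}\mathcal F(\theta,x,|I|,\sigma)$, hence it is enough to prove that $\sup_{\theta>0}\mathcal F(\theta,x,\psi,\sigma)\ge M_x$ whenever $\langle\psi,S\psi\rangle<\varepsilon_x$, together with an explicit bound $M_x\le U(x,a,A,r_\sigma)$.

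The key step is a lower bound on $\sup_{\theta>0}\mathcal F(\theta,x,\psi,\sigma)$, in terms of $\alpha:=\langle\psi,S\psi\rangle$, that blows up as $\alpha\downarrow0$. By Lemma \ref{lem:theta}, $\mathcal F(\theta,x,\psi,\sigma)=0$ when $2\theta\le G_{\mu_\sigma}(x)$, so I restrict to $2\theta\ge G_{\mu_\sigma}(x)$, where $\varphi(\theta,x)_k=G_{\mu_{\sigma,k}}(x)/2\theta$ and $\phi(\theta,x,\psi)=\varphi(\theta,x)+c_\theta\psi$ with $c_\theta:=1-G_{\mu_\sigma}(x)/2\theta\in[0,1)$. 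Expanding $\langle\phi,S\phi\rangle=\langle\varphi,S\varphi\rangle+2c_\theta\langle\varphi,S\psi\rangle+c_\theta^2\langle\psi,S\psi\rangle$ and using the Dyson equation in the form $\sum_m\sigma_{l,m}G_{\mu_{\sigma,m}}(x)=x-|I_l|/G_{\mu_{\sigma,l}}(x)$ together with the elementary bound $G_{\mu_{\sigma,l}}(x)\le|I_l|/(x-r_\sigma)$, one computes $\langle\varphi,S\varphi\rangle=(xG_{\mu_\sigma}(x)-1)/4\theta^2$ and $\langle\varphi,S\psi\rangle=\bigl(x-\sum_l\psi_l|I_l|/G_{\mu_{\sigma,l}}(x)\bigr)/2\theta\le r_\sigma/2\theta$, whence $\theta^2\langle\phi,S\phi\rangle\le\alpha\theta^2+r_\sigma\theta+(xG_{\mu_\sigma}(x)-1)/4$. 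Since $\phi$ is a probability vector, $-\frac12\sum_k|I_k|\log(\phi_k/|I_k|)\ge0$ by Jensen's inequality, while $J_{\mu_\sigma}(x,\theta)\ge\theta x-\frac12\log2\theta-\frac12\log x-\frac12$ (Jensen applied to $\int\log(x-y)\,d\mu_\sigma(y)\le\log x$, using $\int y\,d\mu_\sigma=0$). Hence
\[
\mathcal F(\theta,x,\psi,\sigma)\ \ge\ \theta(x-r_\sigma)-\alpha\theta^2-\tfrac12\log2\theta-C(x),\qquad C(x):=\tfrac12+\tfrac12\log x+\tfrac{xG_{\mu_\sigma}(x)-1}{4}\,,
\]
and evaluating at $\theta=(x-r_\sigma)/2\alpha$, which satisfies $2\theta\ge G_{\mu_\sigma}(x)$ as soon as $\alpha\le(x-r_\sigma)^2$ (because $G_{\mu_\sigma}(x)\le1/(x-r_\sigma)$), gives
\[
\sup_{\theta>0}\mathcal F(\theta,x,\psi,\sigma)\ \ge\ \frac{(x-r_\sigma)^2}{4\alpha}+\tfrac12\log\alpha-\tfrac12\log(x-r_\sigma)-C(x)\ =:\ \Psi_x(\alpha)\,,
\]
a quantity with $\Psi_x(\alpha)\to+\infty$ as $\alpha\downarrow0$ and $\Psi_x$ decreasing on $(0,(x-r_\sigma)^2/2)$.

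To finish I would bound $M_x$ by the same estimates used crudely: for $\theta\ge1/(x-r_\sigma)$ one has $c_\theta\ge1/2$, so $\langle\phi(\theta,x,|I|),S\phi(\theta,x,|I|)\rangle\ge c_\theta^2a\ge a/4$ and $\phi(\theta,x,|I|)_k\ge c_\theta|I_k|\ge|I_k|/2$, giving $0\le-\frac12\sum_k|I_k|\log(\phi_k/|I_k|)\le\frac12\log2$; therefore $\mathcal F(\theta,x,|I|,\sigma)\le\theta x-\frac a4\theta^2+\frac12\log2$, whose supremum over this $\theta$-range is $\le x^2/a+\frac12\log2$, while on the remaining bounded $\theta$-range $\mathcal F$ is controlled trivially. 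This yields $M_x\le U(x,a,A,r_\sigma)$ (the apparent dependence on $l_\sigma$ is removed by the Jensen step, and in any case $\mathrm{supp}\,\mu_\sigma\subset[-C\sqrt A,C\sqrt A]$ and $G_{\mu_\sigma}$ up to the edge are controlled by $A$). I then pick $\varepsilon_x\in\bigl(0,\min(a,(x-r_\sigma)^2/2)\bigr]$ small enough that $\Psi_x(\varepsilon_x)\ge U(x,a,A,r_\sigma)$; for any $\psi$ with $\langle\psi,S\psi\rangle<\varepsilon_x$ we then obtain $\sup_{\theta>0}\mathcal F(\theta,x,\psi,\sigma)\ge\Psi_x(\langle\psi,S\psi\rangle)\ge\Psi_x(\varepsilon_x)\ge M_x$, so such $\psi$ may be discarded from the infimum, which is exactly the statement. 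Finally, the construction gives values of $\varepsilon_x$ bounded away from $0$ as $x\to\infty$, so replacing $\varepsilon_x$ by $\inf_{y\ge x}\varepsilon_y$ keeps it positive and makes $x\mapsto\varepsilon_x$ non-increasing, and all constants above are bounded in terms of $a,A,r_\sigma$ only.

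The probabilistic content is entirely borrowed (Proposition \ref{prop:LDPUB2}, Lemma \ref{lem:zero_entropy}, and the annealed computation of Proposition \ref{ldpv-1-1}); the rest is deterministic convex analysis of $\mathcal F$. I expect the main obstacle to be not the blow-up estimate itself — which follows cleanly from the Dyson identity as above — but the bookkeeping needed to make $\varepsilon_x$ depend only on $a,A,r_\sigma$ (and monotonically on $x$): one must keep all the auxiliary constants ($C(x)$, the bound on $M_x$, the edge behavior of $G_{\mu_\sigma}$ and $l_\sigma$) under explicit control in terms of these three quantities, which is routine but requires care near the edge $x\to r_\sigma$.
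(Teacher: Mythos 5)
Your approach is genuinely different from the paper's and the main deterministic computation is correct, but it cannot deliver the second assertion of the Proposition (that one may take $x\mapsto\epsilon_x$ decreasing), and that part is used later in the paper. The paper does \emph{not} prove this statement by analyzing the variational formula $\mathcal F$; it goes through a probabilistic estimate (Lemma \ref{lemb}): a covering argument on $\mathcal S_\epsilon=\{u:\langle\rho(u),S\rho(u)\rangle\le\epsilon\}$ shows that, because $\langle u,H^Nu\rangle$ is sharp sub-Gaussian with variance $\langle\rho(u),S\rho(u)\rangle/N\le\epsilon/N$, the event $\{\lambda_1\ge r,\,v_1\in\mathcal S_\epsilon\}$ has log-probability bounded by $C'+\log M-\log r-r^2/(4\epsilon)$ (plus the operator-norm tail), and this is made smaller than $-x^2/(4a)-1\le-I_\sigma(x)-1$ via Lemma \ref{lem:Ratefunctionbound}. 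So the paper discards those eigenvector profiles \emph{probabilistically}, on the event itself, while you discard the corresponding $\psi$ \emph{deterministically} by showing $\sup_\theta\mathcal F$ is too large.

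Your Dyson-equation computation is right: $\langle\varphi,S\varphi\rangle=(xG_{\mu_\sigma}(x)-1)/(4\theta^2)$, $\langle\varphi,S\psi\rangle\le r_\sigma/(2\theta)$, $H(\phi)\ge0$ by Jensen, and $J_{\mu_\sigma}(x,\theta)\ge\theta x-\frac12\log2\theta-\frac12\log x-\frac12$ for $2\theta\ge G_{\mu_\sigma}(x)$ using symmetry of $\mu_\sigma$. So the lower bound $\mathcal F\ge\theta(x-r_\sigma)-\alpha\theta^2-\frac12\log2\theta-C(x)$ holds on $\{\theta\ge G_{\mu_\sigma}(x)/2\}$, and evaluating at $\theta=(x-r_\sigma)/(2\alpha)$ gives the blow-up $\Psi_x(\alpha)\to\infty$ as $\alpha\downarrow0$. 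That is a clean and self-contained proof of the \emph{first} part of the Proposition (existence of $\epsilon_x>0$ for each $x>r_\sigma$).

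The gap is in the second part. Your choice of $\theta$ must satisfy $\theta\ge G_{\mu_\sigma}(x)/2$, i.e.\ $\alpha\le(x-r_\sigma)/G_{\mu_\sigma}(x)$; for a typical profile $G_{\mu_\sigma}(r_\sigma)<\infty$, so this admissible range shrinks to $0$ as $x\to r_\sigma^+$, forcing $\epsilon_x\to0$. But a positive \emph{decreasing} function on $(r_\sigma,\infty)$ cannot tend to $0$ at $r_\sigma^+$ (it would then be $\le0$ everywhere), so the monotone, $x\to r_\sigma$-uniform version of $\epsilon_x$ that the Proposition asserts (and that Section \ref{sec:rate} actually uses, to bound $\sup_{x\le B}\theta_*(x,\epsilon_x)<\infty$) is out of reach by this route. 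Moreover, the proposed fix, replacing $\epsilon_x$ by $\inf_{y\ge x}\epsilon_y$, goes in the wrong direction: that map is non-\emph{decreasing} in $x$, not non-increasing, and it does not repair the degeneracy at the edge. The paper's probabilistic bound does not suffer from this because its constraint on $\epsilon$ involves $r=x-\delta$ (of order $r_\sigma>0$ near the edge) rather than the excess $x-r_\sigma$, so the resulting $\epsilon_x$ stays bounded below near $r_\sigma$ and can indeed be taken decreasing and depending only on $a,A,r_\sigma$.
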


A key step to prove this proposition is to show that we can exclude
the eigenvectors $v_{1}$ for the eigenvalue $\lambda_{1}$ such that
$\langle\rho(v_{1}),S\rho(v_{1})\rangle$ is small. This is the content of   the following
lemma. 
\begin{lem}
\label{lem:vectexcl} There exists constants $C,C'>0$ such that for
any $M>0$ and every $x>r_{\sigma}$ :
\[
\frac{1}{N}\log\mathbb{P}[|\lambda_{1}-x|\leq\delta,\langle\rho(v_{1}),S\rho(v_{1})\rangle\leq\epsilon]\leq\max\Big(C-\frac{M^{2}}{A},C'-\log r+\log M-\frac{r^{2}}{4\epsilon}\Big)
\]
with $r=x-\delta$. 
\end{lem}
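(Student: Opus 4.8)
The plan is to use that, on the event considered, $v_1$ is a unit vector with $\langle v_1,H^{N}v_1\rangle=\lambda_1\ge r:=x-\delta$ while $\langle\rho(v_1),S\rho(v_1)\rangle\le\epsilon$, and to show such a configuration is exponentially unlikely. For a \emph{fixed} unit vector $u$ the sharp sub-Gaussian bound gives $\mathbb{E}_{H^{N}}[e^{N\theta\langle u,H^{N}u\rangle}]\le e^{N\theta^{2}\langle\rho(u),S\rho(u)\rangle}$ — exactly the estimate used in the proof of Proposition~\ref{ldpv-1-1}, via \eqref{eq:Annealed_H-1} and \eqref{SSH} — so a Chernoff bound controls $\mathbb{P}(\langle u,H^{N}u\rangle\ge t)$; the randomness of $v_1$ is then handled by a net over the sphere, whose exponential cost is kept under control by simultaneously imposing $\|H^{N}\|_{op}<M$. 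First I would split off the operator norm: by \eqref{expt}, $\mathbb{P}(\|H^{N}\|_{op}\ge M)\le e^{N(C-M^{2}/A)}$, which gives the first term of the maximum, and on $\{\|H^{N}\|_{op}<M\}$ the remaining event is contained in
\[
\big\{\exists u\in\mathbb{S}^{N-1}:\ \langle u,H^{N}u\rangle\ge r,\ \langle\rho(u),S\rho(u)\rangle\le\epsilon\big\}\cap\{\|H^{N}\|_{op}<M\}.
\]

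Next I would fix an $\eta$-net $\mathcal{N}_\eta$ of $\mathbb{S}^{N-1}$ with $|\mathcal{N}_\eta|\le(3/\eta)^{N}$. On $\{\|H^{N}\|_{op}<M\}$, the elementary bounds $|\langle u,H^{N}u\rangle-\langle\tilde u,H^{N}\tilde u\rangle|\le 2M\|u-\tilde u\|$ and $\|\rho(u)-\rho(\tilde u)\|_1\le 2\|u-\tilde u\|$ (together with the bilinearity of $S$ and $\|\rho(u)\|_1=1$, giving $|\langle\rho(u),S\rho(u)\rangle-\langle\rho(\tilde u),S\rho(\tilde u)\rangle|\le 4A\|u-\tilde u\|$) show that any $u$ as above admits $\tilde u\in\mathcal{N}_\eta$ with $\langle\tilde u,H^{N}\tilde u\rangle\ge r-2M\eta$ and $\langle\rho(\tilde u),S\rho(\tilde u)\rangle\le\epsilon+4A\eta$. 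Applying the Chernoff bound to each such fixed $\tilde u$ and optimising over $\theta\ge0$ gives $\mathbb{P}(\langle\tilde u,H^{N}\tilde u\rangle\ge r-2M\eta)\le\exp\big(-N(r-2M\eta)_{+}^{2}/(4(\epsilon+4A\eta))\big)$, so that, restricting the cover to the net points satisfying $\langle\rho(\tilde u),S\rho(\tilde u)\rangle\le\epsilon+4A\eta$, a union bound over these (at most $(3/\eta)^{N}$) points yields
\[
\frac{1}{N}\log\mathbb{P}\big(\{\|H^{N}\|_{op}<M\}\cap\{|\lambda_1-x|\le\delta,\ \langle\rho(v_1),S\rho(v_1)\rangle\le\epsilon\}\big)\ \le\ \log\frac{3}{\eta}-\frac{(r-2M\eta)_{+}^{2}}{4(\epsilon+4A\eta)}.
\]
Choosing the net scale $\eta$ of order $r/M$ — small enough that $r-2M\eta$ stays a fixed fraction of $r$ and $4A\eta$ is negligible against $\epsilon$ up to a bounded factor — turns $\log(3/\eta)$ into $\log M-\log r$ and the negative term into $-r^{2}/(4\epsilon)$, both up to additive constants that I would absorb into $C'$; combining this with the operator-norm estimate by one more union bound gives the asserted maximum.

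The main obstacle is precisely this calibration of $\eta$: the net cardinality $(3/\eta)^{N}$ forbids $\eta$ from being too small, while the two perturbation estimates force $\eta\lesssim r/M$ (so that $r-2M\eta>0$ and the Chernoff bound is nontrivial) and $\eta\lesssim\epsilon/A$ (so that $\epsilon+4A\eta$ does not spoil the exponent); it is the resulting forced scale $\eta\asymp r/M$ that produces the $\log(M/r)$ term, and matching the precise constant $1/(4\epsilon)$ in the exponent only requires taking $\eta$ a suitably small multiple of $r/M$ and charging the remaining discrepancy to $C'$. Everything else — the sub-Gaussian Laplace transform bound and the operator-norm tail \eqref{expt} — is standard and already available from earlier in the paper.
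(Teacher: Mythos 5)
The overall strategy (operator-norm tail via \eqref{expt}, then a net plus a sharp sub-Gaussian Chernoff bound for the fixed-vector form) is the right one and matches the paper's, but your version of the net argument has a genuine gap. You take an $\eta$-net of all of $\mathbb{S}^{N-1}$ and then restrict to net points whose profile satisfies the \emph{perturbed} constraint $\langle\rho(\tilde u),S\rho(\tilde u)\rangle\le\epsilon+4A\eta$. This forces you to juggle two incompatible requirements on $\eta$: you need $\eta\lesssim r/M$ so that $\langle\tilde u, H^N\tilde u\rangle\ge r-2M\eta$ stays a definite fraction of $r$ and so that $\log(3/\eta)=\log M-\log r+O(1)$, and you need $\eta\lesssim\epsilon/A$ so that $\epsilon+4A\eta\le 2\epsilon$. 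You explicitly note both constraints and then set $\eta\asymp r/M$, which only respects the second when $M\gtrsim A r/\epsilon$. The lemma, however, is asserted for \emph{every} $M>0$ and every $x>r_\sigma$, with universal $C,C'$; when $M<Ar/\epsilon$ (which is not ruled out, since $\epsilon$ can be taken arbitrarily small after $M$ is fixed and $r$ is bounded below), the denominator in your Chernoff exponent becomes $\epsilon+Ar/M\approx Ar/M$ and the exponent degrades to $O(rM/A)$ rather than $r^{2}/(4\epsilon)$. Shrinking $\eta$ to $\asymp\epsilon/A$ to rescue the exponent instead turns $\log(3/\eta)$ into $\log(A/\epsilon)+O(1)$, which is no longer of the form $\log M-\log r+O(1)$. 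Either way, the resulting bound cannot be absorbed into a universal $C'$ in the expression $C'-\log r+\log M-r^{2}/(4\epsilon)$.

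The fix is what the paper actually does and is a small but essential change: build the $r/(4M)$-net of the set $\mathcal{S}_\epsilon=\{u\in\mathbb{S}^{N-1}:\langle\rho(u),S\rho(u)\rangle\le\epsilon\}$ itself (a maximal $r/(4M)$-separated subset of $\mathcal{S}_\epsilon$ still has cardinality at most $(K'M/r)^{N}$). Then the net points lie in $\mathcal{S}_\epsilon$, so the sub-Gaussian Chernoff bound applies to them with variance proxy exactly $\epsilon$, and you never need a Lipschitz estimate on $\langle\rho(\cdot),S\rho(\cdot)\rangle$ at all. Only the Lipschitz estimate on $\langle u,H^{N}u\rangle$ enters, which forces $\eta\lesssim r/M$ and nothing else, and then $\log(3/\eta)=\log M-\log r+O(1)$ falls out for free, for every $M>0$. (Minor additional remarks: your Lipschitz constant for $\|\rho(u)-\rho(\tilde u)\|_{1}$ should carry a $\sqrt p$, and the Chernoff bound with $t=r/2$ actually gives exponent $r^{2}/(16\epsilon)$ rather than $r^{2}/(4\epsilon)$ — a constant discrepancy present already in the paper's write-up — but these are harmless; the structural issue above is the one that matters.)
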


This Lemma is a direct consequence of \eqref{expt} and the following
lemma:
\begin{lem}
\label{lemb} For $\epsilon>0$, denote by $\mathcal{S}_{\epsilon}:=\{u\in\mathbb{S}^{N-1},\langle\rho(u),S\rho(u)\rangle\leq\epsilon\}.$
Then, there exists a constant $C'>0$ such that for all $M>0$ 
\[
\frac{1}{N}\log\mathbb{P}[||H^{N}||\leq M,\lambda_{1}\geq r,v_{1}\in\mathcal{S}_{\epsilon}]\leq C'+\log M+\log r-\frac{r^{2}}{4\epsilon}.
\]
\end{lem}

\begin{proof}
For a given $\epsilon>0$, we can built a $r/4M$-net $\mathcal{N}_{\epsilon}(r/M)$
of $\mathcal{S}_{\epsilon}$ of cardinality less than $(K'M/r)^{N}$
for some $K'$ finite independent of $M,N,r$. Thus,  we find that 
\begin{eqnarray*}
\mathbb{P}[\|H^{N}\|\leq M,\lambda_{1}\geq r,v_{1}\in\mathcal{S}_{\epsilon}] & \leq & \mathbb{P}[\|H^{N}\|\leq M,\exists u\in\mathcal{S}_{\epsilon},\langle u,H^{N}u\rangle\ge r]\\
 & \leq & \mathbb{P}[\|H^{N}\|\leq M,\exists u\in\mathcal{N}_{\epsilon}(r/M),\langle u,H^{N}u\rangle\ge r/2]\\
 & \leq & (K'M/r)^{N}\max_{u\in\mathcal{S}_{\epsilon}}\mathbb{P}[\langle u,H^{N}u\rangle>r/2].
\end{eqnarray*}
We conclude by noticing that, as a linear combination of independent
sharp sub-Gaussian random variables, $\langle u,H^{N}u\rangle$ is
itself sharp sub-Gaussian of variance 
\[
\frac{1}{N}\sum_{i,j}\Sigma_{i,j}^{N}u_{i}^{2}u_{j}^{2}=\frac{1}{N}\langle\rho(u),S\rho(u)\rangle\le\frac{\epsilon}{N}
\]
since $u$ belongs to   $\mathcal{S}_{\epsilon}$. This 
 implies that
\[
\mathbb{P}[\langle u,H^{N}u\rangle>r/2]\leq e^{-Nr^{2}/4\epsilon}\,.
\]
\end{proof}
We use the following upper bound on $I_{\sigma}$ whose proof is given
in Section \ref{secsimp}: 
\begin{lem}
\label{lem:Ratefunctionbound} For every $x>r_{\sigma}$, 
\[
I_{\sigma}(x)\leq\frac{x^{2}}{2a}.
\]

\end{lem}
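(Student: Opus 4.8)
The plan is to bound $I_\sigma(x)=\inf_\psi\sup_{\theta\ge 0}\mathcal F(\theta,x,\psi,\sigma)$ from above by evaluating the supremum at one well-chosen $\psi^\star\in\mathcal P_p$ and estimating the resulting one-dimensional optimization in $\theta$. We may assume $a>0$, since otherwise $\sigma\equiv 0$, $I_\sigma\equiv+\infty$ on $(r_\sigma,\infty)$ and the bound is vacuous. Fix $x>r_\sigma$ and abbreviate $g:=G_{\mu_\sigma}(x)>0$ and $G_k:=G_{\mu_{\sigma,k}}(x)>0$. The useful choice is
\[
\psi^\star_k:=\frac{G_k}{g},\qquad k=1,\dots,p,
\]
which is a probability vector because $\sum_kG_k=g$. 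From the Dyson system, written in the form $|I_k|=xG_k-G_k\sum_l\sigma_{k,l}G_l$ (valid for real $x>r_\sigma$ by analytic continuation), summing over $k$ gives $\sum_{k,l}\sigma_{k,l}G_kG_l=xg-1$, hence $\langle\psi^\star,S\psi^\star\rangle=(xg-1)/g^2$, which is $>0$ since $a>0$; thus $\psi^\star$ is admissible in the infimum.

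The point of this choice is that $\phi(\theta,x,\psi^\star)=\psi^\star$ for every $\theta$ with $2\theta\ge g$: in that range $G_{\mu_\sigma}^{-1}(2\theta)\vee x=x$, so $\varphi(\theta,x)_k=G_k/(2\theta)$, and $\tfrac{G_k}{2\theta}+(1-\tfrac{g}{2\theta})\tfrac{G_k}{g}=\tfrac{G_k}{g}$. Writing $c:=\langle\psi^\star,S\psi^\star\rangle=(xg-1)/g^2$, this gives, for $2\theta\ge g$,
\[
\mathcal F(\theta,x,\psi^\star,\sigma)=J_{\mu_\sigma}(x,\theta)-\theta^2c-\tfrac12\textstyle\sum_k|I_k|\log\frac{\psi^\star_k}{|I_k|}.
\]
The additive constant is pinned down by Lemma \ref{lem:theta}: for $2\theta<g$ one has $G_{\mu_\sigma}^{-1}(2\theta)>x$, so $K(\theta,\phi(\theta,x,\psi^\star))=J_{\mu_\sigma}(x,\theta)$ and $\mathcal F(\theta,x,\psi^\star,\sigma)=0$; letting $2\theta\uparrow g$ and using continuity in $\theta$ of $J_{\mu_\sigma}(x,\cdot)$ and $\phi(\cdot,x,\psi^\star)$ yields $\mathcal F(g/2,x,\psi^\star,\sigma)=0$ as well. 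Substituting $J_{\mu_\sigma}(x,\theta)=\theta x-\tfrac12-\tfrac12\log 2\theta-\tfrac12\int\log|x-y|\,d\mu_\sigma(y)$ for $2\theta\ge g$ and solving for the constant, one obtains for all $2\theta\ge g$
\[
\mathcal F(\theta,x,\psi^\star,\sigma)=\theta x-\theta^2c-\tfrac{gx}{2}+\tfrac{g^2c}{4}-\tfrac12\log\tfrac{2\theta}{g}.
\]

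Now I would finish as follows. Since $2\theta/g\ge 1$ the logarithmic term is $\le 0$; the unconstrained maximizer $x/(2c)$ of the quadratic $\theta x-\theta^2c$ satisfies $x/(2c)\ge g/2$ (equivalent to $xg\ge xg-1$), so the quadratic attains its maximum value $x^2/(4c)$ in the admissible range. Together with $\mathcal F\equiv 0$ for $2\theta<g$ this gives
\[
\sup_{\theta\ge 0}\mathcal F(\theta,x,\psi^\star,\sigma)\le\frac{x^2}{4c}-\frac{gx}{2}+\frac{g^2c}{4}=\frac{1}{4(xg-1)},
\]
the last equality being elementary after inserting $c=(xg-1)/g^2$. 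Finally, each $\mu_{\sigma,k}$ has vanishing first moment (expand the Dyson equation at $z=\infty$, matching the $O(1)$ term), so Jensen's inequality for the convex map $y\mapsto(x-y)^{-1}$ on $(-\infty,x)$ gives $G_k=\int(x-y)^{-1}d\mu_{\sigma,k}(y)\ge|I_k|/x$, whence $xg-1=\sum_{k,l}\sigma_{k,l}G_kG_l\ge x^{-2}\sum_{k,l}\sigma_{k,l}|I_k||I_l|=a/x^{2}$. Combining, $I_\sigma(x)\le\sup_\theta\mathcal F(\theta,x,\psi^\star,\sigma)\le x^2/(4a)\le x^2/(2a)$ (in fact the sharper bound $x^2/(4a)$ follows). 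The only mildly delicate points are the zero-mean property of the $\mu_{\sigma,k}$ needed for $G_k\ge|I_k|/x$ and the bookkeeping of the additive constant via Lemma \ref{lem:theta}; the remaining steps are routine algebra.
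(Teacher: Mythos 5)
Your proof is correct and in fact establishes the sharper bound $x^{2}/(4a)$, the same intermediate constant the paper reaches before relaxing to $x^{2}/(2a)$, but by a genuinely different route. The paper's proof evaluates the rate function at $\psi=\text{Leb}$ (i.e.\ $\psi_k=|I_k|$) and invokes the pre-established upper bound $\mathcal{F}(\theta,x,\psi,\sigma)\le -a(\theta-\theta_x)^2+(\theta-\theta_x)x$ from \eqref{eqn:Fupperbound}, obtained by discarding the nonnegative cross term, the logarithmic term, and the entropy difference; maximizing the quadratic then gives $x^2/(4a)$ in one line. You instead choose the specific vector $\psi^\star_k=G_{\mu_{\sigma,k}}(x)/G_{\mu_\sigma}(x)$, whose key structural feature is that $\phi(\theta,x,\psi^\star)=\psi^\star$ is constant on $\{2\theta\ge g\}$, so the entropy term of $K$ is $\theta$-independent and $\mathcal{F}$ becomes fully explicit after pinning the additive constant via the continuity at $\theta=g/2$ supplied by Lemma \ref{lem:theta}. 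The exact optimization then yields the intermediate bound $1/(4(xg-1))$, which you convert to $x^2/(4a)$ using two extra ingredients not needed in the paper's proof: that each $\mu_{\sigma,k}$ has zero first moment (read off from the $O(1)$ coefficient in the large-$z$ Dyson expansion) and Jensen's inequality for $y\mapsto(x-y)^{-1}$, giving $G_{\mu_{\sigma,k}}(x)\ge|I_k|/x$. Both routes are sound; the paper's is shorter because it reuses a general estimate already in hand, while yours is more self-contained, makes the choice of test vector transparent, and produces the slightly finer intermediate estimate $I_\sigma(x)\le 1/(4(xG_{\mu_\sigma}(x)-1))$ along the way.
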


We can finally complete the proof of Proposition \ref{prop:epsilon} 
\begin{proof}[Proof of Proposition \ref{prop:epsilon}]
 It is enough to show that $-I_{\sigma}(x)>\max\{C'+\log M+\log r-\frac{r^{2}}{4\epsilon},C-M^{2}/A\}$
with $r=x-\delta$ and $\epsilon$ small enough since then this implies
that the infimum over $\psi\in\mathcal{P}_{p}$ is taken over $\psi$
so that $\langle\psi,S\psi\rangle\ge\epsilon$. This is easily done thanks to Lemma \ref{lem:Ratefunctionbound} by 
choosing $M>0$ such
that $\frac{M^{2}}{A}-C\geq\frac{x^{2}}{4a}+1$ and then $\epsilon>0$
small enough such that $\log r_{\sigma}+\log M+\frac{r_{\sigma}^{2}}{4\epsilon}-C'\geq\frac{x^{2}}{4a}+1$. 

\end{proof}

\section{Proof of Theorem \ref{maintheo-constant}, Large deviation lower
bound}

\label{sec:LDlowerbound}

In this section we will prove Proposition \ref{prop:LDPLB}. { It will rely on the following key Proposition. }
\begin{prop}
\label{prop:Add_lambda_1}For every  $x>r_{\sigma}$, every $\varepsilon,\delta>0$  and every $\psi\in\mathcal{P}_{p}$
such that $\langle\psi,S\psi\rangle\ge\varepsilon $ there exist
$\theta_{*}=\theta(x,\psi)\geq0$ such that 
\begin{align*}
 & \frac{1}{N}\log\mathbb{E}_{H}\mathbb{E}_{u}[\mathds{1}_{|\lambda_{1}-x|\leq\delta,||\rho(u)-\phi(\theta_{*},x,\psi)||\leq\kappa,H^{N}\in\Omega_{N}}e^{N\theta_{*}\langle u,H^{N}u\rangle}]\\
 & \quad=\frac{1}{N}\log\mathbb{E}_{H}\mathbb{E}_{u}[\mathds{1}_{||\rho(u)-\phi(\theta_{*},x,\psi)||\leq\kappa}e^{\theta_{*}N\langle u,H^{N}u\rangle}]+\eta_{\delta,\varepsilon}(N,\kappa).
\end{align*}
where 
for every  $\delta,\varepsilon>0$,  $\eta_{\delta,\varepsilon}(N,\kappa)$ goes to $0$ as $N\rightarrow\infty$ and then $\kappa\rightarrow0$.
\end{prop}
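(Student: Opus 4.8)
\emph{Choice of $\theta_*$ and reduction.} The plan is to work in the piecewise constant setting of Section~\ref{subsec:Piecewise-constant-notation}, write $\phi_*:=\phi(\theta_*,x,\psi)$ once $\theta_*$ is fixed, and take $\theta_*=\theta_*(x,\psi)$ to be a maximiser over $[0,\infty)$ of $\theta\mapsto\mathcal F(\theta,x,\psi,\sigma)=J_{\mu_\sigma}(x,\theta)-K(\theta,\phi(\theta,x,\psi))$. Such a maximiser exists and lies in $(G_{\mu_\sigma}(x)/2,\infty)$: the map is continuous, vanishes on $[0,G_{\mu_\sigma}(x)/2]$ by Lemma~\ref{lem:theta}, is positive somewhere (indeed $\sup_\theta\mathcal F(\theta,x,\psi,\sigma)\ge I_\sigma(x)>0$ for $x>r_\sigma$), and tends to $-\infty$ because $K(\theta,\phi(\theta,x,\psi))\sim\theta^2\langle\psi,S\psi\rangle\to+\infty$ (here $\phi(\theta,x,\psi)\to\psi$ and $\varphi(\theta,x)\to 0$ as $\theta\to\infty$, and $\langle\psi,S\psi\rangle\ge\varepsilon>0$ is used), whereas $J_{\mu_\sigma}(x,\theta)$ grows only linearly. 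Since the event in the left-hand side carries strictly more indicators than the one on the right, one always has $\frac1N\log(\mathrm{LHS})\le\frac1N\log(\mathrm{RHS})$, while Proposition~\ref{ldpv-1-1} gives $\lim_{\kappa\to0}\lim_N\frac1N\log(\mathrm{RHS})=K(\theta_*,\phi_*)$; so it suffices to prove the matching lower bound $\liminf_N\frac1N\log(\mathrm{LHS})\ge K(\theta_*,\phi_*)-o_\kappa(1)$ for $\kappa$ small relative to the fixed $\delta$.

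\emph{Change of measure on $H$.} The next step is to use Fubini with $\mathbb E_u$ outermost and to restrict the integral to $u$ in the delocalisation event $E=\bigcap_k E_k$ of the proof of Proposition~\ref{ldpv-1-1}, which is independent of $\rho(u)$ (Lemma~\ref{lem:Uniform_sphere_family}) and satisfies $\frac1N\log\mathbb P_u(E)\to0$. For fixed $u$, $N\theta_*\langle u,Hu\rangle$ is linear in the entries of $X^N$, so
\[
\mathbb E_H\big[\mathds 1_{|\lambda_1(H)-x|\le\delta,\,H\in\Omega_N}e^{N\theta_*\langle u,Hu\rangle}\big]=\mathbb E_H\big[e^{N\theta_*\langle u,Hu\rangle}\big]\cdot\mathbb P^{(u)}\big(|\lambda_1-x|\le\delta,\,H\in\Omega_N\big),
\]
where $\mathbb P^{(u)}$ is the (still product) law obtained by tilting each $X^N_{ij}$ by $e^{c_{ij}X^N_{ij}}$, with $c_{ij}=2\sqrt N\theta_*u_iu_j$ for $i<j$ and $c_{ii}=\sqrt N\theta_*u_i^2$. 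On $E$ one has $\max_{ij}|c_{ij}|=O(N^{-1/4})$, so, exactly as in the lower-bound part of Proposition~\ref{ldpv-1-1}, the sharp sub-Gaussian inequality gives $\mathbb E_H[e^{N\theta_*\langle u,Hu\rangle}]\ge e^{(1-\epsilon)N\theta_*^2\langle\rho(u),S\rho(u)\rangle}\ge e^{(1-\epsilon)N(\theta_*^2\langle\phi_*,S\phi_*\rangle-C\kappa)}$ on $\{\|\rho(u)-\phi_*\|\le\kappa\}$.

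\emph{Concentration of $\lambda_1$ under $\mathbb P^{(u)}$, and the key identity.} Because the $c_{ij}$ are small, under $\mathbb P^{(u)}$ one has $H=M_u+\widetilde H$, where $M_u=2\theta_*\sum_{k,l=1}^p\sigma_{k,l}(\Pi_k u)(\Pi_l u)^{\top}$ is of rank $\le p$ (this is $\mathbb E^{(u)}[H]$ up to a matrix whose entries carry a factor $u_i^2u_j^2$, hence of vanishing operator norm on $E$) and $\widetilde H$ is centred, sub-Gaussian, with variance profile converging uniformly to $\sigma$. Then $\mu_H(\Pi_k)\to\mu_{\sigma,k}$ and $\|H\|$ stays bounded under $\mathbb P^{(u)}$ (Proposition~\ref{prop:weak-muk}, Theorem~\ref{prop:G-measure-concentration}, exponential tightness, and the fact that a rank $\le p$ perturbation is negligible), so $\mathbb P^{(u)}(H\in\Omega_N)\to1$. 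Writing $M_u=P\Theta P^{\top}$ with $P=[\hat w_1|\cdots|\hat w_p]$, $\hat w_k=\Pi_k u/\|\Pi_k u\|$ orthonormal, $\Theta_{kl}=2\theta_*\sigma_{k,l}\sqrt{\rho(u)_k\rho(u)_l}$, concentration of quadratic forms in the independent delocalised vectors $\hat w_k$ gives $\big(P^{\top}(y-\widetilde H)^{-1}P\big)_{kl}\to\delta_{kl}\,|I_k|^{-1}G_{\mu_{\sigma,k}}(y)$ in probability for $y>r_\sigma$, so by the Woodbury identity $\lambda_1(H)\to\Lambda(\rho(u),\theta_*)$, where $\Lambda(\chi,\theta)$ is the solution $y>r_\sigma$ of $\det\big(I_p-2\theta\,\sigma\,\mathrm{diag}(\chi_kG_{\mu_{\sigma,k}}(y)/|I_k|)_k\big)=0$. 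The crucial algebraic fact is $\Lambda(\phi_*,\theta_*)=x$: plugging $\chi=\phi_*$, $y=x$ and using the Dyson equations $|I_k|/G_{\mu_{\sigma,k}}(x)=x-\sum_l\sigma_{k,l}G_{\mu_{\sigma,l}}(x)$ together with the definition~\eqref{eq:Def_Phi_theta_bis} of $\varphi,\phi$, vanishing of that determinant is equivalent to the first-order condition $\partial_\theta\mathcal F(\theta_*,x,\psi,\sigma)=0$ defining $\theta_*$ (for $\sigma\equiv\mathds 1$ this reads $2\theta_*=1/G_{\mu_\sigma}(x)$, and the rank one shift $2\theta_*uu^{\top}$ indeed produces the outlier $2\theta_*+1/(2\theta_*)=x$). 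Hence $\Lambda(\rho(u),\theta_*)=x+O(\kappa)$ on our event, so for $\kappa$ small compared with $\delta$, $\mathbb P^{(u)}(|\lambda_1-x|\le\delta,\,H\in\Omega_N)\to1$ for each such $u$.

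\emph{Conclusion and main obstacle.} Putting the last two paragraphs together, using $\mathbb P_u(E\cap\{\|\rho(u)-\phi_*\|<\kappa\})=\mathbb P_u(E)\,\mathbb P_u(\|\rho(u)-\phi_*\|<\kappa)$, the lower-bound half of Lemma~\ref{lem:rho_LD} (rate $H(\phi_*)$ at $\phi_*$), and dominated convergence to integrate $\mathbb P^{(u)}(|\lambda_1-x|\le\delta,H\in\Omega_N)\to1$ against the weighted $u$-measure, one obtains $\liminf_N\frac1N\log(\mathrm{LHS})\ge(1-\epsilon)\theta_*^2\langle\phi_*,S\phi_*\rangle-H(\phi_*)-O(\kappa)$; letting $\epsilon\downarrow0$ yields $\liminf_N\frac1N\log(\mathrm{LHS})\ge K(\theta_*,\phi_*)-O(\kappa)$, which closes the reduction. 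The main obstacle is the third paragraph: establishing the (uniform) deterministic-equivalent description of $\lambda_1$ under the tilted law $\mathbb P^{(u)}$, i.e. the BBP-type analysis of the finite rank perturbation $M_u+\widetilde H$ of a variance-profile random matrix, and above all verifying the identity $\Lambda(\phi_*,\theta_*)=x$ by matching the outlier equation at $y=x$ with the Dyson system and with the equation $\partial_\theta\mathcal F(\theta_*,\cdot)=0$.
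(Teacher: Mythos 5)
Your overall architecture matches the paper's: reduce to a lower bound via Proposition \ref{ldpv-1-1}, swap $\mathbb{E}_u$ and $\mathbb{E}_H$, tilt the law of $H$ by $e^{N\theta_*\langle u,Hu\rangle}$ on a delocalisation event, and show that under the tilted law $\lambda_1$ concentrates at $x$ while $H\in\Omega_N$. The genuine gap is in how $\theta_*$ is chosen and certified. You take $\theta_*$ as a maximiser of $\theta\mapsto\mathcal{F}(\theta,x,\psi,\sigma)$ and then \emph{assert} that the first-order condition $\partial_\theta\mathcal{F}(\theta_*,x,\psi,\sigma)=0$ is equivalent to the BBP outlier equation $\det\bigl(I_p-2\theta_*\,S\,D(\theta_*,x)\bigr)=0$, i.e.\ to $\Lambda(\phi_*,\theta_*)=x$. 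You flag this yourself as ``the main obstacle,'' and correctly so: differentiating $K(\theta,\phi(\theta,x,\psi))$ through the $\theta$-dependence of $\phi(\theta,x,\psi)$ and matching the result with a determinant vanishing is a nontrivial computation that the proposal never carries out; moreover even if it went through you would still need to show $x$ is the \emph{largest} root of the determinantal equation, not merely a root. (A secondary issue: to place the maximiser strictly above $G_{\mu_\sigma}(x)/2$ you invoke $I_\sigma(x)>0$, which in the paper is only established downstream of the LDP that depends on this very proposition, so one would want an independent argument there.)

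The paper avoids all of this by decoupling $\theta_*$ from the variational problem. It sets $\nu(\theta)=2\theta\lambda_1\bigl(\sqrt{D(\theta,x)}\,S\,\sqrt{D(\theta,x)}\bigr)$ and proves (Lemma \ref{lem:continuous-find_theta}), using $\langle\psi,S\psi\rangle\ge\varepsilon>0$, that $\nu$ is continuous with $\nu(0)=0$ and $\nu(\theta)\to\infty$; the intermediate value theorem then gives $\theta_*$ with $\nu(\theta_*)=1$, hence a root of the determinantal equation at $z=x$, and an entrywise monotonicity argument in $z$ upgrades this to $z(\theta_*)=x$. No optimality of $\theta_*$ is used or needed, since Proposition \ref{prop:LDPLB} only requires \emph{some} $\theta$ making the tilted $\lambda_1$ land near $x$. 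The paper also does not re-derive the BBP analysis from scratch: it uses Lemma \ref{lem:Phii-uniform} to replace the $\kappa$-ball constraint on $\rho(u)$ by sampling with $\rho(v)=\phi$ exactly (at the cost of the Kullback--Leibler term), and then cites \cite[Proofs of Lemmas 5.2--5.3]{HussonVar} (Lemmas \ref{lem2-1}, \ref{lem:z(theta)-prediction}) for the decomposition $H=\widetilde H+2\theta E+\Delta_N$ and the concentration of $\lambda_1$ at $z(\theta)$. Your Woodbury sketch is a plausible outline for reproving those lemmas, but together with the unverified identity it leaves the central step of your proposal unproved.
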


The main part of this section is to prove this proposition but for
the moment we assume it is true and finish the proof of the lower
bound. 
\begin{proof}
[Proof of Theorem \ref{maintheo-constant}, lower bound] First, using
the same definition for $\Omega_{N}$ as in section \ref{sec:LDupperbound},
for any $x>r_{\sigma}$,$\delta>0$, $\kappa>0$, and for any $\psi\in\mathcal{P}_{p}$
and $\langle\psi,S\psi\rangle\ge\varepsilon$ and $\theta\geq0$ as in Proposition
\ref{prop:Add_lambda_1} : 
\begin{align*}
 & \frac{1}{N}\log\mathbb{P}[|\lambda_{1}-x|\leq\delta]\\
 & \quad\geq\frac{1}{N}\log\mathbb{E}[\mathds{1}_{|\lambda_{1}-x|\leq\delta,H^{N}\in\Omega_{N}}\frac{I_{N}(H,\theta)}{I_{N}(H,\theta)}]\\
 & \quad=\frac{1}{N}\log\mathbb{E}[\mathds{1}_{|\lambda_{1}-x|\leq\delta,H^{N}\in\Omega_{N}}I_{N}(H,\theta)]-J_{\mu_{\sigma}}(\theta,x)+ {\eta_\delta(N)'}\\
 & \quad=\frac{1}{N}\log\mathbb{E}_{H}\mathbb{E}_{u}[\mathds{1}_{|\lambda_{1}-x|\leq\delta,H^{N}\in\Omega_{N}}e^{N\theta\langle u,H^{N}u\rangle}]-J_{\mu_{\sigma}}(\theta,x)+ {\eta_\delta(N)'}\\
 & \quad\geq\frac{1}{N}\log\mathbb{E}_{H}\mathbb{E}_{u}[\mathds{1}_{|\lambda_{1}-x|\leq\delta,H^{N}\in\Omega_{N}}\mathds{1}_{||\rho(u)-\phi(\theta,x,\psi)||\leq\kappa}e^{N\theta\langle u,H^{N}u\rangle}]-J_{\mu_{\sigma}}(\theta,x)+ {\eta_\delta(N)'}
\end{align*}
where $ { \eta_\delta(N)'}$ comes from the uniform convergence of $N^{-1} \log I(H^N,\theta)$ toward $J_{\mu_{\sigma}}(\theta,x)$. 
 In the case where $\theta_{*}=\theta(x,\psi)$ we get by Proposition \ref{prop:Add_lambda_1} that
\begin{align*}
 & \frac{1}{N}\log\mathbb{P}[|\lambda_{1}-x|\leq\delta]\\
 & \quad\geq\frac{1}{N}\log\mathbb{E}_{H}\mathbb{E}_{u}[\mathds{1}_{||\rho(u)-\phi(\theta_{*},x,\psi)||\leq\kappa}e^{N\theta_{*}\langle u,H^{N}u\rangle}]-J_{\mu_{\sigma}}(\theta_{*},x)+ { \eta_\delta(N)'} + \eta_\kappa(N)\\
 & \quad=K(\theta_{*},\phi(\theta_{*},x,\psi))-J_{\mu_{\sigma}}(\theta_{*},x)+ { \eta_\delta(N)'} + \eta_\kappa(N)\\
 & \quad\geq\inf_{\theta\geq0}-\mathcal{F}(\theta,x,\psi,\sigma)+{ \eta_\delta(N)'}+ \eta_{\delta,\varepsilon}(N,\kappa)
\end{align*}
where $\eta_{\delta,\varepsilon}(N,\kappa)$ goes to zero when $N$ goes to infinity
and then $\kappa$ goes to zero. Because this inequality
holds for every $\psi\in\mathcal{P}_{p}$ so that  $\langle\psi,S\psi\rangle\ge\varepsilon$, every $\varepsilon>0$ and $\delta>0$, 
we finally get the lower bound of Theorem \ref{maintheo-constant}.
\end{proof}
We finally prove Proposition \ref{prop:Add_lambda_1}. Let $\phi\in\mathbb{{\cal P}}_{p}$,
and let $v$ be a  random vector uniformly distributed on the set $\{v\in\mathbb{S}^{N-1}\colon\rho(v)=\phi\}$.
This random vector can also be constructed as follow : we set 
$v=\sum_{i=1}^{p}v_{k}$ where $(\phi_{k}^{-1/2}v_{k})$ are independent
vectors distributed uniformly on 
\[
\mathbb{S}^{I_{k}^{N}}:=\mathbb{S}^{N-1}\cap\{x\in\mathbb{R}^{N}:\forall i\notin I_{k}^{N},x_{i}=0\}\,.
\]
respectively. We denote this law $\mathbb{P}_{v}^{\phi}$ and $\mathbb{E}_{v}^{\phi}$
the associated expectation. 
\begin{lem}
\label{lem:Phii-uniform}For any $\theta\geq0$, $\phi\in{\cal P}_{p}$
and any measurable set $A$ of $\mathcal{S}_{N}$, we have 

\begin{align*}
 & \frac{1}{N}\log\mathbb{E}_{H}\mathbb{E}_{u}[e^{\theta N\langle u,H^{N}u\rangle}\mathds{1}_{H^{N}\in A\cap\Omega_{N},||\rho(u)-\phi||\leq\kappa}]\\
 & \qquad=\frac{1}{N}\log\mathbb{E}_{H}\mathbb{E}_{v}^{\phi}[e^{\theta N\langle v,H^{N}v\rangle}\mathds{1}_{H^{N}\in A\cap\Omega_{N}}]-d_{KL}(|I|\|\phi)+\eta_\kappa(N)
\end{align*}
where $\eta_\kappa(N)$ goes to $0$ for $N$ going to $\infty$ and then $\kappa$ going to $0$.
\end{lem}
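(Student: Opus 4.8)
The plan is to disintegrate the uniform measure on $\mathbb{S}^{N-1}$ along the profile map $\rho$ and then transport the resulting conditional laws. By Lemma \ref{lem:Uniform_sphere_family}, when $u$ is uniform on $\mathbb{S}^{N-1}$ the profile $\rho(u)$ follows a Dirichlet distribution $D_N$ with parameters $(\#I_1^N,\dots,\#I_p^N)/2$, and the conditional law of $u$ given $\{\rho(u)=\phi'\}$ is exactly $\mathbb{P}_v^{\phi'}$. Since $\mathds{1}_{H^N\in A\cap\Omega_N}$ does not depend on $u$, Fubini gives
\[
\mathbb{E}_{H}\mathbb{E}_{u}\big[e^{\theta N\langle u,H^{N}u\rangle}\mathds{1}_{H^{N}\in A\cap\Omega_{N},\,\|\rho(u)-\phi\|\le\kappa}\big]=\mathbb{E}_{H}\Big[\mathds{1}_{H^{N}\in A\cap\Omega_{N}}\int_{\|\phi'-\phi\|\le\kappa}\mathbb{E}_{v}^{\phi'}\big[e^{\theta N\langle v,H^{N}v\rangle}\big]\,dD_N(\phi')\Big].
\]
If $\phi_k=0$ for some $k$ with $|I_k|>0$ then $d_{KL}(|I|\|\phi)=+\infty$ and a direct estimate shows both sides of the lemma tend to $-\infty$ in the iterated limit, so we may assume $\min_k\phi_k>0$; this is automatic for the $\phi=\phi(\theta,x,\psi)$ to which the lemma is applied, since $\varphi(\theta,x)_k>0$.

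The heart of the argument is a comparison of $\mathbb{E}_v^{\phi'}$ with $\mathbb{E}_v^{\phi}$ that is uniform in $N$ and in $H\in\Omega_N$. Using the product description of $\mathbb{P}_v^\phi$ (write $v=\sum_k v_k$ with $w_k:=\phi_k^{-1/2}v_k$ independent and uniform on $\mathbb{S}^{I_k^N}$), the linear map $T_{\phi,\phi'}v:=\sum_k(\phi'_k/\phi_k)^{1/2}v_k=\sum_k(\phi'_k)^{1/2}w_k$ stays on $\mathbb{S}^{N-1}$ and pushes $\mathbb{P}_v^{\phi}$ forward onto $\mathbb{P}_v^{\phi'}$, so $\mathbb{E}_v^{\phi'}[e^{\theta N\langle v,Hv\rangle}]=\mathbb{E}_v^{\phi}[e^{\theta N\langle T_{\phi,\phi'}v,\,HT_{\phi,\phi'}v\rangle}]$. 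On $\Omega_N$ we have $\|H\|_{op}\le\mathcal{C}$, hence $|\langle v_k,Hv_l\rangle|\le\mathcal{C}\|v_k\|\|v_l\|=\mathcal{C}(\phi_k\phi_l)^{1/2}$, so
\[
\big|\langle T_{\phi,\phi'}v,HT_{\phi,\phi'}v\rangle-\langle v,Hv\rangle\big|\le\mathcal{C}\sum_{k,l=1}^{p}\Big|\sqrt{\tfrac{\phi'_k\phi'_l}{\phi_k\phi_l}}-1\Big|(\phi_k\phi_l)^{1/2}=\mathcal{C}\sum_{k,l=1}^{p}\big|\sqrt{\phi'_k\phi'_l}-\sqrt{\phi_k\phi_l}\big|=:\omega(\phi,\phi').
\]
This quantity depends only on $\mathcal{C},p,\phi,\phi'$, is symmetric in $\phi,\phi'$, and $\sup_{\|\phi'-\phi\|\le\kappa}\omega(\phi,\phi')\to0$ as $\kappa\to0$ by uniform continuity of the square root. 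Since $\theta\ge0$, it follows that
\[
e^{-\theta N\omega(\phi,\phi')}\mathbb{E}_v^{\phi}[e^{\theta N\langle v,Hv\rangle}]\le\mathbb{E}_v^{\phi'}[e^{\theta N\langle v,Hv\rangle}]\le e^{\theta N\omega(\phi,\phi')}\mathbb{E}_v^{\phi}[e^{\theta N\langle v,Hv\rangle}]
\]
for every $H\in\Omega_N$ and every $\phi'$ with $\|\phi'-\phi\|\le\kappa$.

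Inserting these bounds in the disintegration identity, pulling $\mathbb{E}_v^{\phi}[e^{\theta N\langle v,Hv\rangle}]$ and $D_N(\{\|\phi'-\phi\|\le\kappa\})$ out of the $\phi'$-integral, and applying $\mathbb{E}_H$ and then $\tfrac1N\log$, we obtain
\[
\Big|\tfrac1N\log\mathbb{E}_{H}\mathbb{E}_{u}\big[e^{\theta N\langle u,H^Nu\rangle}\mathds{1}_{H^N\in A\cap\Omega_N,\,\|\rho(u)-\phi\|\le\kappa}\big]-\tfrac1N\log\mathbb{E}_{H}\mathbb{E}_{v}^{\phi}\big[e^{\theta N\langle v,H^Nv\rangle}\mathds{1}_{H^N\in A\cap\Omega_N}\big]-\tfrac1N\log D_N(\{\|\phi'-\phi\|\le\kappa\})\Big|\le\theta\sup_{\|\phi'-\phi\|\le\kappa}\omega(\phi,\phi').
\]
It then remains to evaluate the Dirichlet mass: by Lemma \ref{lem:rho_LD}, $D_N$ satisfies a large deviation principle with good rate function $d_{KL}(|I|\|\cdot)$, and applying the upper bound to the closed ball $\{\|\phi'-\phi\|\le\kappa\}$, the lower bound to its interior, and the continuity of $d_{KL}(|I|\|\cdot)$ at $\phi$ (using $\min_k\phi_k>0$), we get $\lim_{\kappa\to0}\lim_{N\to\infty}\tfrac1N\log D_N(\{\|\phi'-\phi\|\le\kappa\})=-d_{KL}(|I|\|\phi)$. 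Setting $\eta_\kappa(N):=\big(\tfrac1N\log D_N(\{\|\phi'-\phi\|\le\kappa\})+d_{KL}(|I|\|\phi)\big)+r_N(\kappa)$ with $|r_N(\kappa)|\le\theta\sup_{\|\phi'-\phi\|\le\kappa}\omega(\phi,\phi')$ gives the claimed identity, and $\eta_\kappa(N)\to0$ as $N\to\infty$ then $\kappa\to0$. I expect the main delicate point to be ensuring every error in the comparison of $\mathbb{E}_v^{\phi'}$ with $\mathbb{E}_v^{\phi}$ is uniform over $H$; this is precisely what the operator-norm constraint $\|H\|_{op}\le\mathcal{C}$ built into $\Omega_N$ supplies, which is why the statement is phrased with $\Omega_N$ rather than with all of $\mathcal{S}_N$.
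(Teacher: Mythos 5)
Your proof is correct and follows essentially the same route as the paper's: you use the product structure of the uniform measure (Lemma \ref{lem:Uniform_sphere_family}) to couple $u$ with a $\mathbb{P}_v^{\phi}$-distributed vector sharing the same block-normalized vectors $\mathfrak{u}^k$, control the error $|\langle u,Hu\rangle - \langle v,Hv\rangle|$ via the operator-norm bound $\|H\|_{op}\le\mathcal{C}$ built into $\Omega_N$, and invoke the Dirichlet large deviation principle (Lemma \ref{lem:rho_LD}) to replace $\tfrac1N\log\mathbb{P}_u(\|\rho(u)-\phi\|\le\kappa)$ by $-d_{KL}(|I|\|\phi)$. The paper's write-up is more compressed (it bounds the error directly by $\mathcal{C}p^2\kappa$ rather than introducing $\omega(\phi,\phi')$, and omits the $\min_k\phi_k=0$ discussion), but the underlying argument is identical.
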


\begin{proof}
For $u$ uniform on $\mathbb{S}^{N-1}$ we can write $u=\sum_{k=1}^{p}\sqrt{\rho(u)_{k}}\frak{u}^{k}$
with $\frak{u}^{k}:=(u_{i}/\sqrt{\rho(u)_{k}})_{i\in I_{k}^{N}}$
random uniform unit vectors on $\mathbb{S}^{I_{k}^{N}}$, which are
jointly independent with the random variable $\rho(u)$. If $v_{k}:=\phi_{k}^{1/2}\frak{u}^{k}$
, then the vector $v$ is distributed according to  $\mathbb{P}_{v}^{\phi}$ and we have
\[
\Big|\langle u,H^{N}u\rangle-\langle v,H^{N}v\rangle\Big|=\Big|\sum_{k,l=1}^{p}(\sqrt{\rho(u)_{k}\rho(u)_{l}}-\sqrt{\phi_{k}\phi_{l}})\langle\frak{u}^{k},H^{N}\frak{u}^{l}\rangle\Big|\leq{\cal C}p^{2}\kappa.
\]
with ${\cal C}$ as in the definition of $\Omega_{N}$. The Lemma
then follows from Lemma \ref{lem:rho_LD}.
\end{proof}
For $v$ a unit vector we define the distribution  $\mathbb{P}^{\theta,v}$ on ${\cal S}_{N}$
as

\[
d\mathbb{P}^{\theta,v}(H)=\frac{e^{\theta N\langle v,Hv\rangle}}{\mathbb{E}_{H}[e^{\theta N\langle v,H^{N}v\rangle}]}d\mathbb{P}(H).
\]
We find in the spirit of \cite[Section 5.1]{HuGu1}, that when the
vector $v$ has sufficiently small uniform norm, $\mathbb{P}^{\theta,v}$
is approximately the law of the original matrix tilted by a rank one
perturbation, namely: 
\begin{lem}
\label{lem2-1} \cite[Proof of Lemma 5.2]{HussonVar} For $v$ a unit
vector with $\|v\|_{\infty}\leq N^{-3/8}$ under $\mathbb{P}^{\theta,v}$
we can write
\[
H^{N}=\widetilde{H}^{N}+2\theta E+\Delta_{N}\,,\quad E_{ij}:=\Sigma_{ij}^{N}v_{i}v_{j}
\]
where $\widetilde{H}^{N}$ is a centered matrix with the same variance
profile $\Sigma^{N}$ as $H^{N}$ under $\mathbb{P}$, and $\Delta_{N}$
is such that for  every $\epsilon>0$ 
\[
\lim_{N\to\infty}\mathbb{P}^{\theta,v}[\|\Delta_{N}\|_{op}>\epsilon]=0.
\]
\end{lem}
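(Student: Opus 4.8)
The plan is to use that, since the entries of $H^{N}$ are independent, the tilted law $\mathbb{P}^{\theta,v}$ again has independent entries, each obtained from the original one by an exponential tilt whose parameter is small. Indeed, writing $e^{\theta N\langle v,Hv\rangle}=\prod_{i<j}e^{2\theta\sqrt N v_iv_jX_{ij}^{N}}\prod_i e^{\theta\sqrt N v_i^{2}X_{ii}^{N}}$, under $\mathbb{P}^{\theta,v}$ (with $\mathbb{E}^{\theta,v}$ the corresponding expectation) the variable $X_{ij}^{N}$ has law proportional to $e^{t_{ij}X_{ij}^{N}}\,d\mathbb{P}$ with $t_{ij}=2\theta\sqrt N v_iv_j$ for $i<j$ and $t_{ii}=\theta\sqrt N v_i^{2}$; the hypothesis $\|v\|_{\infty}\le N^{-3/8}$ yields $|t_{ij}|\le 2\theta N^{-1/4}$, so all tilt parameters go to $0$ uniformly.

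First I would Taylor expand $L_{ij}^{N}(t)=\log\mathbb{E}[e^{tX_{ij}^{N}}]$ at $0$. Since $X_{ij}^{N}$ is centered, $(L_{ij}^{N})'(0)=0$ and $(L_{ij}^{N})''(0)=\mathrm{Var}(X_{ij}^{N})$, which is $\Sigma_{ij}^{N}$ off the diagonal and $2\Sigma_{ii}^{N}$ on it; the sharp sub-Gaussian bound \eqref{SSH} moreover gives control, uniform in $i,j,N$, of the moments of $X_{ij}^{N}$ and of its small tilts, hence of $(L_{ij}^{N})''$ and $(L_{ij}^{N})'''$ on a fixed neighbourhood of $0$. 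Consequently $\mathbb{E}^{\theta,v}[X_{ij}^{N}]=(L_{ij}^{N})'(t_{ij})=\mathrm{Var}(X_{ij}^{N})\,t_{ij}+O(t_{ij}^{2})$, so $\mathbb{E}^{\theta,v}[H_{ij}^{N}]=2\theta\,\Sigma_{ij}^{N}v_iv_j+O(\sqrt N\,v_i^{2}v_j^{2})=2\theta E_{ij}+(\Delta_N^{(1)})_{ij}$, uniformly on and off the diagonal. Bounding the operator norm by the Hilbert--Schmidt norm, $\|\Delta_N^{(1)}\|_{op}^{2}\le C N\sum_{i,j}v_i^{4}v_j^{4}=CN\big(\sum_i v_i^{4}\big)^{2}\le CN\|v\|_{\infty}^{4}\le CN^{-1/2}$, which tends to $0$ \emph{deterministically}; this is precisely why the exponent $3/8$ is taken.

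It then remains to handle the centered matrix $H^{N,\mathrm{c}}:=H^{N}-\mathbb{E}^{\theta,v}[H^{N}]$. Under $\mathbb{P}^{\theta,v}$ it has independent centered entries with variance profile $\widehat\Sigma_{ij}^{N}=(L_{ij}^{N})''(t_{ij})$, and since $t_{ij}\to0$ uniformly this converges uniformly to $\mathrm{Var}(X_{ij}^{N})$, i.e. (up to the diagonal factor $2$) to $\Sigma_{ij}^{N}$. Rescaling entrywise by $\big(\Sigma_{ij}^{N}/\widehat\Sigma_{ij}^{N}\big)^{1/2}$ gives a matrix $\widetilde H^{N}$ with exactly the original variance profile $\Sigma^{N}$, and $\Delta_N^{(2)}:=H^{N,\mathrm{c}}-\widetilde H^{N}$ has entries $\big(1-(\Sigma_{ij}^{N}/\widehat\Sigma_{ij}^{N})^{1/2}\big)(\widetilde H^{N})_{ij}=o(1)\,(\widetilde H^{N})_{ij}$ uniformly. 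Since $\widetilde H^{N}$ has independent centered sub-Gaussian entries with bounded variance profile, its operator norm is exponentially tight under $\mathbb{P}^{\theta,v}$ (as in \eqref{expt}), so $\|\Delta_N^{(2)}\|_{op}=o(1)$ in $\mathbb{P}^{\theta,v}$-probability. Taking $\Delta_N:=\Delta_N^{(1)}+\Delta_N^{(2)}$ then gives the claim. The main obstacle is the quantitative control of the tilted log-Laplace transforms uniformly in $i,j,N$ --- the third-derivative bound that makes $\Delta_N^{(1)}$ negligible with the exponent $3/8$, and the continuity of the second derivative used for the rescaling --- together with the operator-norm tightness of $\widetilde H^{N}$ under the tilted measure; the rest is bookkeeping, and this is the argument carried out in \cite[Proof of Lemma 5.2]{HussonVar}.
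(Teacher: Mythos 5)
The paper does not include a proof of this lemma; it simply cites \cite[Proof of Lemma~5.2]{HussonVar}, so there is no in-paper argument to compare against. Your proposal is a correct and essentially standard reconstruction of the underlying tilting argument: factor $\mathbb{P}^{\theta,v}$ over independent entries, note the tilt parameters $t_{ij}=O(N^{-1/4})$ are small, Taylor-expand $L_{ij}^N$ to identify the leading mean shift $2\theta E$ with a deterministically small Hilbert--Schmidt error $\Delta_N^{(1)}$ (here the exponent $3/8$ is exactly what is needed, as you note), and then compare the re-centred tilted matrix with a version rescaled to have the exact variance profile $\Sigma^N$. All the estimates check out (in particular $\sum_i v_i^4\le\|v\|_\infty^2$ since $v$ is a unit vector, giving $\|\Delta_N^{(1)}\|_{HS}^2=O(N^{-1/2})$), and the two cases $i=j$ and $i\ne j$ are handled consistently with the conventions $\mathrm{Var}(X_{ii}^N)=2\Sigma_{ii}^N$, $\mathrm{Var}(X_{ij}^N)=\Sigma_{ij}^N$.

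One minor point of rigor: your justification for $\|\Delta_N^{(2)}\|_{op}=o(1)$ appeals to the operator-norm tightness of $\widetilde H^N$ together with the uniform $o(1)$ bound on the entrywise rescaling factors. That inference does not follow directly, since the operator norm of a Hadamard product $C\circ\widetilde H^N$ is not controlled by $\max_{ij}|c_{ij}|\cdot\|\widetilde H^N\|_{op}$ in general. The correct (and simpler) route is to observe that $\Delta_N^{(2)}$ is itself a random matrix with independent centred sub-Gaussian entries whose variance profile is $o(1)\,\Sigma^N$ uniformly, and then apply the tightness bound \eqref{expt} directly to $\Delta_N^{(2)}$ with that smaller variance proxy. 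With that adjustment the argument is complete.
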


We write $\phi(\theta):=\phi(\theta,x,\psi)$ and consider the following
equation of unknown $z$ on $[r_{\sigma},+\infty[$ 
\begin{equation}
\det(I_{p}-2\theta SD(\theta,z))=0\label{eqn:lambda-1}
\end{equation}
where $S\in\mathbb{R}^{p\times p}$ with $S_{kl}:=\sigma_{kl}$ and
$D(\theta,z)=\text{diag}((G_{\mu_{\sigma,k}}(z)\phi(\theta)_{k})_{1\leq k\leq p})$
and $I_{p}$ is the identity matrix. We define $z(\theta)$ as the
following

\[
z(\theta):=\begin{cases}
\text{largest solution to equation \eqref{eqn:lambda-1} in [\ensuremath{r_{\sigma}},\ensuremath{\infty}) if there is one}\\
\text{\ensuremath{r_{\sigma}} if no such solution exists.}
\end{cases}
\]
 
\begin{lem}
\label{lem:z(theta)-prediction} \cite[Proof of Lemma 5.3]{HussonVar}
Let $v$ a unit vector with $\|v\|_{\infty}\leq N^{-3/8}$ and $\rho(v)=(\phi(\theta)_{1},\dots,\phi(\theta)_{p})$.
For every $\epsilon>0$, we have

\[
\lim_{N\to\infty}\mathbb{P}^{\theta,v}[|\lambda_{1}(H^{N})-z(\theta)|\geq\epsilon]=0.
\]
\end{lem}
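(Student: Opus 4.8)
The plan is to reduce the statement, via the tilted representation of Lemma \ref{lem2-1}, to locating the top eigenvalue of a bounded‑rank perturbation of a centered matrix with the same variance profile, and to identify that eigenvalue through a Bauer–Fike/BBP‑type secular equation whose $N\to\infty$ limit is \eqref{eqn:lambda-1}. First I would write, under $\mathbb P^{\theta,v}$, $H^N=\widetilde H^N+2\theta E+\Delta_N$ with $\widetilde H^N$ centered of variance profile $\Sigma^N$, $E_{ij}=\Sigma_{ij}^Nv_iv_j$, and $\|\Delta_N\|_{op}\to 0$ in $\mathbb P^{\theta,v}$‑probability. Since eigenvalues are $1$‑Lipschitz for the operator norm, it suffices to prove $\lambda_1(\widetilde H^N+2\theta E)\to z(\theta)$ in probability. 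The structural observation is that $E$ has rank at most $p$: setting $w_k:=\Pi_k v$ (so the $w_k$ have pairwise disjoint supports $I_k^N$ and $\|w_k\|^2=\rho(v)_k=\phi(\theta)_k$) and $W:=[\,w_1\,|\cdots|\,w_p\,]\in\mathbb R^{N\times p}$, one has $E=WSW^{T}$ with $S=(\sigma_{kl})_{k,l}$ and $\|W\|_{op}\le 1$ (orthogonal columns with squared norms summing to $1$), so $\|E\|_{op}$ is bounded. I would also record the a priori inputs on $\widetilde H^N$: being centered with variance profile converging to $\sigma$ and retaining sufficient tail decay (cf.\ \cite[Proof of Lemma 5.2]{HussonVar}), it satisfies $\sup_N\|\widetilde H^N\|_{op}<\infty$, $\mu_{\widetilde H^N}\to\mu_\sigma$, $\lambda_1(\widetilde H^N)\to r_\sigma$ (Lemma \ref{lem:Measure-Concentration}), and $\mu_{\widetilde H^N}(\Pi_k)\to\mu_{\sigma,k}$ (Theorem \ref{prop:G-measure-concentration}); moreover, since the rank of $2\theta E$ is bounded, $\mu_{\widetilde H^N+2\theta E}\to\mu_\sigma$, whence $\liminf_N\lambda_1(\widetilde H^N+2\theta E)\ge r_\sigma$, consistent with $z(\theta)\ge r_\sigma$.

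Second — the heart of the argument — for $\lambda>\lambda_1(\widetilde H^N)$ the identity $\det(I_N+AB)=\det(I_p+BA)$ gives that $\lambda$ is an eigenvalue of $\widetilde H^N+2\theta E$ if and only if
\[
\det\!\big(I_p-2\theta\,S\,W^{T}(\lambda-\widetilde H^N)^{-1}W\big)=0 .
\]
I would then invoke an isotropic local law for $\widetilde H^N$ (of the type of \cite[Theorem 1.7]{AEK}, \cite[Theorem A.6]{HussonVar}; for variance profiles that may vanish, first perturb by $\varepsilon\,\mathrm{GOE}$ and let $\varepsilon\to0$ using $\|(\lambda-\widetilde H^N-\varepsilon W)^{-1}-(\lambda-\widetilde H^N)^{-1}\|_{op}=O(\varepsilon)$, exactly as in the proof of Proposition \ref{prop:weak-muk}). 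Because $v$ is delocalized, $\|v\|_\infty\le N^{-3/8}$, the bilinear forms $\langle w_m,(\lambda-\widetilde H^N)^{-1}w_k\rangle$ concentrate, uniformly for $\lambda$ in compact subsets of $(r_\sigma,\infty)$, around $\delta_{mk}$ times $\phi(\theta)_k$ multiplied by the self‑consistent diagonal resolvent of $\widetilde H^N$ on $I_k^N$, the off‑diagonal entries vanishing because $w_m$ and $w_k$ have disjoint supports for $m\neq k$. Hence $W^{T}(\lambda-\widetilde H^N)^{-1}W\to D(\theta,\lambda)$, the diagonal matrix of \eqref{eqn:lambda-1} (after the vector/function normalisations of Section \ref{subsec:Piecewise-constant-notation}), and the finite-$N$ determinant above converges, uniformly on compacts of $(r_\sigma,\infty)$, to $\lambda\mapsto\det(I_p-2\theta SD(\theta,\lambda))$, which tends to $1$ as $\lambda\to\infty$.

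Finally, I would run a continuity/Rouché argument. If the limiting determinant has a largest zero $z^\star$ in $(r_\sigma,\infty)$, then for $N$ large the finite-$N$ determinant has a zero converging to $z^\star$ and, by uniform convergence, none strictly to its right outside a shrinking neighbourhood; since $\lambda_1(\widetilde H^N)\to r_\sigma\le z^\star$, the top eigenvalue of $\widetilde H^N+2\theta E$ exceeding $\lambda_1(\widetilde H^N)$ converges to $z^\star$, so $\lambda_1(\widetilde H^N+2\theta E)\to z^\star$. If no such zero exists, eigenvalue interlacing for the rank‑$\le p$ perturbation together with $\mu_{\widetilde H^N+2\theta E}\to\mu_\sigma$ forces $\lambda_1(\widetilde H^N+2\theta E)\to r_\sigma$. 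In both cases the limit is precisely $z(\theta)$, and since $\|\Delta_N\|_{op}\to0$ in $\mathbb P^{\theta,v}$‑probability we conclude $\lambda_1(H^N)\to z(\theta)$ in probability.

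The step I expect to be the main obstacle is the uniform control of the bilinear resolvent forms $\langle w_m,(\lambda-\widetilde H^N)^{-1}w_k\rangle$ down to the spectral edge $r_\sigma$ — an isotropic local law valid near the edge for these prescribed delocalized vectors, together with the regularity of $G_{\mu_{\sigma,k}}$ up to $r_\sigma$ — and the accompanying BBP‑threshold bookkeeping that distinguishes the outlier regime from the sticks‑to‑the‑bulk regime.
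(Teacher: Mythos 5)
The paper does not actually reproduce a proof of this lemma; it simply cites \cite[Proof of Lemma 5.3]{HussonVar}, and the surrounding machinery (the decomposition $H^N=\widetilde H^N+2\theta E+\Delta_N$ of Lemma \ref{lem2-1}, the secular determinant \eqref{eqn:lambda-1}, and the intermediate value argument in Lemma \ref{lem:continuous-find_theta}) makes clear that the intended argument is precisely the finite-rank perturbation / BBP secular-equation analysis you describe. Your sketch matches this: reduce via $\|\Delta_N\|_{op}\to 0$ and Weyl's inequality to $\widetilde H^N + 2\theta E$, exploit $E=WSW^T$ of rank $\le p$ with $W$ having pairwise orthogonal columns $w_k=\Pi_k v$, pass to the $p\times p$ secular equation $\det(I_p-2\theta S\,W^T(\lambda-\widetilde H^N)^{-1}W)=0$ via $\det(I+AB)=\det(I+BA)$, show $W^T(\lambda-\widetilde H^N)^{-1}W$ converges (using delocalization $\|v\|_\infty\le N^{-3/8}$ and an isotropic local law for $\widetilde H^N$) to the diagonal matrix in \eqref{eqn:lambda-1}, and conclude with a Rouch\'e/interlacing argument covering both the outlier and the sticks-to-the-bulk cases. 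One small bookkeeping remark: the deterministic limit of $\langle w_k,(\lambda-\widetilde H^N)^{-1}w_k\rangle$ is $\phi(\theta)_k\,G_{\mu_{\sigma,k}}(\lambda)/|I_k|$, since the per-entry self-consistent resolvent on $I_k^N$ corresponds to $G_{\mu_{\sigma,t}}=G_{\mu_{\sigma,k}}/|I_k|$ in the paper's conventions; you flag this under ``vector/function normalisations,'' and the determinant condition is unchanged once $D(\theta,z)$ is read with this normalization. You also correctly identify the genuine technical burden (uniform control of these resolvent bilinear forms down to the edge $r_\sigma$, and the moment/tail control on $\widetilde H^N$ needed to invoke the local law under the tilted measure), which is exactly what the cited proof in \cite{HussonVar} supplies.
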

Hence, our main question is to check that for every $\psi$ we can find $\theta=\theta(x,\psi)$ so that $z(\theta)=x$. This will prove 
the existence of $\theta(x,\psi)$ in Proposition \ref{prop:Add_lambda_1}. It 
will be obtained by an intermediate value Theorem. We need the following.
\begin{lem}
\label{lem:continuous-find_theta}The function 
\[
\nu(\theta):=2\theta\lambda_{1}\left(\sqrt{D(\theta,x)}S\sqrt{D(\theta,x)}\right)
\]
 is continuous and satisfies $\nu(0)=0$ and $\lim_{\theta\rightarrow\infty}\nu(\theta)=\infty$.
\end{lem}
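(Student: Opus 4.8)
The plan is to make the $\theta$-dependence of $\nu$ explicit by unpacking the definition of $\phi(\theta):=\phi(\theta,x,\psi)$ from \eqref{eq:Def_Phi_theta_bis} and treating separately the regimes $2\theta<G_{\mu_\sigma}(x)$ and $2\theta\ge G_{\mu_\sigma}(x)$. In the first regime, $G_{\mu_\sigma}^{-1}(2\theta)>x$ and $(1-G_{\mu_\sigma}(x)/2\theta)_+=0$, so writing $z_\theta:=G_{\mu_\sigma}^{-1}(2\theta)$ one gets $\phi(\theta)_k=G_{\mu_{\sigma,k}}(z_\theta)/2\theta$ (which sums to $1$ by the identity $\sum_kG_{\mu_{\sigma,k}}=G_{\mu_\sigma}$); in the second regime $\phi(\theta)_k=G_{\mu_{\sigma,k}}(x)/2\theta+(1-G_{\mu_\sigma}(x)/2\theta)\psi_k$. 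The two formulas agree at $2\theta=G_{\mu_\sigma}(x)$ (there $z_\theta=x$), and each is continuous because $G_{\mu_\sigma}$ is a continuous strictly decreasing bijection of $(r_\sigma,\infty)$ onto its range, with continuous inverse, and each $G_{\mu_{\sigma,k}}$ is continuous on $(r_\sigma,\infty)$. Hence $\theta\mapsto\phi(\theta)\in\mathcal P_p$ is continuous on $(0,\infty)$, so $D(\theta,x)=\mathrm{diag}((G_{\mu_{\sigma,k}}(x)\phi(\theta)_k)_k)$ is continuous with nonnegative entries, $\sqrt{D(\theta,x)}S\sqrt{D(\theta,x)}$ is continuous, and, since the top eigenvalue of a symmetric matrix is a continuous function of its entries, $\nu$ is continuous on $(0,\infty)$.

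For the value at $0$: as $\theta\downarrow 0$, $z_\theta\to\infty$, and the expansions $G_{\mu_{\sigma,k}}(z)=|I_k|/z+O(z^{-2})$, $G_{\mu_\sigma}(z)=1/z+O(z^{-2})$ (using that $\mu_\sigma$ has total mass $1$) give $\phi(\theta)_k=G_{\mu_{\sigma,k}}(z_\theta)/G_{\mu_\sigma}(z_\theta)\to|I_k|$. Thus $D(\theta,x)$ converges to the fixed bounded matrix $\mathrm{diag}((G_{\mu_{\sigma,k}}(x)|I_k|)_k)$, so $\lambda_1(\sqrt{D(\theta,x)}S\sqrt{D(\theta,x)})$ remains bounded as $\theta\to 0$; multiplying by the prefactor $2\theta\to0$ yields $\lim_{\theta\downarrow0}\nu(\theta)=0$, consistent with $\nu(0)=0$ (which holds trivially because of this prefactor).

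For $\theta\to\infty$: the second-regime formula gives $\phi(\theta)\to\psi$, hence $\sqrt{D(\theta,x)}S\sqrt{D(\theta,x)}\to M_\infty:=\sqrt{D_\infty}\,S\,\sqrt{D_\infty}$ with $D_\infty=\mathrm{diag}((G_{\mu_{\sigma,k}}(x)\psi_k)_k)$. The key point is that $\lambda_1(M_\infty)>0$. I would prove this by testing $M_\infty$ against the vector $w\in\mathbb R^p$ defined by $w_k:=\sqrt{\psi_k/G_{\mu_{\sigma,k}}(x)}$ when $\psi_k>0$ and $w_k:=0$ otherwise — which is legitimate and of finite norm because $G_{\mu_{\sigma,k}}(x)\in(0,\infty)$ for $x>r_\sigma$ — since it is built so that $\sqrt{D_\infty}\,w=\psi$, whence
\[
\lambda_1(M_\infty)\ \ge\ \frac{\langle w,M_\infty w\rangle}{\|w\|^2}\ =\ \frac{\langle \psi,S\psi\rangle}{\|w\|^2}\ >\ 0,
\]
using the standing hypothesis $\langle\psi,S\psi\rangle>0$ of Proposition \ref{prop:Add_lambda_1}. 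By continuity of the top eigenvalue, $\lambda_1(\sqrt{D(\theta,x)}S\sqrt{D(\theta,x)})\ge\tfrac12\lambda_1(M_\infty)$ for $\theta$ large, so $\nu(\theta)\ge\theta\,\lambda_1(M_\infty)\to\infty$.

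The genuinely delicate step is this last positivity: $S$ need not be positive semidefinite, so one cannot conclude $\lambda_1(M_\infty)>0$ abstractly, and the test vector $w$ must be chosen with care so as to handle coordinates with $\psi_k=0$ while reducing the Rayleigh quotient exactly to the quantity $\langle\psi,S\psi\rangle$ that is assumed strictly positive; the remainder of the argument is routine continuity bookkeeping.
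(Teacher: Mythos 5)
Your proof is correct, and the continuity and $\theta=0$ parts follow the same line as the paper (though you are actually more careful than the paper at the endpoint: the paper simply writes $0\times\lambda_1(\cdot)=0$, whereas you note that $\phi(\theta)_k\to|I_k|$ as $\theta\downarrow 0$, so $D(\theta,x)$ stays bounded and the prefactor kills the limit). Where you genuinely diverge from the paper is the proof that $\nu(\theta)\to\infty$. The paper argues through the trace: since $\langle\psi,S\psi\rangle>0$ there is a pair $(k,l)$ with $\sigma_{kl}\psi_k\psi_l>0$, and because $\sqrt{D}S\sqrt{D}$ has nonnegative entries one bounds $\text{Tr}\bigl(\sqrt{D}S\sqrt{D}\bigr)$ below by the single term $\sigma_{kl}\sqrt{G_{\mu_{\sigma,k}}(x)G_{\mu_{\sigma,l}}(x)}\sqrt{\phi(\theta)_k\phi(\theta)_l}$, then invokes $\lambda_1\ge p^{-1}\text{Tr}$ and the lower bound $\phi(\theta)_k\ge(1-\tfrac{G_{\mu_\sigma}(x)}{2\theta})\psi_k$ to get a linear-in-$\theta$ lower bound on $\nu$. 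You instead pass to the limiting matrix $M_\infty=\sqrt{D_\infty}S\sqrt{D_\infty}$ (using $\phi(\theta)\to\psi$), produce the explicit test vector $w_k=\sqrt{\psi_k/G_{\mu_{\sigma,k}}(x)}$ satisfying $\sqrt{D_\infty}w=\psi$, and get $\lambda_1(M_\infty)\ge\langle\psi,S\psi\rangle/\|w\|^2>0$ by the Rayleigh quotient; continuity of $\lambda_1$ then propagates this to large $\theta$. Your route is a bit cleaner in that it uses the full quantity $\langle\psi,S\psi\rangle$ directly rather than extracting a single positive entry, at the small cost of having to pass through a limit matrix and invoke continuity once more; the paper's trace bound is more hands-on and gives a fully explicit lower bound on $\nu(\theta)$ for all finite $\theta$, not just an asymptotic one. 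Both are correct.
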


\begin{proof}
[Proof of Lemma \ref{lem:continuous-find_theta}]

This is accomplished by noticing that since $\phi(\theta):=\phi(\theta,x,\psi)$
is continuous in $\theta$, so is $D(\theta,x)$ and so is $\theta\mapsto2\theta\rho(\theta,x)$.
For $\theta=0$, we simply have $0\times\lambda_{1}(\sqrt{D(\theta,x)}S\sqrt{D(\theta,x)})=0$.
We now consider the limit $\theta\rightarrow\infty$. Since $\langle\psi,S\psi\rangle>0$,
there must be at least one couple $(k,l)$ such that $\sigma_{kl}\psi_{k}\psi_{l}>0$.
Finally, for $2\theta>G_{\mu_{\sigma}}(x)$, since $SD(\theta,x)$
has positive entries, we have

\[
\text{Tr}(\sqrt{D(\theta,x)}S\sqrt{D(\theta,x)})\geq\sigma_{kl}\sqrt{G_{\mu_{\sigma,k}}(x)G_{\mu_{\sigma,l}}(x)}\sqrt{\phi(\theta)_{k}\phi(\theta)_{l}}
\]
and by definition of $\phi(\theta)$, 
\[
\phi(\theta)_{k}\geq(1-\frac{G_{\sigma}(x)}{2\theta})\psi_{k}
\]
so we obtain
\[
\text{Tr}(\sqrt{D(\theta,x)}S\sqrt{D(\theta,x)})\geq\frac{1}{2}\sigma_{kl}\sqrt{\psi{}_{k}\psi_{l}}(1-\frac{G_{\sigma}(x)}{2\theta})\min_{i}G_{\mu_{\sigma,i}}(x).
\]
Last, we use that the largest eigenvalue is larger than $p^{-1}\text{Tr}(\sqrt{D(\theta,x)}S\sqrt{D(\theta,x)})$
and we have: 
\[
\nu(\theta)\geq\frac{\theta}{2p}\sigma_{kl}\sqrt{\psi{}_{k}\psi_{l}}(1-\frac{G_{\sigma}(x)}{2\theta})\min_{k}G_{\mu_{\sigma,k}}(x)
\]
and we can conclude that $\lim_{\theta\rightarrow\infty}\nu(\theta)=\infty.$
\end{proof}
We can finally finish the proof of Proposition \ref{prop:Add_lambda_1}.
\begin{proof}
[Proof of Proposition \ref{prop:Add_lambda_1}] With Lemma \ref{lem:continuous-find_theta}
by the  intermediate value Theorem there is a  $\theta_{*}$ such that $\nu(\theta_{*})=1$. { Therefore, by definition, $\theta_*$ is a solution of the equation \eqref{eqn:lambda-1}. Furthermore, one can notice that entrywise the function $z \mapsto \sqrt{D( \theta_*, z)} S \sqrt{D( \theta_*, z)}$ is positive and strictly decreasing. Since $\sqrt{D( \theta_*, z)} S \sqrt{D( \theta_*, z)}$ is symmetric, it implies that $2 \theta_* \lambda_1( \sqrt{D( \theta_*, z)} S \sqrt{D( \theta_*, z)}) <1 $ for $z > x$ and therefore $z(\theta_{*})=x$}. Let $W_{N}:=\{u\in\mathbb{S}^{N-1}:\|u\|_{\infty}\leq N^{-3/8}\}$
using \cite[Lemma 4.3]{HussonVar} we have 
\begin{equation}
\frac{\mathbb{E}_{H}\mathbb{E}_{v}^{\phi}[\mathds{1}_{v\in W_{N}}e^{\theta_{*}N\langle v,H^{N}v\rangle}]}{\mathbb{E}_{H}\mathbb{E}_{v}^{\phi}[e^{\theta_{*}N\langle v,H^{N}v\rangle}]}=1-o(1)\label{eq:annealed_delocalized}
\end{equation}
and one can refer to \cite[Lemma 5.1]{HuGu1} to prove that one can
choose ${\cal C}$ large enough in $\Omega_{N}$ so that

\begin{equation}
\mathbb{P}^{\theta_* }[H^{N}\in\Omega_{N}]=1-o(1).\label{eq:Omega_theta}
\end{equation}
Therefore one can write : 
\begin{align*}
 & \mathbb{E}_{H}\mathbb{E}_{v}^{\phi}[e^{\theta_{*}N\langle v,H^{N}v\rangle}\mathds{1}_{|\lambda_{1}-x|\leq\delta,H^{N}\in\Omega_{N}}]\\
 & \qquad\geq\mathbb{E}_{v}^{\phi}\left[\mathds{1}_{v\in W_{N}}\mathbb{E}_{H}[e^{\theta_{*}N\langle v,H^{N}v\rangle}\mathds{1}_{|\lambda_{1}-x|\leq\delta,H^{N}\in\Omega_{N}}]\right]\\
 & \qquad=\mathbb{E}_{v}^{\phi}\left[\mathds{1}_{v\in W_{N}}\mathbb{P}^{\theta_{*},v}(|\lambda_{1}-x|\leq\delta,H^{N}\in\Omega_{N})\mathbb{E}_{H}[e^{\theta_{*}N\langle v,H^{N}v\rangle}]\right]\\
 & \qquad=\mathbb{E}_{v}^{\phi}[\mathds{1}_{v\in W_{N}}\mathbb{E}_{H}[e^{\theta_{*}N\langle v,H^{N}v\rangle}]]e^{o(N)}\\
 & \qquad=\mathbb{E}_{v}^{\phi}\mathbb{E}_{H}[e^{\theta_{*}N\langle v,H^{N}v\rangle}]e^{o(N)}
\end{align*}
where we use first the definition of $\mathbb{P}^{\theta_{*},v}$
then Lemma \ref{lem:z(theta)-prediction}, \eqref{eq:Omega_theta}
and finally \eqref{eq:annealed_delocalized}. We now use Lemma \ref{lem:Phii-uniform}
to conclude Proposition \ref{prop:Add_lambda_1}.
\end{proof}

\section{Study of the rate function}\label{ratef}
Recall the rate function $I_{\sigma}$ defined in Theorem \ref{maintheo}. Note that when $\sigma$ is piecewise constant, this definition agrees with the definition from  Theorem \ref{maintheo-constant} since in this case we can take $\psi$ with constant density on each intervals by concavity of $H$.
\label{sec:rate} 
\begin{prop}
\label{prop:rate} $\,$
\begin{enumerate}
\item $x\mapsto I_{\sigma}(x)$ is a good rate function. 
\item $x\mapsto I_{\sigma}(x)$ is continuous on $(r_{\sigma},+\infty[$. 
\item Assume that $\sigma$ is piecewise constant. Then $x\mapsto I_{\sigma}(x)$ is strictly increasing on $[r_{\sigma},+\infty[$. 
\end{enumerate}
\end{prop}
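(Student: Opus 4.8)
The plan is to establish the three assertions of Proposition~\ref{prop:rate} essentially in order, though the key analytic work is concentrated in the compactness and continuity statements, after which monotonicity follows by a soft argument.

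\textbf{Good rate function.} First I would fix $M>0$ and bound the level set $\{x : I_\sigma(x)\le M\}$. Since $I_\sigma(x)=+\infty$ for $x<r_\sigma$ and $I_\sigma(r_\sigma)=0$, the level set is a subset of $[r_\sigma,+\infty)$, so it suffices to show it is bounded above and that $I_\sigma$ is lower semicontinuous. For boundedness, I would exhibit an explicit competitor in the infimum over $\psi$ (for instance $\psi$ proportional to Lebesgue, or the uniform vector $|I|$ in the piecewise constant case) and a convenient choice of $\widehat\theta$ of order $x$; plugging these into $\widehat{\mathcal F}(\widehat\theta,x,\psi,\sigma)$ one sees the dominant term is quadratic in $x$, giving a lower bound of order $x^2/(2a)$ (this is exactly Lemma~\ref{lem:Ratefunctionbound}, which I may quote). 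Hence $I_\sigma(x)\to\infty$ as $x\to\infty$ and the level sets are bounded. For lower semicontinuity, the cleanest route is to note that $I_\sigma$ is, by Theorem~\ref{maintheo-constant} in the piecewise constant case (and by the approximation argument of Section~\ref{sec:Piecewise-continuous} in general), the rate function of a genuine large deviation principle, and rate functions are automatically lower semicontinuous; alternatively one writes $I_\sigma=\inf_\psi F(\cdot,\psi)$ as an infimum over a compact set $\mathcal P([0,1])$ (resp.\ $\mathcal P_p$) of the jointly lower semicontinuous map $(x,\psi)\mapsto\sup_{\widehat\theta}\widehat{\mathcal F}$, using that a supremum of continuous functions is lsc and that $\mathcal P([0,1])$ is compact for the weak topology.

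\textbf{Continuity on $(r_\sigma,\infty)$.} Lower semicontinuity is already in hand, so I must prove upper semicontinuity, i.e.\ $\limsup_{x'\to x} I_\sigma(x')\le I_\sigma(x)$. I would pick a near-optimal $\psi$ for $I_\sigma(x)$ and compare $\sup_{\widehat\theta}\widehat{\mathcal F}(\widehat\theta,x',\psi,\sigma)$ with $\sup_{\widehat\theta}\widehat{\mathcal F}(\widehat\theta,x,\psi,\sigma)$: the dependence on $x$ enters only through $\varphi_\star(x)(t)=G_{\mu_{\sigma,t}}(x)$, which is smooth (real-analytic, and monotone decreasing) in $x$ on $(r_\sigma,\infty)$ by the Dyson equation and standard properties of Stieltjes transforms, uniformly in $t$ since the $\mu_{\sigma,t}$ are supported in a fixed compact. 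Monotone/uniform convergence of $\varphi_\star(x')\to\varphi_\star(x)$ then gives continuity of $\widehat{\mathcal F}$ in $x$ uniformly over the relevant range of $\widehat\theta$ and over $\psi$, provided the optimizing $\widehat\theta$ stays in a compact set; the latter is ensured because, as in the discussion after Lemma~\ref{lem:theta}, the relevant $\widehat\theta$ satisfy $\widehat\theta>G_{\mu_\sigma}(x)/2>0$ and an upper bound on $\widehat\theta$ follows from the coercivity of $\widehat\theta\mapsto-\widehat{\mathcal F}$ (the $-\widehat\theta^2\langle\psi,S\psi\rangle$ term, using $\langle\psi,S\psi\rangle\ge\varepsilon_x>0$ from Proposition~\ref{prop:epsilon}). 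I expect this uniform control of the sup over $\widehat\theta$, together with the degeneracy as $x\downarrow r_\sigma$ where $G_{\mu_\sigma}(x)\to G_{\mu_\sigma}(r_\sigma)$ and $\langle\psi,S\psi\rangle$ may be forced small, to be the main technical obstacle; handling it is why continuity is only claimed on the open interval.

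\textbf{Strict monotonicity (piecewise constant case).} Here I would argue directly from the formula $I_\sigma(x)=\inf_{\psi}\sup_{\theta\ge0}\mathcal F(\theta,x,\psi,\sigma)$ with $\mathcal F=J_{\mu_\sigma}(x,\theta)-K(\theta,\phi(\theta,x,\psi))$. Fix $r_\sigma\le x<y$. For any $\psi$ and any $\theta$, I want $\mathcal F(\theta,y,\psi,\sigma)>\mathcal F(\theta,x,\psi,\sigma)$ at the optimizing $\theta$, or more robustly that $\sup_\theta\mathcal F(\cdot,y,\cdot)\ge\sup_\theta\mathcal F(\cdot,x,\cdot)+c(x,y)$ with $c(x,y)>0$. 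The key monotonicity inputs are: $\partial_x J_{\mu_\sigma}(x,\theta)=\theta>0$ when $2\theta\ge G_{\mu_\sigma}(x)$ (and $J$ is constant in $x$, equal to its value at $G_{\mu_\sigma}^{-1}(2\theta)$, when $2\theta<G_{\mu_\sigma}(x)$), while $K(\theta,\phi(\theta,x,\psi))$ depends on $x$ only through $\varphi(\theta,x)_k=G_{\mu_{\sigma,k}}(G_{\mu_\sigma}^{-1}(2\theta)\vee x)/(2\theta)$, which is constant in $x$ as long as $2\theta<G_{\mu_\sigma}(x)$ and decreasing once $x$ exceeds $G_{\mu_\sigma}^{-1}(2\theta)$. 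Combining, for $\theta$ in the effective range $2\theta>G_{\mu_\sigma}(x)$ one gets $\partial_x\mathcal F=\theta-\partial_x K>0$ after checking that the derivative of $-K$ in $x$ has the right sign (using that $\phi\mapsto K(\theta,\phi)$ is increasing in each $\phi_k$ in the regime where it matters, or more carefully using Lemma~\ref{lem:theta} which pins down $\mathcal F=0$ outside the effective range so the supremum is genuinely attained at a $\theta$ with $2\theta>G_{\mu_\sigma}(x)$). Taking the supremum over $\theta$ preserves the strict inequality once one localizes the optimizer in a compact set of $\theta$'s bounded away from $G_{\mu_\sigma}(x)/2$, which again uses $\langle\psi,S\psi\rangle$ bounded below; then taking the infimum over $\psi$ (restricted, by Proposition~\ref{prop:epsilon}, to $\{\langle\psi,S\psi\rangle\ge\varepsilon_x\}$, a compact set) preserves it as well. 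The strictness therefore reduces to the elementary fact $\partial_x J_{\mu_\sigma}(x,\theta)=\theta>0$, propagated through these infimum/supremum operations.
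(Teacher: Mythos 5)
The weakest part of your proposal is the strict-monotonicity argument, and the difference there is worth dwelling on because it explains a structural feature of the paper that you may not have noticed: the restriction to the piecewise-constant case in item~(3). The paper does not try to show that $x\mapsto\mathcal F(\theta,x,\psi,\sigma)$ is increasing (pointwise or at the optimizer). Instead it exploits the fact that, for piecewise-constant $\sigma$, Theorem~\ref{maintheo-constant} plus exponential tightness already give a full LDP with rate function $I_\sigma$, whence $\check I_\sigma(x):=\inf_{y\ge x}I_\sigma(y)$ is the rate governing $\mathbb P(\lambda_1\ge x)$. Then, for any $s\in(0,1)$, the principal submatrix $(H^N)^{(s)}$ obtained by keeping a fraction $s$ of each block satisfies $\lambda_1((H^N)^{(s)})\le\lambda_1(H^N)$, while $s^{-1/2}(H^N)^{(s)}$ is again a Wigner matrix with the \emph{same} variance profile; comparing LDPs yields $\check I_\sigma(x)\le s\,\check I_\sigma(x/\sqrt s)$, i.e.\ $\check I_\sigma(x)\le\frac{x^2}{y^2}\check I_\sigma(y)$ for $x\le y$. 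Combined with $\check I_\sigma(x)>0$ for $x>r_\sigma$ (Gaussian concentration), this forces strict increase, and $I_\sigma(x)=\lim_{\delta\downarrow0}\check I_\sigma(x-\delta)$ transfers it to $I_\sigma$. This is a soft, scaling-based argument; it needs the LDP itself as input, which is why the piecewise-constant hypothesis appears.

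Your proposed analytic route has a genuine gap at precisely the step you flag as needing a ``sign check.'' In the effective regime $2\theta\ge G_{\mu_\sigma}(x)$ one has $\partial_x J_{\mu_\sigma}(x,\theta)=\theta-\tfrac12 G_{\mu_\sigma}(x)$ (not $\theta$ as you wrote), which is indeed positive there, but $\partial_x K(\theta,\phi(\theta,x,\psi))$ has two competing contributions: as $x$ grows, $\varphi(\theta,x)_k=G_{\mu_{\sigma,k}}(x)/(2\theta)$ decreases while $\bigl(1-G_{\mu_\sigma}(x)/(2\theta)\bigr)\psi_k$ increases, and the quadratic form $\theta^2\langle\phi,S\phi\rangle$ and the entropy $-H(\phi)$ react to these in opposite ways. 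Without an identity relating these terms (such as the Dyson equation used in Lemma~\ref{lem:theta} to pin down $\mathcal F$ at $\theta=\theta_x$), the sign of $\partial_x\mathcal F$ at the optimizing $\theta$ is not clear, and you would additionally need to propagate strict positivity through the $\sup_\theta$ and then the $\inf_\psi$ (the former via an envelope-type argument, the latter via the compactness furnished by Proposition~\ref{prop:epsilon}). None of this is impossible in principle, but the paper deliberately avoids it.

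There is also a direction error in your goodness-of-rate-function step. To bound the level sets $\{I_\sigma\le M\}$ you need a \emph{lower} bound $I_\sigma(x)\ge\chi x^2+C$, obtained (as in the paper) by fixing $\theta=\kappa x$ in the supremum and lower-bounding $\mathcal F(\kappa x,x,\psi,\sigma)$ uniformly over $\psi$ using only $\sup\sigma<\infty$; choosing a competitor $\psi$ in the infimum, as you describe, gives an \emph{upper} bound, and Lemma~\ref{lem:Ratefunctionbound}, which you cite, is precisely the upper bound $I_\sigma(x)\le x^2/(2a)$, which cannot prevent level sets from being unbounded. Your lower-semicontinuity argument via the LDP (or via compactness of $\mathcal P([0,1])$ and semicontinuity of suprema) is fine, and your continuity argument in item~(2) matches the paper's in spirit: both reduce $\sup_\theta$ to a compact interval $[0,\theta_B]$ (using $\langle\psi,S\psi\rangle\ge\varepsilon_B>0$ from Proposition~\ref{prop:epsilon}) and then use equicontinuity of $x\mapsto\mathcal F(\theta,x,\psi,\sigma)$ on compacts.
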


To prove the continuity of Proposition \ref{prop:rate}(2), we shall prove the following : 
\begin{lem}
For $B>0$, there exists some  finite real number $\theta_{B}$ and a positive real number $\epsilon_{B}$ such that for  every $x\leq B$, we have
\begin{eqnarray*}
I_{\sigma}(x) & = & \inf_{{\psi\in\mathcal{P}([0,1])\atop \langle\psi,S\psi\rangle\geq\epsilon_{B}}}\sup_{\theta\in[0,\theta_{B}]}\mathcal{F}(\theta,x,\psi,\sigma)
\end{eqnarray*}
Here, $\mathcal F$ is defined in \eqref{defF0}. 
\end{lem}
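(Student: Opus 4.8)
The plan is to rewrite the defining formula $I_\sigma(x)=\inf_{\psi:\,\langle\psi,S\psi\rangle\neq0}\sup_{\theta\geq0}\mathcal F(\theta,x,\psi,\sigma)$ (valid for $x>r_\sigma$) by two truncations that are uniform over $r_\sigma<x\leq B$: first, the infimum may be restricted to $\psi$ with $\langle\psi,S\psi\rangle\geq\epsilon_B$; second, for such $\psi$ the supremum over $\theta\geq0$ is already attained on a fixed compact interval $[0,\theta_B]$. Both rest on a two‑sided estimate of $\mathcal F$ obtained by bounding the two pieces $J_{\mu_\sigma}(x,\theta)$ and $K(\theta,\phi(\theta,x,\psi))=\theta^{2}\langle\phi,S\phi\rangle-H(\phi)$ separately, and the quantities of $\mu_\sigma$ entering these bounds stay controlled as $x\downarrow r_\sigma$. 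Throughout one uses that $\sigma\geq0$, that $r_\sigma>0$ (as $\mu_\sigma$ is symmetric and non‑trivial), and that $\mathcal F$ is finite on the relevant range of $\theta$ since $d\phi/dt\geq c/\theta>0$ keeps $H(\phi)$ finite (and $H(\phi)\geq0$ by Jensen).

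For the first truncation, Lemma~\ref{lem:Ratefunctionbound} gives $I_\sigma(x)\leq x^2/(2a)\leq B^2/(2a)$ for $x\leq B$, so it suffices to show that a $\psi$ with $\langle\psi,S\psi\rangle$ very small cannot be near‑optimal. In the range $2\theta\geq G_{\mu_\sigma}(x)$ one has $\langle\phi,S\phi\rangle\leq2\langle\varphi(\theta,x),S\varphi(\theta,x)\rangle+2\langle\psi,S\psi\rangle$ with $\theta^{2}\langle\varphi(\theta,x),S\varphi(\theta,x)\rangle\leq A\,G_{\mu_\sigma}(x)^{2}/4$ (since $\varphi(\theta,x)$ has mass $G_{\mu_\sigma}(x)/2\theta$), hence $\mathcal F(\theta,x,\psi,\sigma)\geq\theta x-2\theta^{2}\langle\psi,S\psi\rangle-\tfrac12\log2\theta-C_B'$ for a constant $C_B'$ uniform over $x\in(r_\sigma,B]$. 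Taking $\theta$ of order $x/\langle\psi,S\psi\rangle$ yields $\sup_{\theta\geq0}\mathcal F(\theta,x,\psi,\sigma)\geq c/\langle\psi,S\psi\rangle-\tfrac12|\log\langle\psi,S\psi\rangle|-C_B'\to+\infty$ as $\langle\psi,S\psi\rangle\downarrow0$. Hence there is $\epsilon_B>0$, depending only on $B$ and $\mu_\sigma$ and decreasing in $B$, with $\sup_\theta\mathcal F(\theta,x,\psi,\sigma)>B^2/(2a)\geq I_\sigma(x)$ whenever $\langle\psi,S\psi\rangle<\epsilon_B$, so the infimum defining $I_\sigma(x)$ is unchanged when restricted to $\langle\psi,S\psi\rangle\geq\epsilon_B$. (This is in substance Proposition~\ref{prop:epsilon}, established there via Lemma~\ref{lem:vectexcl}.)

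For the second truncation, recall from Lemma~\ref{lem:theta} that $\mathcal F(\theta,x,\psi,\sigma)=0$ for every $\theta\in[0,G_{\mu_\sigma}(x)/2]$, so $\sup_{\theta\in[0,\theta_B]}\mathcal F\geq0$ for any $\theta_B>0$. It therefore suffices to find $\theta_B<\infty$, depending on $B$ only through $\epsilon_B$ and $\mu_\sigma$, with $\mathcal F(\theta,x,\psi,\sigma)<0$ for all $\theta\geq\theta_B$, all $x\in(r_\sigma,B]$ and all $\psi$ with $\langle\psi,S\psi\rangle\geq\epsilon_B$; then $\sup_{\theta\geq0}\mathcal F=\sup_{\theta\in[0,\theta_B]}\mathcal F$ and the claimed identity follows from the first truncation. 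For $2\theta\geq G_{\mu_\sigma}(x)$ the density of $\varphi(\theta,x)$ equals $\tfrac1{2\theta}G_{\mu_{\sigma,\cdot}}(x)\geq\tfrac1{2\theta(x-l_\sigma)}$, hence so is that of $\phi(\theta,x,\psi)\geq\varphi(\theta,x)$, giving $H(\phi(\theta,x,\psi))\leq\tfrac12\log\!\big(2\theta(x-l_\sigma)\big)$; and since $\phi(\theta,x,\psi)=\varphi(\theta,x)+\big(1-\tfrac{G_{\mu_\sigma}(x)}{2\theta}\big)\psi$ with $\varphi(\theta,x),\psi\geq0$ and $\sigma\geq0$, one has $\langle\phi(\theta,x,\psi),S\phi(\theta,x,\psi)\rangle\geq\big(1-\tfrac{G_{\mu_\sigma}(x)}{2\theta}\big)^{2}\langle\psi,S\psi\rangle$. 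Inserting these into $\mathcal F=J_{\mu_\sigma}(x,\theta)-\theta^{2}\langle\phi,S\phi\rangle+H(\phi)$ together with the explicit value of $J_{\mu_\sigma}(x,\theta)$ in this regime, the $\tfrac12\log2\theta$ terms cancel and
\[
\mathcal F(\theta,x,\psi,\sigma)\ \leq\ \theta x-\Big(1-\frac{G_{\mu_\sigma}(x)}{2\theta}\Big)^{2}\langle\psi,S\psi\rangle\,\theta^{2}+C_B,\qquad C_B:=\sup_{r_\sigma<x\leq B}\Big(\tfrac12\log(x-l_\sigma)-\tfrac12\int\log(x-y)\,d\mu_\sigma(y)-\tfrac12\Big).
\]
For $\theta\geq\kappa_B:=\sup_{r_\sigma<x\leq B}G_{\mu_\sigma}(x)=G_{\mu_\sigma}(r_\sigma^{+})$ one has $1-\tfrac{G_{\mu_\sigma}(x)}{2\theta}\geq\tfrac12$, so with $\langle\psi,S\psi\rangle\geq\epsilon_B$ the bound becomes $\mathcal F(\theta,x,\psi,\sigma)\leq\theta B-\tfrac{\epsilon_B}{4}\theta^{2}+C_B$, which is negative for all $\theta\geq\theta_B$ once $\theta_B\geq\kappa_B$ is fixed large enough in terms of $B,\epsilon_B,C_B$.

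The main obstacle is the uniformity as $x\downarrow r_\sigma$: one needs $C_B<\infty$, $\kappa_B=G_{\mu_\sigma}(r_\sigma^{+})<\infty$, and the constants $C_B',\epsilon_B$ of the first step under control down to $r_\sigma$. This is where the regularity of the right edge of $\mu_\sigma$ enters — it is a square‑root (soft) edge, coming from the Dyson equation, so that $x\mapsto\int\log(x-y)\,d\mu_\sigma(y)$ and $x\mapsto G_{\mu_\sigma}(x)$ extend continuously and finitely to $x=r_\sigma$; if instead one only aims at continuity of $I_\sigma$ on each compact $[a,B]\subset(r_\sigma,\infty)$ this is automatic, $\kappa_B$ being replaced by $G_{\mu_\sigma}(a)<\infty$. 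A minor bookkeeping point is the identification, in the piecewise constant case, of the $\psi\in\mathcal P_p$ form of $\mathcal F$ in \eqref{defF0} with the $\psi\in\mathcal P([0,1])$ one, which is handled by the correspondence recalled at the beginning of this section (constant densities on the $I_k$).
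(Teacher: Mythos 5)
Your proof follows the same two-step structure as the paper (first restrict the infimum to $\langle\psi,S\psi\rangle\geq\epsilon_B$, then show the supremum over $\theta$ is attained on a fixed compact $[0,\theta_B]$), and the second step is carried out differently but validly: you bound each of $J_{\mu_\sigma}$, $\theta^2\langle\phi,S\phi\rangle$ and $H(\phi)$ separately to get $\mathcal F\leq\theta x-(\theta-\theta_x)^2\langle\psi,S\psi\rangle+C_B$ with an explicit constant $C_B$ that must be checked finite, whereas the paper uses Lemma~\ref{lem:theta} (the identity $\mathcal F(\theta_x,\cdot)=0$) together with the monotonicity of $\theta\mapsto H(\phi)-\tfrac12\log 2\theta$ to obtain the cleaner bound $\mathcal F\leq-a(\theta-\theta_x)^2+(\theta-\theta_x)x$ with no additive constant. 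Your version buys explicitness at the price of needing $C_B<\infty$ and $G_{\mu_\sigma}(r_\sigma^+)<\infty$ uniformly down to $r_\sigma$; it is worth noting that the paper's argument also implicitly requires $\theta_x=G_{\mu_\sigma}(x)/2$ to stay bounded as $x\downarrow r_\sigma$, so the square-root--edge regularity you point to is not an extra cost of your route, only made more visible by it.

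However, there is a genuine flaw in your attempted self-contained re-derivation of the $\psi$-truncation. You assert
\[
\langle\phi,S\phi\rangle\leq 2\langle\varphi(\theta,x),S\varphi(\theta,x)\rangle+2\langle\psi,S\psi\rangle,
\]
which is an inequality of the type $\|a+b\|_S^2\leq 2\|a\|_S^2+2\|b\|_S^2$, and this requires the operator $S$ to be positive semi-definite. The hypotheses only give $\sigma\geq0$ entrywise, which is not the same; indeed if $\sigma$ is the off-diagonal kernel ($\sigma\equiv0$ on the diagonal blocks), the cross term $\langle\varphi,S\psi\rangle$ can be bounded away from $0$ while $\langle\psi,S\psi\rangle\to0$, and the asserted inequality fails. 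Without it, expanding $\theta^2\langle\phi,S\phi\rangle$ leaves a cross term which is linear in $\theta$ and not controlled by $\langle\psi,S\psi\rangle$, so the lower bound $\sup_\theta\mathcal F\to\infty$ as $\langle\psi,S\psi\rangle\to0$ does not follow by this route. Since you do cite Proposition~\ref{prop:epsilon} (which the paper establishes probabilistically via Lemma~\ref{lem:vectexcl}), the conclusion still stands; the fix is to simply invoke that proposition rather than try to prove it by a pointwise estimate on $\mathcal F$, as the paper does.

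Two further small remarks: first, your claim $H(\phi)\geq0$ ``by Jensen'' is correct and worth keeping, since it is needed for the finiteness claim; second, the passage from the $\mathcal P_p$ version of $\mathcal F$ (formula~\eqref{defF0}) to the $\mathcal P([0,1])$ one is indeed just the constant-density correspondence recalled at the start of Section~\ref{ratef}, and you were right to flag it.
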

 
\begin{proof}{
We first notice that for $\theta\ge \theta_{x}=G_{\mu_{\sigma}}(x)/2 $, 
$$d\phi(x,\theta,\psi)= \frac{1}{2\theta} G_{\mu_{\sigma,t}}(x)dt+(1-\frac{\theta_{x}}{\theta})d\psi(t),$$
 so that

$$\theta^{2}\iint \sigma(s,t) d \phi(\theta,x,\psi)(s)d \phi(\theta,x,\psi)(t)=
a(\theta-\theta_{x})^{2}+2b(\theta-\theta_{x}) +c$$
with $a: =\iint_{[0,1]^{2}}\sigma(u,t)d\psi(u)d\psi(t)$ and
$$ b:=\iint_{[0,1]^{2}}\sigma(t,u)G_{\mu_{\sigma,t}}dt d\psi(u),\mbox{ and }c:=\frac{1}{4}\iint \sigma(s,t) G_{\mu_{\sigma,t}}(x) dt G_{\mu_{\sigma,s}}ds\,.$$
As a consequence, we find that 
\[
\frac{\partial\mathcal{F}}{\partial\theta}(\theta,x,\psi,\sigma)=x-\frac{1}{2\theta}-2a(\theta-\theta_{x})-2b+\frac{\partial(H\circ\phi)}{\partial\theta}(\theta,x,\psi)
\]
with $H$ defined in \eqref{defKH}. 
 Using that $\mathcal{F}(\theta_{x},x,\psi,\sigma)=0$ by Lemma 
\ref{lem:theta}, we deduce that for $\theta\geq \theta_{x}$
\begin{eqnarray}
\mathcal{F}(\theta,x,\psi,\sigma) & = & -a(\theta-\theta_{x})^{2}-2b(\theta-\theta_{x})+(\theta-\theta_{x})x-\frac{1}{2}\log(\theta/\theta_{x})\label{eqn:Fupperbound}\\
 &  & +(H(\phi(\theta,x,\psi))-H(G_{\sigma,t}(x)dt))\,.\nonumber 
\end{eqnarray}

Moreover, for $\theta\geq\theta_{x}$, we find that 
\[
H(\phi(\theta,x,\psi))-\frac{1}{2} \log(2\theta)=-\frac{1}{2}\log\int_{0}^{1}\Big(G_{\sigma,t}(x)+(2\theta-G_{\mu_{\sigma}}(x))\frac{d\psi}{dt}(t)\Big)dt
\]
so that $\theta\mapsto H(\phi(\theta,x,\psi))-\frac{1}{2}\log(2\theta)$
is decreasing since it is convex, with increasing derivative going to zero when $\theta$ goes to infinity. We therefore deduce  that
\begin{equation} \label{eqn:Fupperbound}
\mathcal{F}(\theta,x,\psi,\sigma)\leq-a(\theta-\theta_{x})^{2}+(\theta-\theta_{x})x=-a(\theta-\theta_{x})(\theta-\theta_{x}-\frac{x}{a}).
\end{equation}}
 For $\psi$
such that $a=\langle\psi,S\psi\rangle\geq\epsilon$, and  $\theta\geq\theta_{x}+\frac{\epsilon}{x}=:\theta_{*}(x,\epsilon)$, we deduce that $\mathcal{F}(\theta,x,\psi,\sigma)$ is non-positive.
Since we saw that we can take $\epsilon_{x}$ decreasing and $\theta_{*}(x,\epsilon)$, we conclude that we can take $\theta_{*}=\sup_{x\le B }\theta_{*}(x,\epsilon_{x})$ bounded and the conclusion follows that for all $x\le B$, there exists $\epsilon_{B}>0$ and $\theta_{B}$ finite such that:
\begin{eqnarray*}
I_{\sigma}(x) & = \inf_{{\psi\in\mathcal{P}([0,1])\atop \langle\psi,S\psi\rangle\geq\epsilon_{B}}}\sup_{\theta\geq0}\mathcal{F}(\theta,x,\psi,\sigma)=\inf_{{\psi\in\mathcal{P}([0,1])\atop \langle\psi,S\psi\rangle\geq\epsilon_{B}}}\sup_{\theta\in[0,\theta_{B}]}\mathcal{F}(\theta,x,\psi,\sigma)\,.
\end{eqnarray*}
\end{proof}
\begin{proof}[Proof of the second point of Proposition \ref{prop:rate}]
One only needs to notice that the family of functions $x\mapsto\mathcal{F}(\theta,x,\psi,\sigma)$
defined on $[r_{\sigma},B]$ are equi-continuous for $\theta\in[0,\theta_{*}]$.
Hence since we can take $\langle\psi,S\psi\rangle\geq\epsilon_{B}>0$
uniform for $x$ in compact sets of $(r_{\sigma},+\infty)$, the conclusion
follows. 
\end{proof}
\begin{proof}[Proof of the first point of Proposition \ref{prop:rate}]
By { Lemma \ref{lem:Measure-Concentration}, $I_{\sigma}(r_{\sigma})=0$} whereas Theorem
\ref{lem:Measure-Concentration} implies that $I_{\sigma}(x)=+\infty$
on $(-\infty,r_{\sigma})$. Moreover, $I_{\sigma}$ is non negative.
As a consequence of the continuity of $I_{\sigma}$ on $(r_{\sigma},+\infty)$,
we conclude that $I_{\sigma}$ is lower semi-continuous. Moreover,
taking $\theta=\kappa x$ and using that $\sigma$ is uniformly bounded
above, we find that there exists $\chi>0$ and $C$ finite such that
for every real number $x$, we have 
\[
I_{\sigma}(x)\ge\chi x^{2}+C.
\]
As a consequence, the level sets of $I_{\sigma}$ are included in
compacts and therefore $I_{\sigma}$ is a good rate function. 
\end{proof}

We now turn to the last point of Proposition  \ref{prop:rate}. It is restricted for the time being to the case where $\sigma$ is piecewise constant because we rely on Theorem \ref{maintheo-constant}, but generalizes to the piecewise continuous case as soon as we have the associated weak large deviation principle. Indeed, the weak large deviation principle  of Theorem \ref{maintheo-constant} and the exponential tightness \eqref{expt} imply readily the full large deviation principle and in particular, since $I_{\sigma}$ is continuous on $(r_{\sigma},\infty)$, that
\begin{corollary}\label{cor} Assume that $\sigma$ is piecewise constant. Then, for every $x>r_{\sigma}$,
\[
\limsup_{N\rightarrow\infty}\frac{1}{N}\log\mathbb{P}(\lambda_{1}\geq x)=-\inf_{y\geq x}{I}_{\sigma}(y)=:-\check{I}_{\sigma}(x).
\]
\end{corollary}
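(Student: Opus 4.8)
The plan is to upgrade the weak large deviation principle of Theorem~\ref{maintheo-constant} to a full large deviation principle for the law of $\lambda_1(H^N)$ using the exponential tightness \eqref{expt}, and then to simply read off the claimed formula by applying the full principle to the half-line $[x,\infty)$ from above and to $(x,\infty)$ from below. The ingredients are: the pointwise statement $\lim_{\delta\to0}\liminf_N P_\delta^N(y)=\lim_{\delta\to0}\limsup_N P_\delta^N(y)=-I_\sigma(y)$ for $y>r_\sigma$ (Theorem~\ref{maintheo-constant}); the bound $\frac1N\log\mathbb P(\|H^N\|_{op}\ge \mathcal C)\le C-\mathcal C^2/A$ from \eqref{expt}; and the facts, already established in Proposition~\ref{prop:rate}, that $I_\sigma$ is a good rate function, continuous on $(r_\sigma,\infty)$, infinite on $(-\infty,r_\sigma)$ and zero at $r_\sigma$.

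For the upper bound I would fix $x>r_\sigma$ and $M>x$ and split
\[
\mathbb P(\lambda_1\ge x)\le \mathbb P\big(\lambda_1\in[x,M]\big)+\mathbb P\big(\|H^N\|_{op}> M\big).
\]
The set $[x,M]$ is a compact subset of $(r_\sigma,\infty)$, so the standard covering argument applies: for every $\varepsilon>0$ and every $y\in[x,M]$ choose $\delta_y>0$ with $\limsup_N P^N_{\delta_y}(y)\le -I_\sigma(y)+\varepsilon$, extract a finite subcover of $[x,M]$ by the intervals $(y_i-\delta_{y_i}/2,\,y_i+\delta_{y_i}/2)$, bound $\mathbb P(\lambda_1\in[x,M])\le\sum_i\mathbb P(|\lambda_1-y_i|\le \delta_{y_i})$, and deduce $\limsup_N\frac1N\log\mathbb P(\lambda_1\in[x,M])\le -\inf_{[x,M]}I_\sigma$ after letting $\varepsilon\to0$. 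Combining with \eqref{expt} gives
\[
\limsup_{N\to\infty}\frac1N\log\mathbb P(\lambda_1\ge x)\le\max\Big(-\inf_{y\in[x,M]}I_\sigma(y),\ C-\tfrac{M^2}{A}\Big),
\]
and letting $M\to\infty$ yields $\limsup_N\frac1N\log\mathbb P(\lambda_1\ge x)\le-\inf_{y\ge x}I_\sigma(y)=-\check I_\sigma(x)$.

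For the matching lower bound I would use that $(x,\infty)$ is open: for any $y>x$ and $\delta<(y-x)/2$ one has $\{|\lambda_1-y|\le\delta\}\subset\{\lambda_1\ge x\}$, so $\liminf_N\frac1N\log\mathbb P(\lambda_1\ge x)\ge\liminf_N P^N_\delta(y)$, and letting $\delta\to0$ gives $\liminf_N\frac1N\log\mathbb P(\lambda_1\ge x)\ge -I_\sigma(y)$; taking the supremum over $y>x$ gives $\ge-\inf_{y>x}I_\sigma(y)$. Since $x>r_\sigma$ and $I_\sigma$ is continuous (hence right-continuous) at $x$, one has $I_\sigma(x)=\lim_{y\downarrow x}I_\sigma(y)\ge\inf_{y>x}I_\sigma(y)$, whence $\inf_{y>x}I_\sigma(y)=\inf_{y\ge x}I_\sigma(y)=\check I_\sigma(x)$, and the two bounds combine to the announced equality. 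The argument is essentially routine given the ingredients already assembled; the only place needing a little care is the passage from compact to closed sets in the upper bound, which is exactly what the exponential tightness \eqref{expt} (with the operator-norm cutoff $\mathcal C$ free to be taken arbitrarily large) provides, together with the elementary observation that right-continuity of $I_\sigma$ at $x$ makes $\inf_{y>x}I_\sigma$ and $\inf_{y\ge x}I_\sigma$ coincide.
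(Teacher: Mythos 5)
Your proof is correct and is exactly the standard upgrade from weak to full LDP that the paper invokes in one sentence ("the weak large deviation principle of Theorem~\ref{maintheo-constant} and the exponential tightness \eqref{expt} imply readily the full large deviation principle..."); you have simply spelled out the covering argument for the upper bound on closed sets, the open-set lower bound, and the use of continuity of $I_\sigma$ at $x$ to pass from $\inf_{y>x}$ to $\inf_{y\ge x}$.
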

The monotonicity of $I_{\sigma}(.)$ will follow from the following.
\begin{lem}
\label{lem:inc} 
For any $r_{\sigma}\le x\leq y$, we have $\check{I}_{\sigma}(x)\leq\frac{x^{2}}{y^{2}}\check{I}_{\sigma}(y)$. 
\end{lem}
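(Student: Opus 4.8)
The plan is a scaling argument: pass from $H^N$ to a principal submatrix which, after rescaling, has the \emph{same} limiting variance profile $\sigma$, and exploit that the rate function depends only on $\sigma$ (Theorem \ref{maintheo-constant}). We may assume $r_\sigma<x<y$, the cases $x=y$ and $x=r_\sigma$ being immediate since $\check I_\sigma\ge 0$ and $\check I_\sigma(r_\sigma)=0$. Set $\alpha:=x^2/y^2\in(0,1)$.

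First I would build the submatrix. For each $k\le p$ choose $\tilde I_k^N\subseteq I_k^N$ with $|\tilde I_k^N|=\lfloor\alpha|I_k^N|\rfloor$, set $M=M_N:=\sum_{k=1}^p|\tilde I_k^N|$ (so $M/N\to\alpha$), let $\Pi$ be the coordinate projection onto $\bigcup_k\tilde I_k^N$, and let $H'^M$ be the $M\times M$ matrix $(\tfrac1{\sqrt M}X^N_{ij})_{i,j\in\bigcup_k\tilde I_k^N}$; equivalently, $\Pi H^N\Pi$ acting on the $M$-dimensional range of $\Pi$ equals $\sqrt{M/N}\,H'^M$. Its entries are independent, centered and sharp sub-Gaussian, and after reordering the indices so that block $k$ occupies the interval $I_k$, its variance profile equals $\sigma_{k,l}$ on $I_k\times I_l$ while the block proportions $|\tilde I_k^N|/M$ tend to $|I_k|$, so the reindexed base points equidistribute on $[0,1]$. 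Hence $(H'^M)_M$ satisfies the standing hypotheses with the same piecewise constant profile $\sigma$, so it has the same rate function $I_\sigma$, and Theorem \ref{maintheo-constant}, the exponential tightness \eqref{expt} and the continuity of $I_\sigma$ all apply to it.

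Next I would compare top eigenvalues. By the Courant--Fischer min--max principle, the largest eigenvalue of $\Pi H^N\Pi$ acting on the range of $\Pi$ is at most $\lambda_1(H^N)$; since that operator is $\sqrt{M/N}\,H'^M$, this gives $\lambda_1(H^N)\ge\sqrt{M/N}\,\lambda_1(H'^M)$, so with $z_N:=x\sqrt{N/M}\to x/\sqrt\alpha=y>r_\sigma$ we obtain
\[
\tfrac1N\log\mathbb P\bigl(\lambda_1(H^N)\ge x\bigr)\ \ge\ \tfrac{M}{N}\cdot\tfrac1M\log\mathbb P\bigl(\lambda_1(H'^{M})\ge z_N\bigr).
\]
For any $\varepsilon>0$ one has $z_N\le y+\varepsilon$ eventually, so the large deviation lower bound for $(H'^M)$ yields $\liminf_N\tfrac1M\log\mathbb P(\lambda_1(H'^M)\ge z_N)\ge-\check I_\sigma(y+\varepsilon)$; letting $\varepsilon\to0$ and using continuity of $I_\sigma$ on $(r_\sigma,\infty)$, the $\liminf$ of the right-hand side above is at least $-\alpha\,\check I_\sigma(y)$. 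Taking $\limsup_N$ and invoking Corollary \ref{cor} for $H^N$, that is $\limsup_N\tfrac1N\log\mathbb P(\lambda_1(H^N)\ge x)=-\check I_\sigma(x)$, we conclude $-\check I_\sigma(x)\ge-\alpha\,\check I_\sigma(y)$, i.e.\ $\check I_\sigma(x)\le\tfrac{x^2}{y^2}\check I_\sigma(y)$.

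The only delicate point, and the one place where piecewise constancy is used, is that the submatrix must retain the same fraction $\alpha$ of indices in \emph{each} interval $I_k$: this is exactly what makes the rescaled model have the limiting profile $\sigma$ and the same proportions $|I_k|$, hence the same rate function $I_\sigma$; for a constant profile it is automatic and one recovers the classical Wigner scaling. The remaining steps are routine: verifying that $(H'^M)_M$ meets the standing assumptions (in particular the equidistribution of the reindexed base points) and that the $N$-dependent threshold $z_N\to y$ is harmless thanks to the continuity of $I_\sigma$ away from $r_\sigma$.
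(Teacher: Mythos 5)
Your proof is correct and follows essentially the same approach as the paper: extract a principal submatrix that retains a fixed fraction of indices from each block (so its rescaled version has the same limiting variance profile $\sigma$), use eigenvalue interlacing to get $\lambda_1(H^N)\ge\sqrt{M/N}\,\lambda_1(H'^M)$, and apply Corollary \ref{cor} together with the large deviation lower bound to the submatrix sequence. Your treatment of the $N$-dependent threshold $z_N = x\sqrt{N/M}\to y$ via an $\varepsilon$-argument and the continuity of $I_\sigma$ is if anything a bit more careful than the paper's, which directly writes $\sqrt{s}^{-1}(H^N)^{(s)}$ as a Wigner-type matrix and absorbs this approximation silently.
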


\begin{proof}
For $s\in[0,1]$, we consider the submatrix $(H^{N})^{(s)}\in\mathbb{R}^{t(N)\times t(N)}$
of $H^{N}$ where for each $k$ we erase $\lfloor(1-s)\#I_{k}^{N}\rfloor$
lines and columns of indices $i\in I_{k}^{N}$. $\lim_{N\to\infty}N^{-1}t(N)=s$.
As $(H^{N})^{(s)}$ is  a submatrix of $H^{N}$, we have that $\lambda_{1}(H^{N})\geq\lambda_{1}((H^{N})^{(s)})$
and therefore, for any $x\in\mathbb{R}$,
\[
\mathbb{P}(\lambda_{1}(H^{N})\geq x)\geq\mathbb{P}(\lambda_{1}((H^{N})^{(s)})\geq x)
\]
Furthermore $(\sqrt{s}^{-1}(H^{N})^{(s)})_{N\geq0}$ is also a Wigner
matrix with the same profile variance as $H^{N}$ and we can apply
our large deviation principle. So we have 
\begin{eqnarray*}
-\check{I}_{\sigma}(x) & = & \limsup_{N\rightarrow\infty}\frac{1}{N}\log\mathbb{P}(\lambda_{1}(H^{N})\geq x)\\
 & \ge & \limsup_{N\rightarrow\infty}\frac{1}{N}\log\mathbb{P}(\lambda_{1}((H^{N})^{(s)})\geq x)\\
 & = & \limsup_{N\rightarrow\infty}\frac{t(N)}{N}\frac{1}{t(N)}\log\mathbb{P}(\lambda_{1}(\sqrt{s}^{-1}(H^{N})^{(s)})\geq\sqrt{s}^{-1}x)\\
 & = & -s\check{I}_{\sigma}(\sqrt{s}^{-1}x)
\end{eqnarray*}
And so we obtain 
\[
\check{I}_{\sigma}(x)\leq s\check{I}_{\sigma}(\frac{x}{\sqrt{s}})
\]
As a consequence, for every $x\leq y$, 
\[
\check{I}_{\sigma}(x)\leq\frac{x^{2}}{y^{2}}\check{I}_{\sigma}(y).
\]
\end{proof}
\begin{proof}[Proof of the last point of Proposition \ref{prop:rate}]
Since the rate function doesn't depend on the law of the entries,
we can assume here that they are for instance Gaussian. We can then
use the concentration result for $\lambda_{1}(H^{N})$ and then we
have, for every $\epsilon>0$, the existence of $c>0$ such that 
\[
P[\lambda_{1}(H^{N})\geq r_{\sigma}+\epsilon]\leq e^{-cN}\,.
\]
Therefore, by Corollary \ref{cor},  $\check{I}_{\sigma}(x)>0$ for every $x>r_{\sigma}$. With
Lemma \ref{lem:inc} we deduce that {
\[
\check{I}_{\sigma}(y)-\check{I}_{\sigma}(x)\ge (\frac{y^{2}}{x^{2}}-1)\check{I}_{\sigma}(x)>0
\]}
as soon as $y>x>r_{\sigma}$. Hence, $\check{I}_{\sigma}(.)$ is strictly
increasing on $(r_{\sigma},+\infty[$. Since

\[
\mathbb{P}[|\lambda_{1}(H^{N})-x|\leq\delta]=\mathbb{P}[\lambda_{1}(H^{N})\geq x-\delta]-\mathbb{P}[\lambda_{1}(H^{N})\geq x+\delta]
\]
Therefore if $x>r_{\sigma}$, one can find $\delta>0$ small enough
such that $\check{I}_{\sigma}(x-\delta)<\check{I}_{\sigma}(x+\delta)$.
Then we have

\[
\lim_{N\to\infty}\frac{1}{N}\log\mathbb{P}[|\lambda_{1}(H^{N})-x|\leq\delta]=-\check{I}_{\sigma}(x-\delta)
\]
This implies that $I_{\sigma}(x)=\lim_{\delta\to0}\check{I}_{\sigma}(x-\delta)$
is strictly increasing in $x>r_{\sigma}$. 
\end{proof}

\section{The piecewise continuous case}

\label{sec:Piecewise-continuous}

\label{sec:approximation} If $\sigma$ is piecewise continuous, we
can approximate it by piecewise constant functions for the uniform
norm. Then we can prove that the largest eigenvalue of these approximations provide exponential approximations
for the largest eigenvalue under study \cite{DZ}. We proceed as in \cite[section 6]{HussonVar}. Let $H^{N}=(\frac{X_{i,j}}{\sqrt{N}})$
be a sequence of Wigner type matrices such that for any $N\in{\mathbb{N}}$,
$1\leq i,j\leq N$, 
\[
\Sigma_{i,j}^{N}:=\text{Var}(X_{i,j})2^{-\mathds{1}_{i=j}}\,.
\]

{ Let us build a sequence (in $p \in \mathbb N$) of uniform approximations $(H^N)^{(p)}$ of $H^N$ such that each $(H^N)^{(p)}$ has a piecewise constant variance profile. For this, let $\sigma_p$ be any sequence of piecewise constant function of $[0,1]\times [0,1]$ such that $(\sigma_p)_{p \in\mathbb{N}}$ goes uniformly to $\sigma$. We let $t_{i}^{N}$ be a sequence satisfying \eqref{convcov} and set
\[ \Sigma_{i,j}^{N,p} = \sigma_p ( t_i^N, t_j^N) \]

We set 
\[
(H^{N})^{(p)}(i,j)=\left(\frac{\Sigma_{i,j}^{N,p}}{\Sigma_{i,j}^{N}}\right)^{\frac  1 2 }H^{N}(i,j)
\]
By Lemma \ref{lem:Measure-Concentration}, the empirical measure of the eigenvalues of $(H^{N})^{(p)}$ converges weakly  towards $\mu_{\sigma_{p}}$ almost surely.
Moreover, we can notice the following convergence of  the measures $\mu_{\sigma_p,t}$. 

\begin{prop}\label{prop:Stieltjesconv} 
	For any compact set $K\subset \mathbb{C} \setminus \mathbb{R}$, the sequence of functions $G_{\sigma_p} : [0,1] \times \mathbb{C} \setminus \mathbb{R} \to \mathbb{C}$ that maps $(t,z)$ to $G_{\sigma_p, t}(z)$ converges toward $G_{\sigma}$ for the uniform distance.

	\end{prop}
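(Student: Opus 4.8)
The plan is to compare the Dyson equations \eqref{eq:Dyson} satisfied by the limiting resolvents of $\sigma_{p}$ and of $\sigma$. Write $g^{p}_{t}(z):=G_{\mu_{\sigma_{p},t}}(z)$ and $g_{t}(z):=G_{\mu_{\sigma,t}}(z)$, so the goal is $\sup_{t\in[0,1],\,z\in K}|g^{p}_{t}(z)-g_{t}(z)|\to0$. Since \eqref{eq:Dyson} holds only for a.e. $t$, I would first fix the representative of $(\mu_{\sigma,t})_{t}$ for which $t\mapsto g_{t}(z)$ is continuous on each of the finitely many intervals on which $\sigma$ is continuous (this choice exists by uniqueness of the solution of \eqref{eq:Dyson}, using that $t\mapsto\int\sigma(t,s)g_{s}(z)ds$ is continuous on each such interval by dominated convergence; for $\sigma_{p}$ piecewise constant there is nothing to do). By Lemma \ref{lem:Measure-Concentration} the $\mu_{\sigma_{p},t}$ and $\mu_{\sigma,t}$ are probability measures, so $|g^{p}_{t}(z)|,|g_{t}(z)|\le|\Im z|^{-1}$ for all $t$ and $p$, each $z\mapsto g^{p}_{t}(z)$ is holomorphic on $\mathbb{C}^{+}$, and $g^{p}_{t}(\bar z)=\overline{g^{p}_{t}(z)}$; by this last symmetry it suffices to treat compacts $K\subset\mathbb{C}^{+}$. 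The argument then splits into a contraction estimate for $\Im z$ large and a maximum-principle propagation to the rest of $\mathbb{C}^{+}$.

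For the first step, subtracting the two instances of \eqref{eq:Dyson} and using $\tfrac{1}{g^{p}_{t}}-\tfrac{1}{g_{t}}=\tfrac{g_{t}-g^{p}_{t}}{g^{p}_{t}g_{t}}$ gives, for a.e. $t$,
\[
g^{p}_{t}(z)-g_{t}(z)=g^{p}_{t}(z)g_{t}(z)\Big(\int_{0}^{1}\sigma_{p}(t,s)\big(g^{p}_{s}(z)-g_{s}(z)\big)\,ds+\int_{0}^{1}\big(\sigma_{p}(t,s)-\sigma(t,s)\big)g_{s}(z)\,ds\Big),
\]
so that, with $D_{p}(z):=\sup_{t}|g^{p}_{t}(z)-g_{t}(z)|$ and $\eta=\Im z$, the bounds $|g^{p}_{t}g_{t}|\le\eta^{-2}$ and $|g_{s}|\le\eta^{-1}$ yield $D_{p}(z)\le\eta^{-2}\|\sigma_{p}\|_{\infty}D_{p}(z)+\eta^{-3}\|\sigma_{p}-\sigma\|_{\infty}$. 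As $\|\sigma_{p}\|_{\infty}\to\|\sigma\|_{\infty}<\infty$, there is $h_{0}>0$ depending only on $\|\sigma\|_{\infty}$ with $\eta^{-2}\|\sigma_{p}\|_{\infty}\le\tfrac12$ for $\Im z\ge h_{0}$ and $p$ large, whence $D_{p}(z)\le2h_{0}^{-3}\|\sigma_{p}-\sigma\|_{\infty}=:\varepsilon_{p}\to0$. This already gives uniform convergence on $[0,1]\times\{\Im z\ge h_{0}\}$.

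The second step is where the real work lies: one cannot iterate the contraction down to small $\Im z$ (the factor $\eta^{-2}\|\sigma\|_{\infty}$ exceeds $1$ there), and a fixed-$t$ normal-family / Vitali argument would only yield a $t$-dependent rate. Instead, given a compact $K\subset\mathbb{C}^{+}$ I would fix a bounded rectangle $\Omega=\{\delta_{0}<\Im z<H,\ |\Re z|<R_{0}\}$ with $0<\delta_{0}\le2$, $\delta_{0}<\min_{K}\Im z$, $H>h_{0}$ and $R_{0}>\max_{K}|\Re z|$, so $K$ lies in a compact subset of $\Omega$. Each $z\mapsto\log|g^{p}_{t}(z)-g_{t}(z)|$ is subharmonic on $\mathbb{C}^{+}$; let $v_{p}$ be the upper semicontinuous regularization of $\sup_{t}\log|g^{p}_{t}(z)-g_{t}(z)|$, which is again subharmonic and satisfies $e^{v_{p}}\ge\Psi_{p}:=\sup_{t}|g^{p}_{t}-g_{t}|$. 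Splitting $\partial\Omega=\gamma_{g}\cup\gamma_{b}$ with $\gamma_{g}=\partial\Omega\cap\{\Im z>h_{0}\}$ (which contains the whole top side of the rectangle) and $\gamma_{b}$ the rest, the first step gives $v_{p}\le\log\varepsilon_{p}$ on $\gamma_{g}$, while $v_{p}\le\log(2\delta_{0}^{-1})$ on $\gamma_{b}\subset\{\Im z\ge\delta_{0}\}$ by the a priori bound. The two-constants form of the maximum principle then yields, for $z\in\Omega$,
\[
\Psi_{p}(z)\le e^{v_{p}(z)}\le\varepsilon_{p}^{\,\omega(z)}\,(2\delta_{0}^{-1})^{1-\omega(z)},
\]
where $\omega(z)$ is the harmonic measure of $\gamma_{g}$ in $\Omega$ seen from $z$. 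Since $\gamma_{g}$ has positive length, $\omega$ is a positive harmonic function on $\Omega$, so Harnack's inequality gives $\omega_{0}:=\inf_{z\in K}\omega(z)>0$, and hence $\sup_{z\in K}\Psi_{p}(z)\le2\delta_{0}^{-1}\varepsilon_{p}^{\,\omega_{0}}\to0$. This is precisely uniform convergence of $G_{\sigma_{p}}$ to $G_{\sigma}$ on $[0,1]\times K$. The only genuinely non-routine point is this propagation step: turning the $t$-uniform contraction available for $\Im z$ large into a bound uniform in both $t$ and $z$ on an arbitrary compact, which is achieved by passing to the subharmonic envelope $e^{v_{p}}$ and invoking the two-constants inequality together with Harnack.
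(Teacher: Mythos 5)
Your proof is correct, but it takes a genuinely different route from the paper. The paper simply invokes the stability theory of the Dyson equation developed in \cite{HussonVar} (Theorem 1.4 there), whose underlying mechanism — also visible in this paper's Proposition~\ref{prop:muconv} — is the observation (from \cite{AEK2}) that the map $u\mapsto (z-Su)^{-1}$ is a contraction on $(\mathbb{H}^-)^{[0,1]}$ for the ``hyperbolic'' metric $d(u,v)=\operatorname{arcosh}\bigl(\tfrac{1}{2}\bigl(1+\tfrac{|u-v|^2}{\Im u\,\Im v}\bigr)\bigr)$, with contraction factor $(1+(\Im z)^2/A)^{-2}<1$ \emph{for every} $z$ with $\Im z>0$. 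Comparing the two fixed points then directly yields stability in $\|\sigma_p-\sigma\|_\infty$ uniformly on each strip $\{\Im z\ge \eta\}$, with no need to propagate anything from large $\Im z$. You instead work with the Euclidean metric, where the contraction factor $(\Im z)^{-2}\|\sigma\|_\infty$ is only $<1$ for $\Im z$ large, and you recover the small-$\Im z$ regime via a potential-theoretic bootstrap: subharmonicity of $\sup_t\log|g_t^p-g_t|$ (after upper semicontinuous regularization), the two-constants theorem with harmonic measure, and Harnack. Both arguments are sound. The paper's route is shorter \emph{given} the contraction in the hyperbolic metric as a black box; yours is self-contained modulo classical potential theory and has the small merit of not requiring the reader to know the $|u-v|^2/(\Im u\,\Im v)$ contraction trick, at the cost of more machinery (USC regularization, harmonic measure, Harnack) and some care at the corner points of $\partial\Omega$ separating $\gamma_g$ from $\gamma_b$ (a set of harmonic measure zero, so harmless, but worth noting explicitly if you write this out in full).
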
 
This is in fact a simple application of the proof of Theorem 1.4 in \cite{HussonVar}. In particular one only has to notice that the $\mathbb{H}_{\eta}^+$ for $\eta>0$ are a compact exhaustion of $\mathbb{H}^+$ and that the topology induced by the distance $d_{\mathbb{H}_{\eta}^+}$ is equivalent to the topology induced by the uniform distance on the set of measurable functions from  $[0,1] \times \mathbb{H}_{\eta}^+$ to $\mathbb{C}$. 
}
Then we have the following approximation lemma, which easily follows
from the uniform convergence for instance by using \cite[Lemma 5.6]{HuGu1}: 
\begin{lem}
\label{approx} We have for any $\epsilon>0$ 
\[
\limsup_{n\to\infty}\limsup_{N}\frac{1}{N}\log\mathbb{P}[||(H^{N})^{(p)}-H^{N}||_{\infty}>\epsilon]=-\infty\,.
\]
As a consequence, 
\[
\limsup_{N}\frac{1}{N}\log\mathbb{P}[|\lambda_{\max}((H^{N})^{(p)})-\lambda_{\max}(H^{N})|>\epsilon]=o_{p}(\epsilon)
\]
where $o_{p}(\epsilon)$ goes to $-\infty$ when $p$ goes to $+\infty$
for any $\epsilon>0$. 
\end{lem}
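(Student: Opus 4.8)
The plan is to realize the difference $D^{N,p}:=(H^{N})^{(p)}-H^{N}$ as a Wigner-type matrix with sharp sub-Gaussian entries whose variance profile is uniformly small once $p$ is large, and then to feed this into the operator-norm tail estimate \eqref{expt}. By construction the entries of $D^{N,p}$ above the diagonal are independent and centered, with
\[
D^{N,p}_{ij}=\frac{1}{\sqrt N}\Big(\sqrt{\Sigma_{i,j}^{N,p}/\Sigma_{i,j}^{N}}-1\Big)X_{i,j}^{N},
\]
so that $D^{N,p}$ has variance profile
\[
\tau_{i,j}^{N,p}:=\Big(\sqrt{\Sigma_{i,j}^{N,p}/\Sigma_{i,j}^{N}}-1\Big)^{2}\Sigma_{i,j}^{N}=\Big(\sqrt{\Sigma_{i,j}^{N,p}}-\sqrt{\Sigma_{i,j}^{N}}\Big)^{2}.
\]
Since $X_{i,j}^{N}$ is sharp sub-Gaussian, so is $cX_{i,j}^{N}$ for any real $c$ (with variance $c^{2}\Sigma_{i,j}^{N}$), hence $D^{N,p}$ is itself a sharp sub-Gaussian Wigner-type matrix with uniformly bounded variance profile $\tau^{N,p}$. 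The only genuinely delicate point is the definition of $(H^{N})^{(p)}$ at positions where $\Sigma^{N}_{i,j}=0$; following \cite[section 6]{HussonVar} one sets (or independently resamples) the corresponding entry so that $(H^{N})^{(p)}$ still has variance profile $\Sigma^{N,p}$ and $D^{N,p}$ is still sharp sub-Gaussian with variance profile $\tau^{N,p}_{i,j}=(\sqrt{\Sigma^{N,p}_{i,j}}-\sqrt{\Sigma^{N}_{i,j}})^{2}$, and the argument below is unaffected.

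Next I would bound $\tau^{N,p}$ uniformly, using that $t\mapsto\sqrt t$ is $1/2$-H\"older, i.e.\ $|\sqrt a-\sqrt b|\le\sqrt{|a-b|}$. Since $\Sigma^{N,p}_{i,j}=\sigma_{p}(t_{i}^{N},t_{j}^{N})$, the triangle inequality gives
\[
\sup_{1\le i,j\le N}\big|\sqrt{\Sigma_{i,j}^{N,p}}-\sqrt{\Sigma_{i,j}^{N}}\big|\le\sqrt{\|\sigma_{p}-\sigma\|_{\infty}}+\sup_{1\le i,j\le N}\big|\sqrt{\sigma(t_{i}^{N},t_{j}^{N})}-\sqrt{\Sigma_{i,j}^{N}}\big|,
\]
and by \eqref{convcov} the last supremum tends to $0$ as $N\to\infty$. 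Hence $\limsup_{N\to\infty}\sup_{i,j}\tau^{N,p}_{i,j}\le\|\sigma_{p}-\sigma\|_{\infty}$, which goes to $0$ as $p\to\infty$ by the uniform convergence $\sigma_{p}\to\sigma$.

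Finally, applying \eqref{expt} to the sequence $(D^{N,p})_{N}$ — legitimate since for $N$ large its variance profile is bounded by $\|\sigma_{p}-\sigma\|_{\infty}+o_{N}(1)$, the constant $C$ there being independent of the variance profile — we obtain, for every $\epsilon>0$ and every fixed $p$,
\[
\limsup_{N\to\infty}\frac1N\log\mathbb{P}\big[\|D^{N,p}\|_{op}\ge\epsilon\big]\le C-\frac{\epsilon^{2}}{\|\sigma_{p}-\sigma\|_{\infty}}=:o_{p}(\epsilon),
\]
the right-hand side tending to $-\infty$ as $p\to\infty$. Since the entrywise supremum norm of a matrix is dominated by its operator norm, this already yields the first display. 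For the second, Weyl's inequality gives $|\lambda_{\max}((H^{N})^{(p)})-\lambda_{\max}(H^{N})|\le\|D^{N,p}\|_{op}$, so the same estimate applies verbatim. (Alternatively, once the profiles have been compared as above one may simply invoke \cite[Lemma 5.6]{HuGu1}.) The main obstacle is thus only the bookkeeping around possibly vanishing entries of the profile; the remainder is a routine consequence of the uniform convergence $\sigma_p\to\sigma$ and of the exponential tightness of $\|H^{N}\|_{op}$.
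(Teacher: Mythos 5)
Your proof is correct and follows essentially the same route the paper intends: identify $D^{N,p}=(H^{N})^{(p)}-H^{N}$ as a Wigner-type matrix with sharp sub-Gaussian entries and variance profile $(\sqrt{\Sigma^{N,p}_{i,j}}-\sqrt{\Sigma^{N}_{i,j}})^{2}$, bound this uniformly by $\|\sigma_{p}-\sigma\|_{\infty}+o_{N}(1)$ via the $1/2$-H\"older continuity of $\sqrt{\cdot}$ and \eqref{convcov}, and feed the result into the operator-norm tail bound \eqref{expt} plus Weyl's inequality. The paper compresses all of this into a citation of \cite[Lemma 5.6]{HuGu1}, which you also mention, so you have simply unpacked the same argument in detail, with the correct extra care about entries where $\Sigma^{N}_{i,j}=0$.
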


This implies that the $(\lambda_{\max}((H^{N})^{(p)}))_{N\in{\mathbb{N}}}$
are  exponential approximations of $(\lambda_{\max}(H^{N}))_{N\in{\mathbb{N}}}$
(see definition 4.2.14 in \cite{DemZei2010}). On the other hand,
$\Sigma^{N,p}$ is a piecewise constant covariance. We can then apply
Theorem \ref{maintheo-constant} to $(H^{N})^{(p)}$ with piecewise
constant profile corresponding to $\sigma_p$.

\begin{lem}
The  law of the sequence of random variables $\lambda_{\max}((H^{N})^{(p)})$
satisfies a large deviation principle with speed $N$ and  good rate function $I_{\sigma_p}$\,. 
\end{lem}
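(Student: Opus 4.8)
The plan is to deduce the full large deviation principle from the weak large deviation principle of Theorem~\ref{maintheo-constant} together with the exponential tightness furnished by \eqref{expt}, once it is checked that $(H^{N})^{(p)}$ falls within the piecewise constant framework of Section~\ref{subsec:Piecewise-constant-notation}.

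\emph{Step 1 (the model).} First I would verify that $(H^{N})^{(p)}$ satisfies the hypotheses of Section~\ref{subsec:Piecewise-constant-notation} with limiting profile $\sigma_p$. Since $(H^{N})^{(p)}(i,j)=(\Sigma^{N,p}_{i,j}/\Sigma^{N}_{i,j})^{1/2}H^{N}(i,j)$ is obtained from $H^{N}$ by a deterministic rescaling, the entries $\sqrt N\,(H^{N})^{(p)}(i,j)$ are independent and centered, and the sharp sub-Gaussian bound \eqref{SSH} for $X^{N}$ gives, for every real $t$,
\[
\log\mathbb{E}\big[e^{t\sqrt N (H^{N})^{(p)}(i,j)}\big]=L^{N}_{ij}\Big(t\,(\Sigma^{N,p}_{i,j}/\Sigma^{N}_{i,j})^{1/2}\Big)\le\frac{1}{2^{1_{i\neq j}}}\,\Sigma^{N,p}_{i,j}\,t^{2},
\]
so $(H^{N})^{(p)}$ is sharp sub-Gaussian with variance profile $\Sigma^{N,p}_{i,j}=\sigma_p(t^{N}_i,t^{N}_j)$, which is bounded uniformly in $N$ by $A_p:=\|\sigma_p\|_{\infty}<\infty$. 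Because $\sigma_p$ is piecewise constant and $\frac1N\sum_i\delta_{t^{N}_i}$ converges weakly to the uniform law, $\Sigma^{N,p}$ is piecewise constant in the sense required.

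\emph{Step 2 (weak LDP).} Theorem~\ref{maintheo-constant} then applies to $(H^N)^{(p)}$ (note $\lambda_{\max}=\lambda_1$) and gives, for every $x>r_{\sigma_p}$,
\[
\lim_{\delta\to0}\liminf_{N\to\infty}\frac1N\log\mathbb{P}\big(|\lambda_{\max}((H^{N})^{(p)})-x|\le\delta\big)=\lim_{\delta\to0}\limsup_{N\to\infty}\frac1N\log\mathbb{P}\big(|\lambda_{\max}((H^{N})^{(p)})-x|\le\delta\big)=-I_{\sigma_p}(x).
\]
I would extend this pointwise estimate to all of $\mathbb{R}$: at $x=r_{\sigma_p}$ it is immediate since $I_{\sigma_p}(r_{\sigma_p})=0$ and $\lambda_{\max}((H^{N})^{(p)})\to r_{\sigma_p}$ in probability by Lemma~\ref{lem:Measure-Concentration}(3) applied to $(H^{N})^{(p)}$; for $x<r_{\sigma_p}$, where $I_{\sigma_p}(x)=+\infty$, the event $\{|\lambda_{\max}((H^{N})^{(p)})-x|\le\delta\}$ with $\delta$ small forces $\mu_{(H^{N})^{(p)}}$ to be supported in $(-\infty,r_{\sigma_p}-c]$ for some $c>0$, hence to stay at positive Wasserstein distance from $\mu_{\sigma_p}$, so by Lemma~\ref{lem:Measure-Concentration}(2) this probability decays faster than any exponential. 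This yields the weak large deviation principle with rate function $I_{\sigma_p}$, which is a good rate function by Proposition~\ref{prop:rate}(1).

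\emph{Step 3 (exponential tightness and conclusion).} Since $|\lambda_{\max}((H^{N})^{(p)})|\le\|(H^{N})^{(p)}\|_{op}$, applying \eqref{expt} to $(H^{N})^{(p)}$ gives, for every $\mathcal{C}>0$,
\[
\frac1N\log\mathbb{P}\big(|\lambda_{\max}((H^{N})^{(p)})|\ge\mathcal{C}\big)\le C-\frac{\mathcal{C}^{2}}{A_p},
\]
which tends to $-\infty$ as $\mathcal{C}\to\infty$, uniformly in $N$; hence the laws of $\lambda_{\max}((H^{N})^{(p)})$ are exponentially tight. The standard upgrade of a weak large deviation principle to a full one in the presence of exponential tightness (\cite[Lemma~1.2.18]{DemZei2010}) then yields the large deviation principle with speed $N$ and good rate function $I_{\sigma_p}$. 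The only step calling for a little care is the extension of the weak estimate to the range $x\le r_{\sigma_p}$ in Step~2; everything else is bookkeeping, and there is no genuine obstacle.
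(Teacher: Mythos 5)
Your proposal is correct and follows the same route the paper takes: apply the weak LDP of Theorem~\ref{maintheo-constant} to $(H^N)^{(p)}$ (which lies in the piecewise constant framework with profile $\sigma_p$), then upgrade to a full LDP using the exponential tightness \eqref{expt}. The paper states this lemma without spelling out the verifications; your Steps 1--3 simply make that implicit argument explicit.
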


Then we have the following Theorem, which is simply an application
of Theorem 4.2.14 from \cite{DemZei2010}. 
\begin{thm}
The law of $\lambda_{\max}(H^{N})$ satisfies a weak large deviation
principle with speed $N$ with rate function $\hat{I}$ defined for
$x\in\mathbb{R}$ by: 
\[
\hat{I}(x):=\sup_{\delta>0}\liminf_{p\to\infty}\inf_{|y-x|\leq\delta}I_{\sigma^{(p)}}(y).
\]
\end{thm}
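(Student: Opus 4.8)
The plan is to obtain this statement as a direct consequence of the transfer principle for exponentially good approximations, namely Theorem 4.2.14 of \cite{DemZei2010}, since the two ingredients it requires have already been established above; what remains is to line up the hypotheses and read off the conclusion.

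First I would record the two inputs. \emph{(i)} For each fixed $p$, the sequence $(\lambda_{\max}((H^{N})^{(p)}))_{N}$ satisfies a \emph{full} large deviation principle with speed $N$ and good rate function $I_{\sigma_{p}}$. Indeed $(H^{N})^{(p)}$ has the piecewise constant variance profile $\Sigma^{N,p}$ associated with $\sigma_{p}$, and multiplying the independent entry $X_{i,j}^{N}$ by the deterministic constant $c_{i,j}=(\Sigma_{i,j}^{N,p}/\Sigma_{i,j}^{N})^{1/2}$ turns the sharp sub-Gaussian bound \eqref{SSH} into the same bound with variance $c_{i,j}^{2}\Sigma_{i,j}^{N}=\Sigma_{i,j}^{N,p}$; hence the entries of $(H^{N})^{(p)}$ are again sharp sub-Gaussian with profile $\Sigma^{N,p}$, which is uniformly bounded because $\sup_{p}\|\sigma_{p}\|_{\infty}<\infty$. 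Theorem \ref{maintheo-constant} then gives the weak large deviation principle with rate function $I_{\sigma_{p}}$, and the exponential tightness estimate \eqref{expt} applied to $(H^{N})^{(p)}$ upgrades it to the full large deviation principle with a good rate function, exactly as stated in the lemma above. \emph{(ii)} Lemma \ref{approx} says precisely that $(\lambda_{\max}((H^{N})^{(p)}))_{N}$ is an exponentially good approximation of $(\lambda_{\max}(H^{N}))_{N}$ in the sense of \cite[Definition 4.2.14]{DemZei2010}.

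With these facts in hand I would apply \cite[Theorem 4.2.14]{DemZei2010} with the small parameter $\epsilon\leftrightarrow 1/N$ and the approximation index $m\leftrightarrow p$: since each approximant obeys an LDP with good rate function $I_{\sigma_{p}}$ and the family is exponentially good, the law of $\lambda_{\max}(H^{N})$ satisfies a weak large deviation principle with speed $N$ and rate function $\hat I(x)=\sup_{\delta>0}\liminf_{p\to\infty}\inf_{|y-x|\le\delta}I_{\sigma_{p}}(y)$, which is the claimed statement (here $I_{\sigma_{p}}$ is denoted $I_{\sigma^{(p)}}$ in the statement).

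The points needing care are bookkeeping rather than analysis, and this is where I would slow down: checking that the entrywise rescaling preserves both the independence of the entries and the sharp sub-Gaussian estimate with the prescribed profile, adopting the convention that the $(i,j)$ entry of $(H^{N})^{(p)}$ vanishes when $\Sigma_{i,j}^{N}=0$, and making sure that the principle of Theorem \ref{maintheo-constant} is promoted from weak to full (with a good rate function) before it is fed into \cite[Theorem 4.2.14]{DemZei2010}. No new estimate on the matrices beyond \eqref{expt} and Lemma \ref{approx} is needed. The genuinely substantive remaining task — identifying $\hat I$ with the rate function $I_{\sigma}$ of Theorem \ref{maintheo} by passing to the limit $p\to\infty$ in the variational formula — is a separate step and is not part of the argument above.
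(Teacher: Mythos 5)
Your proposal follows exactly the paper's route: it notes that $(\lambda_{\max}((H^{N})^{(p)}))_{N}$ satisfies a full LDP with good rate function $I_{\sigma_{p}}$ (via Theorem~\ref{maintheo-constant} plus the exponential tightness \eqref{expt}), invokes Lemma~\ref{approx} for exponentially good approximation, and cites Theorem~4.2.14 of \cite{DemZei2010} to conclude. This matches the paper's argument, and the bookkeeping remarks you add (rescaling preserves sharp sub-Gaussianity, the rate functions are good) are the correct points to verify.
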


We will prove the following result: 
\begin{thm}
\label{thm-ratefuct} For all $x\in\mathbb{R}$, 
\[
\hat{I}(x)=I_{\sigma}(x).
\]
\end{thm}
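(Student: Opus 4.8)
The plan is to show that $\hat I = I_\sigma$ by combining the definition of $\hat I$ with a continuity/stability statement for the family of rate functions $p \mapsto I_{\sigma_p}$, using the uniform convergence $\sigma_p \to \sigma$ and the convergence $G_{\sigma_p} \to G_\sigma$ from Proposition \ref{prop:Stieltjesconv}. First I would recall that, since $I_\sigma$ and each $I_{\sigma_p}$ are good rate functions that are moreover continuous and strictly increasing on $(r_{\sigma},\infty)$ (resp. $(r_{\sigma_p},\infty)$) by Proposition \ref{prop:rate}, and infinite to the left of $r_\sigma$ (resp. $r_{\sigma_p}$), the quantity $\hat I(x) = \sup_{\delta>0}\liminf_{p\to\infty}\inf_{|y-x|\le\delta} I_{\sigma_p}(y)$ is completely controlled once one knows (i) $r_{\sigma_p}\to r_\sigma$ and (ii) pointwise (indeed locally uniform) convergence $I_{\sigma_p}\to I_\sigma$ on $(r_\sigma,\infty)$. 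Point (i) follows because $\mu_{\sigma_p}\to\mu_\sigma$ weakly with uniformly bounded supports (Lemma \ref{lem:Measure-Concentration} applied to the piecewise constant profiles, together with the uniform bound on the variance profiles), and the right edge is continuous under this convergence; one can also see it directly from Proposition \ref{prop:Stieltjesconv} since $r_{\sigma_p}$ is characterized through $G_{\mu_{\sigma_p}}$ and its behaviour at the edge.

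The core of the argument is therefore (ii): the locally uniform convergence of $I_{\sigma_p}$ to $I_\sigma$ on $(r_\sigma,\infty)$. Here I would argue directly from the variational formula
\[
I_{\sigma}(x)=\inf_{\psi\in\mathcal P([0,1]),\ \langle\psi,S\psi\rangle\neq0}\ \sup_{\theta\ge0}\ \widehat{\mathcal F}(\widehat\theta,x,\psi,\sigma)
\]
(and its analogue for $\sigma_p$), using the expression \eqref{defF2} for $\widehat{\mathcal F}$. The key inputs are: the functionals $\psi\mapsto\langle\psi,S_p\psi\rangle$ converge uniformly to $\psi\mapsto\langle\psi,S\psi\rangle$ on $\mathcal P([0,1])$ because $\sigma_p\to\sigma$ uniformly; the functions $\varphi_\star^{(p)}(x)=(t\mapsto G_{\mu_{\sigma_p,t}}(x))$ converge uniformly in $(t,x)$ on compact subsets of $(r_\sigma,\infty)$ to $\varphi_\star(x)$ by Proposition \ref{prop:Stieltjesconv} (after checking the statement extends from $\mathbb C\setminus\mathbb R$ to real $x>r_\sigma$, which is where the uniform spectral-edge control and the boundedness of $G_{\mu_{\sigma,t}}$ away from the support enter); and the entropy term $-\frac12\int_0^1\log(\tfrac{2\widehat\theta}{\varphi_\star(x)}\tfrac{d\psi}{dt}+1)\,dt$ is jointly lower semicontinuous in $\psi$ and continuous in the parameter $\varphi_\star$, uniformly for $\widehat\theta$ in compacts. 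Using the bound from Section \ref{seceps} (and Lemma \ref{lem:Ratefunctionbound}) one restricts, uniformly in $p$ for $x$ in a fixed compact, to $\psi$ with $\langle\psi,S_p\psi\rangle\ge\varepsilon_B$ and to $\widehat\theta$ in a fixed compact $[0,\theta_B]$; on this restricted domain $\widehat{\mathcal F}(\cdot,\cdot,\cdot,\sigma_p)\to\widehat{\mathcal F}(\cdot,\cdot,\cdot,\sigma)$ uniformly, and since $\inf$–$\sup$ over (quasi-)compact sets of uniformly convergent functions converges, one gets $I_{\sigma_p}(x)\to I_\sigma(x)$, locally uniformly. Finally, plugging locally uniform convergence of $I_{\sigma_p}$ together with $r_{\sigma_p}\to r_\sigma$ into the formula for $\hat I$ yields $\hat I(x)=I_\sigma(x)$ for $x>r_\sigma$, while for $x\le r_\sigma$ one checks separately that both sides are $0$ at $r_\sigma$ and $+\infty$ below it (using $r_{\sigma_p}\to r_\sigma$ and that $\lambda_{\max}(H^N)\to r_\sigma$).

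The main obstacle I expect is the uniform control of $\varphi_\star^{(p)}(x)=G_{\mu_{\sigma_p,t}}(x)$ at real arguments $x$ just above the (moving) edge $r_{\sigma_p}$: Proposition \ref{prop:Stieltjesconv} gives uniform convergence only on compacts of $\mathbb C\setminus\mathbb R$, so one needs an argument that the convergence and the requisite equicontinuity/boundedness persist up to the real axis on $[r_\sigma+\eta,\infty)$, uniformly in $p$. This requires a uniform lower bound on the distance between $x$ and $\mathrm{supp}(\mu_{\sigma_p})$, i.e. the uniform edge estimate $r_{\sigma_p}\le r_\sigma+o_p(1)$ together with a uniform modulus of continuity of $z\mapsto G_{\mu_{\sigma_p,t}}(z)$ on $\{\,\mathrm{dist}(z,\mathrm{supp}\,\mu_{\sigma_p})\ge\eta/2\,\}$; both can be extracted from the Dyson equation \eqref{eq:Dyson} for $\sigma_p$ and the uniform bound $\|\sigma_p\|_\infty\le A$, but assembling them cleanly is the delicate point. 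Once this is in place, the remaining steps are soft (compactness of $\mathcal P([0,1])$, uniform convergence of $\langle\cdot,S_p\cdot\rangle$, stability of $\inf$–$\sup$), and the identification $\hat I=I_\sigma$ follows.
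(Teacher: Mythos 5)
Your proposal follows essentially the same approach as the paper: you identify the same two key ingredients — convergence of the right edges $r_{\sigma_p}\to r_\sigma$ (Lemma \ref{convn}) and locally uniform convergence of the rate functions $I_{\sigma_p}\to I_\sigma$ on compacts of $(r_\sigma,\infty)$ (obtained, as you describe, from Theorem \ref{thmconv}, the compactification of $\theta$ to $[0,\theta_*]$, and the restriction $\langle\psi,S\psi\rangle\ge\varepsilon$ from Proposition \ref{prop:epsilon}) — and then conclude by the same trichotomy in $x$. You also correctly pinpoint the genuine technical obstacle the paper addresses in Lemma \ref{lem:Gconvergence}, namely pushing the Stieltjes-transform convergence from $\mathbb C\setminus\mathbb R$ down to real $z>r_\sigma+\eta$ uniformly in $p$.
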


Then, to get the full LDP from the weak LDP, we only have to notice
that the sequence of random variables $\lambda_{\max}(H^{N})$ is
exponentially tight, which is well known, see \eqref{expt}.
The rest of this section is devoted to the proof of Theorem \ref{thm-ratefuct}
We recall that

\[
\mathcal{F}(\theta,x,\psi,\sigma)=J_{\mu_{\sigma}}(\theta,x)-K(\theta,\phi(\theta,x,\psi)).
\]

\begin{thm}
\label{thmconv} Let $\sigma$ be a piecewise continuous function.
Let $(\sigma_{n})_{n\in\mathbb{N}}$ be a sequence of piecewise constant
functions such that $(\sigma_{n})_{n\in\mathbb{N}}$ converges uniformly
toward $\sigma$ and $\sigma\neq0$. Let $x_{1},x_{2}$ be two real
numbers such that $r_{\sigma}<x_{1}<x_{2}$, and $M$ be a positive
real number. Then the sequence of functions $(\theta,x,\psi)\mapsto\mathcal{F}(\theta,x,\psi,\sigma_{n})$
converges uniformly toward $(\theta,x,\psi)\mapsto\mathcal{F}(\theta,x,\psi,\sigma)$
on $[0,M]\times[x_{1},x_{2}]\times\mathcal{P}([0,1])$. 
\end{thm}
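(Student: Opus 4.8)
Write $\mathcal F(\theta,x,\psi,\tau)=J_{\mu_\tau}(x,\theta)-K(\theta,\phi(\theta,x,\psi))$ (for $\tau=\sigma$ the piecewise continuous profile, and for $\tau=\sigma_n$ we use the continuous formulas with $\sigma_n$ plugged in, which by Section~\ref{ratef} agree with the piecewise constant ones). The strategy is: first reduce to $\theta$ bounded away from $0$; then prove uniform convergence of $J_{\mu_{\sigma_n}}$ and of the quadratic part of $K$ from stability of the Dyson equation; and finally control the entropy part of $K$ uniformly in $\psi$, which is the only delicate point.

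\emph{Reductions and input convergences.} By Lemma~\ref{lem:theta}, $\mathcal F(\theta,x,\psi,\tau)=0$ whenever $2\theta\le G_{\mu_\tau}(x)$. Since $G_{\mu_{\sigma_n}}(x_2)\to G_{\mu_\sigma}(x_2)>0$, put $\theta_0:=\tfrac18 G_{\mu_\sigma}(x_2)>0$; then for $n$ large, for every $x\in[x_1,x_2]$ one has $2\theta\le G_{\mu_\sigma}(x)$ and $2\theta\le G_{\mu_{\sigma_n}}(x)$ for all $\theta\le\theta_0$, so $\mathcal F(\cdot,\cdot,\cdot,\sigma)\equiv\mathcal F(\cdot,\cdot,\cdot,\sigma_n)\equiv0$ on $[0,\theta_0]\times[x_1,x_2]\times\mathcal P([0,1])$, and it suffices to prove uniform convergence on $[\theta_0,M]\times[x_1,x_2]\times\mathcal P([0,1])$. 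Next, by stability of the Dyson equation under uniform perturbations of the profile (as in the proof of \cite[Theorem~1.4]{HussonVar}, see also \cite{AEK}) one has $r_{\sigma_n}\to r_\sigma$, the probability measures $\mu_{\sigma_n,t}$ are supported in a common compact $[-R,R]$, and $r_{\sigma_n}\le r':=\tfrac{r_\sigma+x_1}{2}<x_1$ for $n$ large; and by Proposition~\ref{prop:Stieltjesconv} — extended from $\mathbb C\setminus\mathbb R$ to real arguments $y>r_\sigma$ by approximating $y$ with $y+i\varepsilon$ and using the uniform Lipschitz bound $|G_{\mu_{\sigma_n,t}}'(z)|\le(x_1-r')^{-2}$ for $\operatorname{Re}z\ge x_1$ — the maps $(t,y)\mapsto G_{\mu_{\sigma_n,t}}(y)$ and $y\mapsto G_{\mu_{\sigma_n}}(y)$ converge uniformly on $[0,1]\times K$, resp.\ $K$, for every compact $K\subset(r_\sigma,\infty)$. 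Since $G_{\mu_\sigma}$ is strictly decreasing, $G_{\mu_{\sigma_n}}^{-1}\to G_{\mu_\sigma}^{-1}$ uniformly on compacts of its range, and weak convergence $\mu_{\sigma_n}\to\mu_\sigma$ on $[-R,R]$ gives uniform convergence of $y\mapsto\int\log(y-s)\,d\mu_{\sigma_n}(s)$ for $y$ in compacts of $(r',\infty)$.

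\emph{Convergence of $J$ and of $\varphi$.} With these inputs, $J_{\mu_{\sigma_n}}(x,\theta)\to J_{\mu_\sigma}(x,\theta)$ uniformly on $[\theta_0,M]\times[x_1,x_2]$: outside the thin strip $\{(x,\theta):|2\theta-G_{\mu_\sigma}(x)|\le\|G_{\mu_{\sigma_n}}-G_{\mu_\sigma}\|_\infty\}$ the two formulas in \eqref{defJ-1} use the same branch and each ingredient ($G_{\mu_{\sigma_n}}^{-1}(2\theta)$, which is bounded in $[x_1,C]$ with $C:=\max(x_2,G_{\mu_\sigma}^{-1}(2\theta_0))$, and the log–potential) converges uniformly, while on the strip both functions are within $o_n(1)$ of their common, converging, boundary values by equicontinuity. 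Set $y=y(\theta,x):=G_{\mu_\sigma}^{-1}(2\theta)\vee x\in[x_1,C]$ and similarly $y_n$ with $\sigma_n$; then $y_n\to y$ uniformly, the density of $\varphi(\theta,x)$ is $t\mapsto\tfrac1{2\theta}G_{\mu_{\sigma,t}}(y)$, which lies in $[\lambda_0,\Lambda]$ with $\lambda_0:=\tfrac1{2M(R+C)}>0$ and $\Lambda:=\tfrac1{2\theta_0(x_1-r')}$, and $\big\|\tfrac{d\varphi_n}{dt}-\tfrac{d\varphi}{dt}\big\|_\infty\to0$ uniformly in $(\theta,x)$. Likewise $c_n:=\big(1-\tfrac{G_{\mu_{\sigma_n}}(x)}{2\theta}\big)_+\to c:=\big(1-\tfrac{G_{\mu_\sigma}(x)}{2\theta}\big)_+$ uniformly, with $|c_n-c|\le\|G_{\mu_{\sigma_n}}-G_{\mu_\sigma}\|_\infty/2\theta_0$. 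Consequently $\phi_n:=\phi(\theta,x,\psi,\sigma_n)=\varphi_n+c_n\psi$ and $\phi:=\varphi+c\psi$ satisfy $\phi_n\to\phi$ in total variation, uniformly, so $\theta^2\iint\sigma_n\,d\phi_n\,d\phi_n\to\theta^2\iint\sigma\,d\phi\,d\phi$ uniformly (using also $\|\sigma_n-\sigma\|_\infty\to0$).

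\emph{The entropy term (main obstacle) and conclusion.} It remains to control $H(\phi)=-\tfrac12\int_0^1\log\tfrac{d\phi}{dt}\,dt$ uniformly over \emph{all} $\psi\in\mathcal P([0,1])$, including singular $\psi$ whose absolutely continuous density $\psi'$ (with $\int_0^1\psi'\le1$) may be arbitrarily concentrated; here the lower bound $\tfrac{d\phi}{dt}=\tfrac{d\varphi}{dt}+c\psi'\ge\lambda_0$ (and $\ge\lambda_0/2$ for $\phi_n$, $n$ large) is what makes things work. From $|\log a-\log b|\le|a-b|/\min(a,b)$,
\[
|H(\phi_n)-H(\phi)|\ \le\ \tfrac12\int_0^1\frac{\big\|\tfrac{d\varphi_n}{dt}-\tfrac{d\varphi}{dt}\big\|_\infty+|c_n-c|\,\psi'(t)}{\lambda_0/2+\min(c_n,c)\,\psi'(t)}\,dt .
\]
Fix a threshold $\delta_1>0$. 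On the region $\min(c_n,c)\ge\delta_1$, splitting $[0,1]$ according to $\psi'\le T$ or $\psi'>T$ (and using $|\{\psi'>T\}|\le1/T$) bounds the right-hand side by $C\big(\|\tfrac{d\varphi_n}{dt}-\tfrac{d\varphi}{dt}\|_\infty+|c_n-c|T+|c_n-c|/(\delta_1T)\big)$, which $\to0$ for fixed $\delta_1,T$; together with the previous paragraph this gives $\mathcal F_n\to\mathcal F$ uniformly there. On the complementary region $\min(c_n,c)<\delta_1$ we have, for $n$ large, $c,c_n<2\delta_1$; when $c>0$ this means $0<\theta-\theta_x<2M\delta_1$ with $\theta_x:=\tfrac12G_{\mu_\sigma}(x)\ge2\theta_0$, and the identity established in the proof of Proposition~\ref{prop:rate}(2),
\[
\mathcal F(\theta,x,\psi,\sigma)=-a(\theta-\theta_x)^2-2b(\theta-\theta_x)+(\theta-\theta_x)x-\tfrac12\log(\theta/\theta_x)+\big(H(\phi(\theta))-H(\varphi_{\theta_x})\big),
\]
with $0\le a=\langle\psi,S\psi\rangle\le A$, $0\le b\le A/(x_1-r_\sigma)$ and $|H(\phi(\theta))-H(\varphi_{\theta_x})|\le\tfrac{c}{2\lambda_0}+\tfrac1{\lambda_0}\|\varphi_\theta'-\varphi_{\theta_x}'\|_\infty$, yields $|\mathcal F(\theta,x,\psi,\sigma)|\le C'\delta_1$ for a constant $C'$ depending only on $A,x_1,x_2,r_\sigma,M,\lambda_0$; the same bound (with constants converging, hence $\le2C'\delta_1$ for $n$ large) holds for $\sigma_n$ when $c_n>0$, while $\mathcal F=0$ (resp. $\mathcal F_n=0$) when $c=0$ (resp. $c_n=0$) by Lemma~\ref{lem:theta}. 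Hence $|\mathcal F_n-\mathcal F|\le3C'\delta_1$ on this region. Finally, given $\eta>0$ choose $\delta_1$ with $3C'\delta_1<\eta/2$ and then $n$ large enough that the first region contributes $<\eta/2$; together with the trivial region $\theta\le\theta_0$ this proves $\sup|\mathcal F(\cdot,\cdot,\cdot,\sigma_n)-\mathcal F(\cdot,\cdot,\cdot,\sigma)|<\eta$ for $n$ large. The main difficulty, as indicated, is precisely this last uniform control of the entropy as $\psi$ ranges over all probability measures and as the coefficient $c$ of $\psi$ crosses zero; everything else is stability of the Dyson equation plus elementary estimates.
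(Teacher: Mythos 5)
Your proposal is correct and rests on the same core ingredients as the paper's proof: stability of the Dyson equation gives uniform convergence of $G_{\mu_{\sigma_n,t}}$, $r_{\sigma_n}$, and the log-potential; the density $\tfrac{d\phi}{dt}$ is bounded below by a positive constant $\lambda_0$ (derived from the Dyson equation and the uniform lower bound on $G_t/G$), so $|\log a-\log b|\le\lambda_0^{-1}|a-b|$; and the $\psi'$-dependent part of the density difference is absorbed by $\int_0^1 \psi'\,dt\le 1$ since $\psi\in\mathcal P([0,1])$.

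The only real deviation from the paper is in how you close the entropy estimate. You introduce a truncation threshold $T$ for $\psi'$ and a threshold $\delta_1$ for $\min(c_n,c)$, handling small $c$ by plugging into the representation of $\mathcal F$ from Proposition~\ref{prop:rate}(2) and showing $\mathcal F$ itself is $O(\delta_1)$ there. This works, but it is more machinery than needed: once one has the uniform pointwise bound
\[
\Bigl|\frac{d\phi_n}{dt}-\frac{d\phi}{dt}\Bigr|\le A_n\,\frac{d\psi}{dt}+B_n,\qquad A_n,B_n\to 0 \text{ uniformly in }(\theta,x),
\]
integrating the Lipschitz estimate immediately gives $|H(\phi_n)-H(\phi)|\le \tfrac{1}{2}\lambda_0^{-1}(A_n+B_n)\to 0$ uniformly in $\psi$ as well, with no $T$-split and no separate treatment of the region where $c$ is small (indeed you already use $\int\psi'\le 1$ in your own estimate $|H(\phi(\theta))-H(\varphi_{\theta_x})|\le\tfrac{c}{2\lambda_0}+\lambda_0^{-1}\|\varphi_\theta'-\varphi_{\theta_x}'\|_\infty$, so the threshold decomposition elsewhere is redundant). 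The reduction to $\theta\ge\theta_0$ via Lemma~\ref{lem:theta} is a harmless convenience; the paper instead proves sequential uniform convergence directly, dealing with $\theta\to 0$ by the asymptotics $G_{\mu_{\sigma_n}}^{-1}(2\theta_n)\simeq\tfrac{1}{2\theta_n}$, $G_{\sigma_n,t}(G_{\mu_{\sigma_n}}^{-1}(2\theta_n)\vee x)\simeq 2\theta_n$, which is morally the same as your observation that $\mathcal F$ and $\mathcal F_n$ vanish there.
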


\begin{proof}
For the sake of simplicity, we will denote $G_{t}^{(n)}$ and $G_{t}$
the following functions: 
\[
G_{t}^{(n)}:=G_{\sigma_{n},t}\text{ and }G_{t}:=G_{\sigma,t}.
\]
and $r_{\sigma}$ and $l_{\sigma}$ respectively the right and left
edges of the support of $\mu_{\sigma}$. As  the Stieltjes transform of a
probability measure with support included in $[l_{\sigma},r_{\sigma}]$,
we have for every $t\in[0,1]$, $z>r_{\sigma}$ 
\[
\frac{1}{z-l_{\sigma}}\leq G_{t}(z)\leq\frac{1}{z-r_{\sigma}}
\]
(the same is true for $\sigma_{n}$ and $G_{t}^{(n)}$). First, we
recall the following lemma, which shows the continuity of the spectral
distributions in terms of the variance profiles, see e.g. \cite[Lemma 6.4]{HussonVar}: 
\begin{lem}
\label{convn} $\mu_{\sigma_{n}}$ converges weakly towards $\mu_{\sigma}$
and 
\[
\lim_{n\to\infty}r_{\sigma_{n}}=r_{\sigma}\,.
\]
\end{lem}

Note that the $\mu_{\sigma}$'s are symmetric probability measures
(because the law of $H^{N}$ is symmetric). Therefore the left hand
side $l_{\sigma}$ of the support of $\mu_{\sigma}$ is equal to $-r_{\sigma}$
and is also continuous. As a consequence, we are going to prove that 
\begin{lem}
\label{lem:Gconvergence} Let $x_{1}$ be a real number such that
$x_{1}>r_{\sigma}$. Under the hypotheses of Theorem \ref{thmconv},
the sequence of functions $(t,z)\mapsto G_{t}^{(n)}(z)$ converges
uniformly on $[0,1]\times[x_{1},+\infty]$ toward $(t,z)\mapsto G_{t}(z)$
and the sequence of functions $(t,z)\mapsto\frac{G_{t}^{(n)}(z)}{G^{(n)}(z)}$
converges uniformly on $[0,1]\times[x_{1},+\infty]$ toward $(t,z)\mapsto\frac{G_{t}(z)}{G(z)}$. 
\end{lem}

\begin{proof}[Proof of the Lemma]
Since for $n$ large enough, Lemma \ref{convn} implies that $r_{\sigma_{n}}<(r_{\sigma}+x_{1})/2$,
we have that for $n$ large enough, for every $t$, $z\mapsto G_{t}^{(n)}(z)$
is $4/(x_{1}-r_{\sigma})^{2}$-Lipschitz. Therefore, the desired uniform
convergence on any set of the form $[0,1]\times[x_{1},A]$ follows
from Proposition \ref{prop:Stieltjesconv}. To extend it to
$[0,1]\times[x_{1},+\infty]$, we only have to notice that for $n$
large enough, we have that for all $t\in[0,1]$ and $z>x_{1}$: 
\[
|G_{t}^{(n)}(z)|\leq\frac{1}{z-(x_{1}+r_{\sigma})/2}
\]
goes to $0$ as $z$ goes to $\infty$. The uniform convergence of
$(t,z)\mapsto\frac{G_{t}^{(n)}(z)}{G^{(n)}(z)}$ toward $(t,z)\mapsto\frac{G_{t}(z)}{G(z)}$
on $[0,1]\times[x_{1},M]$ follows  since $G(z)$ is bounded away from zero
uniformly on $[x_{1},M]$ for any finite $M$. Moreover, we notice
that for any $L<l_{\sigma}$ and $n$ large enough for any $t\in[0,1]$
and $z>x_{1}$: 
\[
\frac{1}{z-L}\leq G_{t}^{(n)}(z)\leq\frac{1}{z-(x_{1}+r_{\sigma})/2}\mbox{ and }\frac{1}{z-L}\leq G^{(n)}(z)\leq\frac{1}{z-(x_{1}+r_{\sigma})/2}\,,
\]
implies 
\[
\frac{z-(x_{1}+r_{\sigma})/2}{z-L}\leq\frac{G_{t}^{(n)}(z)}{G^{(n)}(t)}\leq\frac{z-L}{z-(x_{1}+r_{\sigma})/2}
\]
Both upper and lower bound go to $1$ at infinity, which implies again
the uniform convergence on $[0,1]\times[x_{1},+\infty[$. 
\end{proof}
We let $\mathcal{Q}(\theta,x,\psi,\sigma)$ be given by

\[
\mathcal{Q}(\theta,x,\psi,\sigma):=\theta^{2}\int\int\sigma(s,t)d\phi(\theta,x,\psi)(s)d\phi(\theta,x,\psi)(t)\,.
\]
Recalling the definition of $H$ from \eqref{defKH} and that $\phi(\theta,x,\psi)=\phi_{\sigma}(\theta,x,\psi)$
of \eqref{eqn:phicontinuous-1} depends on $\sigma$, we recall that
\begin{equation}
\mathcal{F}(\theta,x,\psi,\sigma_{n})=J_{\mu_{\sigma_{n}}}(\theta,x)-\mathcal{Q}(\theta,x,\psi,\sigma_{n})+H(\phi_{\sigma_{n}}(\theta,x,\psi))\,.\label{defF}
\end{equation}
To finish the proof of Theorem \ref{thmconv}, we are going to prove
the uniform convergence separately on the three terms of this sum.
We have, with $v=G^{-1}(2\theta)\vee x$, 
\begin{eqnarray*}
\mathcal{Q}(\theta,x,\psi,\sigma) & = & \int\int G_{t}(v)G_{s}(v)\sigma^{2}(s,t)dsdt\\
 &  & +2(\theta-G(v))\int\int G_{t}(v)d\psi(s)\sigma^{2}(s,t)dt\\
 &  & +(\theta-G(v))^{2}\int\int\sigma^{2}(s,t)d\psi(s)d\psi(t)
\end{eqnarray*}
and the same is true replacing $\sigma$ by $\sigma_{n}$, $G$ by
$G^{(n)}$, $G_{t}$ by $G_{t}^{(n)}$ and $v$ by $v_{n}=G_{n}^{-1}(2\theta)\vee x$.
By Lemma \ref{lem:Gconvergence}, $G_{t}^{(n)}(v)$ converges uniformly
toward $G_{t}(v)$ of $[0,1]\times[x_{1},x_{2}]$, $G^{(n)}(v)$ converges
uniformly toward $G(v)$ and $\sigma_{n}$ converges uniformly toward
$\sigma$. Moreover $(G_{t}^{(n)})'$ and $G'_{t}$ are uniformly
bounded above by a negative real number on compacts subsets of $[x_{1},+\infty)$
so that they are invertible. As a consequence, $v_{n}=G_{n}^{-1}(2\theta)\vee x$
converges uniformly towards $v=G^{-1}(2\theta)\vee x$ and belongs
to $[x_{1},\infty)$. We deduce that the function $(\theta,x,\psi)\mapsto\mathcal{Q}(\theta,x,\psi,\sigma_{n})$
converges uniformly toward $(\theta,x,\psi)\mapsto\mathcal{Q}(\theta,x,\psi,\sigma)$
on $[x_{1},x_{2}]\times[0,M]\times\mathcal{P}([0,1])$. The uniform
convergence of $(\theta,x)\mapsto J_{\mu_{\sigma_{n}}}(\theta,x)$
is proven in \cite{HussonVar}. We will now prove that for every $\psi\in\mathcal{P}([0,1])$,
the uniform convergence of $(\theta,x)\mapsto H(\phi_{\sigma_{n}}(\theta,x,\psi))$.
Let us denote in short $H_{n}(\phi(\theta,x,\psi))=H(\phi_{\sigma_{n}}(\theta,x,\psi))$.
We want to prove that for every $(\theta_{n},y_{n})$ such that $\theta_{n}\leq M$,
$y_{n}\in[x_{1},x_{2}]$ and $\lim_{n\to\infty}(\theta_{n},y_{n})=(\theta,y)$,
we have $\lim_{n\to\infty}H_{n}(\phi(\theta_{n},y_{n},\psi))=H(\phi(\theta,y,\psi))$.
First, using the convergence from Lemma \ref{lem:Gconvergence}, we
have that for every $t\in[0,1]$, $\frac{d\phi(\theta_{n},y_{n},\psi)}{dt}$
converges toward $\frac{d\phi(\theta,y,\psi)}{dt}$. Furthermore since
$(t,x)\mapsto G_{t}(x)/G(x)$ is bounded below on $[0,1]\times[x_{1},x_{2}]$,
we can find $a>0$, such that for $n$ large enough $G_{t}^{(n)}(x)/G^{(n)}(x)\geq a$
and the same thing is true for $(t,x)\mapsto G_{t}^{(n)}(x)$ so that
$G_{t}^{(n)}(x)\geq a$. Moreover, there exists a finite constant
$b$ such that for all $x>x_{1}$, $G_{t}^{(n)}(x)\leq b$. This implies
that for $(\theta,x,t)\in[0,M]\times[x_{1},x_{2}]\times[0,1]$, $\frac{d\phi(\theta_{n},y_{n},\psi)}{dt}\geq a(1\wedge1/2M)=:c_{M}^{{-1}}$.
Therefore,

\[
\Big|\log\Big(\frac{d\phi_{\sigma_{n}}(\theta_{n},y_{n},\psi)}{dt}\Big)-\log\Big(\frac{d\phi_{\sigma}(\theta,y,\psi)}{dt}\Big)\Big|\leq c_{M}|\frac{d\phi_{\sigma_{n}}(\theta_{n},y_{n},\psi)}{dt}-\frac{d\phi_{\sigma}(\theta,y,\psi)}{dt}|.
\]
Therefore, using the definition of \eqref{eqn:phicontinuous-1} ,
\begin{eqnarray*}
 &  & |\frac{d\phi_{\sigma_{n}}(\theta_{n},y_{n},\psi)}{dt}-\frac{d\phi_{\sigma}(\theta,y,\psi)}{dt}|\le\big(\frac{1}{2\theta}\Big|G_{\mu_{\sigma_{n}}}(x)-G_{\mu_{\sigma}}(x)\Big|+b|\frac{1}{\theta}-\frac{1}{\theta_{n}}|\big)\frac{d\psi}{dt}\\
 &  & +\big(\frac{1}{2\theta}|{G_{\sigma,t}(G_{\mu_{\sigma}}^{-1}(2\theta_{n})\vee x)}-G_{\sigma_{n},t}(G_{\mu_{\sigma_{n}}}^{-1}(2\theta_{n})\vee x)|+b|\frac{1}{\theta}-\frac{1}{\theta_{n}}|\big).
\end{eqnarray*}
If $\theta =0$, fix $\epsilon>0$ small. When $\theta_{n}\ge\epsilon$ for all $n$,
the previous considerations allow us to conclude that the above differences
go to zero when $n$ goes to infinity. Let us finally consider the
case where $\theta =0$. Up to taking subsequences we
can always assume that this is true for all $n$. But then, as we
discussed already 
\[
G_{\mu_{\sigma_{n}}}^{-1}(2\theta_{n})\simeq\frac{1}{2\theta_{n}}\mbox{ and }G_{\sigma_{n},t}(G_{\mu_{\sigma_{n}}}^{-1}(2\theta_{n})\vee x)\simeq2\theta_{n}
\]
so that $\frac{d\phi_{\sigma_{n}}(\theta_{n},y_{n},\psi)}{dt}$ is
close to one uniformly. The claim follows.

\end{proof}
\begin{thm}
Let $\sigma$ be piecewise continuous. Let us assume
that there is $\eta>0$ such that $\int\int\sigma(s,t)d\psi(s)d\psi(t)\geq\eta$.
Let $x_{1},x_{2},R,L$ be such that $r_{\sigma}<R<x_{1}<x_{2}$ and
$L<l_{\sigma}$. There exists $\theta_{*}>0$ that only depends on
$\eta,x_{1},x_{2},L,R$ such that for any $\psi\in\mathcal{P}([0,1])$,
$x\in[x_{1},x_{2}]$ 
\[
\sup_{\theta\geq0}\mathcal{F}(\theta,x,\psi,\sigma)=\sup_{\theta\in[0,\theta_{*}]}\mathcal{F}(\theta,x,\psi,\sigma).
\]
\end{thm}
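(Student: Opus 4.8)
The plan is to obtain a quantitative upper bound on $\mathcal{F}(\theta,x,\psi,\sigma)$ that is valid for all $\theta$ beyond some explicit threshold and that forces $\mathcal{F}$ to be negative there, so that the supremum over $\theta \geq 0$ is attained on a compact interval. The key computation is exactly the one already carried out in Section~\ref{ratef} for the piecewise constant case, which I would now redo in the piecewise continuous setting using the Dyson equation for $\varphi_\star$. First I would recall that for $2\theta \geq G_{\mu_\sigma}(x)$, writing $\theta_x := G_{\mu_\sigma}(x)/2$, the measure $\phi(\theta,x,\psi)$ decomposes as $d\phi = \tfrac{1}{2\theta} G_{\mu_{\sigma,t}}(x)\,dt + (1-\theta_x/\theta)\,d\psi(t)$, so that
\[
\mathcal{Q}(\theta,x,\psi,\sigma) = a(\theta-\theta_x)^2 + 2b(\theta-\theta_x) + c
\]
with $a = \langle \psi, S\psi\rangle \geq \eta$, $b = \iint \sigma(t,u) G_{\mu_{\sigma,t}}(x)\,dt\,d\psi(u)$ and $c = \tfrac14 \iint \sigma(s,t) G_{\mu_{\sigma,t}}(x) G_{\mu_{\sigma,s}}(x)\,ds\,dt$, all of which are bounded uniformly in terms of $x_1,x_2,L,R$ since $G_{\mu_{\sigma,t}}(x) \in [1/(x-L), 1/(x-R)]$ for $x \in [x_1,x_2]$.

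Next I would use Lemma~\ref{lem:theta} (equivalently that $\mathcal{F}(\theta_x,x,\psi,\sigma) = 0$) to integrate $\partial_\theta \mathcal{F}$ from $\theta_x$ to $\theta$. The entropy term $H(\phi(\theta,x,\psi))$ contributes: one checks, as in Section~\ref{ratef}, that $\theta \mapsto H(\phi(\theta,x,\psi)) - \tfrac12\log(2\theta)$ is convex and decreasing (its derivative increases to $0$ as $\theta \to \infty$), so that $H(\phi(\theta,x,\psi)) - H(G_{\mu_{\sigma,\cdot}}(x)\,dt) \leq \tfrac12 \log(\theta/\theta_x)$. Combining this with the quadratic form above yields
\[
\mathcal{F}(\theta,x,\psi,\sigma) \leq -a(\theta-\theta_x)^2 + (\theta-\theta_x)x = -a(\theta-\theta_x)\Big(\theta-\theta_x-\frac{x}{a}\Big).
\]
Since $a \geq \eta$ and $x \leq x_2$, the right-hand side is non-positive as soon as $\theta - \theta_x \geq x_2/\eta$, i.e. as soon as $\theta \geq \theta_x + x_2/\eta$. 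Because $\theta_x = G_{\mu_\sigma}(x)/2 \leq \tfrac12 \cdot 1/(x_1 - r_\sigma) \leq \tfrac12 \cdot 1/(R - r_\sigma)$ is bounded uniformly over $x \in [x_1,x_2]$, I can set $\theta_* := \tfrac{1}{2(R-r_\sigma)} + x_2/\eta$, which depends only on $\eta, x_1, x_2, L, R$, and conclude that $\sup_{\theta\geq 0} \mathcal{F} = \sup_{\theta \in [0,\theta_*]} \mathcal{F}$.

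The main obstacle, and the only place requiring genuine care rather than bookkeeping, is justifying the convexity/monotonicity of the entropy term $\theta \mapsto H(\phi(\theta,x,\psi))$ in the continuous case: one must argue that $-H(\phi(\theta,x,\psi)) + \tfrac12\log(2\theta) = -\tfrac12 \log \int_0^1 \big(G_{\mu_{\sigma,t}}(x) + (2\theta - G_{\mu_\sigma}(x))\tfrac{d\psi}{dt}(t)\big)\,dt$ is convex in $\theta$ (it is $-\tfrac12 \log$ of an affine, increasing function of $\theta$, hence convex) with derivative tending to $0$, which gives the decrease of $H(\phi(\theta,x,\psi)) - \tfrac12\log(2\theta)$ and hence the needed inequality. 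A subtlety is that $\psi$ need not be absolutely continuous, but $\phi(\theta,x,\psi)$ always has density bounded below by $\min_t G_{\mu_{\sigma,t}}(x)/(2\theta) \geq 1/(2\theta_*(x_2-L))$ on $[x_1,x_2]$, so $H(\phi)$ is well-defined and the manipulations are legitimate; the singular part of $\psi$ only makes $H(\phi(\theta,x,\psi))$ smaller, which is the favourable direction for the upper bound. Once these points are in place the rest is the elementary quadratic estimate above.
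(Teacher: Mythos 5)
Your proof is correct and reaches the same conclusion as the paper's, but it follows a genuinely different route. The paper's proof of this theorem bounds each of the three summands of $\mathcal{F}$ separately and crudely: $J_{\mu_\sigma}(\theta,x)\le \theta x_2$, $-\mathcal{Q}\le-\eta(\theta-\tfrac{1}{x_1-R})^2$, and $H(\phi)\le (\log 2\theta)\vee 0 + C(x_1,x_2,L,R)$ via the pointwise lower bound $\tfrac{d\phi}{dt}\ge G_t(v)/(2\theta)$; adding these gives a quantity that is eventually negative, uniformly in $(x,\psi)$. You instead recycle the Taylor expansion around $\theta_x=G_{\mu_\sigma}(x)/2$ from Section~\ref{ratef}, using $\mathcal{F}(\theta_x,\cdot)=0$ (Lemma~\ref{lem:theta}), the non-negativity of the cross term $b$, and the monotonicity (via convexity) of $\theta\mapsto H(\phi(\theta,x,\psi))-\tfrac12\log 2\theta$ to get the sharper closed-form bound $\mathcal{F}\le -a(\theta-\theta_x)(\theta-\theta_x-x/a)$ and then read off $\theta_*$. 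Your approach is cleaner and yields a more explicit $\theta_*$, at the cost of relying on Lemma~\ref{lem:theta} and the entropy-monotonicity observation, whereas the paper's argument is more self-contained.

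One small but genuine slip: your final formula $\theta_*=\tfrac{1}{2(R-r_\sigma)}+x_2/\eta$ still depends on $r_\sigma$, which the statement explicitly forbids (the whole point of introducing $R,L$ is to have a $\theta_*$ uniform over all profiles whose spectral edges are trapped between $L$ and $R$; this uniformity is essential for the subsequent proposition about convergence of the $I_{\sigma_n}$). The fix is immediate: since $x\ge x_1$ and $r_\sigma<R$, one has $x-r_\sigma> x_1-R$ and hence $\theta_x\le\tfrac{1}{2(x-r_\sigma)}\le\tfrac{1}{2(x_1-R)}$, so $\theta_*:=\tfrac{1}{2(x_1-R)}+x_2/\eta$ depends only on the allowed parameters. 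With that correction the proof is complete.
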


\begin{proof}
We have the following upper bounds on each term on $\mathcal{F}(\theta,x,\psi,\sigma)$.
First, notice that for all $\theta\ge0$, all $x\le x_{2}$, 
\begin{equation}
J(\theta,x,\mu_{\sigma})\leq\theta x\leq\theta x_{2}\,.\label{bJ}
\end{equation}
Using the lower bound we assumed on $\sigma$, as well as the fact
that it is non-negative entrywise as well as $G(x)$, we have that
for $\theta\ge\theta_{x}=G_{\mu_{\sigma}}(x)/2$, 
\[
-\mathcal{Q}(\theta,x,\psi,\sigma)\leq-\theta^{2}(1-\frac{\theta_{x}}{\theta})^{2}\int\int\sigma(s,t)d\psi(s)d\psi(t)\le-\eta\theta^{2}(1-\frac{\theta_{x}}{\theta})^{2}
\]
Since $\theta_{x}\leq\frac{1}{x-r_{\sigma}}$, we have 
\begin{equation}
-\mathcal{Q}(\theta,x,\psi,\sigma)\leq-\eta(\theta-\frac{1}{(x-r_{\sigma})})^{2}.\label{bQ}
\end{equation}
Furthermore for any $\psi\in\mathcal{P}([0,1])$ with $v=x\vee G^{-1}(2\theta)$:

\[
\frac{d\phi(\theta,x,\psi)}{dt}=\Big(1-\frac{G(v)}{2\theta}\Big)_{+}\frac{d\psi}{dt}+\frac{G_{t}(v)}{2\theta}
\]
so 
\[
-\log\Big(\frac{d\phi(\theta,x,\psi)}{dt}\Big)\leq-\log(\frac{G_{t}(v)}{2\theta})=\log(2\theta)-\log(G_{t}(v))
\]
so that 
\begin{equation}
H(\phi(\theta,x,\psi))\le-\int_{0}^{1}\log(\frac{G_{t}(v)}{2\theta})dt\,.\label{gh}
\end{equation}
To bound the above right hand side, note that if $G^{-1}(2\theta)\ge x$
so that $\theta\le\theta_{x}\le\theta_{x_{1}}$ and $v=G^{-1}(2\theta)$,
we have that with $r_{\sigma}$ being the right edge of the support
of $\mu_{\sigma}$ and $l_{\sigma}$ its left edge: 
\[
\frac{1}{v-l_{\sigma}}\leq G_{t}(v)\leq\frac{1}{v-r_{\sigma}}\text{ and }\frac{1}{v-l_{\sigma}}\leq G(v)\leq\frac{1}{v-r_{\sigma}}
\]
so that 
\[
l_{\sigma}+\frac{1}{2\theta}\le v\le r_{\sigma}+\frac{1}{2\theta} \]
which implies that

\[ \frac{G_{t}(v)}{2\theta}\ge\frac{1}{2\theta(v-l_{\sigma})}\ge\frac{1}{2\theta_{x_{1}}(r_{\sigma}-l_{\sigma})+1}
\]
with $\theta_{x_{1}}=G(x_{1})/2\le\frac{1}{2(x_{1}-r_{\sigma})}$.
And so, one has that 

\[ \frac{G_{t}(v)}{2\theta}\ge \frac{x_1 -R }{ x_1 - L}\,.
\]
Therefore, we deduce from \eqref{gh} that 
\[
H(\phi(\theta,x,\psi))\leq \log( x_1 - L) - \log( x_1 - R).
\]

For $G^{-1}(2\theta)\le x$ ,so $v=G^{-1}(x)$, simply using that $G_t(v) = G_t(x) \geq 1/(x - L)  \geq (x_2 - L)$ we find similarly
that

\[
H(\phi(\theta,x,\psi))\le\log(2\theta) + \log( x_2 - L).
\]
Therefore, we deduce from \eqref{defF}, \eqref{bJ} and \eqref{bQ}
that

\[
\mathcal{F}(\theta,x,\psi,\sigma)\leq\theta x_{2}-\eta(\theta-\frac{1}{x_{1}-R})^{2}+(\log(2\theta) \vee 0)+C(x_{1},x_2,L,R)
\]
where 
$ C(x_{1},x_2,L,R) = \max( \log( x_1 - L) - \log( x_1 - R) , \log( x_2 - L))$.
Therefore, there exists a $\theta_{*}$ that depend only on $x_{1},x_{2},\epsilon,R,L$
such that the right hand side in negative on $[\theta_{*},+\infty[$.
Since $\mathcal{F}(0,x,\psi,\sigma)=0$ this implies that

\[
\sup_{\theta\geq0}\mathcal{F}(\theta,x,\psi,\sigma)=\sup_{\theta\in[0,\theta_{*}]}\mathcal{F}(\theta,x,\psi,\sigma).
\]
\end{proof}
\begin{prop}
Le $(\sigma_{n})$ be a sequence of variance profiles that converges
uniformly toward a continuous variance profile $\sigma\neq0$ . Let
$x_{1},x_{2}$ be such that $r_{\sigma}<x_{1}<x_{2}$. Then $I_{\sigma_{n}}$
converges uniformly toward $I_{\sigma}$ on $[x_{1},x_{2}]$. 
\end{prop}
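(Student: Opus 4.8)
The plan is to reduce, uniformly in $n$ and in $x\in[x_1,x_2]$, both $I_{\sigma_n}(x)$ and $I_\sigma(x)$ to an optimization over a \emph{fixed} compact range of parameters, and then to conclude from the uniform convergence of the functional $\mathcal F$ established in Theorem \ref{thmconv}.

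First I would fix $R,L$ with $r_\sigma<R<x_1<x_2$ and $L<l_\sigma$. By Lemma \ref{convn} together with the symmetry $l_\sigma=-r_\sigma$, for $n$ large one has $r_{\sigma_n}<R$ and $l_{\sigma_n}>L$; moreover $a_n:=\langle|I|,S_{\sigma_n}|I|\rangle\to a$ and $A_n:=\max\sigma_n\to A$ by uniform convergence of the profiles. Since the threshold $\varepsilon_x$ supplied by Proposition \ref{prop:epsilon} (and by its piecewise continuous analogue used in Section \ref{ratef}) depends on the profile only through $a,A,r_\sigma$ and is decreasing in $x$, I can pick $\varepsilon_*>0$ such that, for every $0<\varepsilon'\le\varepsilon_*$, for all $n$ large and all $x\le x_2$, the value of $I_\sigma(x)$, resp. $I_{\sigma_n}(x)$, is unchanged if the infimum is restricted to $\{\psi:\langle\psi,S\psi\rangle\ge\varepsilon'\}$ --- indeed such a restricted infimum lies between the one restricted to $\langle\psi,S\psi\rangle\ge\varepsilon_x$ (equal to the rate function by Proposition \ref{prop:epsilon}) and the unrestricted one (equal to the rate function by definition), which coincide. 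Next I would apply the $\theta$-truncation theorem proved just above with $\eta=\varepsilon_*/2$: it yields $\theta_*<\infty$, depending only on $\varepsilon_*,x_1,x_2,L,R$ (hence the same for $\sigma$ and for all $\sigma_n$, $n$ large), such that $\sup_{\theta\ge0}\mathcal F(\theta,x,\psi,\tau)=\sup_{\theta\in[0,\theta_*]}\mathcal F(\theta,x,\psi,\tau)$ whenever $\langle\psi,S_\tau\psi\rangle\ge\varepsilon_*/2$.

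Then I would sandwich the $n$-dependent constraint sets. Writing $\Psi^\tau_c:=\{\psi\in\mathcal P([0,1]):\langle\psi,S_\tau\psi\rangle\ge c\}$, for $n$ large enough that $\|\sigma_n-\sigma\|_\infty<\varepsilon_*/4$ one gets $\Psi^\sigma_{\varepsilon_*}\subset\Psi^{\sigma_n}_{3\varepsilon_*/4}\subset\Psi^\sigma_{\varepsilon_*/2}$. Put $F(\psi):=\sup_{\theta\in[0,\theta_*]}\mathcal F(\theta,x,\psi,\sigma)$, $F_n(\psi):=\sup_{\theta\in[0,\theta_*]}\mathcal F(\theta,x,\psi,\sigma_n)$, and $\delta_n:=\sup|\mathcal F(\cdot,\cdot,\cdot,\sigma_n)-\mathcal F(\cdot,\cdot,\cdot,\sigma)|$ over $[0,\theta_*]\times[x_1,x_2]\times\mathcal P([0,1])$, so that $\delta_n\to0$ by Theorem \ref{thmconv} (applied with $M=\theta_*$). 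Since $g\mapsto\sup g$ and $g\mapsto\inf g$ are $1$-Lipschitz for the uniform norm, $\|F_n-F\|_\infty\le\delta_n$, and then, using the two reductions above together with $\Psi^\sigma_{\varepsilon_*}\subset\Psi^{\sigma_n}_{3\varepsilon_*/4}\subset\Psi^\sigma_{\varepsilon_*/2}$,
\[
I_{\sigma_n}(x)=\inf_{\Psi^{\sigma_n}_{3\varepsilon_*/4}}F_n\ \ge\ \inf_{\Psi^{\sigma_n}_{3\varepsilon_*/4}}F-\delta_n\ \ge\ \inf_{\Psi^\sigma_{\varepsilon_*/2}}F-\delta_n\ =\ I_\sigma(x)-\delta_n,
\]
while symmetrically $I_{\sigma_n}(x)=\inf_{\Psi^{\sigma_n}_{3\varepsilon_*/4}}F_n\le\inf_{\Psi^\sigma_{\varepsilon_*}}F_n\le\inf_{\Psi^\sigma_{\varepsilon_*}}F+\delta_n=I_\sigma(x)+\delta_n$. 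Hence $\sup_{x\in[x_1,x_2]}|I_{\sigma_n}(x)-I_\sigma(x)|\le\delta_n\to0$, which is the assertion.

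The genuinely delicate point is the first of these reductions: one must make sure the threshold $\varepsilon_*$ (Proposition \ref{prop:epsilon}) and the truncation level $\theta_*$ (the $\theta$-cut theorem) are chosen independently of $n$, which rests only on $r_{\sigma_n}\to r_\sigma$, $l_{\sigma_n}\to l_\sigma$, $a_n\to a$, $A_n\to A$ and on the explicit dependence of those constants on the profile, and then on squeezing the $\sigma_n$-constraint sets between fixed $\sigma$-constraint sets so that Proposition \ref{prop:epsilon} can be invoked on both sides. Everything else is the soft fact that uniform convergence of a family of functions on a product space is preserved under successive $\inf$ and $\sup$.
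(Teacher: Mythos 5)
Your overall strategy is the same as the paper's: fix $R,L$ with $r_\sigma<R<x_1$, $L<l_\sigma$; use uniform convergence of the profiles to sandwich the constraint sets $\mathcal{S}(\sigma_n,\cdot)$ between $\mathcal{S}(\sigma,\cdot)$; reduce the optimization to a fixed box $[0,\theta_*]\times\{\langle\psi,S\psi\rangle\ge\varepsilon\}$; and conclude from the uniform convergence of $\mathcal{F}$ (Theorem \ref{thmconv}) and the $1$-Lipschitz stability of $\sup$ and $\inf$. Up to here your argument and the paper's are essentially identical, differing only in which direction the sandwich inclusions are consumed.

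Where you diverge --- and where your argument has a genuine gap --- is in identifying the $\varepsilon$-restricted infimum with $I_\sigma(x)$ for the \emph{piecewise-continuous} profile $\sigma$. You justify the step $\inf_{\Psi^\sigma_{\varepsilon'}}\sup_\theta\mathcal{F}(\cdot,\cdot,\cdot,\sigma)=I_\sigma(x)$ by appealing to ``Proposition~\ref{prop:epsilon} and its piecewise-continuous analogue.'' But Proposition~\ref{prop:epsilon} is a probabilistic upper bound on $P_\delta^N(x)$ proved only in the piecewise-constant setting; to promote it to the rate-function identity for $\sigma$ you would need the full LDP for $\sigma$, which is precisely what Section~\ref{sec:Piecewise-continuous} (and this very proposition) is in the course of establishing. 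Invoking it here is circular. The paper avoids this by never applying Proposition~\ref{prop:epsilon} to $\sigma$ itself: it uses the sandwich only to get $I_{\sigma_n}(x)=\inf_{\mathcal{S}(\sigma,\epsilon_{x_2}/2)}\sup_{\theta\le\theta_*}\mathcal{F}(\theta,x,\psi,\sigma_n)$, passes to the limit to obtain $I_{\sigma_n}\to I_\sigma^{\epsilon_{x_2}/2}$ uniformly, and then observes that the same argument works for \emph{every} $\epsilon'\in(0,\epsilon_{x_2})$; by uniqueness of limits this forces $I_\sigma^{\epsilon'/2}=I_\sigma^{\epsilon_{x_2}/2}$ for all such $\epsilon'$, and letting $\epsilon'\to0$ (using Lemma~\ref{lem:equalityRF}, so the sets $\mathcal{S}(\sigma,\epsilon'/2)$ exhaust $\{\langle\psi,S\psi\rangle\neq0\}$) gives $I_\sigma^{\epsilon_{x_2}/2}=I_\sigma$. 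You should replace the direct appeal to a piecewise-continuous Proposition~\ref{prop:epsilon} with this uniqueness-of-limits device; the rest of your write-up is sound and in fact packages the sandwich manipulations more cleanly than the paper.
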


\begin{proof}
First, for $\epsilon>0$, Let us define 
\[
I_{\sigma}^{\epsilon}(x):=\inf_{{\psi\in\mathcal{P}([0,1])\atop \langle\psi,\sigma_{n}\psi\rangle\geq\epsilon}}\sup_{\theta\geq0}\mathcal{F}(\theta,x,\psi,\sigma).
\]

Let $R$ be such that $r_{\sigma}<R<x_{1}$ and $L$ such that $L<l_{\sigma}$
and let $\eta$ such that $0<\eta<\min_{s,t}\sigma(s,t)$ and $A$
such that $A>\max\sigma$. For $n$ large enough we have $r_{\sigma_{n}}<R$
and $L<l_{\sigma_{n}}$, and $0<\eta<\int\int\sigma_{n}(s,t)dsdt$
and $\max\sigma_{n}<A$. Using Proposition \ref{prop:epsilon}, one
can find an $\epsilon_{x_{2}}$ depending only on $R,A$ and $\eta$
such that for all $n$ large enough, we have that for all $x\in[x_{1},x_{2}]$

\[
I_{\sigma_{n}}(x)=I_{\sigma_{n}}^{\epsilon_{x_{2}}}(x)
\]
Let us define $\mathcal{S}(\sigma,\epsilon):=\{\psi\in\mathcal{P}([0,1]):\langle\psi,\sigma\psi\rangle\geq\epsilon\}$.
Using the uniform convergence of $\sigma_{n}$ toward $\sigma$, we
have that for $n$ large enough 
\[
\mathcal{S}(\sigma_{n},\epsilon_{x_{2}})\subset\mathcal{S}(\sigma,\epsilon_{x_{2}}/2)\subset\mathcal{S}(\sigma_{n},\epsilon_{x_{2}}/4).
\]
Therefore for $n$ large enough, we have that for every $x\in[x_{1},x_{2}]$

\[
I_{\sigma_{n}}(x)=I_{\sigma_{n}}^{\epsilon_{x_{2}}}(x)\geq\inf_{\psi\in\mathcal{S}(\sigma,\epsilon_{x_{2}}/2)}\sup_{\theta\geq0}\mathcal{F}(\theta,x,\psi,\sigma_{n}) \geq I_{\sigma_n}(x). 
\]

Since $\mathcal{S}(\sigma,\epsilon_{x_{2}}/2)\subset\mathcal{S}(\sigma_{n},\epsilon_{x_{2}}/4)$
using the last result one can find $\theta_{*}$ depending only on
$A,\eta,\epsilon_{x_{2}},L,R,x_{1},x_{2}$ such that for $n$ large
enough we have that for every $x\in[x_{1},x_{2}]$.

\begin{eqnarray*}
I_{\sigma_{n}}(x) & = & \inf_{\psi\in\mathcal{S}(\sigma,\epsilon_{x_{2}}/2)}\sup_{\theta\geq0}\mathcal{F}(\theta,x,\psi,\sigma_{n})\\
 & = & \inf_{\psi\in\mathcal{S}(\sigma,\epsilon_{x_{2}}/2)}\sup_{0 \leq \theta\leq\theta_{*}}\mathcal{F}(\theta,x,\psi,\sigma_{n})
\end{eqnarray*}

and also

\begin{eqnarray*}
I^{\epsilon_{x_{2}}/2}_{\sigma}(x) & = & \inf_{\psi\in\mathcal{S}(\sigma,\epsilon_{x_{2}}/2)}\sup_{\theta\geq0}\mathcal{F}(\theta,x,\psi,\sigma)\\
 & = & \inf_{\psi\in\mathcal{S}(\sigma,\epsilon_{x_{2}}/2)}\sup_{0 \leq \theta\leq\theta_{*}}\mathcal{F}(\theta,x,\psi,\sigma).
\end{eqnarray*}

Then since $(\theta,x,\psi)\mapsto\mathcal{F}(\theta,x,\psi,\sigma_{n})$
converges uniformly on $[0,\theta_{*}]\times[x_{1},x_{2}]\times\mathcal{P}([0,1])$
toward $(\theta,x,\psi)\mapsto\mathcal{F}(\theta,x,\psi,\sigma)$,
we have that $x\mapsto I_{\sigma_{n}}(x)$ converges toward $x\mapsto I_{\sigma}^{\epsilon_{x_{2}}/2}(x)$
uniformly on $[x_{1},x_{2}]$. We then notice that the same proof
holds for any $\epsilon'\in(0,\epsilon_{x_{2}})$, implying that for
every $0<\epsilon'\leq\epsilon_{x_{2}}$, $I_{\sigma}^{\epsilon'/2}(x)=I_{\sigma}^{\epsilon_{x_{2}}/2}(x)$.
As a consequence, 
\[
I_{\sigma}^{\epsilon_{x_{2}}/2}(x)=\lim_{\epsilon'\to0}I_{\sigma}^{\epsilon'/2}(x)=I_{\sigma}(x)\,.
\]
Moreover, $I_{\sigma_{n}}(x)$ converges toward $I_{\sigma}^{\epsilon_{x_{2}}/2}(x)$
uniformly on $[x_{1},x_{2}]$. 
\end{proof}
Therefore, we can now prove Theorem \ref{thm-ratefuct}: 
\begin{proof}[Proof of Theorem \ref{thm-ratefuct}:]
Let $\sigma$ be a continuous variance profile such that $\sigma\neq0$
and let $(\sigma_{n})$ be a sequence of piecewise constant variance
profiles that converges uniformly toward $\sigma$. Let $x\in\mathbb{R}$. 
\begin{itemize}
\item If $x<r_{\sigma}$, for $\delta<(r_{\sigma}-x)/2$, $r_{\sigma_{n}}>(x+r_{\sigma_{n}})/2$
for $n$ large enough and then $\inf_{|y-x|\leq\delta}I_{\sigma_{n}}(y)=+\infty$
so $\hat{I}(x)=+\infty=I_{\sigma}(x)$. 
\item If $x=r_{\sigma}$, for any $\delta>0$, $|r_{\sigma_{n}}-x|\leq\delta$
for $n$ large enough. Since $I_{\sigma_{n}}(r_{\sigma_{n}})=0$ and
$\hat{I}(x)\geq0$, we have $\hat{I}(x)=0=I_{\sigma}(x)$. 
\item If $x>r_{\sigma}$, and $\delta<(x-r_{\sigma})/2$, since $I_{\sigma_{n}}(.)$
converges uniformly toward $I_{\sigma}(.)$ on $[x-\delta,x+\delta]$,
we have

\[
\lim_{n\to\infty}\inf_{|y-x|\leq\delta}I_{\sigma_{n}}(y)=\inf_{|y-x|\leq\delta}I_{\sigma}(y).
\]
Since $I_{\sigma}(.)$ is lower semi-continuous, we have 
\[
\sup_{\delta>0}\inf_{|y-x|\leq\delta}I_{\sigma}(y)=I_{\sigma}(x).
\]
This implies 
\[
\hat{I}(x)=I_{\sigma}(x).
\]

\end{itemize}
\end{proof}
{
\textit{Remark}: Regarding the lack of continuity of the rate function in $r_{\sigma}$, it is believed to be a mere technical obstruction rather than a serious one. Indeed following the proof of the approximation argument, to prove the continuity of the rate function, we would essentially only need to prove that the following limit holds:
\[ \lim_{n \to \infty} G_{\sigma_n}( r_{\sigma_n}) = G_{\sigma}( r_{\sigma})  .\]
However we could not find an argument to prove this point. }
		
	\appendix

\section{Appendix}

\subsection{Proof of Lemma \ref{lem:theta}}

\label{secsimp}
\begin{proof}
[Proof of Lemma \ref{lem:theta}]One can re-express $K(\theta,\phi(\theta,x,\psi))$
and $J_{\mu_{\sigma}}(x,\theta)$ in terms of $v=G_{\mu_{\sigma}}^{-1}(2\theta)>x$.
Indeed, because $G_{\mu_{\sigma}}$ is decreasing, this implies $G_{\mu_{\sigma}}(x)>2\theta$
and therefore, if we denote $\phi(t)$ the Radon-Nikodym derivative
of $\phi(\theta,x,\psi)$ with respect to the Lebesgue measure on
$[0,1]$, we find that 
\[
\phi(t):=\frac{d\phi(\theta,x,\psi)}{dt}(t)=\frac{G_{\sigma,t}(v)}{2\theta}\,.
\]
As a consequence, for $x>r_{\sigma}$ and $\theta$ such that $\theta<G_{\mu_{\sigma}}(x)/2$
we get: 
\begin{eqnarray*}
 &  & K(\theta,\phi(\theta,x,\psi))=\theta^{2}\int_{0}^{1}\int_{0}^{1}\phi(t)\phi(s)\sigma(t,s)dtds+\frac{1}{2}\int_{0}^{1}\log\phi(t)dt\\
 &  & \qquad=\frac{1}{4}\int_{0}^{1}\int_{0}^{1}G_{\sigma,t}(v)G_{\sigma,s}(v)\sigma(s,t)dsdt+\frac{1}{2}\int_{0}^{1}\log G_{\sigma,t}(v)dt-\frac{1}{2}\log G_{\mu_{\sigma}}(v)
\end{eqnarray*}
where we used that $2\theta=G_{\mu_{\sigma}}(v)$. We also have: 
\begin{eqnarray*}
J_{\mu_{\sigma}}(x,\theta) & = & \theta v-\frac{1}{2}\int\log(v-\lambda)d\mu_{\sigma}(\lambda)-\frac{1}{2}\log(2\theta)-\frac{1}{2}\\
 & = & \frac{1}{2}\Big[vG_{\mu_{\sigma}}(v)-\int\log(v-\lambda)d\mu_{\sigma}(\lambda)-\log G_{\mu_{\sigma}}(v)-1\Big]
\end{eqnarray*}
Now differentiating $h(v):=K(\theta,\psi(\theta,x,\psi))-J_{\mu_{\sigma}}(x,\theta)$
with respect to $v$ we get 
\begin{eqnarray*}
\frac{d}{dv}h(v) & = & \frac{1}{2}\Big[\int_{0}^{1}\int_{0}^{1}G'_{\sigma,t}(v)G_{\sigma,s}(v)\sigma(s,t)dsdt+\int_{0}^{1}\frac{G'_{\sigma,t}(v)}{G_{\sigma,t}(v)}dt-vG'_{\mu_{\sigma}}(v)\Big]\\
 & = & \frac{1}{2}\Big[\int_{0}^{1}G'_{\sigma,t}(v)\Big(\int_{0}^{1}G_{\sigma,s}(v)\sigma(s,t)ds+\frac{1}{G_{\sigma,t}(v)}-v\Big)dt\Big].
\end{eqnarray*}
Using that $G_{\sigma,t}$ satisfies \eqref{eq:Dyson}, we conclude
that $h'(v)=0$ for $v>x$. Furthermore when $v$ goes to $+\infty$,
$\theta$ goes to $0$, and $h(v)$ goes to $K(0,\psi(0,x,v))-J_{\mu_{\sigma}}(x,0)=0$
so we deduce that $h(v)=0$ for $v>x$, which proves the Lemma. 
\end{proof}
With this lemma, we have the the following simplification on the rate
function, that is we can take the supremum over $\theta\geq G_{\mu_{\sigma}}(x)/2$
which will be useful for the proof of the large deviation upper bound.

Finally let us prove the following proposition that justifies the
fact that in $r_{\sigma}$, $G_{\sigma,t}(r_{\sigma})$ is well defined: 
\begin{prop}
For every variance profile $\sigma$ piecewise continuous, one has
that when $x\to r_{\sigma}^{-}$, $t\mapsto G_{\sigma,t}(x)$ converges
toward some $t\mapsto G_{\sigma,t}(r_{\sigma})$ and furthermore this
convergence is in $L^{2}$ and Lebesgue almost everywhere. 
\end{prop}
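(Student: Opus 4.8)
The plan is to exploit the monotonicity of $x\mapsto G_{\sigma,t}(x)$ on $(r_{\sigma},\infty)$. Fix $t\in[0,1]$. Since $\mu_{\sigma,t}$ is a probability measure supported in $[l_{\sigma},r_{\sigma}]$, for $x>r_{\sigma}$ the integrand $y\mapsto(x-y)^{-1}$ is positive and each $x\mapsto(x-y)^{-1}$ is decreasing on $(r_{\sigma},\infty)$; hence $x\mapsto G_{\sigma,t}(x)=\int(x-y)^{-1}d\mu_{\sigma,t}(y)$ is positive and non-increasing on $(r_{\sigma},\infty)$. By the monotone convergence theorem it increases, as $x$ decreases to $r_{\sigma}$, to
\[
G_{\sigma,t}(r_{\sigma}):=\int_{\mathbb{R}}\frac{1}{r_{\sigma}-y}\,d\mu_{\sigma,t}(y)\in(0,+\infty]\,,
\]
which is the candidate limit; it remains to show it is finite for a.e.\ $t$ and that the convergence takes place in $L^{2}([0,1])$.

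For the a.e.\ finiteness I would integrate in $t$: since $\mu_{\sigma}=\int_{0}^{1}\mu_{\sigma,t}\,dt$, Tonelli's theorem gives $\int_{0}^{1}G_{\sigma,t}(x)\,dt=G_{\mu_{\sigma}}(x)$ for all $x>r_{\sigma}$. The rightmost edge of $\mu_{\sigma}$ is regular (the density vanishes there, with square-root behaviour), as recalled in Section~\ref{convsec} (see \cite{AlErKr,Sh98} and \cite[Theorem~1.4]{HussonVar}), so $G_{\mu_{\sigma}}(r_{\sigma}):=\lim_{x\downarrow r_{\sigma}}G_{\mu_{\sigma}}(x)<\infty$. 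Applying monotone convergence again, $\int_{0}^{1}G_{\sigma,t}(r_{\sigma})\,dt=G_{\mu_{\sigma}}(r_{\sigma})<\infty$, whence $G_{\sigma,t}(r_{\sigma})<\infty$ for Lebesgue-a.e.\ $t$; combined with the monotone convergence $G_{\sigma,t}(x)\uparrow G_{\sigma,t}(r_{\sigma})$ this gives the claimed a.e.\ convergence. Note that $0\le G_{\sigma,t}(x)\le G_{\sigma,t}(r_{\sigma})\in L^{1}([0,1])$, so dominated convergence already yields the convergence in $L^{1}$.

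To upgrade to $L^{2}$, I would first reduce the statement to the square-integrability $\int_{0}^{1}G_{\sigma,t}(r_{\sigma})^{2}\,dt<\infty$. Indeed
\[
\Big\|G_{\sigma,\cdot}(x)-G_{\sigma,\cdot}(r_{\sigma})\Big\|_{L^{2}}^{2}=\int_{0}^{1}G_{\sigma,t}(x)^{2}\,dt-2\int_{0}^{1}G_{\sigma,t}(x)G_{\sigma,t}(r_{\sigma})\,dt+\int_{0}^{1}G_{\sigma,t}(r_{\sigma})^{2}\,dt\,,
\]
and by monotone convergence applied to the increasing integrands $G_{\sigma,t}(x)^{2}\uparrow G_{\sigma,t}(r_{\sigma})^{2}$ and $G_{\sigma,t}(x)G_{\sigma,t}(r_{\sigma})\uparrow G_{\sigma,t}(r_{\sigma})^{2}$, each of the three integrals converges to the common value $\int_{0}^{1}G_{\sigma,t}(r_{\sigma})^{2}\,dt$, so the left-hand side tends to $0$ as soon as this value is finite. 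In the piecewise constant case of Section~\ref{subsec:Piecewise-constant-notation} the latter is immediate: $t\mapsto G_{\sigma,t}(r_{\sigma})$ is constant equal to $G_{\mu_{\sigma,k}}(r_{\sigma})/|I_{k}|$ on each $I_{k}$, and each of these finitely many values is finite (their $|I_{k}|$-weighted sum being $G_{\mu_{\sigma}}(r_{\sigma})<\infty$), so $\int_{0}^{1}G_{\sigma,t}(r_{\sigma})^{2}\,dt=\sum_{k}|I_{k}|^{-1}G_{\mu_{\sigma,k}}(r_{\sigma})^{2}<\infty$.

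For a general piecewise continuous $\sigma$, the square-integrability of $G_{\sigma,\cdot}(r_{\sigma})$ — equivalently $\sup_{x>r_{\sigma}}\int_{0}^{1}G_{\sigma,t}(x)^{2}\,dt<\infty$ — is the one genuinely non-soft point. I would obtain it either from the edge analysis of \cite{AlErKr} (see also \cite[Appendix]{HussonVar}), where the solution $t\mapsto G_{\sigma,t}(z)$ of the Dyson equation is shown to stay uniformly bounded in $L^{2}([0,1])$, indeed in $L^{\infty}$ under a flatness hypothesis, as $z$ approaches $r_{\sigma}$ from $\mathbb{C}\setminus[l_{\sigma},r_{\sigma}]$ — this being controlled by the sub-criticality of the linearized Dyson operator $h\mapsto G_{\sigma,\cdot}(z)\,S\!\big(G_{\sigma,\cdot}(z)\,h\big)$, whose spectral radius stays $\le 1$ up to $z=r_{\sigma}$ — or, more in the spirit of the present paper, by approximating $\sigma$ by piecewise constant profiles $\sigma_{n}$ as in Section~\ref{sec:Piecewise-continuous} and transferring the piecewise constant bound via the Stieltjes convergence of Proposition~\ref{prop:Stieltjesconv}, which would however require a version of that bound uniform in $n$ near the respective edges $r_{\sigma_{n}}$. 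This uniform $L^{2}$ control of the Stieltjes profile up to the rightmost edge is the main obstacle; the monotonicity, the a.e.\ statement, and the reduction above are all soft.
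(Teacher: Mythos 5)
Your argument is the same in spirit as the paper's: reduce everything to the monotone increase of $x\mapsto G_{\sigma,t}(x)$ as $x\downarrow r_\sigma$, plus a uniform-in-$x$ bound on $\|G_{\sigma,\cdot}(x)\|_{L^2([0,1])}$, and then conclude by monotone convergence. The paper's proof is exactly this, in three sentences: it observes monotonicity and dispatches the one hard ingredient --- the uniform $L^2$ bound --- by citing \cite[Proposition 2.1]{AEK2}, which gives $\sup_z\|G_{\sigma,\cdot}(z)\|_{L^2([0,1])}<\infty$ as $z$ approaches the spectrum from $\mathbb{C}\setminus\mathbb{R}$, after which the limit at $z=r_\sigma$ exists by the same monotone convergence. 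You correctly identified this $L^2$ bound as the only non-soft step and flagged it yourself as the main obstacle, but you left it open, sketching two candidate routes (edge analysis or piecewise-constant approximation with uniformity in $n$) without finishing either. So the reduction is right and you even point roughly at the right literature (\cite{AlErKr} and \cite{AEK2} are the same group and body of work), but the step that carries the whole proposition is missing from your proof.

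A secondary observation: once the $L^2$ bound is in hand, your preliminary a.e.-finiteness argument via $G_{\mu_\sigma}(r_\sigma)<\infty$ and ``square-root edge regularity'' is superfluous --- Cauchy--Schwarz gives $G_{\mu_\sigma}(r_\sigma)\le\|G_{\sigma,\cdot}(r_\sigma)\|_{L^2}<\infty$ directly. Moreover, the square-root behaviour of the density at $r_\sigma$ is not actually recalled in Section~\ref{convsec} despite your citation; it is itself a nontrivial theorem about the self-consistent density that would need its own reference from the Ajanki--Erd\H{o}s--Kr\"uger literature. Making the $L^2$ bound the single load-bearing external input, as the paper does, is both shorter and sounder than stacking an extra edge-regularity claim in front of it.
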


\begin{proof}
First, since for all $t$, $x\mapsto G_{\sigma,t}(x)$ the Stieltjes
transform of a measure on $]r_{\sigma},+\infty]$, it is positive
and decreasing. Therefore, using the monotone convergence theorem,
one only needs that $t\mapsto G_{\sigma,t}(x)$ is bounded in $L^{2}$,
which is given for instance by \cite[Proposition 2.1]{AEK2}. 
\end{proof}

\subsection{Proof of Lemma \ref{lem:Ratefunctionbound}}
We give the proof in
the piecewise continuous case which of course implies the piecewise
constant case. Denoting $\text{Leb}$ the Lebesgue measure on $[0,1]$,
we have by definition of $I_{\sigma}$ 
\[
I_{\sigma}(x)\leq\sup_{\theta\geq0}\mathcal{F}(\theta,x,\text{Leb},\sigma)\,.
\]
Moreover, using \eqref{eqn:Fupperbound} with $\psi=\text{Leb}$, we find

\[
\mathcal{F}(\theta,x,\text{Leb},\sigma)\leq-a(\theta-\theta_{x})^{2}+(\theta-\theta_{x})x\leq\frac{x^{2}}{4a}
\]
where we finally optimized over all possible values of $\theta-\theta_{x}$.
Therefore, we have 
\[
I_{\sigma}(x)\leq\frac{x^{2}}{4a}=\frac{x^{2}}{4\iint_{[0,1]^{2}}\sigma(u,t)dudt}\,.
\]

\subsection{Convergence of the largest eigenvalue}

\label{convlambda}

In this section, we prove Lemma \ref{lem:Measure-Concentration}.\ref{enu:edge} for completeness. For this we are first going to prove
it in the piecewise constant case. 
\begin{proof}[Proof of Lemma \ref{lem:Measure-Concentration} in the piecewise constant
case]
First since a.s. $\mu_{H^{N}}$ goes to $\mu$, we must have that
almost surely $\liminf_{N\to\infty}\lambda_{1}(H^{N})\geq r_{\sigma}$.
For the complementary upper bound, we use \cite[ Theorem 4.7]{Alt18}.
Our model satisfies the assumptions of this theorem with $L=1,\ell=1,a_{i}=0,\tilde{a}_{i}=0,\tilde{\alpha}_{1}=1,\tilde{\beta}_{1}=\tilde{\gamma}_{1}=0$.
Therefore, \cite[ Theorem 4.7]{Alt18} implies that for any $\epsilon>0$,
if we denote $\text{Spec}(H^{N})$ the spectrum of $H^{N}$, there
is $C_{\epsilon}>0$ such that

\begin{equation}
\mathbb{P}\left(\text{Spec}(H^{N})\subset[-r_{\sigma_{N}}-\epsilon,r_{\sigma_{N}}+\epsilon]\right)\geq1-\frac{C_{\epsilon}}{N^{100}}\label{alt}
\end{equation}
where $r_{\sigma_{N}}$ is obtained as the rightmost point of the
support of the measure $\mu_{\sigma_{N}}=\sum_{i=1}^{N}\alpha_{i}^{N}\mu_{i}^{N}$
so that the Stieltjes transforms $m_{i}^{N}(z)=\int(z-x)^{{-1}}d\mu_{i}^{N}(x)$
satisfy the Dyson-Schwinger equations

\[
\forall i=1,\dots,p,\text{ }\frac{1}{m_{i}^{N}(z)}=z-\sum_{j=1}^{p}\alpha_{j}^{N}\sigma_{i,j}m_{j}^{N}(z)\,.
\]
Here, $\alpha_{i}^{N}:=\#I_{i}^{N}/N$. Note that this equation only
depends on the function $\sigma_{N}$ equal to $\sigma_{i,j}$ on
$\tilde{I}_{i}^{N}\times\tilde{I}_{j}^{N}$ with $\tilde{I}_{k}^{N}:=[\sum_{i=1}^{k-1}\alpha_{i}^{N},\sum_{i=1}^{k}\alpha_{i}^{N}-1]$.

Our main goal is to show that $r_{\sigma_{N}}$ converges towards
$r_{\sigma}$ as $N$ goes to infinity.  This is not obvious and may become false if the size of one interval would go to zero, as the Baik-Ben Arous-P\'ech\'e transition shows. 
\begin{lem}
\[
\lim_{N\to\infty}r_{\sigma_{N}}=r_{\sigma}\,.
\]
\end{lem}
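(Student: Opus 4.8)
The plan is to treat this as a continuity statement for the edge of the deterministic equivalent as the \emph{weights} of the Dyson system vary. Set $\alpha_j^N:=\#I_j^N/N$, so $\alpha_j^N\to|I_j|>0$; then $\mu_{\sigma_N}$ is the measure whose block Stieltjes transforms $m^N=(m_i^N)_{1\le i\le p}$ solve $\frac1{m_i}=z-\sum_{j=1}^p\alpha_j^N\sigma_{i,j}m_j$, while $\mu_\sigma$ solves the same system with $\alpha_j^N$ replaced by $|I_j|$, so that everything depends only on the weight vector $\alpha^N$, which converges to $(|I_1|,\dots,|I_p|)$ in the interior of the simplex. First I would record that $r_{\sigma_N}\le 2\sqrt A=:{\cal C}$ uniformly in $N$, with $A:=\max_{k,l}\sigma_{k,l}$: for real $x>2\sqrt A$ the monotone fixed-point iteration $m^{(0)}=0$, $m^{(\ell+1)}_i=(x-\sum_j\alpha_j^N\sigma_{i,j}m^{(\ell)}_j)^{-1}$ stays in $[0,2/x]$ and increases to the minimal positive solution at $x$, whence $x>r_{\sigma_N}$. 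Finally, by the stability of the Dyson system under $\alpha^N\to(|I_k|)$ (Proposition \ref{prop:muconv}, or directly: the $m^N(\cdot)$ are uniformly bounded Nevanlinna functions and any limit point solves the limiting system, which has a unique such solution), $\mu_{\sigma_N}\to\mu_\sigma$ weakly.

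The easy half is $\liminf_N r_{\sigma_N}\ge r_\sigma$. Since $\mu_{\sigma_N}$ is symmetric, $\mathrm{supp}\,\mu_{\sigma_N}\subset[-r_{\sigma_N},r_{\sigma_N}]$; so if some subsequence had $r_{\sigma_N}\to r_*<r_\sigma$, then for every $\varepsilon>0$ and $N$ large in the subsequence $\mathrm{supp}\,\mu_{\sigma_N}\subset[-(r_*+\varepsilon),r_*+\varepsilon]$, and weak convergence forces $\mu_\sigma\big(\mathbb{R}\setminus[-(r_*+\varepsilon),r_*+\varepsilon]\big)=0$, i.e.\ $r_\sigma\le r_*+\varepsilon$; letting $\varepsilon\to0$ contradicts $r_*<r_\sigma$.

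The substantive half is $\limsup_N r_{\sigma_N}\le r_\sigma$; weak convergence says nothing here, since an edge can jump \emph{up} in a weak limit. Here I would use the standard description of the top edge via the stable branch (see \cite{AEK2,AlErKr,Alt18}): for real $x$ above the top of the support the Dyson system has a positive solution $m(x)$ characterized by $\lambda_1\!\big(\sqrt{D(x)}\,\sigma\,\sqrt{D(x)}\big)<1$ with $D(x):=\mathrm{diag}\big(\alpha_i m_i(x)^2\big)$; equivalently the Jacobian of the system in $m$, namely $\mathrm{diag}(1/m_i^2)-(\alpha_j\sigma_{i,j})_{i,j}$, is invertible; and $x>r$ iff such a (minimal positive) solution exists. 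Now fix $x>r_\sigma$: the limiting system has $m(x)$ with $\lambda_1(\sqrt{D(x)}\sigma\sqrt{D(x)})<1$, so the Jacobian in $m$ of $(m,\alpha)\mapsto\big(\frac1{m_i}-x+\sum_j\alpha_j\sigma_{i,j}m_j\big)_i$ is invertible at $(m(x),(|I_k|)_k)$, and by the implicit function theorem the finite-$N$ system has, for $N$ large, a positive solution $m^N(x)$ close to $m(x)$ with $\lambda_1(\sqrt{D^N(x)}\sigma\sqrt{D^N(x)})<1$; hence $x>r_{\sigma_N}$. Thus $\limsup_N r_{\sigma_N}\le x$ for every $x>r_\sigma$, and combined with the previous paragraph, $r_{\sigma_N}\to r_\sigma$.

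The main obstacle is this upper bound: it is precisely the exclusion of an outlier appearing in the limit, the Baik--Ben Arous--P\'ech\'e mechanism alluded to in the text, which genuinely occurs if some $|I_k|$ is allowed to tend to $0$, and is prevented here only because the limiting weights $|I_k|$ are strictly positive — this positivity is what keeps $m(x)$ well defined up to the edge and makes the implicit-function-theorem perturbation of the stable branch go through. The remaining ingredients (the uniform bound on $r_{\sigma_N}$, the weak convergence, and the stable-branch characterization of the edge) are standard and can be quoted from Proposition \ref{prop:muconv} and the local-law literature cited above.
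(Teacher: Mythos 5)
Your route is genuinely different from the paper's, which proceeds by a completely elementary moment computation: the paper rewrites both Dyson systems in terms of the moment generating functions $M_i(z)=m_i(1/z)/z$, derives the quadratic recursion $c_i(2k)=\sum_{l=0}^{k-1}c_i(2l)\sum_j\alpha_j\sigma_{i,j}c_j(2(k-1-l))$ for the coefficients, and shows by induction that $(1-\epsilon)\alpha\le\alpha^N\le(1+\epsilon)\alpha$ forces $(1-\epsilon)^{k}c_i(2k)\le c_i^N(2k)\le(1+\epsilon)^{k}c_i(2k)$; since $r_{\sigma}=\lim_k\mu_{\sigma}(x^{2k})^{1/2k}$ by symmetry of $\mu_\sigma$, this yields $\sqrt{1-\epsilon}\,r_{\sigma_N}\le r_{\sigma}\le\sqrt{1+\epsilon}\,r_{\sigma_N}$, giving the upper and lower bounds at once using nothing beyond the Dyson equation itself. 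Your lower bound via weak convergence is fine. For the upper bound you route through the stability operator and the implicit function theorem, which is natural but heavier, and as written the step ``hence $x>r_{\sigma_N}$'' hides the real content: the IFT only produces \emph{some} positive solution of the finite-$N$ system at $x$ with spectral radius $<1$, and to conclude $x>r_{\sigma_N}$ you must either invoke the converse of the edge description (that existence at $x$ of a positive solution with stability $<1$ forces $x$ to lie outside $\mathrm{supp}\,\mu_{\sigma_N}$ --- not merely that the physical branch enjoys this property outside the support), or argue that the IFT branch coincides with the boundary value of the physical $m^N$, for instance by analytic continuation from $\mathbb{H}^+$ using $m^N\to m$ there (Proposition~\ref{prop:muconv}) together with the local uniqueness supplied by the invertible Jacobian, and then a compactness argument in $x$ combined with your a priori bound $r_{\sigma_N}\le 2\sqrt A$. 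You attribute the needed converse to the ``standard description'' in \cite{AEK2,AlErKr,Alt18}, which is defensible, but it deserves an explicit citation or a short argument; the paper's moment recursion avoids the issue entirely, at the modest cost of exploiting the symmetry of $\mu_\sigma$.
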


\begin{proof}
It turns out that the Stieltjes transform of $\mu_{\sigma}$ $m(z):=\int(z-x)^{-1}d\mu_{\sigma}(x)=\sum_{i=1}^{p}\alpha_{i}m_{i}(z)$
where the $m_{i}$ satisfy the following system of equations

\[
\forall i=1,\dots,p,\text{ }\frac{1}{m_{i}(z)}=z-\sum_{j=1}^{p}\alpha_{j}\sigma_{i,j}m_{j}(z).
\]
Hence, we only need to show that the solutions to these equations
are continuous in the $\alpha_{j}$, at least as far as the rightmost
point of the support is concerned. Now we define $M(z)=\frac{m(1/z)}{z}$
the moment generating functions of $\mu$ and for all $i$, $M_{i}(z)=\frac{m_{i}(1/z)}{z}$.
We define $M^{N}$ and $M_{i}^{N}$ the same way with $\mu_{N}$.
Therefore we have that 
\begin{equation}
\forall i=1,\dots,p,\text{ }M_{i}(z)=1+z^{2}M_{i}(z)\sum_{j=1}^{p}\alpha_{i}\sigma_{i,j}M_{j}(z)\label{dys}.
\end{equation}
Expanding $M_{i}(z):=\sum_{k=1}^{+\infty}c_{i}(k)z^{k}$, we find
that $c_{i}(2k+1)=0$ and that \eqref{dys} implies the following
recursion relationship

\[
c_{i}(2k)=\sum_{l=0}^{k-1}c_{i}(2l)\sum_{j=1}^{p}\alpha_{j}\sigma_{i,j}c_{j}(2(k-1-l))\,.
\]
Similarly, for every $N\in\mathbb{N}$, we denote $M_{i}^{N}(z)=\frac{m_{i}^{N}(1/z)}{z}=\sum_{k=1}^{+\infty}c_{i}^{N}(k)z^{k}$
and find the same relationship: 
\[
c_{i}^{N}(2k)=\sum_{l=0}^{k-1}c_{i}^{N}(2l)\sum_{j=1}^{p}\alpha_{j}^{N}\sigma_{i,j}c_{j}^{N}(2(k-1-l))
\]

Let $\epsilon>0$, and $N_{0}\in\mathbb{N}$ such that for $N\geq N_{0}$,
for all $i\in[1,p]$, 
\[
(1-\epsilon)\alpha_{i}\leq\alpha_{i}^{N}\leq(1+\epsilon)\alpha_{i}\,.
\]
then by an recursion on $k$, we easily see that for $N\geq N_{0}$:

\[
(1-\epsilon)^{k}c_{i}(2k)\leq c_{i}^{N}(2k)\leq(1+\epsilon)^{k}c_{i}(2k)
\]
Moreover, because $\mu_{\sigma}(x^{k})=\sum_{j=1}^{p}\alpha_{i}c_{i}(k)$
and $\mu_{\sigma^{N}}(x^{k})=\sum_{j=1}^{p}\alpha_{i}^{N}c_{i}(k)$,
we deduce that 
\[
(1-\epsilon)^{k+1}\mu(x^{2k})\leq\mu_{N}(x^{2k})\leq(1+\epsilon)^{k+1}\mu(x^{2k})
\]
then, using that the law of the entries is symmetric so that the spectrum
is symmetric, $r_{\sigma}=\lim_{k\to\infty}\mu(x^{2k})^{1/2k}$ we
have that

\[
(\sqrt{1-\epsilon})r_{\sigma_{N}}\leq r_{\sigma}\leq(\sqrt{1+\epsilon})r_{\sigma_{N}}\,.
\]
\end{proof}
\end{proof}
Let us now look at the piecewise continuous case. 
\begin{proof}[Proof of Lemma \ref{lem:Measure-Concentration} in the piecewise continuous
case.]
We consider a matrix $H^{N}$ with piecewise continuous covariance
and its approximation $(H^{N})^{(p)}$ as defined in Section \ref{sec:approximation}.
By Lemma \ref{approx}, we have

\[
\mathbb{P}[||H^{N}-(H^{N})^{(p)}||\geq K\epsilon]\leq e^{-N}
\]
so that 
\begin{equation}
\mathbb{P}[\exists i\in[1,N]\text{ s.t }|\lambda_{i}(H^{N})-\lambda_{i}((H^{N})^{(p)})|\geq K\epsilon]\leq e^{-N}\,.\label{cv}
\end{equation}
Applying Lemma \ref{lem:Measure-Concentration} in the piecewise constant
case to $(H^{N})^{(p)}$, we see that we only need to prove that

\begin{equation}
\lim_{p\to\infty}r_{\sigma^{(p)}}=r_{\sigma}.\label{cvr}
\end{equation}
But it is easy to see that we have the weak convergence

\[
\lim_{p\to\infty}\mu_{\sigma^{(p)}}=\mu_{\sigma}\,.
\]
This can be seen directly form the Dyson equation (see Theorem A.4
in \cite{HussonVar}) or from the fact that $\mu_{H^{N}}$ goes to
$\mu_{\sigma}$, $\mu_{(H^{N})^{(p)}}$ goes to $\mu_{\sigma^{(p)}}$
and following the argument above, $\mu_{H^{N}}$ and $\mu_{(H^{N})^{(p)}}$
are close with overwhelming probability. This implies 
\[
\liminf_{p\to\infty}r_{\sigma^{(p)}}\geq r_{\sigma}\,.
\]
To show that $r_{\sigma^{(p)})}$ goes to $r_{\sigma}$ we may again
use \eqref{cv} as follows. It indeed implies that for $\delta>0$,
$p\geq p_{0}$, we have for $N$ large enough: 
\[
\mathbb{P}[|\lambda_{\lceil\delta N\rceil}(H^{N})-\lambda_{\lceil\delta N\rceil}((H^{N})^{(p)})|\geq\epsilon]\leq e^{-N}.
\]
Importantly here, $p_{0}$ does not depend on $\delta$. Therefore,
if we assume that $\delta$ is such that $(1-\delta)$ is not a point
of discontinuity for neither $Q$ nor $Q^{(p)}$, we have that since
a.s. $\lim_{N\to\infty}\mu_{H^{N}}=\mu_{\sigma}$ and $\lim_{N\to\infty}\mu_{(H^{N})^{(p)}}=\mu_{\sigma^{(p)}}$,
a.s. we have that $\lim_{N\to\infty}\lambda_{\lceil\delta N\rceil}(H^{N})=Q_{\mu_{\sigma}}(1-\delta)$
and $\lim_{N\to\infty}\lambda_{\lceil\delta N\rceil}^{(p)}(H^{N})=Q_{\mu_{\sigma^{(p)}}}(1-\delta)$.
So with the inequality above, we conclude that

\[
|Q_{\mu_{\sigma^{(p)}}}(1-\delta)-Q_{\mu_{\sigma}}(1-\delta)|\leq\epsilon
\]
for every such $\delta>0$. Since $r_{\sigma}=\lim_{\delta\to0^{+}}Q(1-\delta)$
and $r(\sigma^{(p)})=\lim_{\delta\to0^{+}}Q^{(p)}(1-\delta)$ we conclude,
taking the limit in the preceding inequality for $\delta$ going to
$0^{+}$ that are points of discontinuity neither for $Q$ nor $Q^{(p)}$,
that for $p\geq p_{0}$:

\[
|r_{\sigma^{(p)}}-r_{\sigma}|\leq\epsilon
\]
which completes the proof of \eqref{cvr}.

\subsection{Convergence of the empirical measure}

Let us prove that the empirical measure of $H^{N}$ converges. In this section we will denote 
\[ \mathbb{H}^{\pm} :=\{ z \in \mathbb C : \pm \Im z > 0 \} .\]

Let us consider $(m^N_i(z))_{ 1 \leq i \leq N}$ the unique solutions to the following Dyson equation for $z \in \mathbb{H}^+$ and $ m^N_i(z) \in  \mathbb{H}^-$: 
\[ \forall i = 1, \dots, N \,\quad  z - \frac{1}{N}\sum_{j=1}^{N} \Sigma_{i,j}^N m^N_j(z) = \frac{1}{m^N_i(z)} \]

and $m_t(z)$ the solution of the limit version of this Dyson equation:

\[ z - \int_0^1 \sigma(s,t) m_s(z)= \frac{1}{m_t(z)}\,. \]

For $z \in \mathbb{H}^+$, both those equations have unique solution $(m_i^N(z))_{ i\in [1,N]}$ and $(m_t(z))_{t \in [0,1]}$ in respectively $(\mathbb{H}^-)^N$ and $L^{\infty} ( [0,1], \mathbb{H}^-)$, furthermore, for every $i \in [1, N]$ (resp. $t \in [0,1]$), $ z \mapsto m_i^N(z)$ (resp. $z \mapsto m_t(z)$) are Stieltjes transform of probability measures that we will denote respectively $\mu_{\Sigma^N,i}$ and $\mu_{\sigma,t}$. From those one can define: 
\[ \mu_{\Sigma^N} = \frac{1}{N} \sum_{i=1}^N \mu_{\Sigma^N,i} \text{ and } \mu_{\sigma} = \int_{0}^1 \mu_{\sigma,t} dt .\]

Using \cite{girko2012} we have the following convergence result that links the empirical measure of $H^N$ and the $\mu_{\Sigma^N}$:

\begin{prop} 
	For almost all $x \in \mathbb R$, we have that with probability $1$
	\[ \lim_{N \to \infty}  \Big| \frac{1}{N} \# \{ \lambda_i(H^N) \leq x \} - \mu_{\Sigma^N}((- \infty, x]) \Big| =0. \]
\end{prop}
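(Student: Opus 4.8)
The plan is to reduce the statement to a convergence of Stieltjes transforms and then pass to distribution functions. For $z\in\mathbb{H}^+$ set $G_N(z):=\tfrac1N\Tr(z-H^N)^{-1}=G_{\mu_{H^N}}(z)$ and $\bar G_N(z):=\tfrac1N\sum_{i=1}^N m_i^N(z)=G_{\mu_{\Sigma^N}}(z)$. First I would prove that, almost surely, $G_N(z)-\bar G_N(z)\to0$ for every $z\in\mathbb{H}^+$, by writing $G_N-\bar G_N=(G_N-\mathbb E[G_N])+(\mathbb E[G_N]-\bar G_N)$ and controlling the fluctuating term by concentration of measure and the deterministic term by Girko's theory.

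For the deterministic term, \cite[Theorem 1.1]{girko2012} asserts that the empirical spectral measure of $H^N$ is asymptotically given by the solution of the finite-$N$ Dyson system; at the level of Stieltjes transforms this means $G_N(z)-\bar G_N(z)\to0$ in probability for each fixed $z$, and since $|G_N(z)|\le(\Im z)^{-1}$, bounded convergence upgrades this to $\mathbb E[G_N(z)]-\bar G_N(z)\to0$. For the fluctuating term, I would use that $X^N\mapsto G_N(z)$ has Euclidean gradient of norm $O\!\big(N^{-1}(\Im z)^{-2}\big)$ (as $\tfrac1N\Tr|(z-H^N)^{-2}|^2\le(\Im z)^{-4}$), apply to $\Re G_N(z)$ and $\Im G_N(z)$ separately the concentration of measure argument already carried out in the proof of Theorem \ref{prop:G-measure-concentration} --- Talagrand's inequality after a truncation step, the entries being sub-Gaussian rather than bounded --- obtaining $\mathbb P(|G_N(z)-\mathbb E[G_N(z)]|>t)\le C(z)e^{-c(z)N^2t^2}$, and conclude $G_N(z)-\mathbb E[G_N(z)]\to0$ almost surely by Borel--Cantelli. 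Doing this along a countable dense set $D\subset\mathbb{H}^+$ produces a single probability-one event on which $G_N(z)-\bar G_N(z)\to0$ for every $z\in D$ (hence, by analyticity and normality, for every $z\in\mathbb{H}^+$).

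To pass to distribution functions, note that by \eqref{expt} and Borel--Cantelli, almost surely $\|H^N\|_{op}\le\mathcal C$ for all large $N$, so $\mu_{H^N}$ is eventually supported in a fixed compact set, as are $\mu_{\Sigma^N}$ and $\mu_\sigma$ (Proposition \ref{prop:muconv}). Since Proposition \ref{prop:muconv} also gives $\bar G_N(z)\to G_{\mu_\sigma}(z)$ for each $z$, the previous paragraph yields $G_{\mu_{H^N}}(z)\to G_{\mu_\sigma}(z)$ almost surely, whence $\mu_{H^N}\to\mu_\sigma$ weakly on a probability-one event (any subsequential weak limit of this tight family has Stieltjes transform $G_{\mu_\sigma}$ on $D$, hence equals $\mu_\sigma$), and likewise $\mu_{\Sigma^N}\to\mu_\sigma$ weakly. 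Consequently, at every continuity point $x$ of the nondecreasing function $F_{\mu_\sigma}$ --- that is, outside the at most countable atom set of $\mu_\sigma$, hence for Lebesgue-almost every $x$ --- both $\tfrac1N\#\{\lambda_i(H^N)\le x\}\to F_{\mu_\sigma}(x)$ almost surely and $\mu_{\Sigma^N}((-\infty,x])\to F_{\mu_\sigma}(x)$, and subtracting gives the assertion.

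The only nontrivial ingredient is the deterministic-equivalent estimate $\mathbb E[G_{\mu_{H^N}}(z)]\approx G_{\mu_{\Sigma^N}}(z)$, which is exactly the content of \cite{girko2012} that I would quote as a black box; a self-contained derivation would expand the diagonal resolvent entries by Schur complements, show the resulting quadratic forms concentrate around $\tfrac1N\sum_j\Sigma^N_{ij}((z-H^N)^{-1})_{jj}$, and close the loop with a quantitative stability bound for the Dyson system (in the spirit of \cite{AlErKr,AEK2}), the delicate point being that $\Sigma^N$ is only assumed bounded and may vanish. Everything else in the argument is routine.
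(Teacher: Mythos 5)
The paper offers no proof of this proposition: it is stated as a bare consequence of \cite[Theorem 1.1]{girko2012} and left at that. Your argument rests on the same citation for the crucial deterministic-equivalence step, so at its heart it is the same approach, and what you add on top --- concentration of $G_{\mu_{H^N}}(z)$ around its mean, Borel--Cantelli along a countable dense set of $z$'s, normal-family upgrade, passage from Stieltjes transforms to distribution functions --- is a reasonable way to turn Girko's estimate into the stated almost-sure convergence. Two remarks on the elaboration. First, $\Re G_{\mu_{H^N}}(z)$ and $\Im G_{\mu_{H^N}}(z)$ are not convex in $X^N$, so invoking Talagrand's inequality as in the proof of Theorem \ref{prop:G-measure-concentration} needs the convex-decomposition (or $d_W$) step from that proof, not just the truncation; you point at the right lemma but should make the non-convexity explicit. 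Second, your route detours through the common limit $\mu_\sigma$: you establish $\mu_{H^N}\to\mu_\sigma$ and $\mu_{\Sigma^N}\to\mu_\sigma$ separately, using Proposition \ref{prop:muconv}, and then subtract. That is valid here, but it is logically heavier than what the proposition asks for and what Girko gives --- a direct comparison of two $N$-dependent measures that does not require the variance profile to converge. As a consequence, your derivation uses the asymptotic piecewise continuity hypothesis at a point where the paper's citation does not need it; the conclusion is the same, but the dependency structure differs.
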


However, we still need a little work to link the "deterministic equivalent" of the empirical measure $H^N$, that is $\mu_{\Sigma^N}$ and its natural continuous limit $\mu_{\sigma}$. We will actually prove a slightly finer result, in that we are going to prove that $\mu_{\Sigma^N,i}$ become uniformly close in $i$ to $\mu_{\sigma, t_i^N}$. 
For convenience sake, for $z \in \mathbb{C} $ we will denote $m^N(z)$ the $N$-tuple $( m_i^N(z))_{i \in [1,N]}$ and $m(z)$ the function $t \mapsto m_t(z)$. 
We have the following a priori bounds on the $m_i^N$ an $m_t$ for $z \in \mathbb{H}^+$

\[ \forall i = 1, \dots, N \, |m_i^N(z)| \leq \frac{1}{\Im z} \]

\[ \forall t \in [0,1] \, |m_t(z)| \leq \frac{1}{\Im z} .\]

%

We will prove the following 
\begin{prop}\label{prop:muconv}
	For all $z \in \mathbb C \setminus \mathbb R$
	\[\lim_{N \to \infty} \sup_{1 \leq i \leq N} | m^N_i(z) -m_{t_i^N}(z)| =0 \]
	\end{prop}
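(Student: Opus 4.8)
The plan is to prove this by a contraction (stability) argument for the Dyson equation when $\Im z$ is large, followed by an extension to all of $\mathbb{C}\setminus\mathbb{R}$ via a normal–families argument. It suffices to treat $z\in\mathbb{H}^+$, since $m^N_i(\bar z)=\overline{m^N_i(z)}$ and $m_t(\bar z)=\overline{m_t(z)}$ (both are Stieltjes transforms of probability measures). Write $\hat m_i:=m_{t^N_i}(z)$ and $S^N_{ij}:=\Sigma^N_{ij}/N$, so that $\sum_j S^N_{ij}\le A$ and $\int_0^1\sigma(s,t)\,ds\le A$. Subtracting the two Dyson equations and using $\tfrac1{m^N_i}-\tfrac1{\hat m_i}=\tfrac{\hat m_i-m^N_i}{m^N_i\hat m_i}$, I would first derive the fixed–point identity
\[
m^N_i-\hat m_i=m^N_i\hat m_i\sum_{j=1}^N S^N_{ij}(m^N_j-\hat m_j)-m^N_i\hat m_i\,\varepsilon^N_i,\qquad
\varepsilon^N_i:=\int_0^1\sigma(s,t^N_i)m_s(z)\,ds-\sum_{j=1}^N S^N_{ij}\,m_{t^N_j}(z).
\]
Using the a priori bounds $|m^N_i|,|\hat m_i|\le(\Im z)^{-1}$ and $\sum_j S^N_{ij}\le A$, taking the supremum over $i$ gives $\sup_i|m^N_i-\hat m_i|\le \tfrac{A}{(\Im z)^2}\sup_i|m^N_i-\hat m_i|+\tfrac1{(\Im z)^2}\sup_i|\varepsilon^N_i|$, hence for $(\Im z)^2>A$
\[
\sup_i\bigl|m^N_i(z)-m_{t^N_i}(z)\bigr|\le\bigl((\Im z)^2-A\bigr)^{-1}\sup_i|\varepsilon^N_i|.
\]

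The crux is therefore to show $\sup_i|\varepsilon^N_i|\to0$. I would split $\varepsilon^N_i=(A^N_i)+(C^N_i)$, where $(A^N_i):=\frac1N\sum_j(\sigma(t^N_j,t^N_i)-\Sigma^N_{ij})m_{t^N_j}(z)$ is bounded uniformly in $i$ by $(\Im z)^{-1}\sup_{i,j}|\Sigma^N_{ij}-\sigma(t^N_i,t^N_j)|\to0$ thanks to \eqref{convcov}, and $(C^N_i):=\int_0^1\sigma(s,t^N_i)m_s(z)\,ds-\frac1N\sum_j\sigma(t^N_j,t^N_i)m_{t^N_j}(z)$ is a Riemann–sum error. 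To control $(C^N_i)$, I would first establish that $s\mapsto m_s(z)$ is continuous on the interior of each interval $I_k$ of the partition underlying $\sigma$ and extends continuously to $\overline{I_k}$: subtracting the limit Dyson equation at two points $t,t'$ gives $\bigl|\tfrac1{m_t}-\tfrac1{m_{t'}}\bigr|\le(\Im z)^{-1}\int_0^1|\sigma(s,t')-\sigma(s,t)|\,ds$, and combining this with the lower bound $|m_t(z)|\ge(|z|+A/\Im z)^{-1}$ and the uniform continuity of each continuous piece of $\sigma$ yields uniform continuity of $t\mapsto m_t(z)$ on each $I_k^\circ$. Consequently $(s,t)\mapsto\sigma(s,t)m_s(z)$ extends continuously, hence uniformly continuously, to each product $\overline{I_k}\times\overline{I_l}$; and since $\frac1N\sum_j\delta_{t^N_j}$ converges weakly to the uniform law (so its restriction to each $I_k$ converges weakly to $\mathrm{Leb}|_{I_k}$, the endpoints being Lebesgue–null), a standard argument — weak convergence of measures tested against a uniformly bounded, equicontinuous family — gives $\sup_{t\in[0,1]}|(C)(t)|\to0$, the finitely many $j$ with $t^N_j$ at an interval endpoint contributing $O(1/N)$.

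Finally, I would upgrade the estimate from $\{w:(\Im w)^2>A\}$ to all of $\mathbb{H}^+$ by a normal–families argument that also disposes of the uniformity in $i$. If the conclusion failed at some $z_0\in\mathbb{H}^+$, there would be $\delta>0$, a subsequence in $N$ and indices $i_N$ with $|m^N_{i_N}(z_0)-m_{t^N_{i_N}}(z_0)|\ge\delta$; the functions $g_N(w):=m^N_{i_N}(w)-m_{t^N_{i_N}}(w)$ are holomorphic on $\mathbb{H}^+$ with $|g_N(w)|\le2/\Im w$, so by Montel's theorem a further subsequence converges locally uniformly to a holomorphic $g$; by the previous step $g\equiv0$ on $\{w:(\Im w)^2>A\}$, hence $g\equiv0$ on $\mathbb{H}^+$ by the identity theorem, contradicting $|g_N(z_0)|\to|g(z_0)|\ge\delta$. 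I expect the main obstacle to be the uniform–in-$i$ (equivalently, uniform in the base point $t^N_i$) control of the Riemann–sum error $(C^N_i)$: the pointwise convergence is immediate from weak convergence, but making it uniform over all $t^N_i$ requires the regularity of $s\mapsto m_s(z)$ on the pieces of the partition together with an equicontinuity argument; the contraction step and the analytic–continuation step are then routine.
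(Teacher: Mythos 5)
Your proposal is correct and its error decomposition coincides with the paper's --- you split $\varepsilon^N_i$ into a ``profile approximation'' term $A^N_i$ controlled by \eqref{convcov} and a ``Riemann-sum'' term $C^N_i$ controlled by the piecewise regularity of $\sigma$ and $s\mapsto m_s(z)$ together with the weak convergence of $\tfrac1N\sum_j\delta_{t^N_j}$ to Lebesgue. Where you diverge is the stability step that converts $\sup_i|\varepsilon^N_i|\to0$ into $\sup_i|m^N_i-m_{t^N_i}|\to0$. The paper invokes the contraction of $u\mapsto\Phi^N(u)(z)$ in the hyperbolic-type metric $d(u,v)=\mathrm{arcosh}\bigl(1+\tfrac{|u-v|^2}{2\Im u\,\Im v}\bigr)$ from \cite{AEK2}, with rate $(1+(\Im z)^2/A)^{-2}<1$ for \emph{every} $z\in\mathbb{H}^+$, and sums the geometric series (the function $F(\delta)$) to get a direct, quantitative stability bound on all of $\mathbb{H}^+$. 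You instead use the elementary Banach contraction of the sup-norm, which only closes for $(\Im z)^2>A$, and then upgrade to all of $\mathbb{H}^+$ via a normal-families argument (uniform bound $|g_N(w)|\le 2/\Im w$, Montel, then the identity theorem starting from $g\equiv0$ on $\{(\Im w)^2>A\}$). The trade-off is transparent: the paper's metric contraction is sharper and avoids any soft analytic-continuation step, at the cost of importing the AEK machinery; your version is self-contained and uses only the trivial bound $|m|\le1/\Im z$, but the final claim at a fixed $z$ is obtained non-constructively. Both arguments carry an identical minor informality in the Riemann-sum step, where one tests a discrete measure against functions with finitely many jump discontinuities; your explicit equicontinuity-on-$\overline{I_k}\times\overline{I_l}$ argument plus the observation that the jump set is Lebesgue-null is the right way to make it precise, and matches in spirit the paper's finite $\epsilon$-net over $\{\sigma_s:s\in[0,1]\}$.
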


For this, let us consider $S^N$ and $\Phi^N$ the following operators defined for any $N$-tuple $u$ in $\mathbb{H}^{-}$ and $z \in \mathbb{H}^+$:

\[ (S^N u)_i = \frac{1}{N}\sum_{j=1}^{N} \Sigma_{i,j}^N u_j \]

\[ (\Phi^N(u)(z))_i = \frac{1}{z - (S^N u)_i } \]

and their  equivalent in the large $N$ limit for $u : [0,1] \to \mathbb{H}^-$

\[ (S u)(t) = \int_{0}^1 \sigma(s,t) u(s) ds \]

\[ (\Phi(u)(z))(t) = \frac{1}{z - (S u)(t) }. \]
Let us consider the function $D$ defined by 
\[ D(u,v) = \frac{|u -v|^2}{\Im u \Im v} \]

and 
\[ D^N(u,v) = \max_{i=1,\dots, N} D( u_i,v_i). \]

Let us consider $d$ the distance defined on $\mathbb{H}^-$ by 
\[ d(u,v) = arcosh \Big(\frac{ 1 + D(u,v) }{2} \Big) \]
and $d^N$ the distance defined on $(\mathbb{H}^-)^N$ by 
\[ d^N(u,v) = \max_{i=1,\dots, N} d( u_i,v_i) =arcosh \Big(\frac{ 1 + D^N(u,v) }{2} \Big) . \]
Remind that, following the proof of \cite[Proposition 2.1]{AEK2}, with $A = \sup_N \sup_{i,j} \Sigma^N_{i,j}$, we have that for $u,v \in (\mathbb{H}^-)^N$ 
\[ D^N ( \Phi^N(u)(z), \Phi^N(v)(z)) \leq \Big( 1 + \frac{(\Im z)^2}{A} \Big) ^{-2} D^N ( u, v) .\]
Therefore if we fix $z \in \mathbb{H}^+$ and if we let for $\delta > 0$

\[ F(\delta) = \sum_{n=0}^{+ \infty} arcosh \Big( \frac{1 + \Big( 1 + \frac{(\Im z)^2}{A} \Big) ^{-2n} \delta }{2} \Big) \]
	we have $\lim_{\delta \to 0} F( \delta) =0 $
	
	Since $m^N(z)$ is the fixed point of $\Phi^N(.)(z)$, and using the contraction property, 
	\[ d^N(m^N(z), u) \leq F( D^N( \Phi^N(u)(z),u)). \]
	Therefore to prove the result, if we let $\tilde{m}^N_i(z) = m_{t_i^N}(z)$, we only need to prove that 
	 \[ \lim_{N \to \infty} D^N( \Phi^N(\tilde{m}^N(z))(z),\tilde{m}^N(z)) =0. \]
	 
	 Using the fact that 
	 \[ D( u,v) =  D( - \frac{1}{u} , - \frac{1}{v}) \]
	 and for $\eta \in \mathbb R$
	 \[ D( u + \eta ,v + \eta ) = D( u,v) \]
	  we have that 
	  
	  \begin{eqnarray*}
	  	D( (\Phi^N(\tilde{m}^N(z))(z))_i,\tilde{m}_i^N(z)) &=& D(  z - S^N(\tilde{m}_i^N(z)) , z - \int_{0}^1 \sigma(t_i^N,s) m_s(z) ds )  \\
	  	& \leq & \frac{1}{(\Im z)^2} | S^N(\tilde{m}_i^N(z)) - \int_{0}^1 \sigma(t_i^N,s) m_s ds |.
	  	\end{eqnarray*}

Let $ \nu^N = \frac{1}{N} \sum_{i=1}^N \delta_{t_i^N} $. With this notation, it suffices to prove:

\[ \lim_{N \to \infty} \max_{i=1, \dots, N} |\int_{0}^1 \sigma(t_i^N,s) m_s ds - S^N (\tilde{m}^N(z))_i | = 0. \]

Using that $\tilde{m}_i^N(z)= m_{t_i^N}(z)$, we have 
\begin{eqnarray*}
	\int_{0}^1 \sigma(t_i^N,s) m_s ds - S^N (\tilde{m}^N(z))_i  &=& \int_0^1 \sigma(t_{i}^N,s) m_s(z) ds - \int_0^1 \sigma(t_{i}^N,s) m_s(z) d \nu^N(s) + A_i^N(z) 
	\end{eqnarray*}

where 
\[ A_i^N(z) =\frac{1}{N} \sum_{j=1}^N (\Sigma_{i,j}^N - \sigma(t_{i}^N,t_j^N)) \tilde{m}_j^N(z). \]

Using our a priori bound on $m$, and the fact the $\lim_{N \to \infty} \sup_{i,j} | \Sigma_{i,j}^N - \sigma(t_i^N,t_j^N)|= 0 $ we have that
\[ \limsup_{N \to \infty} \sup_i  |A_i^N(z)| =0 .\]

Using the fact that $\sigma$ can be continuously extended to $\overline{I_k} \times \overline{I_l}$ for any $k,l$, we have that the family of functions 
$\sigma_{s}: t \mapsto \sigma(s,t)$ for $s \in [0,1]$ is a family of uniformly continuous functions on each $I_k$. This implies that for every $\epsilon >0$, one can find $N_{\epsilon} < + \infty$ and a family $f_1, \dots f_{N_{\epsilon}}$ of $[0,1]$ that can be continuously extended to each $\overline{I}_k$. such that  
\[ \{ \sigma_{s} : s \in [0,1] \} \subset \bigcup_{i=1}^{N_{\epsilon}} B( f_{N_{\epsilon}}, \epsilon) \]
where $B$ are ball for the uniform distance on $[0,1]$. Since $t \mapsto m_t(z)$ can itself be continuously extended to each $\overline{I}_k$, we have that for every $\epsilon >0$, since $\nu^N$ goes weakly to $Leb_{[0,1]}$ and since the $f_i$ and $m_s(z)$ only have finitely many discontinuity points:

\[ \lim_{N \to \infty} \sup_{i=1,\dots, N_{\epsilon}} \Big|  \int_0^1 f_i(s) m_s(z) ds - \int_0^1 f_i(s) m_s(z) d \nu^N(s) \Big| = 0 .\]

Therefore, since $|m_i(z)| \leq 1/ \Im z$ it implies for any $\epsilon > 0$ that 
\[ \limsup_{N \to \infty} \sup_{t \in [0,1]}\Big| \int_0^1 \sigma(t,s) m_s(z) ds - \int_0^1 \sigma(t,s) m_s(z) d \nu^N(s) \Big| \leq \frac{2 A}{\Im z} \epsilon \]

which implies to desired limit.

\end{proof}
\subsection{Proof of Lemma \ref{lem:equalityRF}}\label{equalityRF}

	If we compute the derivatives of the two following functions for $\widehat{\theta} \geq 0$
	
	\[ \widehat{\theta} \mapsto J_{\mu_{\sigma}}(x,\widehat{\theta}+ G_{\mu_{\sigma}}(x)/2)-K(\theta,\phi(\widehat{\theta}+ G_{\mu_{\sigma}}(x)/2,x,\psi)) \text{ and } \widehat{\theta} \mapsto \widehat{\mathcal{F}}(\widehat{\theta},x,\psi,\sigma) \]
	we see that they coincide. Futhermore, for $\widehat{\theta}=0$, we see that both functions are $0$, using respectively Lemma \ref{lem:theta} and a straightforward computation. Therefore they must coincide for $\widehat{\theta} \geq 0$ and so after a change of variables we get 
	
	\[
	I_{\sigma}(x)=\inf_{\substack{\psi\in{\cal P}([0,1])\\
		}
	}\sup_{\theta\geq G_{\mu_{\sigma}}(x)/2 }\{J_{\mu_{\sigma}}(x,\theta)-K(\theta,\phi(\theta,x,\psi))\}\,.
	\]

We can  relax the condition $\theta \geq G_{\mu_{\sigma}}(x)/2$ into $\theta \geq 0$ by using  Lemma \ref{lem:theta}. To impose the condition $\langle\psi,S\psi\rangle\neq0$, we can notice that if $\langle\psi,S\psi\rangle=0$, we have
	
	\[ \widehat{\mathcal{F}}( \widehat{\theta},x,\psi,\sigma) := \widehat{\theta}\langle\frac{dt}{\varphi_{\star}(x)},\psi\rangle-\frac{1}{2}\int_{0}^{1}\log\left(\frac{2\widehat{\theta}}{\varphi_{\star}(x)}\frac{d\psi}{dt}+1\right)dt \]
	which has an infinite supremum in $\widehat\theta$ if $\langle\frac{dt}{\varphi_{\star}(x)},\psi\rangle\neq 0$, and a null supremum if also $\langle\frac{dt}{\varphi_{\star}(x)},\psi\rangle= 0$. None of these cases will contribute to the infimum over $\psi$.

\subsection{The block diagonal case} \label{sec:block}

Let us consider the case of a block diagonal random matrix
\[ H^N = \begin{pmatrix} H^N_1 & 0 \\
	0 & H^N_2 \end{pmatrix} \]
such that $H^N_1$ is $\lfloor \alpha N \rfloor \times\lfloor \alpha N \rfloor $ and $H^N_2$ is $N - \lfloor \alpha N \rfloor \times(N- \lfloor \alpha N \rfloor )$ for some $0 <\alpha < 1$ and such that $H^N$ has a variance profile $\sigma$ that has the following form
\[ \sigma(t,s)= \sigma_1(\alpha^{-1} t, \alpha^{ -1} s)\mathds{1}_{[0,\alpha]^{2}}(s,t)+\sigma_2( (1-\alpha)^{-1} (s -\alpha),(1-\alpha)^{-1} (t -\alpha)) \mathds{1}_{[\alpha,1]^{2}}(s,t) \]
 where $\sigma_{1}$ and $\sigma_2$ are two nonnegative functions of $[0,1]^2$.
 
 Then, one can apply our main result to $\lambda_1(H^N_1)$ and $\lambda_1(H^N_2)$. They then both satisfy large deviation principles in speed $N$ with the respective rate functions $x \mapsto \alpha I_{\sigma_1}( x / \sqrt{\alpha} )$ and $x \mapsto (1- \alpha) I_{\sigma_2}( x / \sqrt{1 - \alpha} )$. The factors $\alpha$ and $1- \alpha$ come from the fact that the dimension of the matrices $H^N_1$ and $H^N_2$ are respectively approximately $\alpha N$ and $(1- \alpha) N$, the renormalization is $1/ \sqrt{N}$ and we are looking for large deviation principle of speed $N$. 
 
 Since the functions $x \mapsto I_{\sigma_1}(x/ \sqrt{\alpha})$ and $x \mapsto I_{\sigma_2}(x/ \sqrt{1 -\alpha})$ are increasing function on $[ r_{\sigma}, + \infty[$ and since $\lambda_1( H^N) = \max \{\lambda_1(H^N_2), \lambda_1(H^N_1) \}$, it satisfies a large deviation principle with rate function
 
 \[ x \mapsto \min\{ \alpha I_{\sigma_1} \Big( \frac{x}{\sqrt{\alpha}} \Big ), (1-\alpha) I_{\sigma_2} \Big( \frac{x}{\sqrt{1 - \alpha}} \Big ) \} .\]
 We will now prove the following Proposition  which states this result at the level of the expression of the rate function $I_{\sigma}$ given in Theorem \ref{maintheo}. 

\begin{prop}\label{prop:block}
	If $\sigma(s,t)= \sigma_1(\alpha^{-1} t, \alpha^{ -1} s)\mathds{1}_{[0,\alpha]^{2}}(s,t)+\sigma_2( (1-\alpha)^{-1} (s -\alpha),(1-\alpha)^{-1} (t -\alpha)) \mathds{1}_{[\alpha,1]^{2}}(s,t)$ for some $0 <\alpha < 1$ and where $\sigma_{1}$ and $\sigma_2$ are two functions of $[0,1]^2$, then for every $x > r_{\sigma}$,
	\[ I_{\sigma}(x) = \min\{ \alpha I_{\sigma_1}\Big( \frac{x}{\sqrt{\alpha}} \Big) , (1-\alpha) I_{\sigma_2}\Big( \frac{x}{\sqrt{1 -\alpha}} \Big) \}. \]
	\end{prop}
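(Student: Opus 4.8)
The plan is to deduce the identity directly from the variational formula \eqref{defF2} for $I_\sigma$, after recording how the Dyson data of $\sigma$ decouples along the two blocks. Write $\beta_1=\alpha$, $\beta_2=1-\alpha$. The first step is to check, using uniqueness of the solution of \eqref{eq:Dyson}, that for $t$ in the $i$-th block the measure $\mu_{\sigma,t}$ is the image of $\mu_{\sigma_i,s}$ (with $s$ the rescaled parameter) under $y\mapsto\sqrt{\beta_i}\,y$; consequently $\mu_\sigma$ is the convex combination, with weights $\alpha$ and $1-\alpha$, of the images of $\mu_{\sigma_1}$ and $\mu_{\sigma_2}$ under $y\mapsto\sqrt\alpha\,y$ and $y\mapsto\sqrt{1-\alpha}\,y$, so that $r_\sigma=\max(\sqrt\alpha\,r_{\sigma_1},\sqrt{1-\alpha}\,r_{\sigma_2})$ — in particular $x/\sqrt{\beta_i}>r_{\sigma_i}$ whenever $x>r_\sigma$ — and $\varphi_\star(x)(t)=\beta_i^{-1/2}\,\varphi_\star^{(i)}(x/\sqrt{\beta_i})(s)$, where $\varphi_\star^{(i)}$ denotes the function $\varphi_\star$ built from $\sigma_i$.

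Next I would use that $S$ is block diagonal, i.e. $\sigma(s,t)=0$ when $s,t$ lie in different blocks, so that each of the three terms of \eqref{defF2} splits as a sum over the two blocks. Given an absolutely continuous $\psi\in\mathcal P([0,1])$ with $p:=\psi([0,\alpha])$, let $\psi_1,\psi_2\in\mathcal P([0,1])$ be the push-forwards of the normalized restrictions $p^{-1}\psi|_{[0,\alpha]}$ and $(1-p)^{-1}\psi|_{[\alpha,1]}$ under the affine bijections of the blocks onto $[0,1]$ (the weight $p$, resp. $1-p$, being dropped when it vanishes). Carrying out the change of variables in each block and using the scaling of $\varphi_\star$ from the first step yields the exact identity
\[
\widehat{\mathcal F}(\widehat\theta,x,\psi,\sigma)
=\alpha\,\widehat{\mathcal F}\!\left(\tfrac{p\widehat\theta}{\sqrt\alpha},\tfrac{x}{\sqrt\alpha},\psi_1,\sigma_1\right)
+(1-\alpha)\,\widehat{\mathcal F}\!\left(\tfrac{(1-p)\widehat\theta}{\sqrt{1-\alpha}},\tfrac{x}{\sqrt{1-\alpha}},\psi_2,\sigma_2\right),
\]
a summand being $0$ when the corresponding block carries no mass. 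Taking $p\in\{0,1\}$ (i.e. $\psi$ supported in a single block) and optimizing over $\widehat\theta\ge0$ and over the surviving $\psi_i$ already gives the upper bound $I_\sigma(x)\le\min\{\alpha I_{\sigma_1}(x/\sqrt\alpha),\,(1-\alpha)I_{\sigma_2}(x/\sqrt{1-\alpha})\}$.

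For the matching lower bound I must show $\sup_{\widehat\theta\ge0}\widehat{\mathcal F}(\widehat\theta,x,\psi,\sigma)\ge\min\{\alpha I_{\sigma_1}(x/\sqrt\alpha),(1-\alpha)I_{\sigma_2}(x/\sqrt{1-\alpha})\}$ for every $\psi$. The crucial point is that $\widehat\theta\mapsto\widehat{\mathcal F}(\widehat\theta,y,\phi,\tau)$ is \emph{unimodal} on $[0,\infty)$: differentiating \eqref{defF2} twice in $\widehat\theta$ gives $-2\langle\phi,S\phi\rangle+\int_0^1 2\big(\tfrac1{\varphi_\star}\tfrac{d\phi}{dt}\big)^2\big(\tfrac{2\widehat\theta}{\varphi_\star}\tfrac{d\phi}{dt}+1\big)^{-2}dt$, which is nonincreasing in $\widehat\theta$, so $\partial_{\widehat\theta}\widehat{\mathcal F}$ is concave; since moreover $\widehat{\mathcal F}(0,\cdot)=0$ and $\partial_{\widehat\theta}\widehat{\mathcal F}|_{\widehat\theta=0}=0$ (the latter by the Dyson equation \eqref{eq:Dyson}, which is exactly what makes the two expressions for $\widehat{\mathcal F}$ agree at $0$), $\partial_{\widehat\theta}\widehat{\mathcal F}$ is $\ge0$ on an interval $[0,\widehat\theta^{\ast}]$ and $\le0$ afterwards. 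Then an elementary lemma applies: if $F_1,F_2\colon[0,\infty)\to(-\infty,+\infty]$ are unimodal with $F_i(0)=0$, then $\sup(F_1+F_2)\ge\min(\sup F_1,\sup F_2)$ — evaluate along a maximizing sequence for whichever function peaks earlier, where the other is still $\ge$ its value at $0$. Applying this to the two summands in the displayed identity, whose suprema over $\widehat\theta$ are respectively $\ge\alpha I_{\sigma_1}(x/\sqrt\alpha)$ and $\ge(1-\alpha)I_{\sigma_2}(x/\sqrt{1-\alpha})$ (because $I_{\sigma_i}$ is itself an infimum over measures), settles the case $0<p<1$; the cases $p\in\{0,1\}$ were treated in the previous step, and a $\psi$ with $\langle\psi,S\psi\rangle=0$ is harmless since then $\widehat{\mathcal F}(\widehat\theta,x,\psi,\sigma)\to+\infty$ as $\widehat\theta\to\infty$. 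Taking the infimum over $\psi$ concludes.

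I expect the main obstacle to be the unimodality statement (and, more mundanely, keeping track of the $\sqrt{\beta_i}$ factors in the change of variables); everything else is a uniqueness argument for the Dyson system together with elementary optimization. As a consistency check one can bypass the formula altogether: $\lambda_1(H^N)=\max(\lambda_1(H^N_1),\lambda_1(H^N_2))$ with $H^N_1,H^N_2$ independent, so the contraction principle produces an LDP for $\lambda_1(H^N)$ with rate function $x\mapsto\min\{\alpha I_{\sigma_1}(x/\sqrt\alpha),(1-\alpha)I_{\sigma_2}(x/\sqrt{1-\alpha})\}$, which must coincide with $I_\sigma$ by uniqueness of the rate function of a large deviation principle; the computation above has the advantage of living entirely on the analytic side.
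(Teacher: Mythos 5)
Your proposal follows essentially the same route as the paper: first the Dyson-equation decoupling of $\mu_{\sigma,t}$ and $\varphi_\star$ along the two blocks together with $r_\sigma=\max(\sqrt{\alpha}\,r_{\sigma_1},\sqrt{1-\alpha}\,r_{\sigma_2})$, then the exact additivity $\widehat{\mathcal F}(\widehat\theta,x,\psi,\sigma)=\alpha\,\widehat{\mathcal F}(\tfrac{a\widehat\theta}{\sqrt\alpha},\tfrac{x}{\sqrt\alpha},\psi_1,\sigma_1)+(1-\alpha)\,\widehat{\mathcal F}(\tfrac{(1-a)\widehat\theta}{\sqrt{1-\alpha}},\tfrac{x}{\sqrt{1-\alpha}},\psi_2,\sigma_2)$, the upper bound by taking $a\in\{0,1\}$, and the lower bound via the unimodality of $\widehat\theta\mapsto\widehat{\mathcal F}$ (your nonincreasing second derivative is the paper's Lemma~\ref{quasiconcave} on the sign of the third derivative) combined with the evaluate-at-the-earlier-peak argument. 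The only cosmetic differences are that you state the ``$\sup(F_1+F_2)\ge\min(\sup F_1,\sup F_2)$'' step as an abstract lemma while the paper carries it out inline, and you present the probabilistic identification of the rate function as a consistency check rather than as the motivation preceding the analytic verification.
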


 \begin{proof}[Proof of Proposition \ref{prop:block}]
 Let us look first at the Dyson equation from Lemma \ref{lem:Measure-Concentration}. If we define for any $t \in [0,1]$, $G_{1,t}(z) = G_{\mu_{\sigma, \alpha t}}(z)$ and $G_{2,t}(z) = G_{\mu_{\sigma, (1- \alpha)t + \alpha}}(z)$, we have 
 
 \begin{equation}
 \frac{1}{G_{1,t}(z)} = z - \alpha \int_{0}^1 \sigma_1(t,s) G_{1,s}(z) ds 
 \end{equation}
 	and 
 	 \begin{equation}
 \frac{1}{G_{2,t}(z)} = z - (1-\alpha) \int_{0}^1 \sigma_2(t,s) G_{2,s}(z) ds.
\end{equation}
Using the uniqueness of the solution to the Dyson equation, this implies that 
\[ G_{1,t}(z) =\frac{1}{\sqrt{\alpha}} G_{\mu_{\sigma_1,t}} \Big( \frac{z}{\sqrt{\alpha}} \Big) 
\mbox{ and }
  G_{2,t}(z) =\frac{1}{\sqrt{1 -\alpha}} G_{\mu_{\sigma_2,t}} \Big( \frac{z}{\sqrt{1 -\alpha}} \Big). \]
In particular this implies that 
\[ \mu_{\sigma} = \alpha (m_{\sqrt{\alpha}})\# \mu_{\sigma_1} +(1- \alpha) (m_{\sqrt{1 - \alpha}})\#\mu_{\sigma_2} \]
where $m_{a}: x \mapsto a x$ is the multiplication by $a$ and $f\#\mu$ denotes the push forward of $\mu$ by $f$. In particular

\[r_{\sigma} = \max \{ \sqrt{\alpha} r_{\sigma_1}, \sqrt{1 - \alpha} r_{\sigma_2} \}. \]
Furthermore, if let $h$ be the function defined on $[0,1]$ by 
 $h_{\alpha}(x) = \alpha + (1- \alpha)x=\alpha+m_{1-\alpha}(x)$, we can always write any measure $\psi \in \mathcal{P}([0,1])$ in the form 

\[ \psi = a m_{\alpha}\#\psi_1 + (1-a) h_{\alpha}\#\psi_2 \]
where $\psi_1,\psi_2 \in \mathcal{P}([0,1])$ and $a \in [0,1]$. Therefore, 

\begin{eqnarray*}
	I_{\sigma}(x) &=& \inf_{\substack{\psi\in{\cal P}([0,1])
		}
	}\sup_{\widehat{\theta}\geq0} \widehat{\mathcal{F}}( \widehat{\theta},x,\psi,\sigma) \\
&=& \inf_{a \in [0,1]} \inf_{\substack{\psi_1, \psi_2\in{\cal P}([0,1])
	}
}\sup_{\widehat{\theta}\geq0} \widehat{\mathcal{F}}( \widehat{\theta},x, a m_{\alpha}\#\psi_1 + (1-a) h_{\alpha}\#\psi_2 ,\sigma)
\end{eqnarray*}
If we let $\varphi^i_{\star}(z)$ be the function $t \mapsto G_{\mu_{\sigma_i,t}}(z)$ for $i=1,2$, we have 

\[ \varphi_{\star}(z)(t) = \frac{1}{\sqrt{\alpha}} \varphi^1_{\star}\Big(\frac{z}{\sqrt{\alpha}} \Big)((\alpha)^{-1}t) \mathds{1}_{ [0,\alpha] }(t) + \frac{1}{\sqrt{1 - \alpha}} \varphi^2_{\star}\Big(\frac{z}{\sqrt{ 1- \alpha}} \Big)((1- \alpha)^{-1}(t-\alpha)) \mathds{1}_{ [\alpha,1] }(t) \]
Therefore, if we define $S_{1},S_{2}$ as the linear operators with kernel $\sigma_{1}$ and $\sigma_{2}$ respectively, we get that
\begin{eqnarray*}
	 \langle\frac{dt}{\varphi_{\star}(x)},S(a m_{\alpha}\#\psi_1 + (1-a) h_{\alpha}\#\psi_2)\rangle &=& a \alpha^{3/2}  \langle\frac{dt}{\varphi^1_{\star}(\frac{x}{\sqrt{\alpha}})},S_1 \psi_1 \rangle\\
	  &+& (1-a) (1-\alpha)^{3/2}  \langle\frac{dt}{\varphi^2_{\star}(\frac{x}{\sqrt{\alpha}})},S_2 \psi_2 \rangle
	\end{eqnarray*}
and also 
$$
	\langle a m_{\alpha}\#\psi_1 + (1-a) h_{\alpha}\#\psi_2, S (a m_{\alpha}\#\psi_1 + (1-a) h_{\alpha}\#\psi_2)\rangle =  a^2  \langle \psi_1, S_1 \psi_1 \rangle + (1-a)^2 \langle \psi_2, S_2 \psi_2 \rangle\,.
$$
Finally, we compute
 \begin{eqnarray*}
&&\hspace{-0.8cm} \int_{0}^{1}\log\left(\frac{2\widehat{\theta}}{\varphi_{\star}(x)}\frac{d(a m_{\alpha}\#\psi_1 + (1-a) h_{\alpha}\#\psi_2)}{dt}+1\right)dt = \alpha 	\int_{0}^{1}\log\left(\frac{2a\widehat{\theta}}{\sqrt{\alpha}\varphi^1_{\star}(x/ \sqrt{\alpha})}\frac{d\psi_1}{dt}+1\right)dt
 	\\
 	&&\quad+ (1-\alpha) 	\int_{0}^{1}\log\left(\frac{2(1-a)\widehat{\theta}}{\sqrt{1-\alpha}\varphi^2_{\star}(x/ \sqrt{1- \alpha})}\frac{d\psi_2}{dt}+1\right)dt
 	\end{eqnarray*}
 where we used that 
\[ \frac{d (a m_{\alpha}\#\psi_1 + (1-a) h_{\alpha}\#\psi_2)}{dt}=  \frac{a}{\alpha}\frac{d \psi_1}{dt}(g^{-1}(t))\mathds{1}_{[0, \alpha]}(t)  +\frac{1- a}{1- \alpha}  \frac{d \psi_2}{dt}(h^{-1}(t)) \mathds{1}_{[\alpha,1]}(t)\,. \]
Thus,  we have:
$$
	\widehat{\mathcal{F}}( \widehat{\theta},x,a m_{\alpha}\#\psi_1 + (1-a) h_{\alpha}\#\psi_2,\sigma) = \alpha \widehat{\mathcal{F}}\Big( \frac{a\widehat{\theta}}{\sqrt{\alpha}},\frac{x}{\sqrt{\alpha}},\psi_1,\sigma_1 \Big) + (1-\alpha) \widehat{\mathcal{F}}\Big( \frac{(1-a)\widehat{\theta}}{\sqrt{1- \alpha}},\frac{x}{\sqrt{1- \alpha}},\psi_2,\sigma_2 \Big).$$
Therefore, choosing $a=0$ and $a=1$ we get

\[ I_{\sigma}(x) \leq \min\{ \alpha I_{\sigma_1} \Big( \frac{x}{\sqrt{\alpha}} \Big ), (1-\alpha) I_{\sigma_2} \Big( \frac{x}{\sqrt{1 - \alpha}} \Big ) \}. \]
For the reverse bound, we now need the following lemma 
\begin{lem}\label{quasiconcave}
		Let $\sigma :[0,1]^2 \rightarrow \mathbb{R}^+$ be a variance profile, $\psi \in \mathcal{P}([0,1])$, $x > r_{\sigma}$ the function $\widehat{\theta} \mapsto \widehat{\mathcal{F}}( \widehat{\theta} ,x, \psi, \sigma)$ is quasiconcave, meaning that for every $t \in \mathbb{R}$, the level set $\{ \widehat{\theta} \in ]r_{\sigma}, + \infty[, \widehat{\mathcal{F}}( \widehat{\theta} ,x, \psi, \sigma) \leq t \}$ are intervals.
\end{lem}
\begin{proof}[Proof]
	Using the formula \eqref{defF2} for $\widehat{\mathcal{F}}$, we have:
	
	\[ \frac{\partial }{\partial \widehat{\theta}}\widehat{\mathcal{F}}( \widehat{\theta} ,x, \psi, \sigma)|_{\widehat{\theta}=0} =0 \]
	 and 
	\[ \frac{\partial^3 }{\partial \widehat{\theta}^3}\widehat{\mathcal{F}}( \widehat{\theta} ,x, \psi, \sigma)= -8 \int_0^1 \Big(\frac{d \psi}{dt} \Big)^3 \frac{1}{( 2 \widehat{\theta} \frac{d \psi }{dt} + \varphi_{\star}(t))^3 } dt <0 .\] 
	Therefore $\widehat{\theta} \mapsto \frac{\partial }{\partial \widehat{\theta}}\widehat{\mathcal{F}}( \widehat{\theta} ,x, \psi, \sigma)$ is concave. Putting together these two pieces of informations,  we deduce that  this function  is positive on some interval $]  0, \widehat{\theta}_c[$, zero at $\widehat{\theta}_c$ and negative on $] \widehat{\theta}_c, + \infty[$. This means that $\widehat{\theta} \mapsto \widehat{\mathcal{F}}( \widehat{\theta} ,x, \psi, \sigma)$ is increasing on $[  0, \widehat{\theta}_c]$ and decreasing on $[ \widehat{\theta}_c,0]$ which proves the lemma.
	 
	\end{proof}

Let $\psi_1, \psi_2 \in \mathcal{P}( [0,1])$ and $a \in[0,1]$. Let $\widehat{\theta}_{c,1}, \widehat{\theta}_{c,2}$ be  the respective maxima of 
$ \widehat{\theta} \mapsto \widehat{\mathcal{F}}( \widehat{\theta} ,\frac{x}{\sqrt{\alpha}}, \psi_1, \sigma_1)$ and $ \widehat{\theta} \mapsto \widehat{\mathcal{F}}( \widehat{\theta} ,\frac{x}{\sqrt{1 - \alpha}}, \psi_2, \sigma_2)$. Let us assume first that $ \sqrt{\alpha} \widehat{\theta}_{c,1} / a < \sqrt{1 -\alpha} \widehat{\theta}_{c,2} / (1-a) $ then since on $[0, \theta_{c,2}]$ the function $ \widehat{\theta} \mapsto \widehat{\mathcal{F}}( \widehat{\theta} ,\frac{x}{\sqrt{1 - \alpha}}, \psi_2, \sigma_2)$ is nonnegative we have 
	\begin{eqnarray*}\sup_{\widehat{\theta} \geq 0} \widehat{\mathcal{F}}( \widehat{\theta},x,a m_{\alpha}\#\psi_1 + (1-a) h_{\alpha}\#\psi_2,\sigma) \hspace{-0.3cm}&\geq& \hspace{-0.3cm}  \alpha \widehat{\mathcal{F}}\Big( \widehat{\theta}_{c,1},\frac{x}{\sqrt{\alpha}},\psi_1,\sigma_1 \Big) \\
		&\geq& \hspace{-0.3cm} \alpha \sup_{\widehat{\theta} \geq 0} \widehat{\mathcal{F}}\Big( \widehat{\theta},\frac{x}{\sqrt{\alpha}},\psi_1,\sigma_1 \Big)  \geq  \alpha I_{\sigma_1} \Big( \frac{x}{\sqrt{\alpha}} \Big) \\
		& \geq  & \hspace{-0.3cm}\min\{ \alpha I_{\sigma_1} \Big( \frac{x}{\sqrt{\alpha}} \Big ), (1-\alpha) I_{\sigma_2} \Big( \frac{x}{\sqrt{1 - \alpha}} \Big ) \}.
		\end{eqnarray*}
If $ \sqrt{\alpha} \widehat{\theta}_{c,1} / a \geq  \sqrt{1 -\alpha} \widehat{\theta}_{c,2} / (1-a) $ the same reasoning yields the same lower bound by exchanging $\alpha$ and $1-\alpha$. 
	Taking the infimum on $a,\psi_1,\psi_2$, we conclude that
\[I_{\sigma}(x) \geq \min\{ \alpha I_{\sigma_1} \Big( \frac{x}{\sqrt{\alpha}} \Big ), (1-\alpha) I_{\sigma_2} \Big( \frac{x}{\sqrt{1 - \alpha}} \Big ) \}\]
which finishes the proof.

  \end{proof}

\subsection{The Wishart case}\label{sec:Wishart}

In this section we prove that the formula from Theorem \ref{maintheo} corresponds to the well known formula in the case of a Wishart matrix.

 We can use the following linearization trick. For a Wishart random matrix $W_N= \frac{1}{L} X X^T$ such that $X$ is a rectangular $L \times M$ random matrix with centered sharp sub-Gaussian coefficients of variance $1$, we can define the following $N \times N$ matrix:
\[ H^N = \begin{pmatrix} 0_{L} & \frac{1}{\sqrt{N}} X \\ \frac{1}{\sqrt{N}} X^T & 0_{M} \end{pmatrix} \]
where $N = L+M$. Then, if $ \lambda_1( W_N), \dots,  \lambda_L( W_N)$ is the spectrum of $W_N$, $ \lambda_{ i} ( W_N) =0$ for $i=1,\dots, L \wedge M$, and 
\[ \forall i =1, \dots , L \wedge M, \lambda_{ i} ( H^N) = \frac{\sqrt{L}}{\sqrt{N}} \sqrt{\lambda_{i}(W_N)}, \lambda_{ N -i+1} ( H^N) = - \frac{\sqrt{L}}{\sqrt{N}} \sqrt{\lambda_{i}(W_N)} \]
and the remaining eigenvalues of $H^N$ are $0$. Therefore if we assume that $M/L\to \alpha \in (1, + \infty)$, getting a large deviation principle for $\lambda_1(W_N)$ is essentially equivalent to getting a large deviation principle for $\lambda_1(H^N)$ whose variance profile is \[  \sigma(s,t) = \mathds{1}_{[0, 1/(1+ \alpha) ]}(s) \mathds{1}_{[1/(1+ \alpha),1  ]}(t) + \mathds{1}_{[0, 1/(1+ \alpha) ]}(s) \mathds{1}_{[1/(1+ \alpha),1  ]}(t)  \] 
and the formula we want for its rate function $I^{GH}$ is given by Theorem 1.7. of \cite{HuGu1}. Furthermore let us mention that
\[ r_{\sigma} = \frac{1 + \sqrt{\alpha}}{1+\alpha} .\]
Let us first begin by proving that the supremum and infimum in $I_{\sigma}$ can be exchanged. This is a consequence by Sion's minmax theorem  of the more general assumption made in \cite{HussonVar} that  $\psi \mapsto \langle \psi, S \psi \rangle$ is concave.

\begin{lem} \label{lem:concave} If $\sigma: [0,1]^2 \to \mathbb{R}^+$ is piecewise constant such that $\psi \mapsto \langle \psi, S \psi \rangle$ is concave on $\mathcal{P}([0,1])$ then for $x > r_{\sigma}$:
	
	\[ I_{\sigma}(x) = \sup_{\theta \geq 0 } \Big( J_{\mu_{\sigma}}(x,\theta) - \tilde{F}(\theta,x,\sigma) \Big) \]
	where 
	\[ \tilde{F}(\theta,x,\sigma)  = \sup_{\psi \in \mathcal{P}([0,1])} K(\theta, \phi(\theta,x,\psi)). \]
\end{lem}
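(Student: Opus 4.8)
The plan is to prove the asserted identity by exchanging the infimum over $\psi$ and the supremum over $\theta$ in
\[
I_\sigma(x)=\inf_{\psi\in\mathcal P_p}\sup_{\theta\ge0}\mathcal F(\theta,x,\psi,\sigma),
\]
using Sion's minimax theorem on the product $\mathcal P_p\times[0,+\infty)$. First I would drop the constraint $\langle\psi,S\psi\rangle\neq0$ from the definition of $I_\sigma$: exactly as in the discussion following Lemma \ref{lem:equalityRF}, any $\psi$ with $\langle\psi,S\psi\rangle=0$ satisfies $\sup_{\theta\ge0}\mathcal F(\theta,x,\psi,\sigma)=+\infty$, so it never contributes to the infimum. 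The set $\mathcal P_p$ is compact and convex, $[0,+\infty)$ is convex, and $\mathcal F(\theta,x,\psi,\sigma)$ is finite for all $(\theta,\psi)$ (using the continuous extension at $\theta=0$, where $\mathcal F$ vanishes by Lemma \ref{lem:theta}); so Sion's theorem applies once the required semicontinuity and quasi-convexity/quasi-concavity are verified.

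For fixed $\theta$, I would show $\psi\mapsto\mathcal F(\theta,x,\psi,\sigma)$ is convex and continuous. By \eqref{eq:Def_Phi_theta_bis} the map $\psi\mapsto\phi(\theta,x,\psi)$ is affine, with image a translate of a nonnegative multiple of the simplex $\mathcal P_p$. Since by hypothesis the quadratic form $\phi\mapsto\langle\phi,S\phi\rangle$ is concave on $\mathcal P_p$, that is, its Hessian is negative semidefinite on the directions $\{v:\sum_k v_k=0\}$, which are the tangent directions shared by every translate and positive rescaling of the simplex, it is also concave on that image, so $\psi\mapsto\theta^2\langle\phi(\theta,x,\psi),S\phi(\theta,x,\psi)\rangle$ is concave. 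Similarly, $-H(\phi)=\frac{1}{2}\sum_k|I_k|\log(\phi_k/|I_k|)$ is concave in $\phi$, hence in $\psi$ after composition with the affine map $\psi\mapsto\phi(\theta,x,\psi)$. Therefore $\psi\mapsto K(\theta,\phi(\theta,x,\psi))$ is concave and $\psi\mapsto\mathcal F(\theta,x,\psi,\sigma)=J_{\mu_\sigma}(x,\theta)-K(\theta,\phi(\theta,x,\psi))$ is convex; it is continuous because $\varphi(\theta,x)_k>0$ for every $k$ (the $\mu_{\sigma,k}$ being nonzero nonnegative measures), so the densities $\phi_k$ stay bounded away from $0$. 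In particular this map is quasiconvex and lower semicontinuous.

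For fixed $\psi$, I would show $\theta\mapsto\mathcal F(\theta,x,\psi,\sigma)$ is quasiconcave. By Lemma \ref{lem:theta} it vanishes on $[0,G_{\mu_\sigma}(x)/2]$, and on $[G_{\mu_\sigma}(x)/2,+\infty)$ it equals $\widehat\theta\mapsto\widehat{\mathcal F}(\widehat\theta,x,\psi,\sigma)$ under the substitution $\widehat\theta=\theta-G_{\mu_\sigma}(x)/2$, as established in Appendix \ref{equalityRF}. By Lemma \ref{quasiconcave} that function is nonnegative, increasing up to its maximum and decreasing afterwards, so $\theta\mapsto\mathcal F(\theta,x,\psi,\sigma)$ is continuous, nonnegative and unimodal; its superlevel sets are intervals, hence it is quasiconcave and, being continuous, upper semicontinuous. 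Sion's minimax theorem then gives $\inf_{\psi}\sup_{\theta}\mathcal F=\sup_{\theta}\inf_{\psi}\mathcal F$. Since $\inf_{\psi\in\mathcal P_p}\mathcal F(\theta,x,\psi,\sigma)=J_{\mu_\sigma}(x,\theta)-\sup_{\psi\in\mathcal P_p}K(\theta,\phi(\theta,x,\psi))$ and, by the remark opening Section \ref{ratef} (concavity of $H$ together with Jensen's inequality), this supremum over $\mathcal P_p$ coincides with $\sup_{\psi\in\mathcal P([0,1])}K(\theta,\phi(\theta,x,\psi))=\tilde F(\theta,x,\sigma)$, the announced formula follows.

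The step I expect to be the main obstacle is the quasiconcavity in $\theta$: it cannot be read off the definition of $\mathcal F$ and must be obtained by combining Lemma \ref{lem:theta} (which forces $\mathcal F=0$ on $[0,G_{\mu_\sigma}(x)/2]$), the change of variable of Appendix \ref{equalityRF}, and the unimodality of $\widehat{\mathcal F}$ from Lemma \ref{quasiconcave}. A secondary technical point is the bookkeeping needed to discard the constraint $\langle\psi,S\psi\rangle\neq0$ and to match the two suprema defining $\tilde F$. The convexity in $\psi$, by contrast, is transparent once one notices that $\psi\mapsto\phi(\theta,x,\psi)$ is affine and invokes the standing concavity hypothesis on $\langle\cdot,S\cdot\rangle$.
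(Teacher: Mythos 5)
Your proof is correct and follows essentially the same route as the paper's: both apply Sion's minimax theorem on $\mathcal P_p\times[0,\infty)$, obtaining quasi-concavity in $\theta$ from Lemma \ref{quasiconcave} together with Lemma \ref{lem:theta}, and convexity in $\psi$ from the concavity of $H$ and the hypothesis on $\langle\cdot,S\cdot\rangle$. Your write-up spells out the intermediate verifications (affinity of $\psi\mapsto\phi(\theta,x,\psi)$, compactness, the reduction from $\mathcal P([0,1])$ to $\mathcal P_p$) that the paper treats as straightforward, but the argument is the same one.
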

\begin{proof}
	This is a straightforward application of Sion's minmax theorem. Indeed, using Lemma \ref{quasiconcave} and Lemma \ref{lem:theta} it is easy to see that for any $\psi,x$, 
	\[ \theta \mapsto \mathcal{F}( \theta,x,\psi,\sigma) .\]
	is quasi-concave. Furthermore, using that $H$ is concave as well as the assumption, we can see that 
	\[ \psi\mapsto \mathcal{F}( \theta,x,\psi,\sigma) \]
	is convex. Therefore using the minmax theorem, we get
	
	\begin{eqnarray*}
		I_{\sigma}(x) &=& \inf_{\substack{\psi\in{\cal P}([0,1])\\
			}
		}\sup_{\theta\geq0} \mathcal{F}( \theta,x,\psi,\sigma) \\
&=& \sup_{\theta\geq0} \inf_{\substack{\psi\in{\cal P}([0,1])}}\Big( J_{\mu_{\sigma}}(x,\theta) - K( \theta, \phi(\theta,x,\psi) )\Big) \\
&=& \sup_{\theta\geq0} \Big( J_{\mu_{\sigma}}(x,\theta) - \sup_{\substack{\psi\in{\cal P}([0,1])}}K( \theta, \phi(\theta,x,\psi)) \Big)\,. \\
		\end{eqnarray*}
	\end{proof}

In section 4.2 of \cite{HuGu1}, it is proved that the rate function for the Wishart case can also be expressed in the following way 
  
  \[ I^{GH}(x) = \sup_{\theta \geq 0} \Big( J_{\mu_{\sigma}}(x,\theta) - F(\theta,\sigma) \Big) \]
  where it is shown in Lemma 3.1 that 
  \[ F(\theta,\sigma) = \sup_{\psi \in \mathcal{P}([0,1]) }\Big( \theta^2 \langle \psi, S \psi \rangle - H(\psi) \Big) = \sup_{\psi \in \mathcal{P}([0,1]) } K(\theta,\psi).\]
One has to remember that since the variance profile is piecewise constant, this supremum can basically be reduced to the set of probability measures of the form $\psi = (1+ \alpha) a \mathds{1}_{[0,\frac{1}{1+ \alpha}]} dt + \frac{1+ \alpha}{\alpha}(1-a) \mathds{1}_{[\frac{1}{1+ \alpha},1]}  dt$ where $a \in[0,1]$. 
Therefore to complete the proof, we only need the following lemma
\begin{lem}
	In the Wishart case, for every $x > r_{\sigma}$, $\theta > 0$, we have
	\[ \tilde{F}( \theta,x,\sigma) =F(\theta,\sigma) .\]
	\end{lem}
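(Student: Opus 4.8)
The inequality $\tilde F(\theta,x,\sigma)\le F(\theta,\sigma)$ is immediate, since $\phi(\theta,x,\psi)\in\mathcal P([0,1])$ for every $\psi\in\mathcal P([0,1])$, so that the supremum defining $\tilde F$ is over a subset of the one defining $F$. For the converse the first observation is that as $\psi$ runs over $\mathcal P([0,1])$ the measure $\phi(\theta,x,\psi)=\varphi(\theta,x)+\bigl(1-\tfrac{G_{\mu_\sigma}(x)}{2\theta}\bigr)_+\psi$ runs \emph{exactly} over the set of probability measures whose density dominates that of $\varphi(\theta,x)$: the total mass of $\varphi(\theta,x)$ equals $1-\bigl(1-\tfrac{G_{\mu_\sigma}(x)}{2\theta}\bigr)_+$, so any $\phi\in\mathcal P([0,1])$ with $\tfrac{d\phi}{dt}\ge\tfrac{d\varphi(\theta,x)}{dt}$ a.e.\ is of the above form with $\psi=\bigl(1-\tfrac{G_{\mu_\sigma}(x)}{2\theta}\bigr)_+^{-1}(\phi-\varphi(\theta,x))$, and (singular $\psi$ forcing $K=-\infty$) one may restrict to absolutely continuous $\psi$. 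Hence $\tilde F(\theta,x,\sigma)=\sup\{K(\theta,\phi):\phi\in\mathcal P([0,1]),\ \tfrac{d\phi}{dt}\ge\tfrac{d\varphi(\theta,x)}{dt}\text{ a.e.}\}$. Since $\varphi(\theta,x)(t)=\tfrac{1}{2\theta}G_{\mu_{\sigma,t}}\bigl(x\vee G_{\mu_\sigma}^{-1}(2\theta)\bigr)$ is non-increasing in $x$ (each $G_{\mu_{\sigma,t}}$ being decreasing on $(r_\sigma,\infty)$), this admissible set is non-decreasing in $x$; therefore $x\mapsto\tilde F(\theta,x,\sigma)$ is non-decreasing on $(r_\sigma,\infty)$, and since $\varphi(\theta,x)\to0$ as $x\to\infty$ the admissible set exhausts $\mathcal P([0,1])$, so $\tilde F(\theta,x,\sigma)\to F(\theta,\sigma)$. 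Together with $\tilde F\le F$ this shows that it suffices to prove $\tilde F(\theta,x,\sigma)=F(\theta,\sigma)$ for $x$ in some right neighbourhood of $r_\sigma$, equivalently that the maximiser of $K(\theta,\cdot)$ is admissible for such $x$.

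Next I would reduce to block-constant competitors: as $\sigma$ is piecewise constant, $\langle\phi,S\phi\rangle$ depends on $\phi$ only through the masses $\phi(I_k)$, while by Jensen's inequality $-H(\phi)=\tfrac12\int\log\tfrac{d\phi}{dt}\,dt$ does not increase when $\phi$ is replaced by the block average $\bar\phi$ with constant density on each $I_k$; hence $K(\theta,\phi)\le K(\theta,\bar\phi)$ and the optimisations defining $F$ and $\tilde F$ may be restricted to block-constant measures, recovering the reduction noted after Lemma 3.1 of \cite{HuGu1}. In the Wishart case $\psi\mapsto\langle\psi,S\psi\rangle$ is concave, so $K(\theta,\cdot)$ is strictly concave on the simplex of block masses and has a unique maximiser $\phi^\star$, characterised by its Euler--Lagrange equation $\phi^\star_k=|I_k|\bigl(2\mu^\star-4\theta^2\textstyle\sum_l\sigma_{kl}\phi^\star_l\bigr)^{-1}$ with $\textstyle\sum_k\phi^\star_k=1$. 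Comparing this with the Dyson equation of Lemma \ref{lem:Measure-Concentration} one checks that, \emph{when $2\theta\le G_{\mu_\sigma}(r_\sigma)$}, $\phi^\star_k=\tfrac{1}{2\theta}G_{\mu_{\sigma,k}}\bigl(G_{\mu_\sigma}^{-1}(2\theta)\bigr)$, and by Lemma \ref{lem:theta} that $F(\theta,\sigma)=K(\theta,\phi^\star)=J_{\mu_\sigma}\bigl(G_{\mu_\sigma}^{-1}(2\theta),\theta\bigr)$; when $2\theta>G_{\mu_\sigma}(r_\sigma)$, $\phi^\star$ is instead the unique interior solution of the two-block Euler--Lagrange equations. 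In either case one verifies from the Dyson equation the edge identities $G_{\mu_\sigma}(r_\sigma)=r_\sigma$ and $G_{\mu_{\sigma,1}}(r_\sigma)-G_{\mu_{\sigma,2}}(r_\sigma)=d/r_\sigma$, where $d=|I_1|-|I_2|$ and $G_{\mu_{\sigma,k}}(r_\sigma)$ is the well-defined limit from Appendix \ref{secsimp}.

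The remaining point is that $\phi^\star$ is admissible for $x\downarrow r_\sigma$, i.e.\ $\phi^\star_k\ge\varphi(\theta,r_\sigma)_k=\tfrac{1}{2\theta}G_{\mu_{\sigma,k}}\bigl(r_\sigma\vee G_{\mu_\sigma}^{-1}(2\theta)\bigr)$ for $k=1,2$ (which, $\varphi(\theta,x)$ being non-increasing in $x$, then gives admissibility for all $x>r_\sigma$). If $2\theta\le G_{\mu_\sigma}(r_\sigma)$ then $G_{\mu_\sigma}^{-1}(2\theta)\ge r_\sigma$ and $\varphi(\theta,r_\sigma)_k=\tfrac{1}{2\theta}G_{\mu_{\sigma,k}}\bigl(G_{\mu_\sigma}^{-1}(2\theta)\bigr)=\phi^\star_k$, so admissibility is automatic. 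If $2\theta>G_{\mu_\sigma}(r_\sigma)$ then $\varphi(\theta,r_\sigma)_k=\tfrac{1}{2\theta}G_{\mu_{\sigma,k}}(r_\sigma)$, and using the explicit two-block solution of the Euler--Lagrange equations together with the edge identities above, the two inequalities $\phi^\star_k\ge\tfrac{1}{2\theta}G_{\mu_{\sigma,k}}(r_\sigma)$ reduce, after clearing denominators, to a single polynomial inequality in $\theta$ (for each fixed ratio $\alpha$), which holds with equality at $2\theta=r_\sigma$ and strictly for $2\theta>r_\sigma$; this is checked by a direct computation. Combining with the monotonicity of $x\mapsto\tilde F(\theta,x,\sigma)$ and with $\tilde F\le F$ finishes the proof.

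The soft part of the argument — the trivial inequality, the surjectivity of $\psi\mapsto\phi(\theta,x,\psi)$ onto the dominating measures, the monotonicity in $x$, the symmetrisation reduction, and the identification of $\phi^\star$ via the Dyson equation — reduces the whole lemma to one statement: the free maximiser of $K(\theta,\cdot)$ dominates $\varphi(\theta,r_\sigma)$ at the spectral edge. This edge domination is the only step requiring an explicit computation and is precisely where the bipartite (Wishart) structure of $\sigma$, beyond the mere concavity of $\psi\mapsto\langle\psi,S\psi\rangle$, is used; I expect it to be the main obstacle.
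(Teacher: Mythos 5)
Your reduction (trivial inequality $\tilde F\le F$; surjectivity of $\psi\mapsto\phi(\theta,x,\psi)$ onto the measures dominating $\varphi(\theta,x)$; monotonicity in $x$; block-constant symmetrisation; identification of the free maximiser $\phi^\star$ of $K(\theta,\cdot)$ via the Euler--Lagrange/Dyson comparison when $2\theta\le G_{\mu_\sigma}(r_\sigma)$) is all sound, and the edge identities $G_{\mu_\sigma}(r_\sigma)=r_\sigma$ and $G_{\mu_{\sigma,1}}(r_\sigma)-G_{\mu_{\sigma,2}}(r_\sigma)=(|I_1|-|I_2|)/r_\sigma$ that you record are in fact correct in the Wishart case. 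However, there is a genuine gap at precisely the point you flag: for $2\theta>G_{\mu_\sigma}(r_\sigma)$ you reduce everything to an assertion that ``after clearing denominators'' the domination $\phi^\star_k\ge\tfrac{1}{2\theta}G_{\mu_{\sigma,k}}(r_\sigma)$ becomes a single polynomial inequality which ``is checked by a direct computation'' -- but this computation is never done, and it is not obvious (note that $\phi^\star_1+\phi^\star_2=1>\tfrac{r_\sigma}{2\theta}=\tfrac{1}{2\theta}\sum_kG_{\mu_{\sigma,k}}(r_\sigma)$ does not by itself collapse the two inequalities into one). So the proposal as written does not establish the lemma.

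The paper avoids this computation entirely by a trick you did not find: it writes the \emph{interior} maximiser explicitly in terms of the second algebraic branch of the Stieltjes transforms. Defining $\overline{G_{\pi_\alpha}}$, $\overline G_1$, $\overline G_2$, $\overline G_{\mu_\sigma}$ by flipping the sign of the square root in the Marchenko--Pastur formulas, the (formal, algebraic) Dyson equation is still satisfied by $\overline\phi_\star$, so the same Euler--Lagrange/Dyson comparison that you used for $2\theta\le G_{\mu_\sigma}(r_\sigma)$ shows that when $2\theta=\overline G_{\mu_\sigma}(x')$ (which has a solution $x'>r_\sigma$ for every $2\theta>G_{\mu_\sigma}(r_\sigma)$ since $\overline G_{\mu_\sigma}$ is \emph{increasing}), the critical point is $\phi^\star_k=\tfrac{1}{2\theta}\overline G_k(x')$. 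The admissibility $\phi^\star_k\ge\tfrac{1}{2\theta}G_k(x)$ is then immediate from the chain $\overline G_k(x')\ge\overline G_k(r_\sigma)=G_k(r_\sigma)\ge G_k(x)$, using only that $\overline G_k$ is increasing, $G_k$ is decreasing, and the two branches agree at $r_\sigma$. This is exactly the ``edge domination'' you identify as the crux, proved by soft monotonicity rather than by expanding a polynomial. If you wish to complete your route, the cleanest fix is to borrow this observation: solve the interior two-block Euler--Lagrange system not by the quadratic formula but by recognising it as the conjugate Dyson system, after which your remaining work disappears.
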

\begin{proof}
	We assume $\alpha \geq 1$. Let us first establish what is $\phi_{\star}(z)$ is this case. Actually we can represent $\phi_{\star}(z)$ in the following way:
	
	\[ \phi_{\star}(z)(t) = G_1(z) \mathds{1}_{[0, \frac{\alpha}{1+ \alpha}]}(t) + G_2(z) \mathds{1}_{[\frac{\alpha}{1+ \alpha}, 1]}(t) \]
	where, if we denote $G_{\pi_{\alpha}}$ the Stieltjes transform of the Marcenko-Pastur law of parameter $\alpha$ we have if $G_{\pi_{\alpha}}(z) = \frac{z - \alpha + 1 - \sqrt{(z-\alpha -1)^2 -4 \alpha }}{2 z },$
	
	\[ G_1(z) = z (1+ \alpha) G_{\pi_{\alpha}}((1+ \alpha) z^2) \mbox{ and }	 G_2(z)  = z \frac{1+ \alpha}{\alpha} G_{\pi_{1/\alpha}}(\frac{1+ \alpha}{\alpha} z^2)\,.\]
		This comes from the fact that for large $z$, using the expansion of  $( z- H^N)^{-1}$ in $1/z$: 
	
	\[  ( z- H^N)^{-1} = \begin{pmatrix} z ( z^2 - \frac{XX^T}{N})^{-1} & * \\
		* &  z ( z^2 - \frac{X^TX}{N})^{-1} \end{pmatrix} .\]
	
Furthermore, we have 

\[ G_{\mu_{\sigma}}(z)= 2 \alpha (1 + \alpha)z G_{\pi_{\alpha}}((1+\alpha)z^2) + \frac{1 -  \alpha}{(1+ \alpha)z} .\]
We will also denote $\theta_c = G_{\mu_{\sigma}}( r_\sigma)$. We then find our formulas using that asymptotically the empirical measure of $(1+ \alpha)XX^{T}$ is a Marcenko Pastur of parameter $\alpha$ and the empirical measure of $\frac{(1+ \alpha)}{\alpha} X^TX$ is a Marcenko Pastur of parameter $1/\alpha$. Then let us define $\overline{G_{\pi_{\alpha}}}$ simply by changing the sign in the square root 
		
		\[ \overline{G_{\pi_{\alpha}}}(z) = \frac{z  \alpha +1 + \sqrt{(z-\alpha -1)^2 -4 \alpha }}{2 z }. \]
	and  define similarly $\overline{G_1},\overline{G_2},\overline{G_{\mu_{\sigma}}}$ and $ \overline{\phi_{\star}}$ by switching each time  $\overline{G_{\pi_{\alpha}}}$ for $G_{\pi_{\alpha}}$.
	If we denote $\mathcal{P}(\theta,x,\sigma)$ the image of the set $\mathcal{P}([0,1])$ by $\psi \mapsto \phi(\theta,x,\psi)$, we get
	\[ \tilde{F}(\theta,x,\sigma) = \sup_{ \psi \in\mathcal{P}(\theta,x,\sigma) } K( \theta,\phi(\theta,x,\psi)) \leq F( \theta,x) \]
	Let us first describe the minimizer in the expression of $F(\theta,\sigma)$ that we will denote $\psi(\theta)$. It satisfies the critical point equation in $\mathcal{P}([0,1])$: 
	
	\[ 2 \theta^2 S \psi(\theta) + \frac{1}{2} \Big(\frac{d \psi(\theta)}{dt} \Big)^{-1} dt  = \lambda dt  \]
	where $\lambda \in \mathbb{R}$ is some Lagrange multiplier. Since we are dealing with a piecewise constant variance profile, we can consider this to be an equation in $\mathbb R^2$.  Furthermore, by strict concavity this critical point equation is sufficient. Therefore our result boils down to proving that for any $x > r_{\sigma}$ and $\theta \geq 0$, there exoists some $\overline{\psi}$ such that $\psi(\theta) = \phi(\theta, x, \overline{\psi})$.
	
	One can then notice that if there is $z \in [r_{\sigma}, + \infty[$ such that $2\theta = G_{\mu_{\sigma}}(z)$, the Dyson equation can take the form
	\[ S (\phi_{\star}(z) dt) + \frac{1}{\phi_{\star}(z)} dt  = z dt \]
	meaning that choosing $\psi(\theta) = \frac{\phi_{\star}(z)}{G_{\mu_{\sigma}}(z)}dt$ does yield the desired critical point equation.
 In particular, it shows the desired  result  for any $x > r_{\sigma}$ and $\theta \leq \theta_c$. Indeed if $2 \theta \leq G_{\mu_{\sigma}}(x)$, then we simply have $\phi(\theta, x, \overline{\psi}) = \psi(\theta)$ for any $\overline{\psi}$ and if  $G_{\mu_{\sigma}}(x) \leq 2 \theta \leq 2 \theta_c $ there is $x'\leq x$ such that $2 \theta= G_{\mu_{\sigma}}(x')$ and we have with 
	 \[ \overline{\psi} = \frac{\phi_{\star}(x') -\phi_{\star}(x) }{2 \theta - G_{\mu_{\sigma}}(x)}dt \]
	 that 
	 \[ \psi(\theta) = \phi(\theta,x,\overline{\psi}) .\]
	 
However, one can look at the Dyson equation and deduce that
	\[ S (\phi_{\star}(z) dt) + \frac{1}{\phi_{\star}(z)} dt  - z dt =0.\]
	We can then see the righ hand side as an algebraic expression in $z$ that is equal to $0$ on an open set. So if we see it as a formal algebraic expression in the inderminate $Z$, it is equal to $0$. Using the morphism the $\mathbb{R}(\alpha)$-morphism that sends $\sqrt{(1+\alpha)^2(z-1) - 4 \alpha}$ on $-\sqrt{(1+\alpha)^2(z-1) - 4 \alpha}$, we have that 
	\[ S (\overline{\phi_{\star}}(z) dt) + \frac{1}{\overline{\phi_{\star}}(z)} dt  - z dt =0 \]
	Therefore, if $2 \theta = \overline{G}_{\mu_{\sigma}}(z)$, $\psi( \theta) = \frac{\overline{\phi}_{\star}(z)}{\overline{G}_{\mu_{\sigma}}(z)}dt$
	does satisfy the critical point equation. 
Furthermore, one can see that the functions $\overline{G}_{\pi_\alpha}$ and $\overline{G}_{\pi_\alpha}$ are increasing functions and therefore so are $\overline{G}_1,\overline{G}_1, \overline{G}_{\mu_{\sigma}}$.  Furthermore $\overline{G}_{\mu_{\sigma}}(\theta)$ goes to $+\infty$ as $\theta$ goes to $+\infty$. Therefore for $\theta \geq \theta_c$ we can find $x'$ such that $2 \theta = \overline{G}_{\mu_{\sigma}}(x')$. Then with 
\[ \overline{\psi} =  \frac{\overline{\phi_{\star}}(x') -\phi_{\star}(x) }{
	\overline{G}_{\mu_{\sigma}}(x') - G_{\mu_{\sigma}(x)}}dt \] 
we have $\psi(\theta)= \phi(\theta,x,\overline{\psi})$. The fact that $\overline{\psi}$ is indeed a probability measure, that is that $\frac{\overline{\phi_{\star}}(x') -\phi_{\star}(x) }{
	\overline{G}_{\mu_{\sigma}}(x') - G_{\mu_{\sigma}}(x)}$ comes from the fact that since $G_{\mu_\sigma}, G_1,G_2$ are decreasing on $(r_{\sigma},+ \infty)$ and $\overline{G}_{\mu_\sigma}, \overline{G }_1,\overline{G}_2$  are increasing and therefore 
\[ \overline{G}_{\mu_{\sigma}}(x') \geq \overline{G}_{\mu_{\sigma}}(r_{\sigma})= G_{\mu_\sigma}(r_{\sigma}) \geq  G_{\mu_\sigma}(x) \]
\[ \overline{G}_{1}(x') \geq \overline{G}_{1}(r_{\sigma})= G_{1}(r_{\sigma}) \geq  G_{1}(x) \]
and
\[ \overline{G}_{2}(x') \geq \overline{G}_{2}(r_{\sigma})= G_{2}(r_{\sigma}) \geq  G_{2}(x) \]
	
\end{proof}

 \bibliographystyle{acm}
\bibliography{bibLDP}

\end{document}